\theoremstyle{plain}
\newtheorem*{thmA}{Theorem A}
\newtheorem*{thmB}{Theorem B}
\newtheorem*{thm1.2}{(1.2) Theorem}
\newtheorem*{thm1.3}{(1.3) Theorem}
\newtheorem*{thm1.4}{(1.4) Theorem}
\newtheorem*{propA*}{Proposition A}
\newtheorem*{propB*}{Proposition B}
\newtheorem*{thmC*}{Theorem C}
\newtheorem*{propD*}{Proposition D}
\newtheorem{prop}{Proposition}[section]
\newtheorem{thm}[prop]{Theorem}
\newtheorem{cor}[prop]{Corollary}
\newtheorem{lemma}[prop]{Lemma}
\theoremstyle{definition}
\newtheorem{point}[prop]{}
\newtheorem{Def}[prop]{Definition}
\newtheorem*{Def*}{Definition}
\newtheorem{notation}[prop]{Notation}
\newtheorem*{notation*}{Notation}
\newtheorem*{question*}{Question}
\def\rmono{\rto|<\hole|<<\ahook}
\def\lmono{\lto|<\hole|<<\bhook}
\def\urmono{\urto|<\hole|<<\ahook}
\newcommand{\cala}{\mathcal{A}}
\newcommand{\calb}{\mathcal{B}}
\newcommand{\calc}{\mathcal{C}}
\newcommand{\cald}{\mathcal{D}}
\newcommand{\cale}{\mathcal{E}}
\newcommand{\calf}{\mathcal{F}}
\newcommand{\calg}{\mathcal{G}}
\newcommand{\calm}{\mathcal{M}}
\newcommand{\calr}{\mathcal{R}}
\newcommand{\cals}{\mathcal{S}}
\newcommand{\calt}{\mathcal{T}}
\newcommand{\ra}{\rightarrow}
\newcommand{\lra}{\longrightarrow}
\title{On the classification of fibrations}
\author[Blomgren, Chach\'{o}lski]
{M.~Blomgren$^1$ \quad  W.~Chach\'{o}lski$^2$}
\address{Martin Blomgren\\
      Department of Mathematics\\
      KTH\\
      S 10044 Stockholm\\
      Sweden}
\email{blomgr@kth.se}
\address{Wojciech Chach\'{o}lski\\
      Department of Mathematics\\
      KTH\\
      S 10044 Stockholm\\
      Sweden}
\email{wojtek@math.kth.se}
\begin{document}
\maketitle
\footnotetext[1]{Supported by grant KAW 2005.0098 from the Knut and Alice Wallenberg Foundation.}
\footnotetext[2]{Partially supported by G\"oran Gustafsson Stiftelse and VR grants.}

\section{Introduction}      
Classification questions are often about  understanding components of a category.  
It is not unusual however that with  a category one can associate a unique homotopy  type of  a simplicial set whose set of components coincide with the set of components of the category. Such a space carries  more information about the  category than just the set of its components.  For example:
\begin{Def}\label{def intro FibXF}
Let  $X$ and $F$ be spaces. Define
$\text{Fib}(X,F)$ to be the category whose objects are maps $f\colon A\to B$ where $B$ is weakly equivalent to $X$ and  the homotopy fiber of $f$, over any base point in $B$, is weakly equivalent to $F$. The set of morphisms in  $\text{Fib}(X,F)$ between $f\colon A\to B$ and $f'\colon A'\to B'$, consists of pairs of weak equivalences $\phi\colon A\to A'$ and $\psi\colon B\to B'$ for which $f'\phi=\psi f$\@.  The composition of morphisms is induced by the usual composition of maps.
\end{Def}

A classical result states that the components of  $\text{Fib}(X,F)$ can be enumerated by the set of homotopy classes $ [X,B\text{we}(F,F)]$ where $B\text{we}(F,F)$ is the  classifying space of the topological monoid of weak equivalences of $F$\@. This is a classical theorem proved  by Stasheff in~\cite{MR0154286} and re-proved and generalized by May in~\cite{MR0370579}.

Instead of looking  at the set of components, it is  more desirable to study the entire moduli space of fibrations.
One would like to understand the homotopy type of the  category $\text{Fib}(X,F)$ and not just the set of its components.  Naively one can  try to form the nerve of $\text{Fib}(X,F)$ and then identify its homotopy type. However, since  $\text{Fib}(X,F)$ is not equivalent to a small category, this can not be done so directly.  Instead we are going to show that $\text{Fib}(X,F)$ has what we call a {\bf core} (see~\ref{def core}) which is a small category whose nerve  approximates the homotopy type of $\text{Fib}(X,F)$\@.
Our classification statement can be then formulated as follows:

\begin{thmA}
The category\/ 
$\text{\rm Fib}(X,F)$ has a core whose nerve  admits a map to\/ $B\text{\rm we}(X,X)$\@. This map has a  section and its homotopy fiber is weakly equivalent to the mapping space\/ 
$\text{\rm map}(X,B\text{\rm we}(F,F))$.
\end{thmA}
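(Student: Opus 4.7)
The plan is to build everything from a small, well-behaved replacement of $\text{Fib}(X,F)$ and then reduce Theorem A to the classical Stasheff--May classification over a fixed base.

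First I would construct a core $\calc \subset \text{Fib}(X,F)$ whose objects are Kan fibrations $f\colon A\to B$ between simplicial sets of bounded cardinality (say, bounded by some infinite cardinal large compared to $X$ and $F$), with $B \simeq X$ and fibers $\simeq F$. Standard replacement arguments (factor any $f$ through a weak equivalence followed by a Kan fibration and then use a small Grothendieck-universe-type cardinality bound) show every object of $\text{Fib}(X,F)$ is weakly equivalent, through a zig--zag of morphisms in $\text{Fib}(X,F)$, to an object of $\calc$, so $\calc$ is a core in the sense of the paper.

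Next I would define a forgetful functor $p\colon \calc \to \calb$, where $\calb$ is the core of the category of spaces weakly equivalent to $X$ with weak equivalences between them (so that the nerve of $\calb$ models $B\text{we}(X,X)$). The functor sends $(f\colon A\to B)$ to $B$ and $(\phi,\psi)$ to $\psi$. A section $s\colon \calb \to \calc$ is given by picking a functorial fibration with fiber $F$ over every $B\in\calb$: the simplest choice is the product $B\times F \to B$, which is strictly functorial in weak equivalences of $B$, so $ps=\id_{\calb}$. This proves the section statement.

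For the homotopy-fiber identification I would use Quillen's Theorem B (or its small-category/fibration variant). For a fixed $B\in\calb$, the (homotopy) fiber $p^{-1}(B)$ is the category of Kan fibrations with fiber $F$ over the fixed base $B$, with morphisms the pairs $(\phi,\id_B)$. One then checks that the hypotheses of Theorem B hold (pullback along any $\psi\colon B \to B'$ is a weak equivalence on these fiber categories, which is the homotopy invariance of the moduli of fibrations over a space), so the homotopy fiber of $|N\calc|\to|N\calb|$ over $B$ is $|Np^{-1}(B)|$. Finally, the nerve of $p^{-1}(B)$ is, by the classical Stasheff--May theorem applied fiberwise-over-simplices (or, more directly, by the equivalence between fibrations over $B$ with fiber $F$ and maps $B\to B\text{we}(F,F)$ up to homotopy, upgraded to a moduli-space level statement), weakly equivalent to $\text{map}(B, B\text{we}(F,F)) \simeq \text{map}(X, B\text{we}(F,F))$.

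The main obstacle I anticipate is the last step: upgrading Stasheff--May from a $\pi_0$-level bijection to a weak equivalence of the moduli space of fibrations over a fixed base with $\text{map}(X,B\text{we}(F,F))$. This requires establishing, in an explicit and naturally-in-$B$ way, that the groupoid of fibrations with fiber $F$ over $B$ has the correct higher homotopy, which typically means exhibiting a universal fibration over $B\text{we}(F,F)$ and showing that pullback induces the claimed equivalence. Once that is in place, combining it with the section shows the sequence $\text{map}(X,B\text{we}(F,F)) \to |N\calc| \to B\text{we}(X,X)$ is a split fibration sequence up to homotopy, giving Theorem A.
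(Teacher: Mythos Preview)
Your high-level strategy matches the paper's: use the range functor $\calr\colon\text{Fib}(X,F)\to X_{\text{we}}$, exhibit a section via the product fibration $B\times F\to B$, and identify the homotopy fiber over a fixed $B$ with the moduli of fibrations over $B$. The paper does exactly this (Proposition~\ref{prop rangeqf} and the proof of Theorem~A at the end of Section~18), working with under-categories $B\!\uparrow\!\calr$ rather than strict fibers and showing these are homotopy equivalent to $\text{Ext}(B,F)$.

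There are, however, two genuine gaps. First, your core argument is incomplete: showing that every object of $\text{Fib}(X,F)$ is zig-zag connected to your bounded-cardinality $\calc$ only gives a bijection on $\pi_0$; it does not verify Definition~\ref{def core}, which requires that every small enlargement $J\supset\calc$ sit inside some small $K$ with $\calc\subset K$ a weak equivalence of categories. That condition controls higher homotopy, not just components, and is not automatic. The paper establishes essential smallness of $\text{Fib}(X,F)$ indirectly, by proving $\calr$ is a quasi-fibration and then invoking Lemma~\ref{lemma groveresssmallqf} together with the essential smallness of $X_{\text{we}}$ (Theorem~\ref{thm bweXX}).

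Second, and more importantly, the step you correctly flag as the main obstacle --- upgrading Stasheff--May to a weak equivalence between the nerve of the moduli category $\text{Ext}(B,F)$ and $\text{map}(B,B\text{we}(F,F))$ --- is not a lemma one can quote but the central technical content of the paper. It is proved as Theorem~\ref{thmA Ext is Map} by first identifying $\text{Ext}(X,F)$ with $\text{Fun}(X,F_{\text{we}})\!\downarrow\!\Delta_X$ via a decomposition/assembly adjunction (Proposition~\ref{prop Decompint}), then dropping the augmentation, and finally invoking Theorem~B (Theorem~\ref{thm funintoxwe}), whose proof is an induction over the cell structure of the indexing space using the clutching and bounded-functor machinery developed earlier. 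Your proposed route through a universal fibration over $B\text{we}(F,F)$ is a reasonable heuristic, but making it precise at the level of nerves of categories is essentially equivalent to reproving Theorem~B.
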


It turns out that the above theorem is a particular case of a much more general statement that holds in an arbitrary model category. The purpose of such a generalization is not only to show that analogous classification statements hold in much broader  context. Statements that hold in an arbitrary model category often have more conceptual proofs in which one  does not need to use the nature of objects considered  but rather basic fundamental facts from  homotopy theory. In this way arguments are becoming more transparent. It was Dwyer and Kan who first realized and proved that such general classification statements are true. In their sequence of papers that includes~\cite{MR581012, MR584566, MR744846, MR777259, MR789792} they develop a strategy and techniques for dealing  with classification questions. An important part of their program was a discovery of continuity in model categories. They showed that an arbitrary model category has mapping spaces whose homotopy type is unique. They also gave a particular model for them through the use of so called hammocks. 

In this paper we follow, in principle, the plan of Dwyer and Kan. Our realization of their strategy is different however.   For example homotopical smallness is an essential ingredient in our work.
Another important difference is that we use a model for mapping spaces developed in~\cite{MR2443229}.
Our general statement is about the homotopy type of the category of weak equivalences $\calm_{\text{we}}$ of a model category $\calm$\@. Its objects are the objects of $\calm$ and morphisms are all the weak equivalences in $\calm$\@. To understand its homotopy type, we study the components of  $\calm_{\text{we}}$\@. For an object $X$ in $\calm$, we denote by $X_{\text{\rm we}}$ the full subcategory of $\calm_{\text{we}}$ that consists of all the objects in $\calm$ which are weakly equivalent to  $X$. This subcategory is also called a {\bf component} of $\calm_{\text{we}}$\@. Our key result states (see~\ref{thm funintoxwe}):

\begin{thmB}
Let $I$ be a small category and $X$ be an object in a model category $\calm$\@. The category of  functors\/ $\text{\rm Fun}(I,X_{\text{\rm we}})$ has a core which is weakly equivalent to the mapping 
space\/ $\text{\rm map}(N(I),B\text{\rm we}(X,X))$\@.
\end{thmB}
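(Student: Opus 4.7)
The plan is to reduce Theorem~B to the classical case $I=\ast$ (i.e., the Stasheff--May classification) and then lift that identification to diagram categories through a general functoriality argument for cores. The central difficulty will be the comparison of the nerve of a diagram category $\text{Fun}(I, \mathcal{C})$ with the mapping space $\text{map}(N(I), N(\mathcal{C}))$; this is where the specific model for mapping spaces developed in \cite{MR2443229} is essential.

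First, I would construct a small core $\mathcal{C} \subset X_{\text{we}}$ together with a weak equivalence $N(\mathcal{C}) \simeq B\text{we}(X,X)$. This is, up to packaging, the case $I=\ast$ of Theorem~B and reproves the classical Stasheff--May theorem. Concretely, one picks a fibrant-cofibrant replacement $\widehat X$ of $X$, forms the simplicial monoid of self-weak-equivalences of $\widehat X$ in the sense of \cite{MR2443229}, and takes $\mathcal{C}$ to be its one-object classifying category.

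Next, I would propagate cores to diagram categories: if $\mathcal{C}$ is a core of $X_{\text{we}}$, then $\text{Fun}(I,\mathcal{C})$ should be a core of $\text{Fun}(I,X_{\text{we}})$. Weak equivalences between functors are defined objectwise, so any $F \colon I \to X_{\text{we}}$ admits a natural zigzag of pointwise weak equivalences to a functor landing in $\mathcal{C}$, produced by a functorial replacement in $\calm$. Smallness of $I$ and of $\mathcal{C}$ then confines the resulting subcategory of $\text{Fun}(I, X_{\text{we}})$ to a small one.

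The final and hardest step is the identification
\[
N\bigl(\text{Fun}(I,\mathcal{C})\bigr) \;\simeq\; \text{map}\bigl(N(I),\, N(\mathcal{C})\bigr).
\]
For an arbitrary target category this comparison map is far from a weak equivalence; it is rescued here by the fact that every morphism of $\mathcal{C}$ is a weak equivalence, so $N(\mathcal{C})$ models a classifying space and $\mathcal{C}$ behaves like an $\infty$-groupoid. The concrete implementation of this principle is where the mapping-space formalism of \cite{MR2443229} does the heavy lifting: it encodes morphisms with enough flexibility that natural zigzags between functors correspond to homotopies of maps out of $N(I)$. Reconciling this simplicial model with the objectwise replacements used to produce the core of the functor category is the main technical obstacle of the argument.
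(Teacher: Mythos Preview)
Your outline has the right shape, but Steps~2 and~3 both contain genuine gaps, and the paper's argument proceeds along a substantially different route.

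\textbf{Step 2 does not work as stated.} If $\mathcal{C}$ is the one-object classifying category you propose, then a functor $I\to\mathcal{C}$ must send every object of $I$ to the single object $\widehat X$. A functorial replacement in $\calm$ applied objectwise to $F\colon I\to X_{\text{we}}$ produces a functor whose values are cofibrant--fibrant, but in general \emph{distinct}, objects; it does not rectify $F$ to a functor that is constant on objects. So your zigzag claim fails for this $\mathcal{C}$. Even with a larger core, exhibiting a zigzag from each $F$ to something in $\text{Fun}(I,\mathcal{C})$ only addresses $\pi_0$ and does not establish the core property, which requires controlling \emph{all} small enlargements. Proving that $\text{Fun}(I,X_{\text{we}})$ is essentially small is in fact one of the main difficulties of the paper and is not a formal consequence of $X_{\text{we}}$ being essentially small.

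\textbf{Step 3 is the theorem.} The identification $N(\text{Fun}(I,\mathcal{C}))\simeq\text{map}(N(I),N(\mathcal{C}))$ for a category $\mathcal{C}$ whose morphisms are weak equivalences is precisely the content to be proved; invoking that $\mathcal{C}$ ``behaves like an $\infty$-groupoid'' is a restatement of the conclusion, not a mechanism. The mapping-space model of \cite{MR2443229} does not by itself supply this comparison.

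\textbf{What the paper actually does.} Rather than fixing a core of $X_{\text{we}}$ and pushing it into diagrams, the paper runs an induction over the simplicial structure of $N(I)$. Using the reduction $\text{Fun}(I,X_{\text{we}})\simeq\text{Fun}^b(N(I),X_{\text{we}})$, it proves (Proposition~\ref{prop keypropessentiallysmall}) that for any space $A$ the system $\calf(A,X)$ is essentially small and the nerve of its core over $A$ agrees with the homotopy limit over simplices of the constant diagram $B\text{we}(X,X)$, i.e., with $\text{map}(A,B\text{we}(X,X))$. The induction has five steps: contractible spaces (base case, via~\ref{cor funoutofcotract}), pushouts (via the clutching construction and the strong homotopy pull-back of Proposition~\ref{prop cluthcingcof}), maps with contractible simplex-fibers (cofinality), disjoint unions, and telescopes. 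The pushout step is where essential smallness and the mapping-space identification are proved simultaneously; the clutching construction furnishes the homotopy inverse needed to show the relevant restriction functor is a strong fibration. None of this machinery appears in your outline, and it is not bypassable by the direct approach you sketch.
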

%

\section{Categorical constructions and notation}
To describe sets we use 
  Zermelo-Fraenkel set theory with the axiom of choice. 

\begin{point} 
The term category is used as defined in~\cite[Section 7]{MR0156879}.\  
 The category $\cala^{\text{op}}$ is the opposite category of $\cala$\@.
  A natural transformation between functors
$f,g\colon\calb\to \cala$  is denoted by  $\phi\colon g\to f$\@.  It consists of  
morphisms $\phi_b\colon g(b)\to f(b)$ in $\cala$ for any object $b$ in $\calb$ such that $f(\beta)\phi_{b_1}=\phi_{b_0}g(\beta)$
for any morphism $\beta\colon b_1\ra b_0$ in $\calb$.

The category of sets is denoted by Sets. A category is  small if it has a set of  objects.\
$\text{Cat}$ denotes the category of small categories and $\Delta$
its  full subcategory  whose objects are posets 
$[n]:=\{0<\ldots< n\}$ for $n\geq 0$. 
 
The symbol $B\subset A$  denotes the fact that 
$B$ is a subset or a subcategory or a subspace, depending on whether  $A$ is  a set or a category or a space.


\end{point}
\begin{point}\label{pt modelcat}
 The symbol $\calm$  always denotes a model category which  in addition to  the standard axioms {\bf MC1}-{\bf MC5}  (see e.g.\ ~\cite{MR2002k:55026, MR96h:55014, 99h:55031, MR36:6480}),
we require  $\calm$ to be closed under arbitrary colimits and limits, to have  a functorial fibrant replacement, and that  any commutative square in $\calm$ on the left below can be extended {\bf functorially } to a commutative diagram on the right
with the indicated morphisms being cofibrations and acyclic fibrations:
\[\xymatrix@R=14pt@C=15pt{
X\rto^{f}\dto_-{\alpha_1} &Y\dto^-{\alpha_2}\\
X'\rto^{f'} & Y'
} \ \ \ \ \  \ \ \ \ \ \ \ \ 
\xymatrix@R=14pt@C=20pt{
X\dto_-{\alpha_1}\ar@/^17pt/[rr]|{f}\rmono  & P(f)\ar@{->>}[r]^{\simeq} \dto|(.45){P(\alpha_1,\alpha_2)\  \ }& Y\dto^-{\alpha_2}\\
X'\ar@/_14pt/[rr]|{f'}\rmono  &P(f') \ar@{->>}[r]^{\simeq} &Y'
} \]

The restriction of $P$ to the  the full subcategory of arrows in $\calm$ of the form
 $\emptyset\to  X$ is a  functorial cofibrant replacement in $\calm$.

Simplicial sets are also called spaces and their category with the  standard model structure (see for example~\cite{MR2001d:55012}) is denoted by $\text{Spaces}$.  The full subcategory of
$\text{Spaces}$ whose objects are the standard simplices $\Delta[n]$ is isomorphic to $\Delta$. 
%
\end{point}

\begin{point}
\label{point systemandGr}
A  system $\calf$ of categories indexed by a category $\calc$ consists of a category $\calf_c$ for any object
$c$ in $\calc$ and a functor $\calf_{\alpha}\colon \calf_{c_0}\to \calf_{c_1}$ for any morphism $\alpha\colon c_1\to c_0$ in $\calc$ (note contravariancy).
These functors are required to satisfy: $\calf_{\text{id}_c}=\text{id}$
for any object $c$; and $\calf_{\alpha\alpha'}=\calf_{\alpha'}\calf_{\alpha}$
for any  morphisms $\alpha'\colon c_2\to c_1$ and $\alpha\colon c_1\to c_0$.

A subsystem $\calg\subset \calf$    consists of a subcategory $\calg_c\subset \calf_c$ for any object $c$ in $\calc$
such that, for any morphism $\alpha\colon c_1\to c_0$,   $\calf_{\alpha}$ takes $\calg_{c_0}$ to  $\calg_{c_1}$.
\end{point}
\begin{point}\label{pt Grothendieck}
Let $\calf$ be a system of categories indexed by $\calc$\@. Its Grothendieck construction, denoted by $\text{Gr}_{\calc}\calf$,
is  the category whose objects are pairs $(c,x)$ where $c$ is an object in $\calc$ and $x$ in $\calf_c$.
The set of morphisms between $(c_1,x_1)$ and $(c_0,x_0)$  is the set of  pairs    
$(\alpha\colon :c_1\to c_0,\beta\colon  x_1\to \calf_{\alpha}(x_0))$ where $\alpha$ is a morphism in $\calc$ and $\beta$ is a morphism in $\calf_{c_1}$\@. 
The composition of $(\alpha'\colon c_2\to  c_1,\beta'\colon x_2\to \calf_{\alpha'}(x_1))$ 
and 
$(\alpha\colon c_1\to c_0,\beta\colon x_1\to\calf_{\alpha}(x_0) )$ is defined to  be the pair:
\[(c_2\xrightarrow{\alpha'} c_1\xrightarrow{\alpha} c_0, \ 
x_2\xrightarrow{\beta'} \calf_{\alpha'}(x_1) \xrightarrow{\calf_{\alpha'}(\beta)} 
\calf_{\alpha'}(\calf_{\alpha}(x_0))=\calf_{\alpha\alpha'}(x_0))
\]

The  {\bf projection} $\pi\colon\text{Gr}_{\calc}\calf\to \calc$ is  the functor that assigns to  an object 
$(c,x)$ (resp.\ morphism $(\alpha,\beta)$) in $\text{Gr}_{\calc}\calf$ the object $c$ (resp.\ morphism
$\alpha$)  in $\calc$.
For any object $c$ in $\calc$ the functor $\calf_c\to \text{Gr}_{\calc}\calf$ which
assigns  to an object $x$   the  pair $(c,x)$ and to  a morphism $\beta\colon x\to y$ 
the  pair $(\text{id}_c, \beta)$ is called the {\bf standard inclusion}.
\end{point}

\begin{point}\label{pt undercat}
Let $f\colon \calb\to \cala$ be a functor and $a$ be an object in $\cala$\@. The {\bf under category} 
\mbox{$a\!\uparrow\! f$} has 
   pairs $(b,\alpha:a\to  f(b))$ of an object  $b$ in $\calb$ and a morphism $\alpha\colon a\to f(b)$
 in $\cala$ as objects.  The set of morphisms
between   $(b_1, \alpha_1)$ and $(b_0, \alpha_0)$ in \mbox{$a\!\uparrow\! f$} is the set of  
morphisms $\beta\colon b_1\to b_0$ in $\calb$ for which   $f(\beta)\alpha_1=\alpha_0$\@.
The category \mbox{$a\!\uparrow \!\text{id}_{\cala}$} is also denoted by \mbox{$a\!\uparrow\!\!\cala$}\@.
By forgetting the second component we obtain a functor $(a\!\uparrow\! f)\to \calb$   called  {\bf forgetful}.

The {\bf over category} $f\!\downarrow\! a$  has pairs $(b, \alpha\colon f(b)\ra a)$ of an object $b$ in $\calb$ and a morphism $\alpha$Ê in $\cala$ as objects. The set of morphisms between $(b_1,\alpha_1)$ and
$(b_0,\alpha_0)$ is the set of morphisms $\beta\colon b_1\to b_0$ in $\calb$ for which $\alpha_0f(\beta)=\alpha_1$.

Consider the following commutative diagram of functors:
\[\xymatrix@R=11pt@C=15pt{
\cald\rto^{g}\dto_{e} & \calc\dto^{h}\\
\calb\rto^{f} & \cala
}\]
Let  $c$ be an object  in $\calc$. The symbol  $(e,h)\colon c\!\uparrow\! g\to h(c)\!\uparrow\! f$ denotes the functor
that maps an object $(d, \alpha)$ to  $(e(d), h(\alpha))$ and 
a morphism $\alpha\colon d_1\to d_0$ to  $e(\alpha)$.

Let $\gamma \colon a_1\to a_0$ be a morphism in $\cala$. The functor
$\gamma\!\uparrow\! f\colon a_0\!\uparrow\! f\to a_1\!\uparrow\! f$  assigns  to
$(b, \alpha)$  the object $(b, \alpha\gamma)$ and to a morphism $\beta$  the same  $\beta$\@.
The assignment
$a\mapsto (a\!\uparrow\! f)$ and $\lambda\mapsto (\gamma\!\uparrow\! f)$
is  a system of categories indexed by $\cala$  denoted by $-\!\uparrow\! f$\@.
Its Grothendieck construction $\text{Gr}_{\cala}(-\!\uparrow\! f)$ is isomorphic to  a category whose   objects are
pairs $(b, \alpha\colon a\to f(b))$  of an object $b$ in $\calb$ and a morphism $\alpha$ in $\cala$. The set of morphisms between 
$(b_1, \alpha_1\colon a_1\to f(b_1))$ and $(b_0, \alpha_0\colon a_0\to f(b_0))$
consists of   pairs  $(\beta\colon b_1\to b_0, \gamma\colon a_1\to a_0)$
where $\gamma$ is a morphism in $\cala$ and  $\beta$  is a morphism in $\calb$  
making the following square commutative:
\[\xymatrix@R=11pt@C=15pt{
a_1\rto^{\gamma}\dto_{\alpha_1} & a_0\dto^{\alpha_0}\\
f(b_1)\rto^{f(\beta)} & f(b_0)
}\]

The functor
$\hat{\pi}\colon \text{Gr}_{\cala}(-\!\uparrow\! f)\to \calb$  assigns to an object 
$(b, \alpha)$ in $\text{Gr}_{\cala}(-\!\uparrow\! f)$ the object 
$b$ in $\calb$ and to a morphism $(\beta,\gamma)$ in  $\text{Gr}_{\cala}(-\!\uparrow\! f)$ the morphism $\beta$ in $\calb$\@. 
The functor $\hat{f}\colon \calb\to \text{Gr}_{\cala}(-\!\uparrow\! f)$  assigns to an object $b$ in $\calb$ the object $(b, \text{id}_{f(b)})$ in $\text{Gr}_{\cala}(-\!\uparrow\! f)$ and to a morphism $\beta$ in $\calb$ the morphism $(\beta, f(\beta))$ in   $  \text{Gr}_{\cala}(-\!\uparrow\! f)$\@. These functors fit into the following commutative diagram:
\[\xymatrix@R=11pt{
\calb\rto|(.3){\hat{f}}\ar@/_5pt/[dr]_{f}\ar@/^15pt/[rr]|{\text{id}} & \text{Gr}_{\cala}(-\!\uparrow\! f)
\dto^{\pi} \rto_(.65){\hat{\pi}} &
\calb\\
& \cala
}\]
Note  that there is a natural transformation between the functors $\hat{f}\hat{\pi}$  and 
$\text{id}_{\text{Gr}_{\cala}(-\!\uparrow\! f)}$ 
which for an object $(b,\alpha\colon a\to f(b))$ is given by the morphism
$(\text{id}_b, \alpha)$.
\end{point}

\begin{point}\label{pt functors}
$\text{Fun}(I,\calc)$ is  the category of  functors indexed by a {\bf small} category $I$
with values in  a category $\calc$ and natural transformations as morphisms. 
The set of  
natural transformations between  $F\colon I\to \calc$ and $G\colon I\to \calc$ is denoted by $\text{Nat}(F,G)$.

Let $f\colon I\to J$ be a functor of  small categories.\ 
The composition with $f$ functor is denoted by $f^{\ast}\colon \text{Fun}(J,\calc)\to \text{Fun}(I,\calc)$.
It assigns to a natural transformation $\{\psi_j\}_{j\in J}$  the natural transformation $\{\psi_{f(i)}\}_{i\in I}$.
 If $\calc $ is closed under colimits, then $f^{\ast}$ has a left adjoint  
$f^{k}\colon  \text{Fun}(I,\calc)\to \text{Fun}(J,\calc)$ called the left Kan extension.
The assignment $I\mapsto \text{Fun}(I,\calc)$ and $f\mapsto f^{\ast}$ is a system of categories indexed by  $\text{Cat}$\@.

 A natural transformation  $\phi\colon F\to G$ in
$\text{Fun}(I,\calm)$  is  called a weak equivalence if $\phi_i\colon F(i)\to G(i)$ is   a weak equivalence for any $i$ in $I$\@.\/
 $\text{Ho}(\text{Fun}(I,\calm))$ denotes the localization of 
$\text{Fun}(I,\calm)$ with respect to weak equivalences which  
exists by~\cite[Theorem 11.3]{MR2002k:55026}.
\end{point}

\begin{point}\label{not component}
A connected component of a category $\calc$ containing an object $c$ is the class of all objects $d$ for which there is a finite  sequence of  morphisms 
$d = x_0 \to x_1\gets \cdots \gets x_n = c$ in $\calc$\@. The symbol $\pi_0(\calc)$ denotes the discrete category
(identities are the only morphisms) whose objects are connected components of $\calc$
and $\pi_0\colon \calc\to \pi_0(\calc)$
denotes the unique functor mapping  an object to its component.  
\end{point}

\noindent
{\bf Part I.   Categories and homotopy.}         
In this part we review two ways of doing homotopy theory on categories. In one the homotopy relation is induced by natural transformations.  This notion however is too strong for us. We need weak equivalences. A standard way of introducing them is to transport  weak notions from simplicial sets using  the nerve. This works for small categories.  Our aim is to extend the weak notions to categories such as $\text{Fib}(X,F)$
that can be approximated by small categories. Dwyer and Kan called them homotopically small (\cite{MR584566}). We call them essentially small and their study  
is the main aim of this part.

\section{Categorical  homotopy}\label{sec basicdictionary}
Here is a standard  dictionary of homotopy notions on arbitrary categories:
\begin{itemize}
\item 
Functors $f,g\colon \calb\to \cala$ are {\bf homotopic} if there are
functors $\{h_k\colon \calb\to \cala\}_{0\leq k\leq n}$ and natural transformations 
$f=h_0\to h_1\gets \cdots\gets h_n= g$.
\item 
$f\colon\calb\to \cala$ is  a {\bf homotopy equivalence} if it has a homotopy inverse, i.e.,  a functor 
$g\colon\cala\to \calb$  for which $fg$ and $gf$ are homotopic to $\text{id}_\cala$ and $\text{id}_\calb$\@.
%
%
\item  $f\colon \calb\to \cala$ is   a {\bf strong fibration} if, for any morphism $\gamma\colon a_1\to a_0$ in
$\cala$,  $\gamma\!\uparrow\! f\colon a_0\!\uparrow\! f\to a_1\!\uparrow\! f$ (see~\ref{pt undercat}) is a homotopy equivalence. 
\item  A {\bf strong homotopy pull-back} is a commutative square of functors:
\[\xymatrix@R=11pt@C=15pt{
\cald\rto^{g}\dto_-{e} & \calc\dto^-{h}\\
\calb\rto^{f} & \cala
}\]
 such that $f$ is a strong fibration and 
  $(e,h)\colon c\!\uparrow\! g\to h(c)\!\uparrow\! f$
(see~\ref{pt undercat})  is a homotopy
equivalence  for any object $c$ in $\calc$.
\end{itemize}

For example $\hat{f}\colon\calb\to \text{Gr}_{\cala}(-\!\uparrow\! f)$
(see~\ref{pt undercat}) is  a homotopy equivalence. 

\section{Small categories}
\label{sec smallcat}
It is the  nerve construction  used to translate weak  homotopy notions from $\text{Spaces}$ into $\text{Cat}$. 
It is a functor $N\colon\text{Cat}\to \text{Spaces}$ 
that assigns to a small category $I$ a simplicial set $N(I)$  whose set of
$n$-dimensional simplices is,  for $n>0$,  the set of $n$-composable morphisms in $I$ and,  if $n=0$, it is the set of objects in $I$. 

Here is a basic dictionary (compare with notions recalled  in Section~\ref{sec basicdictionary}):
\begin{itemize}
\item A functor $f\colon J\to I$ of small categories is called a {\bf weak equivalence}, if $N(f)\colon N(J)\to N(I)$ is a weak equivalence of
spaces.
\item  A functor $f\colon J \to I$ of small categories is called a {\bf  quasi-fibration} if
 $\alpha\!\uparrow\! f\colon i_0\!\uparrow\! f \to i_1\!\uparrow\! f$ is a weak equivalence
 for any   $\alpha\colon i_1\to i_0$ in $I$. 
\item 
A commutative square  of small categories
is called a {\bf homotopy pull-back}  if after applying the nerve
we obtain a homotopy pull-back  of spaces.
\end{itemize}

It is well known  that if   functors of small categories are homotopic as functors, then
their nerves are homotopic as  maps.
Consequently  a homotopy equivalence, resp.\ strong fibration, of small categories is a weak equivalence, resp.\ quasi fibration. To prove that for small categories strong homotopy pull-backs are homotopy pull-backs we need 
the so called Thomason and Puppe's theorems:

\begin{prop}\label{prop ThomasonPuppe}
\stepcounter{subsection}
Let $I$ be a small category and
$F,G\colon I^{\text{\rm op}}\to \text{\rm Cat}$ be  functors.
\begin{enumerate}
\item $N(\text{\rm Gr}_{I}F)$ is weakly equivalent to\/ $\text{\rm hocolim}_{I^{\text{\rm op}}} N(F)$.
\item  Let  $f\colon F\to G$ be a natural transformation. Assume $f_i$ is a weak equivalence
for any object  $i$ in $I$\@.
Then\/ $\text{\rm Gr}_I f$ is  a weak equivalence.
\item Assume  $F({\alpha})\colon F({i_0})\to F({i_1})$ is a weak equivalence for any  $\alpha\colon i_1\to i_0$. Then, for any object $i$ in $I$, the following is a 
homotopy pull-back square:
\[\xymatrix@R=11pt@C=15pt{F_i \rto\dto & \text{\rm Gr}_{I}F\dto^-{\pi}\\
[0]\rto^{i} & I
}\]
where $\pi$ is the projection,  $F(i)\ra \text{\rm Gr}_{I}F$ is the standard inclusion (see~\ref{pt Grothendieck}),
and $i\colon [0]\to I$ is the functor that sends the object\/ $0$ to  $i$.
\item  Let $f\colon J\to I$ be a quasi-fibration of small categories. Then, for any object $i$ in $I$, the following is a
homotopy  pull-back square:
\[\xymatrix@R=11pt@C=15pt{
i\!\uparrow\! f\rrto^{\text{\rm forget}}\dto & &J\dto^-{f}\\
i\!\uparrow\! I\rrto^{\text{\rm forget}} & & I
}\]
\end{enumerate}
\end{prop}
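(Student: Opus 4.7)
My strategy is to prove the four parts in order, using (1) as the foundation, (2) as a formal corollary, (3) as the key fibration statement, and (4) as a consequence of (3) via the identifications in point 2.6. For (1), I would compare $N(\text{Gr}_I F)$ directly to the Bousfield-Kan simplicial replacement. An $n$-simplex of $N(\text{Gr}_I F)$ unwinds as a pair consisting of an $n$-chain $\sigma$ in $I$ together with a compatible $n$-chain of objects and morphisms distributed among the categories $F(\sigma(k))$ via the structure functors $F(\alpha)$. Letting $B_{p,q}$ denote the set of such pairs with $\sigma$ a $p$-chain in $I$ and the second component a $q$-chain in the appropriate category, $B$ is a bisimplicial set whose diagonal is $N(\text{Gr}_I F)$ and which coincides with the Bousfield-Kan simplicial replacement computing $\text{hocolim}_{I^{\text{op}}} N(F)$.

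Part (2) follows formally: $N$ sends pointwise weak equivalences of functors $I^{\text{op}} \to \text{Cat}$ to pointwise weak equivalences of functors $I^{\text{op}} \to \text{Spaces}$ by definition, and homotopy colimits preserve pointwise weak equivalences; combined with (1) this shows $N(\text{Gr}_I f)$ is a weak equivalence. For (3), apply (1) to rewrite $N(\text{Gr}_I F) \simeq \text{hocolim}_{I^{\text{op}}} N(F)$ and invoke the classical hocolim-fibration lemma: when every structure map $N(F(\alpha))$ is a weak equivalence, the projection $\text{hocolim}_{I^{\text{op}}} N(F) \to N(I)$ is a quasi-fibration whose homotopy fiber over the vertex corresponding to $i$ is canonically weakly equivalent to $N(F(i))$. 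This exhibits the square in (3) as a homotopy pull-back.

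Part (4) is a formal consequence of (3). Apply (3) to the functor $(-\!\uparrow\! f)\colon I^{\text{op}} \to \text{Cat}$ from point 2.6; its structure maps $\gamma\!\uparrow\! f$ are weak equivalences precisely when $f$ is a quasi-fibration, matching the hypothesis of (3). The diagram in 2.6 exhibits $\hat\pi\colon \text{Gr}_I(-\!\uparrow\! f) \to J$ as a homotopy equivalence (with $\hat f$ a one-sided inverse and the natural transformation between $\hat f \hat\pi$ and $\text{id}$ providing the required homotopy), and $i\!\uparrow\! I$ is contractible since it has initial object $(i,\text{id}_i)$. Translating the homotopy pull-back of (3) along these equivalences yields the square in (4).

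The main obstacle is the hocolim-fibration lemma invoked in (3): proving that, under the hypothesis that every structure map is a weak equivalence, the Bousfield-Kan projection onto $N(I)$ is a quasi-fibration with the expected fibers. The remaining parts are either a direct bisimplicial identification (part (1)) or formal manipulations built on top of (1) and (3).
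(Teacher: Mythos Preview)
Your proposal is correct and follows essentially the same logical structure as the paper's proof: the paper cites Thomason's theorem for (1), derives (2) from (1), cites Puppe's theorem for (3) (your ``hocolim-fibration lemma''), and obtains (4) from (3). Your derivation of (4) via the system $-\!\uparrow\! f$ and the homotopy equivalence $\hat f\colon J\to \text{Gr}_I(-\!\uparrow\! f)$ of~2.6 is exactly the intended route; the only detail to add is that the standard inclusion $i\!\uparrow\! f\hookrightarrow \text{Gr}_I(-\!\uparrow\! f)$ and the composite $\hat f\circ\text{forget}$ differ by a natural transformation, so their nerves are homotopic and the comparison of homotopy fibers goes through.
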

\begin{proof}
Statement (1) is the so called Thomason's theorem~\cite{MR80b:18015}, see also~\cite{MR2002k:55026}. Statement (2) is a  consequence of (1). Statement (3) follows from
(1) and the so called Puppe's theorem~\cite{MR51:1808}, see also~\cite{MR2222504}. Statement (4) is   Quillen's Theorem A; it follows easily from Statement (3).
\end{proof}
Here is a method to verify that a square of small categories is a homotopy pull-back.
It should be compared with~\cite{MR1310749} where the term homotopy pull-back
is used to describe squares that are not homotopy pull-backs as defined in this paper.

\begin{prop}\label{prop hopullback}
Let the following be  a commutative diagram of small categories:
\[\xymatrix@R=11pt@C=15pt{
L\rto^{g}\dto_-{e} & K\dto^-{h}\\
J\rto^{f} & I
}\]
Assume 
 $f\colon J\to I$ is a quasi-fibration and $(e,h)\colon k\!\uparrow\! g\to h(k)\!\uparrow\! f$ is a weak equivalence
for any object $k$ in $K$.
Then the above square is homotopy pull-back.
\end{prop}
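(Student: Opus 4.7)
The plan is to replace $L$ by a transparent model for the homotopy pullback and to rely throughout on Proposition~\ref{prop ThomasonPuppe}. Let $\calh$ be the system on $K$ given by $k\mapsto h(k)\!\uparrow\! f$ (with $\calh_\beta = h(\beta)\!\uparrow\! f$ for a morphism $\beta$ in $K$), and set $P:=\text{Gr}_K\calh$; unpacking definitions, $P$ is isomorphic to the strict pullback $K\times_I\text{Gr}_I(-\!\uparrow\! f)$, and its objects are triples $(k,j,\alpha\colon h(k)\to f(j))$. The category $P$ maps to $K$ by $(k,j,\alpha)\mapsto k$ and to $J$ by $(k,j,\alpha)\mapsto j$, and the commutativity $hg=fe$ yields a functor $\phi\colon L\to P$, $l\mapsto (g(l),e(l),\mathrm{id}_{fe(l)})$.

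First, I would check that $\phi$ is a weak equivalence by factoring it as $L\xrightarrow{\hat g}\text{Gr}_K(-\!\uparrow\! g)\to P$. The functor $\hat g$ is a homotopy equivalence of categories, hence a weak equivalence, by the construction in~\ref{pt undercat} together with the closing example of Section~\ref{sec basicdictionary}. The second arrow is $\text{Gr}_K$ applied to the natural transformation of systems on $K$ whose component at $k$ is $(e,h)\colon k\!\uparrow\! g\to h(k)\!\uparrow\! f$; since every such component is a weak equivalence by hypothesis, Proposition~\ref{prop ThomasonPuppe}(2) promotes it to a weak equivalence.

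Next I would show that the square obtained from the given one by replacing $L$ with $P$ is a homotopy pullback. Because $f$ is a quasi-fibration, $\calh_\beta = h(\beta)\!\uparrow\! f$ is a weak equivalence for every $\beta$ in $K$, so Puppe's theorem (Proposition~\ref{prop ThomasonPuppe}(3)) identifies, for each $k\in K$, the category $h(k)\!\uparrow\! f$ with the homotopy fiber of $P\to K$ over $k$. On the other hand, Quillen's Theorem~A (Proposition~\ref{prop ThomasonPuppe}(4)) applied to $f$, combined with the contractibility of $h(k)\!\uparrow\! I$ (which has an initial object), identifies $h(k)\!\uparrow\! f$ with the homotopy fiber of $Nf$ at $h(k)$, hence with the homotopy fiber of $N(J)\times^h_{N(I)}N(K)\to N(K)$ over $k$. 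The canonical comparison $NP\to N(J)\times^h_{N(I)}N(K)$ lives over $NK$ and restricts to a weak equivalence on the homotopy fiber over every vertex of $NK$.

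The main obstacle is the last claim: deducing a global weak equivalence from matching fibers. The cleanest route within the paper's toolkit is to express both spaces as the homotopy colimit of the common $K^{\text{op}}$-diagram $k\mapsto N(h(k)\!\uparrow\! f)$ --- for $NP$ this is Thomason's theorem (Proposition~\ref{prop ThomasonPuppe}(1)), while for $N(J)\times^h_{N(I)}N(K)$ one writes $NK\simeq\hocolim_{K^{\text{op}}}\ast$ and invokes the standard commutation of homotopy colimit with homotopy pullback in simplicial sets. One final appeal to Proposition~\ref{prop ThomasonPuppe}(2) applied to the object-wise weak equivalence of diagrams then closes the argument.
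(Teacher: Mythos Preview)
Your argument is valid, but it is considerably more elaborate than what is needed, and its last step leans on a fact (``commutation of homotopy colimit with homotopy pullback in simplicial sets'') that, while true, lies outside the paper's developed toolkit.

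The paper's proof is two lines, and the key observation you missed is that the hypotheses already force $g$ to be a quasi-fibration. Indeed, for any morphism $\beta\colon k_1\to k_0$ in $K$ one has a commutative square
\[
\xymatrix@R=11pt@C=22pt{
k_0\!\uparrow\! g \rto^-{(e,h)} \dto_{\beta\uparrow g} & h(k_0)\!\uparrow\! f \dto^{h(\beta)\uparrow f}\\
k_1\!\uparrow\! g \rto^-{(e,h)} & h(k_1)\!\uparrow\! f
}
\]
in which the horizontal arrows are weak equivalences by hypothesis and the right vertical arrow is a weak equivalence because $f$ is a quasi-fibration; by 2-out-of-3 the left vertical arrow is a weak equivalence as well. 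Now Proposition~\ref{prop ThomasonPuppe}(4) applied to \emph{both} $f$ and $g$ identifies $N(k\!\uparrow\! g)$ and $N(h(k)\!\uparrow\! f)$ with the homotopy fibers of $N(g)$ and $N(f)$ over the relevant vertices, and $(e,h)$ is exactly the induced map between them. Since it is a weak equivalence, the square is a homotopy pullback.

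Your route via $P=\text{Gr}_K(h(-)\!\uparrow\! f)$ and Thomason's theorem reconstructs this same homotopy-fiber comparison from the outside; it buys you an explicit model for the pullback, but at the cost of having to compare $NP$ with $N(J)\times^h_{N(I)}N(K)$, which is precisely the step you flagged as the ``main obstacle''. Noticing that $g$ is already a quasi-fibration removes that obstacle entirely.
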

\begin{proof}
The  assumptions imply that $g$ is also a quasi-fibration. Thus by~\ref{prop ThomasonPuppe}.(4)
the induced map on the homotopy fibers of $N(g)$ and $N(f)$ is a weak equivalence.
\end{proof}


\section{Essentially small categories}
The aim of this section
is to explain how certain ``big'' categories can be approximated  by small categories. It is based
on the  following  well known fact:
\begin{lemma}
\label{lemma-telescope}
If $I_0 \subset I_1 \subset \ldots$ is a    sequence of small categories where each inclusion is a weak equivalence, then  $I_0 \subset \text{\rm colim } I_n=\cup_{n\geq 0} I_n$ is 
also a weak equivalence.
\end{lemma}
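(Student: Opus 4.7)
The plan is to push the statement through the nerve and reduce it to a closure property of acyclic cofibrations in Spaces. Since weak equivalences of small categories are by definition those functors whose nerve is a weak equivalence of simplicial sets, it suffices to show that $N(I_0) \to N(\cup_{n\ge 0} I_n)$ is a weak equivalence.

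The first step is to identify $N(\cup_{n\ge 0} I_n)$ with $\text{colim}_n N(I_n)$. This is a direct check simplex-by-simplex: for each $k\ge 0$, a $k$-simplex of $N(\cup_{n\ge 0} I_n)$ is a $k$-composable chain of morphisms in the union, which involves only finitely many arrows and hence lies in some $I_n$; conversely every simplex of each $N(I_n)$ maps to the union. Thus $N(\cup_{n\ge 0} I_n)_k = \cup_{n\ge 0} N(I_n)_k$, which is exactly the colimit in Spaces.

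The second step is to note that each subcategory inclusion $I_n \subset I_{n+1}$ induces an injection of simplicial sets $N(I_n) \hookrightarrow N(I_{n+1})$, which is a cofibration in the standard model structure on Spaces. By hypothesis each such inclusion is a weak equivalence, so each $N(I_n) \hookrightarrow N(I_{n+1})$ is an acyclic cofibration.

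Finally I invoke the standard fact that acyclic cofibrations in Spaces are closed under transfinite composition: the colimit of a sequence of acyclic cofibrations between simplicial sets is again an acyclic cofibration. Applied to our tower, this gives that the induced map $N(I_0) \to \text{colim}_n N(I_n) \cong N(\cup_{n\ge 0} I_n)$ is an acyclic cofibration, and in particular a weak equivalence, which is exactly what was to be proved. There is no real obstacle here; the only point requiring a moment's thought is the commutation of $N$ with the sequential colimit of inclusions, which is verified by the finiteness of simplices as above.
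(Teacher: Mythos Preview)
Your proof is correct and is precisely the standard argument one has in mind for this fact. The paper itself does not give a proof at all: it simply introduces the lemma as ``the following well known fact'' and moves on, so there is nothing to compare against beyond noting that your argument is the expected one. One tiny remark: for the first step you could alternatively just say that the nerve, being a right adjoint on $\text{Cat}$, nonetheless preserves filtered colimits because each $N(-)_k = \text{Fun}([k],-)$ is corepresented by a finite category; your direct simplex-by-simplex check is of course the same observation unpacked.
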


Here is our key definition:

\begin{Def}\label{def core}
A {\bf core} of a category $\calc$ is a small subcategory  $I\subset \calc$ such that, for any small subcategory $J\subset \calc$ with $I\subset J$,  there is a small subcategory $K\subset \calc$  for which  $J\subset K$ and the inclusion $I\subset K$ is a weak equivalence.
A category is said to be {\bf essentially small} if it has a core.
\end{Def}

For example if $\calc$ has  a small skeleton, then this skeleton is its  core.   

\begin{prop}\label{prop propesssmall} Let $\calc$ be a category.
\begin{enumerate}
\item If $I\subset \calc$ and $J\subset \calc$ are cores, then $I$ and $J$ are weakly equivalent.
\item A discrete essentially small category is small.
\item If $\calc$ is essentially small, then the components of $\calc$ form a set. If $I\subset \calc$ is a core, then this inclusion induces a bijection between $\pi_0(I)$ and  $\pi_0(\calc)$.
\end{enumerate}
\end{prop}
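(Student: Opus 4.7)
My plan is to derive all three parts by iteratively enlarging small subcategories using the core property and gluing them via Lemma~\ref{lemma-telescope}.

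For (1), the idea is to build a common small enlargement of $I$ and $J$ which receives weak equivalences from both. Starting from the small subcategory $K_0 := I\cup J \subset \calc$, I would alternately apply the core property of $I$ and of $J$ to produce a chain $K_0\subset K_1 \subset K_2\subset\cdots$ of small subcategories of $\calc$ such that $I \subset K_{2n+1}$ is a weak equivalence for $n\geq 0$ and $J\subset K_{2n}$ is a weak equivalence for $n\geq 1$. Two-out-of-three for weak equivalences of spaces then shows that the inclusions $K_1\subset K_3\subset K_5\subset\cdots$ (and similarly $K_2\subset K_4\subset\cdots$) are all weak equivalences, so Lemma~\ref{lemma-telescope} gives that $I\subset \bigcup_n K_n \supset J$ is a zigzag of weak equivalences.

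For (2), I would exploit that a weak equivalence between discrete simplicial sets is a bijection on $0$-simplices. Given a core $I\subset \calc$ of a discrete category and any object $c\in\calc$, the small subcategory $I\cup\{c\}$ contains $I$, so by the core property it is contained in a small $K\subset \calc$ with $I\subset K$ a weak equivalence. Since $\calc$ (hence $K$) is discrete, $N(I)\to N(K)$ is a weak equivalence between discrete spaces, forcing equality of vertex sets; thus $c\in I$ and $\calc=I$ is small.

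For (3), I would again enlarge $I$ by the relevant data and use that a weak equivalence of small categories induces a bijection on $\pi_0$. For surjectivity of $\pi_0(I)\to\pi_0(\calc)$, given any $c\in\calc$ I would place $I\cup\{c\}$ in a small $K\subset\calc$ with $I\subset K$ a weak equivalence; the bijection $\pi_0(I)\cong \pi_0(K)$ puts $[c]_K$ in the image of $\pi_0(I)$, which remains true in $\pi_0(\calc)$. For injectivity, given a finite zigzag $i_0 = x_0\to x_1\gets\cdots\gets x_n = i_1$ in $\calc$ connecting two objects of $I$, I would adjoin all its objects and morphisms to $I$ to form a small subcategory $J\subset\calc$, then enlarge to a small $K\supset J$ with $I\subset K$ a weak equivalence; since $i_0,i_1$ lie in the same component of $K$, they lie in the same component of $I$. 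The resulting bijection $\pi_0(I)\cong \pi_0(\calc)$ then exhibits $\pi_0(\calc)$ as a set, since $\pi_0(I)$ is.

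The main conceptual point to verify is that the alternating enlargement in (1) really produces weak equivalences along the telescope — this is where two-out-of-three and Lemma~\ref{lemma-telescope} do the work. Once one recognizes that the core property lets one absorb any prescribed finite (or small) amount of data from $\calc$ into a small subcategory without disturbing the nerve up to weak equivalence, each of the three assertions reduces to routine bookkeeping.
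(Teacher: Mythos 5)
Your proof is correct and rests on essentially the same machinery as the paper's: the alternating-enlargement-plus-telescope-plus-two-out-of-three argument you inline for (1) is exactly the content of the paper's Lemma~\ref{lem propesssmall}, which its proof of (1) invokes to produce a single small $K$ with $I\subset K\supset J$ and both inclusions weak equivalences, while your arguments for (2) and (3) simply spell out the paper's terse remarks that weak equivalences of discrete categories are isomorphisms and that $\pi_0\colon\calc\to\pi_0(\calc)$ carries a core to a core. The only cosmetic point is that $I\cup J$ (and likewise $I$ together with the objects and morphisms of a zigzag) need not be closed under composition, so one should pass to the small subcategory it generates (or the full subcategory on those objects, as the paper does); this does not affect the argument.
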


\begin{lemma}\label{lem propesssmall}
\begin{enumerate}
\item Let $I\subset \calc$ be a core and $I'\subset \calc$  a small subcategory containing
$I$. Then $I'\subset \calc$ is a core if and only if  $I\subset I'$  is a weak equivalence.
\item Let $J\subset \calc$ be a small subcategory and $I\subset \calc$ be a core. Then
there is a full subcategory $K\subset \calc$ such that $J\subset K\supset I$  and
 $I\subset K$ is a weak equivalence.
\end{enumerate}
\end{lemma}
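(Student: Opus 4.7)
My plan is to prove both parts by iterating the core property and combining the telescope lemma (Lemma~\ref{lemma-telescope}) with two-out-of-three for weak equivalences of spaces. I would begin with part (1), since it illustrates the technique in its simplest form, and then adapt the argument to handle the additional fullness requirement in (2).

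For the direction $(\Leftarrow)$ of (1), assume $I\subset I'$ is a weak equivalence; given any small $J\supset I'$, the core property of $I$ produces a small $K\supset J$ with $I\subset K$ a weak equivalence, and two-out-of-three on $N(I)\to N(I')\to N(K)$ gives that $I'\subset K$ is a weak equivalence, so $I'$ is a core. For $(\Rightarrow)$, with both $I$ and $I'$ cores and $I\subset I'$, I build an increasing sequence
\[ I\subset I'\subset L_1\subset L_2\subset L_3\subset \cdots \]
of small subcategories of $\calc$ by alternately applying the core property of $I$ (to obtain $L_{2k+1}$ with $I\subset L_{2k+1}$ a weak equivalence) and the core property of $I'$ (to obtain $L_{2k+2}$ with $I'\subset L_{2k+2}$ a weak equivalence). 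Passing to the colimit $L=\bigcup_n L_n$, two-out-of-three on the odd and even subsequences shows each consecutive inclusion there is a weak equivalence; Lemma~\ref{lemma-telescope} then makes both $I\subset L$ and $I'\subset L$ weak equivalences, and a final application of two-out-of-three yields $I\subset I'$ a weak equivalence.

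For part (2), the new subtlety is to arrange that the final enlargement be a \emph{full} subcategory. I alternate two operations: take the full subcategory of $\calc$ on the current object set, and apply the core property of $I$ to pass to a (possibly non-full) weakly equivalent small enlargement. Concretely, let $K_0$ be the full subcategory of $\calc$ on the objects of $I\cup J$; inductively, given a full $K_n$, the core property of $I$ produces $K_n'\supset K_n$ small with $I\subset K_n'$ a weak equivalence, and I take $K_{n+1}$ to be the full subcategory of $\calc$ on the objects of $K_n'$. Setting $K=\bigcup_n K_n=\bigcup_n K_n'$, the nestedness of the object sets together with fullness of each $K_n$ forces $K$ to be full in $\calc$; two-out-of-three applied to the triangles $I\subset K_n'\subset K_{n+1}'$ turns $\{K_n'\}$ into a telescope of weak equivalences, and Lemma~\ref{lemma-telescope} then gives that $I\subset K$ is a weak equivalence, with $J\subset K_0\subset K$ by construction.

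The main obstacle will be (2), specifically reconciling fullness with weak-equivalence control: the fullification step need not preserve the weak equivalence from $I$ in isolation, so it has to be absorbed into the next core-enlargement step, and I rely on the fact that both operations produce nested object sets for the colimit to remain full. Beyond that, everything reduces to careful bookkeeping with the telescope lemma and two-out-of-three on the relevant triangles in spaces.
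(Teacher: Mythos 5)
Your proposal is correct and follows essentially the same route as the paper: in (1) the forward direction is the paper's 2-out-of-3 argument verbatim, and the converse uses the same interleaved sequence of core-enlargements with the telescope lemma; in (2) your alternation of fullification and core-enlargement, with fullness of the union following from nested object sets, is exactly the paper's construction of $I_0\subset K_0\subset I_1\subset K_1\subset\cdots$. No gaps.
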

\begin{proof}
\noindent (1):\quad 
Assume   $I\subset I'$ is a weak equivalence.
Let $J\subset \calc $ be  a small subcategory such that $I'\subset J$. Since $I\subset \calc$ is a core,
there is  a small subcategory $K\subset \calc$ for which  $J\subset K$ and  $I\subset K$ is a
weak equivalence. By the ``2 out of 3'' property the inclusion $I'\subset K$   is also
a weak equivalence. This shows that $I'\subset \calc$ is  a core.

Assume  $I'\subset \calc$ is  a core.  We   define inductively a sequence of small subcategories
$I_0\subset I'_{0}\subset I_1\subset I'_1\subset\cdots\subset \calc$\@. Set $I_0=I$ and $I'_0=I'$\@.
Assume  $n>0$\@.  Let $I_n\subset \calc$   be a small
subcategory containing $I'_{n-1}$  for which $I_{0}\subset I_n$ is a weak equivalence.
It exists since $I_0$ is a core in $\calc$.
Similarly, let $I'_n\subset \calc$ be a small subcategory containing $I_n$ for which
$I'_0\subset I'_n$ is a weak equivalence. 
Note that $\bigcup_{n\geq 0}I_n=\bigcup_{n\geq 0}I'_n$. Moreover,
according to~\ref{lemma-telescope}, the inclusions $I_0\subset \bigcup_{n\geq 0}I_n=
\bigcup_{n\geq 0}I'_n\supset I'_0$ are weak equivalences. It follows that $I_0\subset I'_0$ is a weak
equivalence as well.
\smallskip

\noindent (2):\quad 
Define inductively a sequence of small subcategories
$I_0\subset K_0 \subset I_1\subset K_1\subset \cdots \subset \calc$.
Set $I_0=I$ and $K_0$ to  be the full subcategory in $\calc$ on  objects
in $I_0$ and $J$.  Assume  $n>0$\@.
Define $I_n\subset \calc$ to be a small subcategory such that $K_{n-1}\subset I_n$ and  $I_0\subset I_n$ is a weak equivalence.
Define $K_n$ to  be the full subcategory
of $\calc$ on the set of  objects in $I_n$\@. Set  $K:=\bigcup_{n\geq 0} K_n$\@. Since
for any $n$, $K_n$ is a full subcategory in $\calc$, same is true for $K$\@. As
$K=\bigcup_{n\geq 0} I_n$ and  $I=I_0\subset I_n$ is a weak equivalence, for any $n$, 
$I=I_0\subset K$ is also a weak equivalence.
\end{proof}

\begin{proof}[Proof of Proposition~\ref{prop propesssmall}]
\noindent (1):\quad 
According to~\ref{lem propesssmall}.(2) there is a small subcategory $K\subset \calc$ such that $I\subset K\supset J$ and
 $I\subset K$ is a weak equivalence. Since $I\subset \calc $ is a core we can 
use~\ref{lem propesssmall}.(1) to 
conclude that $K\subset \calc$ is also a core.  By assumption $J\subset \calc$ is a core. The inclusion $J\subset K$ is thus  a weak equivalence.
\smallskip


\noindent (2):\quad 
Just note that 
weak equivalences of  discrete  categories are isomorphisms.
\smallskip

\noindent (3):\quad 
Follows from the fact that   $\pi_0\colon\calc\to \pi_0(\calc)$ maps a core to a core.
%
\end{proof}

By~\ref{prop propesssmall}.(1)  the homotopy type of a core  is a well define invariant. 
We can thus use it to introduce various homotopy notions on essentially small categories. For example, we can define homotopy groups of an essentially small category as the homotopy groups of the nerve of  its core. Usefulness of such invariants depend on  how they  behave under functors. 
For that we need to extend Definition~\ref{def core} to:


\begin{Def}\label{def essentially small system}
Let $\calf$ be a system of categories indexed by a small category $I$
(see~\ref{point systemandGr}). A {\bf core} of $\calf$ is a subsystem $F\subset \calf$ such that  $F_i\subset \calf_i $ is a core for any $i$\@. A system $\calf$ is called {\bf essentially small} if it has a core.
\end{Def}

\begin{prop}\label{prop funcorialcore} 
Let $\calf$ be a system of categories indexed by a small category $I$\@.
\begin{enumerate}
\item  $\calf$ is essentially small if and only if, for any $i$, $ \calf_i $ is  essentially small.
\item Assume that $F\subset \calf\supset F'$ are cores. Then there is a core $H\subset \calf$ such that
$F\subset H\supset F'$ and $H_i\subset \calf_i$ is a full subcategory for any object $i$ in $I$.
\item If $F\subset \calf$ is a core, then\/ $\text{\rm Gr}_IF\subset \text{\rm Gr}_I\calf$ is a core.
\end{enumerate}
\end{prop}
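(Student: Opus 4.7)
The three parts share a common strategy: solve the pointwise problem using the results of Section~\ref{sec smallcat} together with Lemma~\ref{lem propesssmall}, then lift the construction to the system level by interleaving pointwise enlargement with closure under the transition functors $\calf_\alpha$, and finally take a union and invoke Lemma~\ref{lemma-telescope}. Part (3) additionally relies on Proposition~\ref{prop ThomasonPuppe}.(2), which upgrades a level-wise weak equivalence of systems to a weak equivalence of Grothendieck constructions.

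For (1), one direction is tautological from Definition~\ref{def essentially small system}. For the other, given cores $F_i \subset \calf_i$ for each $i$, I set $\widetilde{F}^{(0)}_i := F_i$ and inductively choose small subcategories $\widetilde{F}^{(n+1)}_i \subset \calf_i$ containing $\widetilde{F}^{(n)}_i$ and all $\calf_\alpha(\widetilde{F}^{(n)}_{i_0})$ for the (set-many) arrows $\alpha\colon i \to i_0$ in $I$, arranged so that the inclusion $F_i \subset \widetilde{F}^{(n+1)}_i$ remains a weak equivalence; this is possible since $F_i$ is a core of $\calf_i$. Setting $\widetilde{F}_i := \bigcup_n \widetilde{F}^{(n)}_i$ yields a small subsystem with $\calf_\alpha(\widetilde{F}_{i_0}) \subset \widetilde{F}_i$, and $F_i \subset \widetilde{F}_i$ is a weak equivalence by Lemma~\ref{lemma-telescope}, so each $\widetilde{F}_i$ is a core of $\calf_i$ by Lemma~\ref{lem propesssmall}.(1).

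Part (2) runs the same induction with Lemma~\ref{lem propesssmall}.(2) in place of the core property, producing full subcategories $H^{(n)}_i \subset \calf_i$: start with $H^{(0)}_i$ full and containing $F_i \cup F'_i$, and at each step enlarge to a full $H^{(n+1)}_i$ absorbing the $\calf_\alpha(H^{(n)}_{i_0})$ while keeping $F_i \subset H^{(n+1)}_i$ a weak equivalence; the union of full subcategories is full, giving the desired core $H$. For (3), let $J \subset \text{Gr}_I\calf$ be a small subcategory containing $\text{Gr}_I F$. I first extract a small subsystem $G \subset \calf$ with $F \subset G$ and $\text{Gr}_I G \supset J$ by letting $G^{(0)}_i$ be generated by $F_i$ together with every object $x$ for which $(i,x)$ occurs in $J$, every object $\calf_\alpha(x_0)$ arising from a morphism $(\alpha,\beta)\colon (i_1,x_1) \to (i_0,x_0)$ of $J$ (placed in level $i_1$), and every such $\beta$; then I close under the $\calf_\alpha$'s by the induction of~(1). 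Applying~(1) to $G$ and the cores $F_i$ produces a core $H \subset \calf$ with $G \subset H$ and each inclusion $F_i \subset H_i$ a weak equivalence. The inclusion $F \hookrightarrow H$ is then a natural transformation of functors $I^{\text{op}} \to \text{Cat}$ that is a pointwise weak equivalence, so Proposition~\ref{prop ThomasonPuppe}.(2) yields that $\text{Gr}_I F \subset \text{Gr}_I H$ is a weak equivalence, and $K := \text{Gr}_I H$ is the required small subcategory containing $J$.

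The main obstacle is not conceptual but organizational: each construction must simultaneously respect the subsystem property (closure under the $\calf_\alpha$) and the core property (the pointwise inclusion into each enlargement must stay a weak equivalence), which forces the interleaved two-variable induction, with Lemma~\ref{lemma-telescope} delivering the weak equivalence in the colimit. In part (3) there is the additional bookkeeping subtlety that a single morphism $(\alpha,\beta)$ of $J$ with non-identity $\alpha$ contributes data (namely $\calf_\alpha(x_0)$ and $\beta$) to a \emph{different} level $i_1$ from the level $i_0$ of its target, which is precisely why the small subsystem $G$ must be extracted from $J$ before the standard closure argument of~(1) can be applied.
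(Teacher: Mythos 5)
Your proposal is correct and follows essentially the same route as the paper: the paper packages your interleaved induction (pointwise enlargement via the core property and Lemma~\ref{lem propesssmall}, closure under the transition functors $\calf_\alpha$, and Lemma~\ref{lemma-telescope}) into the single auxiliary Lemma~\ref{lem keysystemess}, from which (1) and (2) are immediate, and it proves (3) exactly as you do by comparing $\text{Gr}_IF\subset J\subset \text{Gr}_IH$ and invoking~\ref{prop ThomasonPuppe}.(2). The only cosmetic difference is that the cores produced by~\ref{lem keysystemess} are full subcategories closed under the $\calf_\alpha$, so the paper needs to collect only the objects of $J$ in step (3), whereas your not-necessarily-full construction makes you add the morphisms $\beta$ and the objects $\calf_\alpha(x_0)$ at level $i_1$ by hand.
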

\begin{lemma}\label{lem keysystemess}
Let  $G_i\subset \calf_i$ be a small subcategory for any object $i$ in $I$. Then  there is a core $H\subset \calf$
such that $G_i\subset H_i$ and $H_i\subset \calf_i$ is a full subcategory for any  $i$.
\end{lemma}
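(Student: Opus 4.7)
The plan is to build $H$ by a countable telescope inside $\calf$, alternating at each stage between closing up under the structure functors $\calf_\alpha$ and re-enlarging each fibre back to a full core. The key building block is Lemma~\ref{lem propesssmall}.(2): given a core $I\subset\calc$ and a small subcategory $J\subset\calc$, it produces a small full subcategory $K\subset\calc$ with $J\subset K\supset I$ and $I\subset K$ a weak equivalence, whence $K$ is again a core by Lemma~\ref{lem propesssmall}.(1). Since each $\calf_i$ is essentially small in the setting where this lemma is applied, I fix once and for all a core $F_i\subset\calf_i$ for every object $i$ of $I$.

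For the initialisation, for each $i$ apply Lemma~\ref{lem propesssmall}.(2) with $\calc=\calf_i$, $J=G_i$, and the core $F_i$ to obtain a small full subcategory $K_i^{(0)}\subset\calf_i$ which contains $G_i$ and $F_i$ and is itself a core. Inductively, at stage $n+1$, for each $i$ take a small subcategory $G_i^{(n+1)}\subset\calf_i$ whose object set is the union of the objects of $K_i^{(n)}$ with all images $\calf_\alpha(\mathrm{ob}\,K_{c_0}^{(n)})$ as $\alpha\colon i\to c_0$ ranges over the set of morphisms out of $i$ in $I$; this remains a set because $I$ is small and each $K_{c_0}^{(n)}$ is small. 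Now apply Lemma~\ref{lem propesssmall}.(2) to the pair $(G_i^{(n+1)},K_i^{(n)})$ inside $\calf_i$ -- valid because $K_i^{(n)}$ is a core by induction -- to obtain $K_i^{(n+1)}$, a small full subcategory of $\calf_i$ containing $G_i^{(n+1)}$ and again a core, with the inclusion $K_i^{(n)}\subset K_i^{(n+1)}$ a weak equivalence.

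Set $H_i:=\bigcup_{n\ge 0}K_i^{(n)}$. Then $H_i$ is small as a countable union of small subcategories, full in $\calf_i$ as a union of full subcategories, and it contains $G_i$. For any morphism $\alpha\colon c_1\to c_0$ in $I$, the inclusion $\calf_\alpha(\mathrm{ob}\,K_{c_0}^{(n)})\subset\mathrm{ob}\,K_{c_1}^{(n+1)}$ enforced at each stage yields $\calf_\alpha(\mathrm{ob}\,H_{c_0})\subset\mathrm{ob}\,H_{c_1}$, and fullness of $H_{c_1}$ upgrades this to an inclusion of subcategories, so $H\subset\calf$ is a subsystem. Finally the telescope of weak equivalences $K_i^{(0)}\subset K_i^{(1)}\subset\cdots$ has colimit $H_i$, so by Lemma~\ref{lemma-telescope} the inclusion $K_i^{(0)}\subset H_i$ is a weak equivalence; since $K_i^{(0)}$ is a core of $\calf_i$, Lemma~\ref{lem propesssmall}.(1) identifies $H_i$ as a core, which shows $H$ is a core of $\calf$.

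The main friction is that the two enlargement operations pull against each other: closing under the functors $\calf_\alpha$ can destroy the core property in a given fibre, while enlarging back to a full core may add new objects that still need to be closed under the $\calf_\alpha$. The telescope construction absorbs this because $I$ is small (so each closure stage stays small) and because Lemma~\ref{lemma-telescope} guarantees that the countably many repairs accumulate to a genuine core in the colimit.
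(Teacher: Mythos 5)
Your proof is correct and follows essentially the same route as the paper's: an inductive alternation between taking a full core containing the current stage (via Lemma~\ref{lem propesssmall}.(2)) and closing the object sets under the structure functors $\calf_{\alpha}$ (possible since $I$ is small), followed by passing to the union and invoking Lemma~\ref{lemma-telescope} together with Lemma~\ref{lem propesssmall}.(1), with fullness of the fibres handling morphisms. The only cosmetic differences are your reindexing of the closure step and your explicit mention of the (implicit) hypothesis that each $\calf_i$ is essentially small.
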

\begin{proof}
We  construct  inductively a sequence of small subcategories, for any object $i$ in $I$,
$(G_i)_0\subset (H_i)_1\subset (G_i)_1\subset (H_i)_2\subset  (G_i)_2\subset\cdots \subset \calf_i$\@. Set $(G_i)_0:=G_i$\@. Assume  $n>0$\@.  Let $(H_i)_n\subset \calf_i$ to be a core such that
$(G_i)_{n-1}\subset (H_i)_n$\@. It exists  by~\ref{lem propesssmall}.(2).
Define $(G_i)_n$ to be the full subcategory of $\calf_i$ on the set of objects
$\bigcup_{\alpha\colon i\to j} \calf_{\alpha} ((H_j)_n)$
where the index $\alpha\colon i\to j$ runs over all possible morphisms in $I$ with domain $i$. 
The purpose of this definition is to ensure that, for any $n>0$,
\begin{enumerate}\renewcommand{\labelenumi}{\alph{enumi}.}
\item $(H_i)_n\subset \calf_i$ is a core for any object $i$ in $I$;
\item $(G_i)_n\subset \calf_i$ is a full subcategory  for any object $i$ in $I$;
\item $\calf_{\alpha}\colon\calf_j\to\calf_i$
takes $(H_j)_n$ to $(G_i)_n$ for any morphism $\alpha\colon i\to j$ in $I$.
\end{enumerate}
Define $H_i:=\cup_{n>0} (H_i)_n$\@.
The above properties  and~\ref{lem propesssmall}.(1) imply:
\begin{enumerate}\renewcommand{\labelenumi}{\alph{enumi}.}
\item $H_i\subset \calf_i$ is a core.
\item $H_i\subset\calf_i$ is a full subcategory.
\item $\calf_{\alpha}\colon\calf_j\to\calf_i$ takes $H_j$ to $H_i$ for any morphism
$\alpha\colon i\to j$ in   $I$. \qedhere
\end{enumerate}
\end{proof}
\begin{proof}[Proof of Proposition~\ref{prop funcorialcore}]
\noindent
(1) and (2) are direct consequences of~\ref{lem keysystemess}.
To prove (3) choose a small subcategory $J\subset  \text{\rm Gr}_I\calf$  containing  $ \text{\rm Gr}_I F$\@. For any object $i$ in $I$, let $G_i\subset \calf_i$ be the full subcategory on the set of all objects $x$ in $\calf_i$ for which $(i,x)\in J$\@. According to~\ref{lem keysystemess} there is a core $H\subset \calf$ such that $G_i\subset H_i$ for any  $i$\@. Consider the inclusions $\text{Gr}_IF\subset J\subset\text{Gr}_IH\subset \text{Gr}_I\calf$. Since   $F_i\subset H_i$ is a weak equivalence (see~\ref{lem propesssmall}), then so is  $\text{Gr}_IF\subset \text{Gr}_IH$
(see~\ref{prop ThomasonPuppe}.(2)).
\end{proof}

\begin{prop}\label{prop esssmallretract}
Let $f\colon \calb\to\cala$ and $r\colon \cala\to\calb$ be functors. Assume  $\cala$ is essentially small and
$rf\colon \calb\to\calb$ is homotopic to $\text{\rm id}_{\calb}$. Then $\calb$ is  essentially small and there are cores 
$A\subset \cala$ and $B\subset\calb$ for which the following diagram commutes:
\[\xymatrix@R=11pt@C=15pt{
B\ar@{^(->}[d]\rto^{f} &A\ar@{^(->}[d]\rto^{r} &B\ar@{^(->}[d]\\
\calb\rto^{f} & \cala\rto^{r} &\calb
}\]
\end{prop}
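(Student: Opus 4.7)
The plan is to build cores $A \subset \cala$ and $B \subset \calb$ simultaneously by an alternating telescope that weaves together $f$, $r$, and the chosen homotopy, using the flexibility provided by Lemma~\ref{lem propesssmall}. First, fix a zig-zag $\text{id}_{\calb} = h_0 \to h_1 \leftarrow \cdots \leftarrow h_n = rf$ realising the hypothesis, with natural transformations $\phi_k$ between consecutive $h_k$. For any subcategory $S \subset \calb$, let $\mathcal{H}(S) \subset \calb$ denote the smallest subcategory containing $S$, the images $h_k(s)$ for every $k$ and every object or morphism $s$ of $S$, and the morphisms $\phi_{k,b}$ for every object $b$ of $S$. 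Iterating this operation and taking unions yields a closure operator that preserves smallness; the point is that on any $\mathcal{H}$-closed $S$ satisfying $rf(S) \subset S$, the $h_k$ and $\phi_k$ restrict to give a categorical homotopy $\text{id}_S \simeq rf|_S$ internal to $S$.

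Starting from a core $A_0 \subset \cala$ and $B_{-1} := \emptyset$, I inductively define small subcategories by $B_n := \mathcal{H}(B_{n-1} \cup r(A_n))$ and choose $A_{n+1}$ to be a core of $\cala$ containing $A_n \cup f(B_n)$. Such an $A_{n+1}$ exists: apply Lemma~\ref{lem propesssmall}.(2) to the core $A_0$ and the small subcategory $A_n \cup f(B_n)$, and then Lemma~\ref{lem propesssmall}.(1) to recognise the result as a core. Setting $A := \bigcup_n A_n$ and $B := \bigcup_n B_n$, Lemma~\ref{lemma-telescope} combined with Lemma~\ref{lem propesssmall}.(1) shows $A$ is a core of $\cala$. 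By construction $f(B) \subset A$, $r(A) \subset B$, and $B$ is $\mathcal{H}$-closed, so $rf|_B \simeq \text{id}_B$ through the restricted zig-zag.

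The remaining task is to show $B$ is a core of $\calb$. Given any small $J \subset \calb$ with $B \subset J$, rerun the alternating construction with $J$ and $A$ in place of $B_{-1}$ and $A_0$ to obtain a small $\mathcal{H}$-closed $K \supset J$ and a core $A'' \supset A$ of $\cala$ with $f(K) \subset A''$ and $r(A'') \subset K$. Then $A \subset A''$ is a weak equivalence by Lemma~\ref{lem propesssmall}.(1), and the strictly commutative diagram
\[\xymatrix@R=11pt@C=18pt{
B \ar[d] \ar[r]^{f|_B} & A \ar[d]^{\simeq} \ar[r]^{r|_A} & B \ar[d] \\
K \ar[r]^{f|_K} & A'' \ar[r]^{r|_{A''}} & K
}\]
has horizontal composites $rf|_B$ and $rf|_K$, each homotopic to the corresponding identity through the restricted $\mathcal{H}$-homotopy. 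Taking nerves exhibits $N(B) \hookrightarrow N(K)$ as a retract, in the homotopy category $\text{Ho}(\text{Spaces})$, of the weak equivalence $N(A) \to N(A'')$; since retracts of isomorphisms are isomorphisms, $N(B) \to N(K)$ is a weak equivalence, which shows $B$ is a core.

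The main obstacle is this last step: verifying that $B$ genuinely has the core property rather than merely sitting compatibly with $f$ and $r$. The retract argument hinges on both $B$ and every enveloping $K$ being closed under the specific homotopy data $\{h_k, \phi_k\}$, which is why the closure operator $\mathcal{H}$ must be woven into every stage of the telescope, not invoked only once at the end. One also has to check that the ``up to homotopy'' retract data in $\text{Spaces}$ descends to a strict retract in $\text{Ho}(\text{Spaces})$, so that closure of isomorphisms under retracts applies.
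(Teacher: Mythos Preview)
Your argument is correct and follows essentially the same strategy as the paper: build interleaved telescopes in $\cala$ and $\calb$ that are stable under $f$, $r$, and the chosen homotopy data, then verify the core property for $B$ by rerunning the construction and recognising $B\subset K$ as a homotopy retract of the weak equivalence $A\subset A''$. The only notable difference is that the paper arranges for the homotopy to restrict by taking the $K_n$ to be \emph{full} subcategories of $\calb$, whereas you achieve the same effect with your explicit closure operator $\mathcal{H}$; both devices serve the identical purpose of ensuring the zig-zag $\text{id}\simeq rf$ lives inside the constructed subcategory.
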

\begin{proof}
Choose  a sequence of natural transformations $rf= h_0\to \cdots\gets h_m= \text{id}_{\calb}$\@.
For  small subcategories $I\subset\cala$ and $K\subset\calb$,   define inductively a sequence of small subcategories of $\cala$ and $\calb$ that fit into the following commutative diagram:
\[\xymatrix@R=11pt@C=12pt{
I:=I_0\ar@{}[r]|-{\displaystyle \subset}  &
I_1\dto_r\ar@{}[r]|{\displaystyle \subset} &  I_2\dto_r\ar@{}[r]|{\displaystyle \subset}&
\cdots\ar@{}[r]|{\displaystyle \subset}  & \cala\dto^{r}\\
K:=K_0\ar@{}[r]|-{\displaystyle \subset}&
K_1\ar@{}[r]|{\displaystyle \subset} & K_2\ar@{}[r]|{\displaystyle \subset}&
\cdots\ar@{}[r]|{\displaystyle \subset}  & \calb
}\]
Let  $n>0$. Using~~\ref{lem propesssmall}.(2), define $I_n\subset\cala$ to be a core which is a full subcategory and contains the set of objects that either belong to $I_{n-1}$ or are of the form $f(b)$ where $b$  is in $K_{n-1}$\@.  Set $K_n$ to be the full subcategory of $\calb$ on the set of objects that either
belong to $K_{n-1}$, or
 are of the form $r(a)$ where $a$ is  in $I_n$, or
 are of the form $h_k(b)$ where $b$ is in $K_{n-1}$\@. 
The purpose of this definition is to ensure:
\begin{enumerate}\renewcommand{\labelenumi}{\alph{enumi}.}
\item $f\colon  \calb\to\cala$
takes $K_{n-1}$ to  $I_n$ and
$r\colon\cala\to \calb$ takes  $I_n$ to $K_n$;
\item $I_n\subset \cala$ is a core for any $n>0$;
\item $h_k\colon\calb\to\calb$ takes $K_{n-1}$ to $K_n$ for any $0\leq k\leq m$;
\item $K_n\subset \calb$ is a full subcategory.
\end{enumerate}
Define
$I_\infty:=\cup_{n\geq 0} I_n$ and  $K_\infty:=\cup_{n\geq 0} K_l$.
The above requirements imply:
\begin{enumerate}\renewcommand{\labelenumi}{\alph{enumi}.}
\item there is a commutative  diagram
$\xymatrix@R=11pt@C=15pt{
K_{\infty}\ar@{^(->}[d]\rto^{f} &I_{\infty}\ar@{^(->}[d]\rto^{r} &K_{\infty}\ar@{^(->}[d]\\
\calb\rto^{f} & \cala\rto^{r} &\calb
}$
\item 
$I_{\infty}\subset \cala$ is a core;
\item $h_k\colon \calb\to\calb$ takes $K_{\infty}$ to $K_{\infty}$ for any
$0\leq k\leq m$;
\item $rf\colon K_{\infty}\to K_{\infty}$ is homotopic to the identity functor.
The appropriate ``zig-zag'' is obtained by restriction 
using fulness of $K_{\infty}$ in $\calb$.
 \end{enumerate}

We need to prove that  $K_{\infty}\subset \calb$ is a core.
Let $J\subset \calb$ be a small subcategory containing $K_{\infty}$.
The above construction applied to  $I_{\infty}\subset\cala$ and 
$J\subset \calb$ yields:
\[\xymatrix@R=11pt@C=15pt{
K_{\infty}\ar@{^(->}[d]\rto^-{f} &I_{\infty}\ar@{^(->}[d]\rto^-{r} &K_{\infty}\ar@{^(->}[d]\\
J_{\infty}\ar@{^(->}[d]\rto^-{f} & (I_{\infty})_{\infty} \ar@{^(->}[d]\rto^-{r} &
J_{\infty}\ar@{^(->}[d]\\
\calb\rto^-{f} & \cala\rto^-{r} &\calb
}\]
Since $(I_{\infty})_{\infty} \subset \cala$ is a core,  $I_{\infty}\subset (I_{\infty})_{\infty}$ is a weak equivalence (see~\ref{lem propesssmall}.(1)). Since  $rf\colon J_{\infty}\to J_{\infty}$ and $rf\colon K_{\infty}\to K_{\infty}$ are homotopic to the identity functors, $K_{\infty}\subset J_{\infty}$, as a homotopy retract of a weak equivalence, is a weak equivalence.
\end{proof}
\begin{cor}\label{cor esssmallhomeqinv}
Let $f\colon\calb\to \cala$ be a homotopy equivalence. Then $\cala$ is essentially small if and only if $\calb$ is.
\end{cor}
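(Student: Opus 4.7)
The statement is symmetric in $\cala$ and $\calb$, and the heavy lifting has already been done in Proposition~\ref{prop esssmallretract}. My plan is to deduce each direction by a single application of that proposition to an appropriate pair of functors coming from a homotopy inverse of $f$.

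Since $f\colon\calb\to\cala$ is a homotopy equivalence, it has a homotopy inverse, namely a functor $g\colon\cala\to\calb$ with $gf$ homotopic to $\text{id}_{\calb}$ and $fg$ homotopic to $\text{id}_{\cala}$. For the forward direction, suppose $\cala$ is essentially small. Then the pair $(f,g)$ satisfies exactly the hypothesis of Proposition~\ref{prop esssmallretract} (with $f$ playing the role of $f$ and $g$ playing the role of $r$), so we may conclude that $\calb$ is essentially small.

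For the reverse direction, suppose $\calb$ is essentially small. We apply Proposition~\ref{prop esssmallretract} again, but now with the roles of the two categories swapped: take $g\colon\cala\to\calb$ in the role of ``$f$'' of the proposition and $f\colon\calb\to\cala$ in the role of ``$r$''. The required homotopy $fg\simeq \text{id}_{\cala}$ holds because $g$ is a homotopy inverse of $f$. The proposition then yields that $\cala$ is essentially small.

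The only subtlety worth flagging is that Proposition~\ref{prop esssmallretract} is not itself symmetric in its two categories, so one must be careful to identify which category in the corollary plays the role of ``$\cala$'' (the one assumed essentially small) at each step; once this bookkeeping is done, no further argument is needed. In particular, the stronger conclusion of Proposition~\ref{prop esssmallretract}, that one can find compatible cores inside $\cala$ and $\calb$, is not needed here.
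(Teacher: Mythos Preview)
Your proof is correct and matches the paper's intended approach: the corollary is stated without proof in the paper precisely because it follows immediately from two applications of Proposition~\ref{prop esssmallretract}, one for each direction, exactly as you describe. Your remark about the asymmetry of Proposition~\ref{prop esssmallretract} and the need to swap roles is apt.
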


\section{Weak homotopy notions for essentially small categories}
Our aim is to extend the dictionary from Section~\ref{sec smallcat} to essentially small categories. 
A functor  $f\colon\calf_1\to\calf_0$  is  a system of categories indexed by the poset $[1]$\@. 
Thus its core 
  consist of cores  $F_1\subset \calf_1$ and $F_0\subset \calf_0$
such that $f$ takes $F_1$ to  $F_0$.  The restricted functor 
 $f\colon F_1\to  F_0$  fits into a  commutative diagram:
 \[\xymatrix@R=11pt@C=15pt{
 F_1\ar@{^(->}[r]\dto_-{f} &  \calf_1 \dto^-{f} \\
F_0\ar@{^(->}[r]  & \calf_0
 }\]
  By the ``2 out of 3'' property of weak equivalences and~\ref{prop funcorialcore}.(2), 
if $f\colon F_1\to  F_0$ and  $f\colon F'_1\to F'_0$ are cores of  $f\colon \calf_1\to\calf_0$, then $f\colon F_1\to  F_0$ is a weak equivalence if and only if $f\colon F'_1\to F'_0$ is so.

Similarly, a commutative square on the left below is  a system of categories indexed by the poset  of all the subsets of $\{0,1\}$\@.  Its  core consists of cores
$F_{\emptyset}\subset \calf_{\emptyset}$, $F_0\subset\calf_0$, $F_1\subset\calf_1$, and 
$F_{0,1}\subset\calf_{0,1}$ making the right cube commutative:
\[\xymatrix@R=10pt@C=12pt{
& \calf_1\ddto^{g_1}\\
\calf_{0,1}\urto^{f_1}\ddto_{f_0} \\
& \calf_{\emptyset}\\
\calf_0\urto^{g_0}
}\ \ \ \ \ \ \ \  \ \ \ \ 
\xymatrix@R=10pt@C=12pt{
& F_{1}\ar@{^(->}[rr]\ddto|\hole^(.3){g_1} & & \calf_1\ddto^{g_1}\\
F_{0,1}\ar@{^(->}[rr] \urto^{f_1}\ddto_{f_0}& & \calf_{0,1} \urto^(.4){f_1}\ddto_(.3){f_0}\\
& F_{\emptyset}\ar@{^(->}[rr]|\hole& & \calf_{\emptyset}\\
F_{0}\ar@{^(->}[rr] \urto^{g_0}& & \calf_0 \urto^{g_0}
}
\]
Again, by the ``2 out of 3'' property and~\ref{prop funcorialcore}.(2), 
if one   core of a commutative square is homotopy pull-back then so is any other.   This justifies:
\begin{Def}\label{def weqfhpull}\hspace{1mm}
\begin{itemize}
\item 
A functor  
is called 
a {\bf weak equivalence} if  it has a core which  is a weak equivalence.
\item A functor $f\colon\calb\to \cala$ is called a {\bf quasi-fibration} if
$\alpha\!\uparrow\! f\colon a_0\!\uparrow\! f\to\! a_1\uparrow\! f$ is a weak equivalence
 for any morphism $\alpha\colon a_1\to a_0$ in $\cala$.
\item A commutative square of functors  is called  {\bf homotopy
pull-back} if it has a core which is  homotopy pull-back.
\end{itemize}
\end{Def}
Note that a weak equivalence can  only be between  essentially small categories.


\begin{prop}\label{prop esssmallweakequiv}
Let  $f\colon\calb\to \cala$ be a homotopy 
equivalence of essentially small categories. Then $f$ is a weak equivalence.
\end{prop}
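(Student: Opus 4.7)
The plan is to adapt the construction from the proof of Proposition~\ref{prop esssmallretract} so as to produce cores that are simultaneously closed under a homotopy inverse of $f$ and under all the intermediate functors in both homotopies. Fix a homotopy inverse $r\colon \cala\to\calb$, together with zigzags of natural transformations
$rf=h_0\to h_1\gets\cdots\gets h_m=\text{id}_{\calb}$ and $fr=h'_0\to h'_1\gets\cdots\gets h'_{m'}=\text{id}_{\cala}$.

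Starting from arbitrary cores $B_0\subset\calb$ and $A_0\subset\cala$, which exist by assumption, I build by induction full subcategories $B_0\subset B_1\subset\cdots\subset\calb$ and $A_0\subset A_1\subset\cdots\subset\cala$ satisfying, for every $n\ge 1$: (a)~$B_n\subset\calb$ and $A_n\subset\cala$ are cores; (b)~$f(B_{n-1})\subset A_n$ and $r(A_{n-1})\subset B_n$; (c)~$h_k(B_{n-1})\subset B_n$ for all $0\le k\le m$ and $h'_k(A_{n-1})\subset A_n$ for all $0\le k\le m'$. At step $n$, one first adds the finitely many required images of previous-stage objects to $B_{n-1}$ respectively $A_{n-1}$, then passes to the full subcategory on this enlarged object set in $\calb$ respectively $\cala$, and finally invokes Lemma~\ref{lem propesssmall}.(2) to absorb the result into a core while keeping it full; this is precisely the juggling used in the proof of Proposition~\ref{prop esssmallretract}, only with more functors to accommodate. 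Setting $B:=\bigcup_n B_n$ and $A:=\bigcup_n A_n$, Lemma~\ref{lemma-telescope} shows that $B\subset\calb$ and $A\subset\cala$ remain cores, and conditions (b)--(c) pass to the union, so $f$, $r$ and each $h_k$ and $h'_k$ restrict to functors on $B$ and $A$.

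Hence $(B,A)$ is a core of the functor $f$ in the sense of the discussion preceding Definition~\ref{def weqfhpull}, and by restriction the two zigzags exhibit $f|_B\colon B\to A$ and $r|_A\colon A\to B$ as mutually inverse homotopy equivalences of \emph{small} categories (fullness of $B$ and $A$ guarantees that the components of the natural transformations, which a priori live in $\calb$ and $\cala$, automatically lie in $B$ and $A$). Since homotopic functors between small categories induce homotopic maps on nerves, as recorded at the start of Section~\ref{sec smallcat}, $f|_B$ is a weak equivalence of small categories. By Definition~\ref{def weqfhpull} this is exactly what it means for $f\colon\calb\to\cala$ to be a weak equivalence.

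The only non-trivial point is arranging (a)--(c) simultaneously at each inductive stage: fullness is needed for the restricted zigzags to be automatically well-defined, closure under all the $h_k$ and $h'_k$ is needed so that the homotopies survive restriction, and the core property must be preserved throughout. The three requirements are compatible because Lemma~\ref{lem propesssmall}.(2) allows one to absorb any given small subcategory into a core, so the alternation ``enlarge by the required images, take full subcategory, promote to a core'' can be iterated indefinitely; the rest of the argument is bookkeeping parallel to Proposition~\ref{prop esssmallretract}.
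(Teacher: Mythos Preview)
Your proof is correct and follows essentially the same strategy as the paper: build full cores closed under $f$, its homotopy inverse, and all the intermediate functors $h_k$, $h'_k$, so that the homotopy equivalence restricts to one between small categories. The only difference is organizational—the paper invokes the system-of-categories machinery (Proposition~\ref{prop funcorialcore}, Lemma~\ref{lem keysystemess}) twice, first obtaining cores compatible with $(f,g)$ and then separately obtaining (possibly different) full cores on which the zigzags restrict, finishing with a 2-out-of-3 argument, whereas you fold everything into a single telescope.
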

\begin{proof}
Let $g$ be  a homotopy inverse to $f$\@.
Consider  a system of categories indexed by the free category on the  graph on the left given by the
diagram on the right:
%
\[\xymatrix{
a\ar@/_7pt/[rr]|{\beta}& &b\ar@/_7pt/[ll]|{\alpha}
}\ \ \ \ \ \ \ \ \ \ \  \xymatrix{
\cala\ar@/^7pt/[rr]|{g}&& \calb\ar@/^7pt/[ll]|{f}
}\]
This system is essentially small (see~\ref{prop funcorialcore}.(1)) and  hence has a core. It  consists of small subcategories $A\subset \cala$ and $B\subset\calb$ that fit into a   commutative diagram:
\[\xymatrix@R=11pt@C=15pt{
A\rto^{g}\ar@{^(->}[d] & B\rto^{f}\ar@{^(->}[d] & A\ar@{^(->}[d]\\
\cala\rto^{g} & \calb\rto^{f}& \cala
}\]
To show that $f\colon A\to B$ is a weak equivalence, it is enough to
 prove that the compositions $fg\colon A\to  A$ and $gf\colon B\to B$ are weak equivalences.

Choose a sequence of 
 natural transformations $fg= h_0\to \cdots\gets h_m= \text{id}_{\cala}$\@. The functors $\{h_k\colon\cala\to \cala\}_{0\leq k\leq m}$ form a system of categories indexed by:
\[\xymatrix{
0 & & &1 \ar@/_20pt/[lll]|{\alpha_0} \ar@/_10pt/[lll]|{\alpha_1}
\ar@{}[lll]|\vdots\ar@/^15pt/[lll]|{\alpha_m} 
}\]
It has a core given by   full subcategories $A_0\subset \cala$ and $A_1\subset \cala$ containing $A$
(see~\ref{lem keysystemess}).
Take the restrictions
$h_k\colon A_0\to A_1$\@. 
Since $A_0$ and $A_1$ are cores of $\cala$,   $h_m\colon A_0\to A_1$ is a weak equivalence as it is the restriction of the identity.  Use fullness of $A_1$ in $\cala$ to get 
a sequence of natural transformations
$h_0\to \cdots\gets h_m$ between these restrictions. 
 Thus  $h_0\colon A_0\to A_1$ is  a weak 
equivalence too and hence,
by the ``2 out of 3'' property, so is $fg\colon A\to A$\@.
By symmetry  $gf$ is also a weak equivalence.
\end{proof}

If $\calf$ is a system of essentially small categories indexed by a small category $I$,
then according to~\ref{prop funcorialcore}.(3), $\text{Gr}_I\calf$ is essentially small.
This is probably not true in general if we just assume that $I$ is essentially small. However, we have 
the following lemma, which will be an important tool for us later in this paper.

\begin{lemma}\label{lemma groveresssmallqf}
Let $\calf$ be a system of essentially small categories indexed by
an essentially small category $\cala$\@.  If, for any morphism
$\alpha\colon a_1\to a_0$ in $\cala$, the functor $\calf_{\alpha}\colon \calf_{a_0}\to\calf_{a_1}$ is a weak
equivalence, then\/ $\text{\rm Gr}_{\cala}\calf$ is essentially small.
\end{lemma}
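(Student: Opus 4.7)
The plan is to construct a core of $\text{\rm Gr}_{\cala} \calf$ by combining a core of the base $\cala$ with coherent cores of the fibers, and to verify the defining property by enlarging base and fibers in tandem.

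First I would fix a full core $A \subset \cala$ (available by Lemma~\ref{lem propesssmall}.(2)) and apply Lemma~\ref{lem keysystemess} to the restricted system $\calf|_A$ (indexed by the small category $A$) to obtain a subsystem $F \subset \calf|_A$ which is a core and whose value $F_a$ at every $a$ is a full subcategory of $\calf_a$. By Proposition~\ref{prop funcorialcore}.(3) the Grothendieck construction $\text{\rm Gr}_{A} F$ is a core of $\text{\rm Gr}_{A}(\calf|_A)$; the claim I would prove is that $\text{\rm Gr}_{A} F \subset \text{\rm Gr}_{\cala} \calf$ is itself a core.

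To verify this, let $J \subset \text{\rm Gr}_{\cala} \calf$ be an arbitrary small subcategory containing $\text{\rm Gr}_{A} F$. Its image $\pi(J)$ under the projection $\pi$ is a small subcategory of $\cala$ containing $A$, so by Lemma~\ref{lem propesssmall}.(2) it embeds into a full subcategory $A \subset A' \subset \cala$ with $A \subset A'$ a weak equivalence. For each $a \in A'$ let $G_a \subset \calf_a$ be a small subcategory containing $F_a$ (when $a \in A$) together with every object and morphism of $\calf_a$ that arises as a fiber component of an object or morphism of $J$ over $a$. Applying Lemma~\ref{lem keysystemess} to $\calf|_{A'}$ produces a core $F' \subset \calf|_{A'}$ with $G_a \subset F'_a$ full in $\calf_a$ for every $a$. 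Setting $K := \text{\rm Gr}_{A'} F'$ gives a small subcategory of $\text{\rm Gr}_{\cala} \calf$ containing both $J$ and $\text{\rm Gr}_{A} F$.

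It remains to show $\text{\rm Gr}_{A} F \hookrightarrow K$ is a weak equivalence, which I would factor as
\[
\text{\rm Gr}_{A} F \hookrightarrow \text{\rm Gr}_{A}(F'|_A) \hookrightarrow \text{\rm Gr}_{A'} F'.
\]
For the first inclusion, both $F_a$ and $F'_a$ are cores of $\calf_a$ for $a \in A$, so $F_a \subset F'_a$ is a weak equivalence by Lemma~\ref{lem propesssmall}.(1), and Proposition~\ref{prop ThomasonPuppe}.(2) yields a weak equivalence on Grothendieck constructions. For the second, the hypothesis that each $\calf_\alpha$ is a weak equivalence restricts, via Definition~\ref{def weqfhpull}, to a weak equivalence $F'_\alpha\colon F'_{a_0} \to F'_{a_1}$ between the small cores; Proposition~\ref{prop ThomasonPuppe}.(3) then identifies $N(F'_a)$ with the homotopy fiber of each of $N(\text{\rm Gr}_{A}(F'|_A)) \to N(A)$ and $N(\text{\rm Gr}_{A'} F') \to N(A')$ over $a \in A$. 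Because the base map $N(A) \to N(A')$ is a weak equivalence and the map on fibers is the identity, a standard comparison of the resulting long exact sequences of homotopy groups (equivalently, the gluing lemma for homotopy pullbacks) forces the map between total spaces to be a weak equivalence too. The main technical obstacle I anticipate is the coherent construction of $F'$: one must extend $F$ to a subsystem core on the enlarged base $A'$ while absorbing the data of $J$ into the fibers, which is precisely the task Lemma~\ref{lem keysystemess} is designed to accomplish.
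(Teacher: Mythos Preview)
Your argument is correct and follows essentially the same route as the paper: pick a core $A\subset\cala$ and a core $F$ of $\calf|_A$, claim $\text{\rm Gr}_A F$ is a core of $\text{\rm Gr}_\cala\calf$, then for any small $J\supset\text{\rm Gr}_A F$ enlarge to a core $A'\supset A$ and a fiberwise core $F'$ so that $J\subset\text{\rm Gr}_{A'}F'$, and finally use Proposition~\ref{prop ThomasonPuppe}(3) together with the hypothesis on the $\calf_\alpha$ to compare homotopy fibers and conclude that $\text{\rm Gr}_A F\hookrightarrow\text{\rm Gr}_{A'}F'$ is a weak equivalence. The paper does this comparison in one step via the square
\[
\xymatrix@R=11pt@C=15pt{\text{\rm Gr}_{A}F\dto_{\pi}\ar@{^(->}[r] &  \text{\rm Gr}_{A'}F'\dto^{\pi}\\ A\ar@{^(->}[r] & A'}
\]
rather than factoring through $\text{\rm Gr}_A(F'|_A)$, but this is a cosmetic difference; your intermediate factorization and the appeal to Proposition~\ref{prop funcorialcore}(3) are harmless but unnecessary.
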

\begin{proof}
Choose a core  $A\subset\cala$\@.  Let $F$ be a core of the  restriction of $\calf$ to $A$
(see~\ref{prop funcorialcore}.(1)).
 We claim  $\text{Gr}_{A}F\subset  \text{Gr}_{\cala}\calf$ is a core. 
Let  $J\subset \text{Gr}_{\cala}\calf$ be a small subcategory containing $\text{Gr}_{A}F$ and  $A'\subset \cala$ be a core  containing the full subcategory on all the objects of the form $\pi(x)$ where $x$ is in $J$ and $\pi\colon\text{Gr}_{\cala}\calf\ra \cala$ is the projection (see~\ref{pt Grothendieck}). 
 Let $F'$ be a  core
of the restriction of $\calf$ to $A'$ such that 
$\text{Gr}_{A}F\subset J\subset \text{Gr}_{A'}F'$ (see~\ref{lem keysystemess}).
According to~\ref{prop ThomasonPuppe}.(3), the homotopy fibers of the nerves of the projections $\pi:\text{Gr}_{A}F\ra A$ and
 $\pi:\text{Gr}_{A'}F'\ra A'$ over a vertex given by an object $a$ in $A$ are weakly equivalent to the nerves of $F_a$ and $F'_a$. As these categories are the cores of $\calf_a$, they are weakly equivalent and consequently the following square is a homotopy pull-back:
 \[\xymatrix@R=11pt@C=15pt{\text{Gr}_{A}F\dto_{\pi}\ar@{^(->}[r] &  \text{Gr}_{A'}F'\dto^{\pi}\\
A\ar@{^(->}[r] & A'
}\]
Since $A\subset A'$ is  a weak equivalence,  $\text{Gr}_{A}F\subset \text{Gr}_{A'}F'$ is a weak equivalence too.
\end{proof}

\begin{cor}\label{cor esssmallhominv}
Let $f:\calb\ra\cala$ be a quasi-fibration. If $\cala$ is essentially small, then $\calb$ is essentially small.
\end{cor}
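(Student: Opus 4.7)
The plan is to exhibit $\calb$ as a homotopy retract of a Grothendieck construction to which Lemma~\ref{lemma groveresssmallqf} applies. The natural candidate is the system $-\!\uparrow\! f$ indexed by $\cala$, together with the functors $\hat{f}\colon\calb\to\text{Gr}_{\cala}(-\!\uparrow\! f)$ and $\hat{\pi}\colon\text{Gr}_{\cala}(-\!\uparrow\! f)\to\calb$ constructed in~\ref{pt undercat}.

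The first step is to verify the hypotheses of Lemma~\ref{lemma groveresssmallqf} for the system $-\!\uparrow\! f$. The indexing category $\cala$ is essentially small by assumption. The transition functors of $-\!\uparrow\! f$ are precisely the $\gamma\!\uparrow\! f$, which are weak equivalences by the definition of a quasi-fibration (Definition~\ref{def weqfhpull}). Crucially, by the convention recorded after Definition~\ref{def weqfhpull}, a weak equivalence can only be a functor between essentially small categories; hence every under category $a\!\uparrow\! f$ is automatically essentially small. The lemma then gives that $\text{Gr}_{\cala}(-\!\uparrow\! f)$ is essentially small.

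The second step is to transfer this property to $\calb$. Inspecting the diagram in~\ref{pt undercat}, one has $\hat{\pi}\hat{f}=\text{id}_{\calb}$ and an explicit natural transformation $\hat{f}\hat{\pi}\to\text{id}_{\text{Gr}_{\cala}(-\!\uparrow\! f)}$, so $\hat{f}$ is a homotopy equivalence. Corollary~\ref{cor esssmallhomeqinv} now yields that $\calb$ is essentially small.

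The only point requiring care is the first: one must recognise that in the essentially small setting, the very definition of quasi-fibration already forces each $a\!\uparrow\! f$ to be essentially small, so that Lemma~\ref{lemma groveresssmallqf} is applicable without additional hypotheses on the fibres of $f$. Once this observation is made, the remainder of the argument is a direct assembly of results already developed in this section and in~\ref{pt undercat}.
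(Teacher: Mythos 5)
Your proposal is correct and follows essentially the same route as the paper: pass to the system $-\!\uparrow\! f$, apply Lemma~\ref{lemma groveresssmallqf} to conclude that $\text{Gr}_{\cala}(-\!\uparrow\! f)$ is essentially small, and transfer back along the homotopy equivalence $\hat{f}$ via Corollary~\ref{cor esssmallhomeqinv}. Your explicit remark that the definition of quasi-fibration (via Definition~\ref{def weqfhpull}) already forces each $a\!\uparrow\! f$ to be essentially small merely spells out a point the paper leaves implicit.
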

\begin{proof}
Consider the system 
$-\!\uparrow\! f$ indexed by $\cala^{\text{op}}$ (see~\ref{pt Grothendieck}).
Recall that we have a homotopy equivalence
$\hat{f}\colon \calb\to  \text{Gr}_{\cala}(-\!\uparrow\! f)$. Thus according to~\ref{cor esssmallhomeqinv},
$\calb$ is essentially small if and only if $ \text{Gr}_{\cala}(-\!\uparrow\! f)$ is so. We can now apply~\ref{lemma groveresssmallqf}.
\end{proof}

\begin{prop}\label{prop stronquasi}
Let $f\colon\calb\to\cala$ be  a strong fibration between essentially small categories. Then $f$ is a quasi-fibration.
\end{prop}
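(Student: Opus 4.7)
The plan is to verify the definition of quasi-fibration (Definition~\ref{def weqfhpull}) directly: for every morphism $\gamma\colon a_1\to a_0$ in $\cala$, the functor $\gamma\!\uparrow\! f\colon a_0\!\uparrow\! f\to a_1\!\uparrow\! f$ has to be a weak equivalence. The strong-fibration hypothesis already supplies the stronger statement that each $\gamma\!\uparrow\! f$ is a homotopy equivalence. By Proposition~\ref{prop esssmallweakequiv}, a homotopy equivalence between essentially small categories is automatically a weak equivalence. Consequently the entire proof reduces to exhibiting, for every $a\in\cala$, a core of the under category $a\!\uparrow\! f$.

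The first input is the essential smallness of the Grothendieck construction $\text{Gr}_{\cala}(-\!\uparrow\! f)$. Recall from the end of Section~\ref{sec basicdictionary} that the functor $\hat f\colon\calb\to\text{Gr}_{\cala}(-\!\uparrow\! f)$ is a homotopy equivalence. Because $\calb$ is essentially small by hypothesis, Corollary~\ref{cor esssmallhomeqinv} transfers this essential smallness across $\hat f$ to $\text{Gr}_{\cala}(-\!\uparrow\! f)$.

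The second input is propagation: the strong-fibration hypothesis guarantees that every transition functor of the system $-\!\uparrow\! f$ is a homotopy equivalence, so by Corollary~\ref{cor esssmallhomeqinv} essential smallness of $a\!\uparrow\! f$ is invariant along morphisms of $\cala$, and hence throughout each connected component of $\cala$. Since any core $A\subset\cala$ meets every component (Proposition~\ref{prop propesssmall}(3)), it is enough to produce a core of $a\!\uparrow\! f$ for each $a\in A$. For such an $a$, take a core $G\subset\text{Gr}_{\cala}(-\!\uparrow\! f)$ and, after the inductive enlargement of Lemma~\ref{lem keysystemess} applied to the restricted system $(-\!\uparrow\! f)|_A$ (to ensure the preimage of $a$ under the projection $\pi$ captures the homotopy type of the fiber), set the candidate core to be the full subcategory of $a\!\uparrow\! f$ on the preimage of $a$. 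The homotopy-pull-back identification of Proposition~\ref{prop ThomasonPuppe}(3), available because the transitions in $-\!\uparrow\! f$ restricted to a small core are homotopy (hence weak) equivalences, identifies this candidate with the true homotopy fiber of $\pi$ over $a$, making it a core of $a\!\uparrow\! f$.

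The main obstacle is this final descent step. Essential smallness is naturally a property of total spaces and pullbacks rather than strict fibers, so the delicate point is to arrange the inductive core-enlargement in Lemma~\ref{lem keysystemess} so as to respect both the projection $\pi\colon\text{Gr}_{\cala}(-\!\uparrow\! f)\to\cala$ and the system structure of $-\!\uparrow\! f$, and then to feed the resulting small subsystem into Proposition~\ref{prop ThomasonPuppe}(3) to recognise the extracted fiber subcategories as cores. All other steps are essentially bookkeeping on top of Proposition~\ref{prop esssmallweakequiv} and Corollary~\ref{cor esssmallhomeqinv}.
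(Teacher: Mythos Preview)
Your reduction is sound: the strong-fibration hypothesis makes each $\gamma\!\uparrow\! f$ a homotopy equivalence, and Proposition~\ref{prop esssmallweakequiv} upgrades that to a weak equivalence once you know every under category $a\!\uparrow\! f$ is essentially small. The propagation argument along components via Corollary~\ref{cor esssmallhomeqinv} is also fine. The problem is entirely in the descent step, and it is a real gap rather than bookkeeping.

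Your appeal to Lemma~\ref{lem keysystemess} is circular. Look at its proof: at each stage it chooses a core $(H_i)_n\subset\calf_i$ via Lemma~\ref{lem propesssmall}(2), and that lemma \emph{assumes} $\calf_i$ already possesses a core. So Lemma~\ref{lem keysystemess} only applies to a system whose values are already known to be essentially small --- precisely what you are trying to establish for $(-\!\uparrow\! f)|_A$. Knowing that the total category $\text{Gr}_{\cala}(-\!\uparrow\! f)$ is essentially small does not, by any result in the paper, push essential smallness down to the fibres; Lemma~\ref{lemma groveresssmallqf} goes only in the other direction. Moreover, even if you could extract a small subcategory of $a\!\uparrow\! f$ whose nerve is weakly equivalent to the homotopy fibre of some small projection, that alone is not the core property: you must show that \emph{every} small enlargement inside $a\!\uparrow\! f$ can be further enlarged so that the inclusion from your candidate is a weak equivalence, and nothing in your sketch controls what happens to the fibre under such enlargements.

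The paper handles this by abandoning the Grothendieck construction entirely and proving the key claim directly: for any prescribed small $A'\subset\cala$, $B'\subset\calb$ there is a core $f\colon B\to A$ of the functor $f$ which is itself a strong fibration (hence a quasi-fibration of small categories) with $A'\subset A$, $B'\subset B$. The construction is an interleaved induction that at each stage enlarges the current small approximation so as to absorb the chosen homotopy inverses $\phi_\alpha$ and the zig-zags witnessing $\phi_\alpha(\alpha\!\uparrow\! f)\simeq\text{id}$ and $(\alpha\!\uparrow\! f)\phi_\alpha\simeq\text{id}$. Once such a core $f\colon B\to A$ is in hand, $a\!\uparrow\! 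B\subset a\!\uparrow\! f$ is shown to be a core by running the same claim a second time to compare homotopy fibres of two nested quasi-fibration cores via Proposition~\ref{prop ThomasonPuppe}(4). This inductive absorption of the strong-fibration data is exactly the ``missing idea'' your final paragraph gestures at; without it, the descent from total space to fibre does not go through.
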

\begin{proof}   We claim: {\em for small subcategories $A'\subset \cala$ and $B'\subset\calb$, there is a core $f\colon B\to A$ of $f$ which is a quasi-fibration and  such that $A'\subset A$ and  $B'\subset B$.}

Assume the claim. To prove the proposition we need to show 
$a\!\uparrow\! f$ is essentially small for any  $a$ in $\cala$. 
The under category of the restriction of $f$ to 
 $\calc\subset \calb$ is denoted by $a\!\uparrow\!\calc$\@. 
Use the claim to get a  quasi-fibration core
$f\colon B\to A$ of $f\colon\calb\to\cala$  such that $a$ is in $A$.
 We will show that  $a\!\uparrow\! B\subset a\!\uparrow\! f$ is a core. Let $J\subset a\!\uparrow\! f$ be a small subcategory containing $a\!\uparrow\! B$ and  $B'$  be the full  subcategory  of $\calb$ on the set of objects $b$ for which there is $\alpha\colon a\to f(b))$ with  $(b, \alpha)$   in $J$\@.
  Note  $J\subset a\!\uparrow\! B'$\@. Use the claim again to get a  quasi-fibration  core  $f\colon\hat{B}\to\hat{A}$  of 
  $f\colon\cala\to\calb$ 
   such that $A\subset\hat{A}$ and $B'\subset\hat{B}$.  All this fits into a commutative diagram:
\[\xymatrix@R=11pt@C=10pt{
B\ar@{}[r]|{\displaystyle \subset} \ar@/_5pt/[dr]_-{f}& B'\ar@{}[r]|{\displaystyle \subset} & \hat{B}\ar@{}[r]|{\displaystyle \subset} \dto^{f}&\calb\dto^{f}\\
&A\ar@{}[r]|{\displaystyle \subset}  & \hat{A}\ar@{}[r]|{\displaystyle \subset} &
\cala
}\]
The inclusions 
$A\subset  \hat{A}$ and $B\subset   \hat{B}$ are weak equivalences
(see~\ref{lem propesssmall}.(1)).
Since 
$f\colon B\to  A$ and $f\colon\hat{B}\to \hat{A}$ are quasi-fibrations,  
the homotopy fibers of their nerves over the components containing $a$
are given by 
 $N(a\!\uparrow\! B)$ and $N(a\!\uparrow\! \hat{B})$ (see~\ref{prop ThomasonPuppe}.(4)).
These two observations imply that  $a\!\uparrow\! B\subset a\!\uparrow\! \hat{B}$ is  a weak equivalence.

It remains to show the claim. Since  $f\colon\calb\to \cala$ is a strong fibration, for any 
$\alpha\colon a_1\to a_0$ in $\cala$,  there is $\phi_{\alpha}\colon a_1\!\uparrow\! f\to a_0\!\uparrow\! f$
for which   $\phi_{\alpha} (\alpha\!\uparrow\! f)$  and $(\alpha\!\uparrow\! f)\phi_{\alpha} $
are homotopic to the identity functors.  Choose    
 $\{h_{\alpha,k}\colon a_0\!\uparrow\! f\to a_0\!\uparrow\! f\}_{0\leq k\leq m}$
and $\{g_{\alpha,k}\colon a_1\!\uparrow\! f\to a_1\!\uparrow\! f\}_{0\leq k\leq l}$ and  natural transformations
$\phi_{\alpha} (\alpha\!\uparrow\! f)=h_{\alpha,0}\to\cdots\gets h_{\alpha,m}=\text{id}$ and  
$(\alpha\!\uparrow\! f)\phi_{\alpha}=g_{\alpha,0}\to\cdots\gets g_{\alpha,l}=\text{id}$\@.
By induction   define a sequence of subsystems of $f\colon\calb\to \cala$:
\[\xymatrix@R=11pt@C=10pt{
B_0\dto_f\ar@{}[r]|{\displaystyle \subset} & D_1\dto_f\ar@{}[r]|{\displaystyle \subset} &
B_1\dto_f\ar@{}[r]|{\displaystyle \subset} & D_2\dto_f\ar@{}[r]|{\displaystyle \subset} & B_2\dto_f\ar@{}[r]|{\displaystyle \subset}&
\cdots\ar@{}[r]|{\displaystyle \subset}  & \calb\dto^{f}\\
A_0\ar@{}[r]|{\displaystyle \subset} & C_1\ar@{}[r]|{\displaystyle \subset} &
A_1\ar@{}[r]|{\displaystyle \subset} & C_2\ar@{}[r]|{\displaystyle \subset} & A_2\ar@{}[r]|{\displaystyle \subset}&
\cdots\ar@{}[r]|{\displaystyle \subset}  & \cala
}\]
Set $f\colon B_0\to A_0$ to be a core of $f\colon\calb\to\cala$ such that $A'\subset A_0$ and $B'\subset B_0$.
It  exists by~\ref{lem keysystemess}. Assume  $n>0$ and  the sequence is defined for  indices smaller than $n$. Let  $D_n$ to be the full  subcategory of $\calb$ on the set of objects $b$ such that:
\begin{itemize}
\item $b$  either belongs to $B_{n-1}$ or
\item there is an object $b'$ in $B_{n-1}$ and   morphisms $\alpha\colon a_1\to a_0$ in $A_{n-1}$ and  
$\beta'\colon a_1\to f(b')$ and $\beta\colon a_0\to f(b)$ in $\cala$ such that $\phi_{\alpha}(b',\beta')=(b,\beta)$, or
\item  there is   $b'$ in   $B_{n-1}$ and   $\alpha\colon a_1\to a_0$ in $A_{n-1}$ and 
$\beta'\colon a_0\to f(b')$ and $\beta\colon a_0\to f(b)$ in  $\cala$  such that 
$h_{\alpha,k}(b',\beta')=(b,\beta)$ for some $0\leq k\leq m$, or
\item  there is   $b'$ in   $B_{n-1}$  and   $\alpha\colon a_1\to a_0$ in $A_{n-1}$ and $\beta'\colon a_1\to f(b')$ and $\beta\colon a_1\to f(b)$ in  $\cala$  such that 
$g_{\alpha,k}(b',\beta')=(b,\beta)$ for some $0\leq k\leq l$.
\end{itemize}
Define $C_n$ to  be the full subcategory of $\cala$ on the set of objects that belong either to  $A_{n-1}$ or are of the form $f(b)$ where $b$ is  in $D_n$.
The purpose is to ensure that for any  $\alpha\colon a_1\to a_0$ in $A_{n-1}$:
\begin{enumerate}\renewcommand{\labelenumi}{\alph{enumi}.}
\item   $\phi_{\alpha}\colon a_1\!\uparrow\! f\to a_0\!\uparrow\! f$
takes $a_1\!\uparrow\! B_{n-1}$ to $a_0\!\uparrow\! D_n$,
\item $h_{\alpha,k}\colon a_0\!\uparrow\! f\to a_0\!\uparrow\! f$ takes  $a_0\!\uparrow\! B_{n-1}$
to $a_0\!\uparrow\! D_n$ for any $0\leq k\leq m$,
\item  $g_{\alpha,k}\colon a_1\!\uparrow\! f\to a_1\!\uparrow\! f$ takes  $a_1\!\uparrow\! B_{n-1}$
to $a_1\!\uparrow\! D_n$ for any $0\leq k\leq l$. 
\end{enumerate}
Let $f\colon B_n\to A_n$ to be a core
of $f\colon\calb\to\cala$ such that $C_{n}\subset A_n$ and $D_{n}\subset B_n$.
Define
$A:=\cup_{n\geq 0} A_n$ and $B:=\cup_{n\geq 0} B_n$. 
According to~\ref{lemma-telescope} and~\ref{lem propesssmall}.(1)
$f\colon \calb\to\cala$ is also a core of $f\colon B\to A$.
We are going to  show  that
$f\colon B\to A$ is a strong fibration.  The requirements  above  imply that,  for any   $\alpha\colon a_1\to a_0$ in $A$:
\begin{enumerate}\renewcommand{\labelenumi}{\alph{enumi}.}
\item   $\phi_\alpha\colon a_1\!\uparrow\! f\to a_0\!\uparrow\! f$
takes  $a_1\!\uparrow\! B$ to  $a_0\!\uparrow\! B$;
 \item  $h_{\alpha,k}\colon a_0\!\uparrow\! f\to a_0\!\uparrow\! f$ takes 
$a_0\!\uparrow\! B$ to $a_0\!\uparrow\! B$ for any $0\leq k\leq m$;
\item  $g_{\alpha,k}\colon a_1\!\uparrow\! f\to  a_1\!\uparrow\! f$ takes 
$a_1\!\uparrow\! B$ to $a_1\!\uparrow\! B$ for any $0\leq k\leq l$.
\end{enumerate}
Since $B$  is a full subcategory in $\calb$, for any  $\alpha\colon a_1\to a_0$ in $A$, by restricting to $a_0\!\uparrow\! B$ and $a_1\!\uparrow\! B$ we have two  sequences of natural transformations
$\phi_{\alpha} (\alpha\!\uparrow\! f)=h_{\alpha,0}\to\cdots\gets h_{\alpha,m}=\text{id}$ and  
$(\alpha\!\uparrow\! f)\phi_{\alpha}=g_{\alpha,0}\to\cdots\gets g_{\alpha,l}=\text{id}$,
showing that $\phi_{\alpha} (\alpha\uparrow f):a_0\uparrow B\ra a_0\uparrow B$ and
$(\alpha\uparrow f)\phi_{\alpha}:a_1\uparrow B\ra a_1\uparrow B$ are both homotopic
to the identity functors. The functor $\alpha\!\uparrow\! B\colon a_0\!\uparrow\! B\to a_1\!\uparrow\! B$ is therefore a homotopy equivalence and hence a weak equivalence. Which shows the claim.
\end{proof}

\begin{cor}\label{cor stronhpullhompull}
Let the following be a  strong homotopy pull-back square:  
\[\xymatrix@R=11pt@C=15pt{
\cald\rto^g\dto_e &\calc\dto^{h}\\
\calb\rto^{f} & \cala
}\]
Assume that $\cala$, $\calb$, and $\calc$ are essentially small categories.
Then $\cald$ is also essentially small and the above square is  homotopy
pull-back.
\end{cor}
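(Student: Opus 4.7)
The plan is to first show $\cald$ is essentially small by assembling the slices $c\!\uparrow\! g$ into a system on $\calc^{\text{op}}$ to which Lemma \ref{lemma groveresssmallqf} applies, and then to exhibit a compatible core of the full square to which Proposition \ref{prop hopullback} applies. Since $f$ is a strong fibration between essentially small categories, Proposition \ref{prop stronquasi} makes it a quasi-fibration, which by Definition \ref{def weqfhpull} forces each slice $a\!\uparrow\! f$ to be essentially small. The homotopy equivalence $(e,h)\colon c\!\uparrow\! g\to h(c)\!\uparrow\! f$ combined with Corollary \ref{cor esssmallhomeqinv} then makes $c\!\uparrow\! g$ essentially small, and Proposition \ref{prop esssmallweakequiv} promotes $(e,h)$ to a weak equivalence. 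For any morphism $\gamma\colon c_1\to c_0$ in $\calc$, the commutative square
\[\xymatrix@R=11pt@C=15pt{
c_0\!\uparrow\! g\rto^{(e,h)}\dto_{\gamma\!\uparrow\! g} & h(c_0)\!\uparrow\! f\dto^{h(\gamma)\!\uparrow\! f}\\
c_1\!\uparrow\! g\rto^{(e,h)} & h(c_1)\!\uparrow\! f
}\]
has two horizontal weak equivalences and a right vertical weak equivalence (the strong fibration hypothesis on $f$ supplies a homotopy equivalence $h(\gamma)\!\uparrow\! f$, weak by Proposition \ref{prop esssmallweakequiv}), so $\gamma\!\uparrow\! g$ is a weak equivalence by ``2 out of 3''. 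The system $-\!\uparrow\! g$ indexed by $\calc^{\text{op}}$ therefore has essentially small values and weak equivalence transition functors, so Lemma \ref{lemma groveresssmallqf} yields that $\text{Gr}_{\calc^{\text{op}}}(-\!\uparrow\! g)$ is essentially small; since $\hat{g}\colon\cald\to\text{Gr}_{\calc^{\text{op}}}(-\!\uparrow\! g)$ is a homotopy equivalence, Corollary \ref{cor esssmallhomeqinv} gives that $\cald$ is essentially small.

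For the pull-back assertion, the square is now an essentially small system indexed by the poset of subsets of $\{0,1\}$ (Proposition \ref{prop funcorialcore}.(1)) and hence has a core; by the ``2 out of 3'' discussion preceding Definition \ref{def weqfhpull} it suffices to exhibit one core $(F_\emptyset,F_0,F_1,F_{01})$ whose small restricted square satisfies the hypotheses of Proposition \ref{prop hopullback}, i.e.\ the restricted $f\colon F_0\to F_\emptyset$ is a quasi-fibration and, for every $c\in F_1$, the restriction of $(e,h)$ to the slices inside the cores is a weak equivalence. Following the template of Proposition \ref{prop stronquasi}, I would construct such a core by inductively enlarging a starting core, at each stage adjoining (a) for each new $\alpha$ in $F_\emptyset$, a homotopy inverse $\phi_\alpha$ of $\alpha\!\uparrow\! f$ and the zig-zag natural transformations exhibiting it as one, and (b) for each new $c$ in $F_1$, a homotopy inverse of $(e,h)\colon c\!\uparrow\! g\to h(c)\!\uparrow\! f$ and its zig-zag, then closing up to full subcategories so that the zig-zags restrict via fullness. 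Lemma \ref{lemma-telescope} and Lemma \ref{lem propesssmall}.(1) ensure the countable unions remain cores.

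The main obstacle is orchestrating this simultaneous four-fold enlargement so that the closure conditions (a) and (b) coexist with the commutativity of the cube: each new witness in one corner generally introduces new objects and morphisms in the others (for instance $\phi_\alpha$ produces objects of $\calb$ whose images under $f$ must be adjoined to $F_\emptyset$, and a homotopy inverse of $(e,h)$ produces objects of $\cald$ whose images in $\calb$ and $\calc$ must be adjoined), so the stages must be interleaved carefully. In the limit one must verify that the four inclusions are still cores, the restricted $f$ is still a quasi-fibration, and each restricted $(e,h)$ is still a homotopy equivalence (hence a weak equivalence by Proposition \ref{prop esssmallweakequiv}). With the core in hand, Proposition \ref{prop hopullback} certifies that the small restricted square is a homotopy pull-back, and Definition \ref{def weqfhpull} transfers this conclusion to the original square.
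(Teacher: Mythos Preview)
Your route to essential smallness of $\cald$ is the paper's, just with Corollary~\ref{cor esssmallhominv} unfolded. The divergence is in the second half, and your instinct that the simultaneous four-corner enlargement is the ``main obstacle'' is exactly where the paper does something cleaner.

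Notice that the commutative square you wrote down for $\gamma\!\uparrow\! g$ has \emph{homotopy equivalences} on three sides, not merely weak equivalences; homotopy equivalences satisfy 2-out-of-3, so $\gamma\!\uparrow\! g$ is itself a homotopy equivalence and $g\colon\cald\to\calc$ is a strong fibration between essentially small categories. This means the claim packaged inside the proof of Proposition~\ref{prop stronquasi} applies to $g$ just as well as to $f$. The paper exploits this by a two-pass rather than a simultaneous construction: first invoke that claim once for $g$ to obtain a quasi-fibration core $g\colon D\to C$; then invoke it a second time for $f$, requiring only that the resulting quasi-fibration core $f\colon B\to A$ contain the small images $e(D)$ and $h(C)$. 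No interleaving is needed, because $D$ and $C$ are fixed before $B$ and $A$ are chosen. The same proof of~\ref{prop stronquasi} records that $c\!\uparrow\! D\subset c\!\uparrow\! g$ and $h(c)\!\uparrow\! B\subset h(c)\!\uparrow\! f$ are cores, so the restriction of $(e,h)$ to these slices is a core of a weak equivalence and hence a weak equivalence by ``2 out of 3''; Proposition~\ref{prop hopullback} then finishes.

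Your simultaneous enlargement would work if carried out carefully, but it reproves work already encapsulated in the claim of~\ref{prop stronquasi}. The paper's sequential approach buys a short, obstacle-free argument by reusing that claim twice rather than re-engineering it for four corners at once.
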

\begin{proof}
The strong homotopy pull-back assumption implies
 $(e,h)\colon c\!\uparrow\! g\ra h(c)\!\uparrow\! f$ is a homotopy equivalence 
for any  $c$ in $\calc$, and $f\colon\calb\to\cala$ and  $g\colon\cald\to\calc$  are a strong fibrations.
Since $\cala$ and $\calb$ are   essentially small,  by~\ref{prop stronquasi}, $f\colon\calb\to\cala$  is a quasi-fibration and thus $a\!\uparrow\! f$ is essentially small for any  $a$ in $\cala$.
By~\ref{cor esssmallhomeqinv}, $c\!\uparrow\! g$ is then also essentially small for any  $c$ in $\calc$. The functor $g\colon\cald\to\calc$ is therefore a quasi-fibration. As $\calc$ is essentially small, ~\ref{cor esssmallhominv} implies that so is $\cald$.

By  the claim
in the proof of~\ref{prop stronquasi}, there is a core $g\colon D\to C$ of $g$ which is a quasi-fibration. 
Let  $B'\subset\calb$ be the full subcategory on the set of all objects 
$e(d)$ where $d$ is in $D$ and the full subcategory $A'\subset \cala$ on the set of all
objects $h(c)$ where $c$ is in $C$. The same
 claim yields a core $f\colon B\to A$ of $f$ which is a quasi-fibration and  such that $A'\subset A$ and $B'\subset B$\@. This leads to a commutative diagram of categories:
\[\xymatrix@R=10pt@C=15pt{
& C\ar@{^(->}[rr]\ddto|\hole^(.27){h} & & \calc\ddto^-{h}\\
D\ar@{^(->}[rr] \urto^{g}\ddto_{e}& & \cald \urto_(.42){g}\ddto^(.28){e}\\
& A\ar@{^(->}[rr]|\hole& & \cala\\
B\ar@{^(->}[rr] \urto^-{f}& & \calb \urto_{f}
}\]
In the proof of~\ref{prop stronquasi} it was also shown that $a\!\uparrow\! B\subset a\!\uparrow\! f$ and
 $c\!\uparrow\! D\subset c\!\uparrow\! g$ are cores for any  $a$ in  $A$ and $c$ in $C$. By the ``2 out of 3'' property  $(e,h)\colon c\!\uparrow\! D\to h(c)\!\uparrow\! B$ is  a weak equivalence.  The following square is a therefore a homotopy-pull-back:
%
\[\xymatrix@R=11pt@C=15pt{D\rto^{g}\dto_-{e} & C\dto^-{h}\\
B\rto^{f} & A
}\qedhere\]
\end{proof}

\noindent
{\bf Part II. }
The aim of this part is to present  a construction of  the spaces of weak equivalences
and their deloopings  in an arbitrary model category  based on~\cite{MR2443229}. 


\section{Simplex categories}\label{sec simplex}
Let $A$ be a simplicial set. Its simplex category (see~\cite[Section 6]{MR2002k:55026}), denoted by the same symbol
 $A$, is a category whose objects are simplices of $A$ i.e., maps of the 
form $\sigma\colon\Delta[n]\to A$. The set of morphisms in $A$ between $\tau\colon\Delta[m]\to A$ and 
$\sigma\colon\Delta[n]\to A$ consists of the maps $\alpha\colon\Delta[m]\to\Delta[n]$ for which $\tau=\sigma\alpha$.
A map of spaces $f\colon A\to B$ induces a functor $f\colon A\to B$. It assigns to 
$\sigma\colon\Delta[n]\to A$  the composition $f\sigma\colon\Delta[n]\to B$. This defines  
a functor from $\text{Spaces}$ to $\text{Cat}$ called the simplex category.
Its  composition with the nerve is called the {\bf subdivision}:
\[
\xymatrix@C=20pt{
\text{Spaces}\ar[rrr]^-{\text{ simplex category }}\ar@/_12pt/[rrrr]|-{\text{ subdivision }} &&& \text{Cat}\rto^(.4){N} & \text{Spaces}
}\]
The subdivision has the following properties:
\begin{enumerate}
\item  A space $A$ is contractible if and only if $N(A)$ is contractible.
\item A map $f\colon A\to B$  is  a weak equivalence if and only if $N(f)$ is.
\item For any  $f\colon A\to B$,  the  map $N(f)$ is {\bf reduced}, i.e., it maps
non-degenerate simplices to  non-degenerate simplices (see~\cite[Definition 12.9]{MR2002k:55026}).
\item The subdivision  is a left adjoint and hence it  commutes with colimits. In particular
if the   left square below  is a push-out, then so is the right square:
\[\xymatrix@R=11pt@C=15pt{
A\dto_{g}\rto^{f} & B\dto^{h}\\
C\rto^{k} & D
}\ \ \ \ \ \ \ \ \ \ 
\xymatrix@R=11pt@C=20pt{
N(A)\dto_{N(g)}\rto^{N(f)} & N(B)\dto^{N(h)}\\
N(C)\rto^{N(k)} & N(D)
}\]
\end{enumerate}

The symbols $\text{Fun}(-,\calc)$ and $\text{Fun}(N(-),\calc)$   denote systems of categories indexed by
$\text{Spaces}$ (see~\ref{point systemandGr}) given by the  assignments $A\mapsto  \text{Fun}(A,\calc)$,
$(f:A\ra B)\mapsto f^{\ast}$, $A\mapsto \text{Fun}(N(A),\calc)$, and  $(f:A\ra B)\mapsto  N(f)^{\ast}$.

\subsection{Clutching construction}
\label{sec clutching}
\stepcounter{prop}
To construct and analyze functors indexed   simplex categories one can use  the geometry of the underlying spaces.
For example  the  clutching construction 
can be described  as follows.
An {\bf initial data} consists of a push-out square of spaces where the indicated 
maps are inclusions:
\[\xymatrix@R=11pt@C=18pt{
A\rmono^{i} \dto_{f} & C\dto^{g}\\
B\rmono^{j} & D
}\]
 two functors $F\colon B\to \calm$ and $G\colon C\to \calc$,
and   a natural transformation $\psi\colon f^{\ast}F\to i^{\ast}G$ in $\text{Fun}(A,\calc)$.
Out of this data we are going to construct a functor $H\colon D\to \calc$ and a natural transformation
$\overline{\psi}\colon g^{\ast}H\to G$ in $\text{Fun}(C,\calc)$\@.   This functor is called {\bf the clutching of $F$ and $G$ along $\psi$} and is  denoted by  $H(\psi, F,G)$\@. The  functor $H$ and the natural transformation  $\overline{\psi}$ are supposed to satisfy the following properties:
\begin{enumerate}
\item \label{pt restrictionalongj} $j^{\ast}H=F$;
\item 
\label{eq cluthingreq}
the following   diagram commutes:
\[
\xymatrix@R=11pt@C=6pt{f^{\ast}F\ar@{=}[d]\ar@{->}[rrr]|{\psi} &  & & i^{\ast}G \\
f^{\ast}j^{\ast}H\ar@{=}[r] &
(jf)^{\ast}H\ar@{=}[r] &(gi)^{\ast}H\ar@{=}[r] &i^{\ast}g^{\ast} H\uto_(0.5){i^{\ast}\overline{\psi}}
}
\]
\item 
\label{pt isoonG} the morphism $\overline{\psi}_{\sigma}\colon Hg(\sigma)\to G(\sigma)$ is an isomorphism
for any simplex $\sigma$ in $C$ which is not in the image of $i$.
\end{enumerate}
We  use the following diagrams to depict an initial  data and its clutching:
\[\xymatrix@R=15pt@C=15pt{
A\rmono^{i} \dto_{f} & C\dto^{g}\ar@/^1pc/[rrd]_(.7){G}="G" \\
B\rmono^{j}\ar@/_1pc/[rrd]^(.6){F}="F" & D & & \calc \ar@/_.9pc/ @{->}_(.6){\psi} "F"; "G" \\
&  & \calc
}
\ \ \ \ \ \ \ \ \ 
\xymatrix@R=15pt@C=15pt{
A\rmono^{i} \dto_{f} & C\dto^{g}\ar@/^1pc/[rrd]_(.7){G}="G" \\
B\rmono^{j}\ar@/_1pc/[rrd]^(.6){F}="F" & D\drto^(.3){H}="H" & &\calc \ar@/_.9pc/ @{->}_(.6){\psi}|(.28)\hole "F"; "G" 
\ar @/^.5pc/@{->}^{\overline{\psi}} "H" ; "G"\\\
&  & \calc
}
\]

By the push-out assumption in the initial data,  there is a bijective correspondence between the set  $D_n$ and the disjoint union $j(B_n)\coprod g(C_n\setminus i(A_n))$. Furthermore,
$j\colon B_n\to j(B_n)$, $g\colon C_n\setminus i(A_n)\to g(C_n\setminus i(A_n))$, and $i\colon A_n\to i(A_n)$ are bijections.
This justifies the   use of the following notation. If   $\sigma\colon \Delta[n]\to C$ belongs to  $i(A)$, then ${\sigma'}\colon\Delta[n]\to A$  denotes the unique simplex for which $i {\sigma'} =\sigma$\@.  If $\sigma\colon\Delta[n]\to D$ belongs to $j(B)$, then  ${\sigma'}\colon\Delta[n]\to B$  denotes the unique simplex for which $j {\sigma'} =\sigma$\@.  If $\sigma\colon\Delta[n]\to D$ does not belong to $j(B)$,  then $\overline{\sigma}\colon\Delta[n]\to C$  denotes the unique simplex in $C$ for which $g\overline{\sigma}=\sigma$\@. 
We can now define:
\[H(\sigma):=
\begin{cases}
F({\sigma'}) &\text{ if } \sigma\in j(B)\\
G(\overline{\sigma}) &\text{ if } \sigma\not\in j(B)
\end{cases}
\]
Let $\alpha\colon\Delta[m]\to \Delta[n]$ be a morphism in $D$ between
$\tau\colon\Delta[m]\to D$ and $\sigma\colon\Delta[n]\to D$. 
We are going to define $H(\alpha)\colon H(\tau)\ra H(\sigma)$.
Note that if $\sigma$ belongs to $j(B)$, then so does $\tau$.  Thus there are no morphisms
in $D$ between any simplex that does not belong to $j(B)$ and a simplex that belongs to $j(B)$.
Three possibilities remain\smallskip.
\begin{tabular}{c|c}
\hline \\  [-1.5ex]
\hspace{0mm}$\bullet$ If $\tau\in j(B)$ and $\sigma\in j(B)$, then \hspace{6mm} &
\hspace{0mm} $\bullet$ If $\tau\not\in j(B)$ and $\sigma\not \in j(B)$, then
\hspace{6mm} \\
 $\xymatrix@R=6pt@C=20pt{
H(\tau)\ar@{=}[d]\rto^{H(\alpha)}& H(\sigma)\\
F(\tau')\rto^{F(\alpha)} & F(\sigma')\ar@{=}[u] 
}$ 
  & 
 $\xymatrix@R=6pt@C=20pt{H(\tau)\ar@{=}[d]\rto^{H(\alpha)}&H(\sigma) \\
G(\overline{\tau})\rto^(.47){G(\alpha)} & G(\overline{\sigma})
\ar@{=}[u] 
}$ \\
 \hline  \end{tabular}\smallskip
 
 \noindent
$\bullet$ If $\tau\in j(B) $ and $\sigma\not\in j(B)$, then 
we have a commutative diagram of spaces:
\[\xymatrix@R=15pt@C=18pt{
& \Delta[m]\dlto_{(\overline{\sigma}\alpha)'}\rrto^{\alpha}\ar@{=}[dd]|\hole\drto|{\overline{\sigma}\alpha} & & \Delta[n] \dlto_{\overline{\sigma}}  \ar@{=}[dd]\\
A\ddto_{f} \ar@{^(->}[rr]^(.3){i} & & C\ddto^(.3){g}\\
& \Delta[m]\dlto_{\tau'}\rrto^(.3){\alpha}|(.51)\hole \drto|{\tau} & & \Delta[n]\dlto_{\sigma}\\
B\ar@{^(->}[rr]^{j}  & & D
}\]
Define $H(\alpha)\colon H(\tau)\to H(\sigma)$ as the following composition:
\[\xymatrix@R=6pt@C=20pt{
H(\tau)\ar@{=}[d]\ar@{->}[rrrr]|{H(\alpha)} & & & &  H(\sigma)\\
 F(\tau')\ar@{=}[r] & F(f(\overline{\sigma}\alpha)')\rto^{\psi_{(\overline{\sigma}\alpha)'}} &
G(i(\overline{\sigma}\alpha)')\ar@{=}[r] &G(\overline{\sigma}\alpha)\rto^{G(\alpha)} & G(\overline{\sigma})\ar@{=}[u] 
}\]

\noindent\hrulefill
\smallskip

This procedure indeed defines a functor $H\colon D\to \calm$ such that
$Hj=F$, which is the  requirement~(\ref{pt restrictionalongj}). It remains to construct  $\overline{\psi}\colon Hg\to G$.
\begin{itemize}
\item If $\sigma\colon\Delta[n]\to C$ belongs to $i(A)$, then $\overline{\psi}_{\sigma}\colon Hg(\sigma)\to G(\sigma)$
is defined as:
\[\xymatrix@R=6pt@C=15pt{
Hg(\sigma)\ar@{=}[d]\ar@{->}[rrr]|{\overline{\psi}_{\sigma}} & & &G(\sigma) \\ 
Hgi(\sigma')\ar@{=}[r] & Hjf(\sigma')\ar@{=}[r] 
&Ff(\sigma')\rto^{\psi_{\sigma'}} & Gi(\sigma')\ar@{=}[u] 
}\]
\item If $\sigma\colon\Delta[n]\to C$ does not belong to $i(A)$, then $\overline{\psi}_{\sigma}$
is given by the identity  $\text{id}:Hg(\sigma)=G(\sigma)\ra G(\sigma)$.
\end{itemize}
The morphisms $\{\overline{\psi}_{\sigma}\}_{\sigma\in C}$ form a natural transformation between $g^{\ast}H$ and  $G$\@ that fulfills the requirements~(\ref{eq cluthingreq})  and~(\ref{pt isoonG}).

Assume now that we have
 a push-out square of spaces, where the indicated 
maps are inclusions:
\[\xymatrix@R=11pt@C=20pt{
A\rmono^{i} \dto_{f} & C\dto^{g}\\
B\rmono^{j} & D
}\]
two functors $F\colon B\to \calc$ and $G\colon D\to\calc$, and
 a natural transformation $\psi\colon F\to j^{\ast} G$ in $\text{\rm Fun}(B,\calc)$.
This data induces an initial data for the clutching that consists of the above push-out,
 functors $F\colon A\to\calc$ and  $g^{\ast}G\colon C\to \calc$, and a natural transformation
$f^{\ast}\psi\colon f^{\ast}F\to f^{\ast}j^{\ast} G=i^{\ast}g^{\ast}G$\@. Its clutching  is a
functor $H(f^{\ast}\psi, F, g^{\ast}G)\colon D\to \calc$ and a natural transformation $\overline{f^{\ast}\psi}\colon
g^{\ast}H(f^{\ast}\psi, F, g^{\ast}G)\to g^{\ast} G$.  

\begin{prop}\label{prop univclutching}
There is a unique natural transformation 
$\widehat{\psi}\colon H(f^{\ast}\psi, F, g^{\ast}G)\to G$
 such that $j^{\ast}\widehat{\psi}=\psi$
and $g^{\ast}\widehat{\psi}=\overline{f^{\ast}\psi}$.
\end{prop}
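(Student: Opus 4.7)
The plan is to define $\widehat{\psi}$ simplex by simplex using the partition $D_n = j(B_n) \sqcup g(C_n \smallsetminus i(A_n))$ exploited in the clutching construction, then verify the two restriction conditions and naturality. For a simplex $\sigma$ of $D$ I set $\widehat{\psi}_\sigma := \psi_{\sigma'}$ if $\sigma = j\sigma'$ with $\sigma' \in B$, and $\widehat{\psi}_\sigma := \mathrm{id}_{G(\sigma)}$ if $\sigma = g\overline{\sigma}$ with $\overline{\sigma} \in C \smallsetminus i(A)$. The identity makes sense because in the latter case $H(\sigma) = (g^{\ast}G)(\overline{\sigma}) = G(\sigma)$ by construction.

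Both restriction conditions are essentially immediate from this definition. The equality $j^{\ast}\widehat{\psi} = \psi$ reads off the first case directly. For $g^{\ast}\widehat{\psi} = \overline{f^{\ast}\psi}$ I would split a simplex $\sigma$ of $C$ according to whether it lies in $i(A)$: if $\sigma = i\sigma'$ then $g\sigma = jf\sigma' \in j(B)$, so $\widehat{\psi}_{g\sigma} = \psi_{f\sigma'} = (f^{\ast}\psi)_{\sigma'}$, which matches the value of $\overline{f^{\ast}\psi}_\sigma$ on the image of $i$; if $\sigma \notin i(A)$ then $g\sigma \notin j(B)$ and both sides equal $\mathrm{id}_{G(\sigma)}$.

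The only substantive point is naturality of $\widehat{\psi}$, which I would check using the same three subcases appearing in the construction of $H(\alpha)$. When $\tau, \sigma \in j(B)$, naturality reduces to naturality of $\psi$ at the corresponding morphism in $B$; when neither is in $j(B)$, both $\widehat{\psi}_\tau$ and $\widehat{\psi}_\sigma$ are identities and $H(\alpha) = G(\alpha)$, so the square is trivial. The mixed case $\tau \in j(B)$, $\sigma \notin j(B)$ appears to be the main obstacle, but the clutching construction has baked the needed identity into $H(\alpha)$: by definition $H(\alpha) = G(\alpha) \circ (f^{\ast}\psi)_{(\overline{\sigma}\alpha)'} = G(\alpha) \circ \psi_{f(\overline{\sigma}\alpha)'}$, and from the pushout identity $jf = gi$ together with injectivity of $j$ one reads off $f(\overline{\sigma}\alpha)' = \tau'$, so $H(\alpha) = G(\alpha) \circ \psi_{\tau'}$. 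The naturality square $\widehat{\psi}_\sigma \circ H(\alpha) = G(\alpha) \circ \widehat{\psi}_\tau$ then collapses to $G(\alpha) \circ \psi_{\tau'} = G(\alpha) \circ \psi_{\tau'}$. Uniqueness is immediate: any $\widehat{\psi}$ satisfying both restrictions is forced to equal $\psi_{\sigma'}$ on $j(B)$ and $\mathrm{id}_{G(\sigma)}$ on the complement, and every simplex of $D$ falls in exactly one of these cases.
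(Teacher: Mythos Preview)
Your proof is correct and follows exactly the same approach as the paper's own proof: define $\widehat{\psi}$ simplex by simplex via the partition $D_n = j(B_n)\sqcup g(C_n\setminus i(A_n))$, setting it to $\psi_{\sigma'}$ on the first piece and to the identity on the second. The paper's proof is considerably terser—it records only the definition and asserts that it works—whereas you have carefully verified the restriction conditions, the three naturality cases, and uniqueness; all of your checks are sound, including the key observation $f(\overline{\sigma}\alpha)'=\tau'$ in the mixed case.
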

\begin{proof}
Let $\sigma$ be a simplex in $D$\@. If it belongs to $B$, define
$\widehat{\psi}_{\sigma}$
to be $\psi_{\sigma}$. If it does not,  define
$\widehat{\psi}_{\sigma}$ to be
the identity morphism $\text{id}\colon H(f^{\ast}\psi, F, g^{\ast}G)(\sigma)=g^{\ast}G(\overline{\sigma})=
G(\sigma)$. These morphisms define the desired natural transformation $\widehat{\psi}$.
\end{proof}

\section{Bounded functors}\label{sec boundedfunctors}
In a simplex category $A$ the face and degeneracy morphisms:
%
\[\xymatrix@R=8pt@C=16pt{\Delta[n]\rrto^-{d_{i}} \drto_{d_i\sigma} & & \Delta[n+1]\dlto^{\sigma}\\
& A
}\ \ \ \ \
\xymatrix@R=10pt@C=15pt{\Delta[n+1]\rrto^-{s_{i}} \drto_{s_i\sigma} & & \Delta[n]\dlto^{\sigma}\\
& A
}
\]
are subject to the usual cosimplicial identities, and they generate all the morphisms in $A$\@.
A functor  indexed by $A$ is called {\bf  bounded} \cite[Definition 10.1]{MR2002k:55026} if
it assigns an isomorphism to  any degeneracy morphism 
$s_{i}$ in $A$. If $S$ denotes the set of
all these degeneracy morphisms in $A$, then a bounded functor is
a functor indexed by the localized category
$A[S^{-1}]$\@. 
The symbol $\text{Fun}^{b}(A,\mathcal{C})$  denotes the category of 
bounded functors indexed by $A$ with values in  $\calc$\@.  The sets of morphisms in $\text{Fun}^{b}(A,\mathcal{C})$
consists  of all  natural transformations. 
For example,  let $I$ be a small category and  $\epsilon\colon N(I)\to I$  be a functor   defined as follows (see
also~\cite[Definition 6.6]{MR2002k:55026}).
For a simplex $\sigma=(i_{n}\stackrel{\alpha_{n}}{\ra}
\cdots \stackrel{\alpha_{1}}{\ra} i_{0})$ in $N(I)$, let:
\[\epsilon(\sigma):=i_0,\ \ \   \epsilon(s_k\colon s_k\sigma\to\sigma):=\text{id}_{i_0},\ \ \   \epsilon(d_k\colon d_k\sigma\to\sigma):=
\begin{cases}
\text{id}_{i_{0}} & \text{ if }\  k>0\\
\alpha_{1} & \text{ if }\  k=0
\end{cases}
\]
Since $\epsilon$ maps all the degeneracy morphisms to  identities, it is  a bounded functor.
Consequently
$\epsilon^{\ast}\colon\text{Fun}(I,\calc)\to \text{Fun}(N(I),\calc)$ has bounded values. The induced functor is denoted by the same symbol  $\epsilon^{\ast}\colon \text{Fun}(I,\calc)\to \text{Fun}^{b}(N(I),\calc)$.

If $F\colon B\to \mathcal{C}$ is bounded, then, for any map $f\colon A\to B$,  so is  $Ff\colon A\to  \mathcal{C}$.
Thus  the  subcategories $ \text{Fun}^{b}(A,\mathcal{C})\subset  \text{Fun}(A,\mathcal{C})$   form  a subsystem
of  $\text{Fun}(-,\mathcal{C})$. The same is true for the subcategories
 $ \text{Fun}^{b}(N(A),\mathcal{C})\subset  \text{Fun}(N(A),\mathcal{C})$.
 We denote these subsystems by $ \text{Fun}^{b}(-,\mathcal{C})$ and  $ \text{Fun}^{b}(N(-),\mathcal{C})$
 respectively.

If $\calc$ is 
 closed under colimits, then  
the left Kan extension
 $f^k\colon \text{Fun}(A,\calc)\to \text{Fun}(B,\calc)$ also preserves the property of being bounded
(\cite[Theorem 10.6]{MR2002k:55026}).  
 Thus  in this case we have  a pair of adjoint functors
  $f^{k}\colon\text{Fun}^{b}(A,\calc)\rightleftarrows \text{Fun}^{b}(B,\calc): f^{\ast}$.

\begin{prop}\label{prop boundclutch}
Notation as in Section~\ref{sec clutching}. If  $F\colon B\to \calc$ and $G\colon C\to \calc$
are bounded, then so is  their clutching $H(\psi,F,G)$.
\end{prop}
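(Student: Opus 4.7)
The plan is to unpack what boundedness means concretely for the clutched functor $H=H(\psi,F,G)$ and then verify the isomorphism condition one degeneracy at a time, exploiting the case analysis already laid out in the clutching construction. Fix a degeneracy morphism $s_i\colon s_i\sigma\to\sigma$ in $D$, where $\sigma\colon\Delta[n]\to D$. I want to show $H(s_i)$ is an isomorphism in $\calc$.

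First I would record a simple but crucial observation about subsimplicial sets. Since $j\colon B\to D$ is an inclusion of spaces, $j(B)\subset D$ is a subsimplicial set, hence closed under faces and degeneracies. The simplicial identity $d_i s_i=\text{id}$ then yields the key equivalence: $s_i\sigma\in j(B)$ if and only if $\sigma\in j(B)$. Indeed, if $s_i\sigma=j\rho$ for some $\rho\colon\Delta[n+1]\to B$, then $\sigma=d_i(s_i\sigma)=j(d_i\rho)\in j(B)$; the converse is immediate from closure of $j(B)$ under degeneracies. Consequently, the third case of the clutching construction (where $\tau\in j(B)$ but $\sigma\not\in j(B)$, and $H(s_i)$ is built from $F$, $\psi$, and $G$ via a composition) simply does not occur for a degeneracy $s_i$.

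That leaves only the two pure cases, each of which is immediate. If $\sigma\in j(B)$, then so is $s_i\sigma$, and the construction of $H$ on morphisms within $j(B)$ gives $H(s_i)=F(s_i\colon s_i\sigma'\to\sigma')$, which is an isomorphism because $F\colon B\to\calc$ is bounded. Here I use that $j$ is a simplicial map, so $(s_i\sigma)'=s_i(\sigma')$. Symmetrically, if $\sigma\not\in j(B)$, then $s_i\sigma\not\in j(B)$, and the construction gives $H(s_i)=G(s_i\colon \overline{s_i\sigma}\to\overline{\sigma})=G(s_i\colon s_i\overline{\sigma}\to\overline{\sigma})$, which is an isomorphism because $G\colon C\to\calc$ is bounded.

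No step is really an obstacle; the main point to check carefully is the closure argument using $d_i s_i=\text{id}$, which eliminates the mixed case from the trichotomy in the clutching construction. Once this is noted, boundedness of $H$ reduces cleanly to boundedness of $F$ and boundedness of $G$.
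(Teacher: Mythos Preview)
Your proof is correct and is essentially the explicit unpacking of what the paper's one-line argument intends: the paper simply cites properties~(\ref{pt restrictionalongj}) and~(\ref{pt isoonG}) of the clutching construction, and your case analysis---noting via $d_is_i=\text{id}$ that a degeneracy never falls into the mixed case, so $H(s_i)$ is literally $F(s_i)$ or $G(s_i)$---is exactly how one would spell this out. No substantive difference in approach.
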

\begin{proof}
This is a consequence of properties (1) and (3) given in Section~\ref{sec clutching}.
\end{proof}

Let $\calm$ be a model category and    $A$  a simplicial set. A natural transformation $\phi\colon F\to G$
  in $\text{\rm Fun}^{b}(A,\mathcal{M})$\@ is called a {\bf weak equivalence} ({\bf fibration}) if
 $\phi_{\sigma}\colon F(\sigma)\to
G(\sigma)$ is a weak equivalence (fibration) in $\calm$ for any  $\sigma$ in $A$\@. 
It is called  a {\bf cofibration} if, for any  {\bf non-degenerate}
 $\sigma\colon\Delta[n]\to A$, the morphism:
 \[\text{colim}( \text{colim}_{\partial\Delta[n]} G\sigma\xleftarrow{\text{colim}_{\partial\Delta[n]}\phi}
\text{colim}_{\partial\Delta[n]} F\sigma\ra  F(\sigma))\lra G(\sigma)\]
induced commutativity of the following  square is a cofibration:
\[\xymatrix@R=13pt@C=15pt{
\text{colim}_{\partial\Delta[n]} F\sigma \dto_{\text{colim}_{\partial\Delta[n]}\phi} \rto& \text{colim}_{\Delta[n]} F\sigma 
\dto|{\text{colim}_{\Delta[n]}\phi}
\ar@{=}[r]  &F(\sigma) \dto^{\phi_{\sigma}} \\
\text{colim}_{\partial\Delta[n]} G\sigma\rto & \text{colim}_{\Delta[n]} G\sigma  \ar@{=}[r]  & G(\sigma)
}\]
%

\begin{thm}[{\cite[Theorem 21.1]{MR2002k:55026}}]\label{thm modelmain}
The above choice of weak equivalences, fibrations, and cofibrations in
$\text{\rm Fun}^{b}(A,\mathcal{M})$ is a  model category structure.
%
%
\end{thm}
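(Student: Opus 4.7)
The plan is to verify the model category axioms \textbf{MC1}--\textbf{MC5} for $\text{Fun}^{b}(A,\calm)$. The first three are essentially formal from the corresponding properties of $\calm$: \textbf{MC1} holds because bounded functors correspond to functors out of the localization $A[S^{-1}]$, so limits and colimits exist and are computed pointwise; \textbf{MC2} is immediate from 2-out-of-3 in $\calm$ since weak equivalences are pointwise; and for \textbf{MC3} the only subtlety is that cofibrations are closed under retracts, which holds because the latching construction $F\mapsto \text{colim}_{\partial\Delta[n]} F\sigma$ preserves retracts and cofibrations in $\calm$ are retract-closed.

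The substance lies in \textbf{MC4} and \textbf{MC5}, both of which are proved by induction on the non-degenerate simplices of $A$. Let $A_{n}\subset A$ denote the simplex category of the $n$-skeleton of $A$; then $A=\bigcup_{n}A_{n}$, and each inclusion $A_{n-1}\subset A_{n}$ fits into a pushout of simplicial sets
\[\xymatrix@R=11pt@C=16pt{\coprod_{\sigma}\partial\Delta[n]\ar@{^(->}[r]\ar[d] & \coprod_{\sigma}\Delta[n]\ar[d]\\ A_{n-1}\ar@{^(->}[r] & A_{n}}\]
indexed by non-degenerate $n$-simplices $\sigma$ of $A$. This is precisely the geometric input required for the clutching construction of Section~\ref{sec clutching}.

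To factor a map $\phi\colon F\to G$ as a cofibration followed by an acyclic fibration (the other factorization is symmetric), I would build the factorization $F\to H\to G$ inductively on $n$. Assume on $A_{n-1}$ we already have $F|_{A_{n-1}}\xrightarrow{i} H_{n-1}\xrightarrow{q} G|_{A_{n-1}}$ with $i$ a cofibration and $q$ an acyclic fibration. For each non-degenerate $n$-simplex $\sigma$, form the pushout
\[P_{\sigma}:=\text{colim}\bigl(\text{colim}_{\partial\Delta[n]} H_{n-1}\sigma\leftarrow \text{colim}_{\partial\Delta[n]} F\sigma\to F(\sigma)\bigr)\]
The canonical map $P_{\sigma}\to G(\sigma)$ can be factored in $\calm$ as a cofibration $P_{\sigma}\hookrightarrow H_{n}(\sigma)$ followed by an acyclic fibration $H_{n}(\sigma)\twoheadrightarrow G(\sigma)$. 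Applying the clutching construction (using Propositions~\ref{prop boundclutch} and~\ref{prop univclutching}) glues these choices to a bounded extension $H_{n}$ on $A_{n}$ together with natural transformations $F|_{A_{n}}\to H_{n}\to G|_{A_{n}}$, and by construction the latching cofibration condition at $\sigma$ is exactly the cofibration $P_{\sigma}\hookrightarrow H_{n}(\sigma)$. Taking the colimit over $n$ yields the desired global factorization. The lifting axiom \textbf{MC4} follows the same template: a cofibration--acyclic-fibration lifting problem is solved one non-degenerate simplex at a time, at each stage reducing to a lifting problem in $\calm$ between $P_{\sigma}$ and the value of the target, and reassembling via clutching.

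The main obstacle is bookkeeping: one must ensure that the data chosen for $\sigma$ agrees, under restriction along each face, with what was built on $\partial\Delta[n]$, and that boundedness is preserved. This is precisely what the clutching construction of Section~\ref{sec clutching} guarantees, so the proof reduces to checking that (i) $P_{\sigma}$ is correctly identified with the colimit appearing in the cofibration definition, and (ii) Proposition~\ref{prop boundclutch} applies at each inductive step. Everything else is a routine transfer of \textbf{MC4} and \textbf{MC5} from $\calm$ one simplex at a time.
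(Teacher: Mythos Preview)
The paper does not prove this theorem; it is stated with a citation to \cite[Theorem 21.1]{MR2002k:55026} and no proof environment follows. Your sketch is the standard Reedy-style argument and is essentially what appears in the cited reference, so there is nothing to compare against here.

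Your outline is correct in substance. A couple of minor points of hygiene: the clutching construction of Section~\ref{sec clutching} as written in this paper takes as input a functor $G$ already defined on \emph{all} of $C=\coprod_{\sigma}\Delta[n]$, not just a value at the top cell, so when you invoke it you should first say explicitly how the single object $H_{n}(\sigma)$ together with the cocone $\text{colim}_{\partial\Delta[n]}H_{n-1}\sigma\to H_{n}(\sigma)$ determines a bounded functor on each $\Delta[n]$ (this is immediate since the identity simplex is terminal in the simplex category of $\Delta[n]$, but it should be stated). Second, in the lifting argument you should record that the lift $G(\sigma)\to H(\sigma)$ produced in $\calm$ is automatically compatible with the previously constructed $\ell_{n-1}$ on the boundary precisely because the upper triangle $P_{\sigma}\to G(\sigma)\to H(\sigma)$ commutes; this is what guarantees the extended $\ell_{n}$ is a natural transformation. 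With those two sentences added, the argument is complete.
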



Let $X$ be an object in $\calm$\@. A bounded functor $F\colon A\to X_{\text{we}}$ is called
cofibrant if its composition with  $X_{\text{we}}\subset \calm$ is cofibrant in
$\text{Fun}^b(A,\calm)$\@. The symbol  $\text{Cof}(A,X_{\text{we}})$ denotes the full subcategory of
$\text{\rm Fun}^{b}(A,X_{\text{we}})$ of  cofibrant functors. 

A map of simplicial sets $f\colon A\to B$ can send a non-degenerate simplex in $A$  to a degenerate simplex in $B$\@. This is the reason why 
the subcategories $\text{Cof}(A,X_{\text{we}})\subset \text{Fun}^b(A,X_{\text{we}})$ do not form a subsystem of $\text{Fun}^b(-,X_{\text{we}})$.

\begin{prop}\label{prop basicprophocolim}
Let $f:A\ra B$ be a map of spaces.
\begin{enumerate}
\item If  $\phi$ is a weak equivalence in $\text{\rm Fun}^{b}(A,\mathcal{M})$ between cofibrant objects, 
then  $f^{k}\phi$ is a weak equivalence in
$\text{\rm Fun}^{b}(B,\mathcal{M})$.
\item If $\phi$ is an (acyclic) cofibration  in $\text{\rm Fun}^{b}(A,\mathcal{M})$, then  $f^{k}\phi$ is an (acyclic) cofibration  in $\text{\rm Fun}^{b}(B,\mathcal{M})$.
\item Assume  $f$ is reduced  (see Section~\ref{sec simplex}). 
If $\phi$ is an (acyclic) cofibration in $\text{\rm Fun}^{b}(B,\calm)$, then so is
$f^{\ast}\phi$ in $\text{\rm Fun}^{b}(A,\calm)$.
\item If $\phi$ is
an (acyclic) cofibration in $\text{\rm Fun}^{b}(N(B),\mathcal{M})$, then so
is the natural transformation  $N(f)^{\ast}\phi$ in $\text{\rm Fun}^{b}(N(A),\mathcal{M})$.
\end{enumerate}
\end{prop}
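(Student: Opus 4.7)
The overall plan is to deduce (1) and (2) simultaneously from the fact that $(f^{k},f^{\ast})$ forms a Quillen adjunction on the bounded functor model structures of Theorem~\ref{thm modelmain}, to prove (3) by a direct latching-map argument exploiting the reducedness of $f$, and then to obtain (4) as an immediate corollary of (3).

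For (2), I would first observe that weak equivalences and fibrations in $\text{Fun}^{b}(-,\calm)$ are defined simplex-wise, and that the right adjoint $f^{\ast}$ is just precomposition, so $(f^{\ast}\phi)_{\sigma}=\phi_{f\sigma}$. It follows that $f^{\ast}$ preserves fibrations and acyclic fibrations, so $(f^{k},f^{\ast})$ is a Quillen adjunction and the left adjoint $f^{k}$ preserves cofibrations and acyclic cofibrations, which is exactly (2). Statement (1) is then Ken Brown's lemma applied to the left Quillen functor $f^{k}$: a functor sending acyclic cofibrations to weak equivalences automatically sends weak equivalences between cofibrant objects to weak equivalences.

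For (3), I would let $\phi\colon F\to G$ be a cofibration in $\text{Fun}^{b}(B,\calm)$ and $\sigma\colon\Delta[n]\to A$ a non-degenerate simplex. Because $f$ is reduced, $f\sigma$ is non-degenerate in $B$. The key point is that $\text{colim}_{\partial\Delta[n]}(f^{\ast}F)\sigma$ coincides with $\text{colim}_{\partial\Delta[n]}F(f\sigma)$: both compute the colimit of $F$ along the composite $\partial\Delta[n]\to A\to B$. Consequently the latching square that defines the cofibration condition for $f^{\ast}\phi$ at $\sigma$ is canonically identified with the latching square for $\phi$ at $f\sigma$, whose latching map is a cofibration in $\calm$ by assumption. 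Weak equivalences are preserved trivially since they are simplex-wise and $f^{\ast}$ only reindexes, so $f^{\ast}$ sends acyclic cofibrations to acyclic cofibrations as well.

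Finally, (4) is (3) applied to the map $N(f)\colon N(A)\to N(B)$, which is reduced by property~(3) of the subdivision recorded in Section~\ref{sec simplex}. The main point of care will be in (3), where one must verify that the identification of the two latching squares is a literal equality rather than merely a weak equivalence; once that bookkeeping is done, everything else reduces to formal manipulations with the Quillen adjunction and Ken Brown's lemma.
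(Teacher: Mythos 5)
Your proposal is correct, and for parts (3) and (4) it coincides with the paper: the paper disposes of (3) with ``follows from the definition'' and of (4) as a particular case, and your latching-square bookkeeping (reducedness of $f$ sends non-degenerate simplices to non-degenerate ones, and the latching square of $f^{\ast}\phi$ at $\sigma$ is literally the latching square of $\phi$ at $f\sigma$, since weak equivalences are simplex-wise) is exactly the content being alluded to. For (1) and (2) your route differs from the paper's: the paper simply cites the reference (Proposition 13.3(2) and Theorem 11.2 of the diagram-homotopy-theory memoir), whereas you derive them formally from the model structure of Theorem~\ref{thm modelmain} --- $f^{\ast}$ preserves fibrations and acyclic fibrations because they are defined simplex-wise, so $(f^{k},f^{\ast})$ is a Quillen pair, $f^{k}$ preserves (acyclic) cofibrations by the lifting characterization, and Ken Brown's lemma gives (1). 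This is a clean, self-contained argument within the logic of the present paper, which treats Theorem~\ref{thm modelmain} as a black box; the only caveat worth recording is that in the cited source the preservation of (acyclic) cofibrations by $f^{k}$ is proved directly, prior to and as an ingredient of the model structure, so your derivation should not be read as a proof of the cited results from scratch --- as a re-derivation inside this paper it buys brevity and avoids any explicit analysis of latching objects for $f^{k}$, at the cost of leaning on the full strength of Theorem~\ref{thm modelmain}.
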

\begin{proof}
Statement (1) is~\cite[Proposition 13.3(2)]{MR2002k:55026}. Statement (2) is
\cite[Theorem 11.2.]{MR2002k:55026}. Statement (3) follows  from the definition and (4)
is its particular case.
\end{proof}

Since, for  any map $f\colon A\to B$ of spaces,  $N(f)$ is reduced,  the subcategories $\text{Cof}(N(A),X_{\text{we}})\subset \text{\rm Fun}^{b}(N(A),X_{\text{we}})$ form a subsystem of $\text{\rm Fun}^{b}(N(-),X_{\text{we}})$.


\section{homotopy colimits and derived left Kan extensions}
\label{sec hocollkex}
The following proposition is part of~\cite[Theorem 11.3(1)]{MR2002k:55026} which states that 
$\epsilon^{k}\colon\text{Fun}^b(N(I),\calm)\rightleftarrows \text{Fun}(I;\calm):\epsilon^{\ast}$
is  a left model approximation (\cite[5.1]{MR2002k:55026}):

\begin{prop}\label{prop modelapprox}
Let $I$ be a small category.
\begin{enumerate}
\item Assume that $\phi\colon F\to G$ is a weak equivalence between cofibrant objects in\/
$\text{\rm Fun}^{b}(N(I),\calm)$. Then  $\epsilon^{k}\phi$ is a weak equivalence in\/ $\text{\rm Fun}(I,\calm)$.
\item  Let $F\colon I\to \calm$ be a functor  and $\psi\colon G\to \epsilon^{\ast}F$ be a weak equivalence 
in\/ $\text{\rm Fun}^{b}(N(I),\calm)$\@. If $G\colon N(I)\to\calm$ is cofibrant, 
then  the morphism $\epsilon^{k}G\to F$ which is adjoint  to $\psi$ is a weak equivalence.
\end{enumerate}
\end{prop}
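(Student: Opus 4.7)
The plan is to treat the two assertions together as saying that $\epsilon^{k}\dashv \epsilon^{\ast}$ is a left model approximation, handling part~(1) via Ken Brown's lemma and then deducing part~(2) by lifting $\psi$ through a cofibrant replacement of $\epsilon^{\ast}F$.

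For part~(1), I would first identify the pointwise value $(\epsilon^{k}F)(i)$, for $i \in I$, as the colimit of $F$ restricted along the forgetful functor from (the simplex category of) $N(I/i)$; this is the standard formula for left Kan extensions along $\epsilon\colon N(I)\to I$. The key geometric input is that $I/i$ has a terminal object, so $N(I/i)$ is contractible. Next I would invoke Ken Brown's lemma: it suffices to show that $\epsilon^{k}$ carries acyclic cofibrations between cofibrant objects in $\text{Fun}^{b}(N(I),\calm)$ to pointwise weak equivalences in $\text{Fun}(I,\calm)$. By Theorem~\ref{thm modelmain}, the acyclic cofibrations in $\text{Fun}^{b}(N(I),\calm)$ are generated, via the cellular structure indexed by non-degenerate simplices, by pushout products of boundary inclusions $\partial\Delta[n]\hookrightarrow\Delta[n]$ with acyclic cofibrations of $\calm$; since $\epsilon^{k}$ commutes with colimits, the claim reduces to checking pointwise weak equivalence on these generators, which follows from contractibility of the relevant over-categories together with the fact that $\calm$ is closed under the colimits involved.

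For part~(2), using the functorial factorization in the model category of Theorem~\ref{thm modelmain}, I take a cofibrant replacement $q\colon Q\epsilon^{\ast}F \twoheadrightarrow \epsilon^{\ast}F$ with $q$ an acyclic fibration and $Q\epsilon^{\ast}F$ cofibrant. Since $G$ is cofibrant and $q$ is an acyclic fibration, the weak equivalence $\psi$ lifts through $q$ to a morphism $h\colon G\to Q\epsilon^{\ast}F$ with $qh = \psi$; by two-out-of-three, $h$ is a weak equivalence between cofibrant objects. Part~(1) then gives that $\epsilon^{k}h$ is a weak equivalence, so it remains to show that the adjoint $\widetilde{q}\colon \epsilon^{k}Q\epsilon^{\ast}F\to F$ of $q$ is a weak equivalence; since the adjoint of $\psi$ factors as $\epsilon^{k}G \xrightarrow{\epsilon^{k}h} \epsilon^{k}Q\epsilon^{\ast}F \xrightarrow{\widetilde{q}} F$, two-out-of-three will finish. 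Evaluated at $i \in I$, the map $(\widetilde{q})_{i}$ computes a homotopy colimit of $(\epsilon^{\ast}F)$ pulled back along $N(I/i)\to N(I)$, whose restriction to the terminal object $\mathrm{id}_{i}$ is $F(i)$; a cofinality argument based on the contractibility of $N(I/i)$ identifies this homotopy colimit with $F(i)$.

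The main obstacle will be the final step of~(2): making precise the identification of $(\widetilde{q})_{i}$ as a homotopy colimit of a diagram weakly equivalent to a constant diagram with value $F(i)$, so that a Bousfield--Kan-type cofinality argument using the terminal object of $I/i$ can be applied. In part~(1), the technical care lies in verifying preservation of acyclic cofibrations; concretely, one must control how $\epsilon^{k}$ interacts with the non-degenerate-simplex filtration underlying the cofibrations in $\text{Fun}^{b}(N(I),\calm)$, ensuring that after left Kan extension along $\epsilon$ the resulting pushouts are still pointwise acyclic cofibrations in $\calm$.
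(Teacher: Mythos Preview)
The paper does not give a proof of this proposition; it merely records it as part of \cite[Theorem 11.3(1)]{MR2002k:55026}, which asserts that $\epsilon^{k}\dashv\epsilon^{\ast}$ is a left model approximation. Your outline is therefore not being compared to an argument in the present paper but to the one in that reference, and in broad strokes your strategy---identify $(\epsilon^{k}F)(i)$ with a colimit over $N(I/i)$, exploit contractibility of $I/i$, and run a Ken Brown reduction for~(1), then bootstrap~(2) through a cofibrant replacement of $\epsilon^{\ast}F$---is the right shape.

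There is, however, a genuine gap in your treatment of~(1). You write that ``the acyclic cofibrations in $\text{Fun}^{b}(N(I),\calm)$ are generated \ldots\ by pushout products of boundary inclusions $\partial\Delta[n]\hookrightarrow\Delta[n]$ with acyclic cofibrations of $\calm$''. This is the language of cofibrantly generated model categories, and no such hypothesis is placed on $\calm$ here (see~\ref{pt modelcat}). What Theorem~\ref{thm modelmain} gives you is a Reedy-type description: a map is an (acyclic) cofibration precisely when certain relative latching maps, one for each non-degenerate simplex, are (acyclic) cofibrations in $\calm$. That yields a skeletal filtration and lets you analyse $\epsilon^{k}\phi$ stage by stage via pushouts in $\calm$, but it does \emph{not} reduce you to a set of generating morphisms. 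The fix is to argue directly with that filtration (or, equivalently, to use that the restriction along the reduced map $N(I/i)\to N(I)$ preserves cofibrant objects by Proposition~\ref{prop basicprophocolim}(3), and then that colimits over $N(I/i)$ of weak equivalences between cofibrant bounded diagrams are weak equivalences, as in Proposition~\ref{prop basicprophocolim}(1) applied to the map to a point).

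For~(2), your reduction to the adjoint $\widetilde{q}\colon\epsilon^{k}Q\epsilon^{\ast}F\to F$ is correct. The endgame you sketch---that $(\widetilde{q})_i$ computes a homotopy colimit over $N(I/i)$ of a diagram with terminal value $F(i)$---is also the right idea, but note that $(\epsilon^{\ast}F)$ restricted to $N(I/i)$ is not constant, so you cannot simply invoke a ``constant diagram over a contractible category'' lemma; you need the cofinality of the terminal object $\text{id}_i\in I/i$ together with the fact that the cofibrant replacement makes the colimit homotopy invariant. This is exactly what the Bousfield--Kan machinery in the cited reference supplies.
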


Let  $P$  be  a functorial cofibrant replacement in   $\text{Fun}^b(N(I),\calm)$
(see~\ref{pt modelcat}).
We  use the same symbol $P$ to denote the following composition:
\[\xymatrix@C=15pt{
\text{Fun}(I,\calm)\rto^-{\epsilon^{\ast}}\ar@/_1.2pc/[rrr]|-P&\text{Fun}^b(N(I),\calm)\rto^-{P}& \text{Fun}^b(N(I),\calm)
\rto^-{\epsilon^k} & \text{Fun}(I,\calm)
}\]
%
Let $PF=\epsilon^{k}P\epsilon^{\ast}F\to F$ be the natural transformation  adjoint to the cofibrant replacement $P\epsilon^{\ast}F\to \epsilon^{\ast}F$\@.  According to~\cite[Theorem 11.3(1)]{MR2002k:55026} it is a weak equivalence. The functor 
$P\colon\text{Fun}(I,\calm)\to \text{Fun}(I,\calm)$ with this natural  transformation
is called a {\bf cofibrant replacement} in $\text{Fun}(I,\calm)$ and   $\text{colim}_IP(-)\colon\text{Fun}(I,\calm)\to \calm$  is called the {\bf  homotopy colimit}.
There is a natural transformation $\text{colim}_IP(-)\to \text{colim}_I (-)$ given by the colimit
of the cofibrant replacement. 
According to~\cite[Theorem 11.3(2)]{MR2002k:55026},  this natural transformation is   the total  left derived functor of the colimit which justifies
the name homotopy colimit.  

Analogous statements hold for arbitrary left Kan extensions. Let $f\colon I\to J$ be a functor
between small categories.
The functor $f^{k}PF\colon J\to \calm$ is called the {\bf derived  left Kan extension} of $F\colon I\to \calm$ along $f$\@.
Let 
$f^{k}PF\to f^kF$ be the natural transformation given by the left Kan extension of the 
  cofibrant replacement.  In this way we obtain a functor
 $ f^{k}P\colon\text{Fun}(I,\calm)\to \text{Fun}(J,\calm)$ and a natural transformation $f^kP\to f^k$\@.
According to~\cite[Theorem 11.3(3)]{MR2002k:55026}, this natural transformation is   the total  left derived functor of the left Kan extension.

Let $j$ be an object in $J$ and  $g\colon f\!\downarrow\! j\to I$ be the forgetful functor. The value of the derived left Kan extension $f^kPF(j)=\text{colim}_{f\downarrow j}(PF)g$
is weakly equivalent to the homotopy colimit  of the composition $Fg\colon f\!\downarrow\! j\to \calm$.

\section{Homotopy constant functors and their homotopy colimits}
\label{sec hdkepxwe}
A functor $F\colon I\to \calm$, indexed by a small category $I$, is called {\bf homotopy
constant} if it is isomorphic in $\text{Ho}(\text{Fun}(I,\calm))$ to a constant functor. 
To be homotopy constant it is  necessary for a functor to have  
values in a component $X_{\text{we}}$ for some  $X$ in $\calm$.
In general however this is not enough. Similarly to
the fact that fibrations of spaces over a contractible base are weakly equivalent to 
product fibrations, in the context of functors  according to~\cite[Corollary 29.2]{MR2002k:55026}, we have:

\begin{prop}\label{prop hconst}
Let $I$ be a small  contractible category and  $P$ a cofibrant replacement in\/ $\text{\rm Fun}(I,\calm)$. 
 Then, for any     $F\colon I\to  X_{\text{\rm we}}$,
the morphism
$PF(i)\ra \text{\rm colim}_IPF$, induced by the inclusion of an object  $i$ in $I$,   is
a weak equivalence and 
 $F$ is homotopy constant.
\end{prop}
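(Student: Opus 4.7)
The plan is to first reduce statement (2) to statement (1), and then prove (1) by passing to the bounded-functor model of Section~\ref{sec hocollkex} and exploiting contractibility of $N(I)$.

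For the reduction, if each coprojection $PF(i)\to\text{colim}_I PF$ is a weak equivalence, then these maps assemble into a natural transformation $PF\to\text{const}_{\text{colim}_I PF}$ which is a pointwise weak equivalence. Composed with the pointwise weak equivalence $PF\to F$, this exhibits $F$ as isomorphic in $\text{Ho}(\text{Fun}(I,\calm))$ to a constant functor, so $F$ is homotopy constant.

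For (1), let $\tilde F:=P\epsilon^{\ast}F$ be a cofibrant replacement of $\epsilon^{\ast}F$ in $\text{Fun}^{b}(N(I),\calm)$, so that $PF=\epsilon^{k}\tilde F$ and $\text{colim}_I PF=\text{colim}_{N(I)}\tilde F$. Since $F$ has values in $X_{\text{we}}$, every morphism in the simplex category $N(I)$ is sent by both $\epsilon^{\ast}F$ and $\tilde F$ to a weak equivalence, and the value of the unit $\tilde F\to\epsilon^{\ast}\epsilon^{k}\tilde F=\epsilon^{\ast}PF$ at a vertex $i$ is a weak equivalence $\tilde F(i)\to PF(i)$ (by 2-out-of-3, using that $\tilde F\to\epsilon^{\ast}F$ and $PF\to F$ are pointwise weak equivalences). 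It therefore suffices to prove the coprojection $\tilde F(i)\to\text{colim}_{N(I)}\tilde F$ is a weak equivalence.

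To establish this, the plan is to connect $\tilde F$ by a zigzag of weak equivalences between cofibrant objects of $\text{Fun}^{b}(N(I),\calm)$ to a constant functor $\text{const}_{\tilde X}$, where $\tilde X$ is a cofibrant replacement of $X$ in $\calm$. By Proposition~\ref{prop basicprophocolim}(1), applying $\text{colim}_{N(I)}$ along such a zigzag yields weak equivalences on both the colimits and the individual values, so the problem reduces to the constant case. For $\text{const}_{\tilde X}$, the colimit over $N(I)$ is the standard homotopy colimit of a constant diagram, which is weakly equivalent to $\tilde X$ precisely because $N(I)$ is contractible; under this identification the coprojection at each vertex is the standard weak equivalence.

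The principal obstacle is the construction of the zigzag connecting $\tilde F$ to a constant cofibrant bounded diagram. I expect this to require an inductive argument over the skeleta of $N(I)$: one uses that every transition morphism of $\tilde F$ is a weak equivalence, together with the lifting and factorization properties of the model structure of Theorem~\ref{thm modelmain}, to successively rectify $\tilde F$ toward a constant diagram while preserving cofibrancy at each stage. A cleaner alternative, if admissible, would be to invoke a direct homotopy invariance statement for derived left Kan extensions along the vertex inclusion $\Delta[0]\to N(I)$, which is a weak equivalence of spaces because $N(I)$ is contractible.
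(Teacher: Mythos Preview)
Your reduction of the second statement to the first is fine, and the passage to the bounded model via $\tilde F=P\epsilon^{\ast}F$ is the right move. The proposal is not a proof, however: you explicitly leave the ``principal obstacle'' --- the zigzag connecting $\tilde F$ to a constant cofibrant bounded diagram --- unconstructed, and this is precisely the content of the proposition. Two further points make the gap more serious than it may look. First, $\text{const}_{\tilde X}$ is in general \emph{not} cofibrant in $\text{Fun}^{b}(N(I),\calm)$, so your endpoint does not belong to the class of objects on which Proposition~\ref{prop basicprophocolim}(1) applies; you would need its cofibrant replacement, and then the ``constant case'' is no longer the trivial observation you describe. Second, the paper itself warns, just before Proposition~\ref{prop hocolimovercontract}, that a bounded functor weakly equivalent to a constant in $\text{Fun}(A,\calm)$ need not be weakly equivalent to a constant in $\text{Fun}^{b}(A,\calm)$; so the existence of the zigzag you seek inside $\text{Fun}^{b}(N(I),\calm)$ is exactly the delicate point, not a routine skeletal induction. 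Note also that Proposition~\ref{prop hocolimovercontract} is stated in the paper as a \emph{consequence} of the present proposition, so you cannot invoke it here.

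For comparison: the paper does not supply its own argument for this proposition at all; it is quoted from \cite[Corollary~29.2]{MR2002k:55026}. Your ``cleaner alternative'' --- homotopy invariance of the homotopy colimit of a diagram of weak equivalences under weak equivalences of indexing categories, applied to the vertex inclusion $[0]\hookrightarrow I$ --- is in fact the substance of what that reference proves, and is the right direction. If you want to make the argument self-contained, that is the statement to establish directly, rather than attempting to rectify $\tilde F$ to a constant inside $\text{Fun}^{b}$.
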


If  $I$ is contractible, then
the homotopy colimit  
$\text{colim}_IP(-)\colon \text{Fun}(I,\calm)\to \calm$ maps the subcategory 
$\text{Fun}(I,X_{\text{we}})\subset \text{Fun}(I,\calm)$ into 
$X_{\text{we}}\subset \calm$ (see~\ref{prop hconst}) inducing a  functor $\text{colim}_IP(-)\colon
\text{Fun}(I,X_{\text{we}})\to X_{\text{we}}$\@. More generally
let $f\colon I\to J$ be a functor of small categories  such that $f\!\downarrow\! j$ is contractible for any $j$ in $J$\@.
Then the derived left Kan extension $f^kP(-)\colon\text{Fun}(I,\calm)\to \text{Fun}(J,\calm)$ 
maps  $\text{\rm Fun}(I,X_{\text{\rm we}})$  
into
$\text{\rm Fun}(J,X_{\text{\rm we}})$ inducing  a functor
$f^kP(-)\colon\text{\rm Fun}(I,X_{\text{\rm we}})\to \text{\rm Fun}(J,X_{\text{\rm we}})$ (see~\ref{prop hconst}).
In particular let $f\colon A\to B$ be a map of simplicial sets. For any  $\sigma\colon\Delta[n]\to B$
the category $f\!\downarrow\! \sigma$ is the simplex category of the pull-back
$df(\sigma)=\text{lim}(\xymatrix@R=2pt@C=16pt{\Delta[n]\rto|(.52){\sigma} & B&A\lto|(.45){f}})$\@.
Thus if $df(\sigma)$ is contractible for any $\sigma$, then the derived left Kan extension
induces a functor 
$f^kP\colon \text{Fun}(A,X_{\text{we}})\to \text{Fun}(B,X_{\text{we}})$.

If a bounded functor $F\colon A\to \calm$ 
is weakly equivalent to a constant functor in $\text{Fun}(A,\calm)$, it 
does not necessarily have to be weakly equivalent to a constant functor in 
$\text{Fun}^b(A,\calm)$.
Furthermore even if $A$ is  contractible
and $F\colon A\to X_{\text{we}}$ is  bounded and  cofibrant, 
then  the morphism $F(\sigma)\to\text{colim}_AF$, induced by
the inclusion of a simplex $\sigma$ into $A$, may not be a weak equivalence. This is not
 true even if we assume  that $F$ is weakly equivalent to a constant functor in 
$\text{Fun}^b(A,\calm)$\@. An additional constraint on $A$ is needed. The following is
a consequence of~\ref{prop hconst}:

 \begin{prop}\label{prop hocolimovercontract}
 Let $F\colon N(I)\to \calm$ be  a   bounded and cofibrant
 functor weakly equivalent in\/ $\text{\rm Fun}^{b}(N(I),\calm)$ to a constant functor and $\sigma$ a simplex in $N(I)$. 
If  $I$ is contractible, then  the morphism $F(\sigma)\to\text{\rm colim}_{N(I)}F$ is  a weak equivalence.
\end{prop}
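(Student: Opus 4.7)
My strategy is to reduce directly to Proposition~\ref{prop hconst} applied to $N(I)$ viewed as a small category. First I would observe that $N(I)$ is contractible as a small category: by hypothesis $N(I)$ is contractible as a space, and since the nerve of the simplex category of a space is weakly equivalent to that space (see Section~\ref{sec simplex}), $N(N(I))$ is contractible as well. Second, $F$ takes values in $X_{\text{we}}$ for some $X$: evaluating the given zig-zag of weak equivalences between $F$ and a constant functor with value $Y$ at any simplex gives a zig-zag of weak equivalences from $F(\sigma)$ to $Y$, so I would take $X=Y$. Applying Proposition~\ref{prop hconst} with its indexing category replaced by our $N(I)$ then yields, for each simplex $\sigma$, a weak equivalence $P'F(\sigma)\to \text{colim}_{N(I)}P'F$, where $P'=\epsilon^{k}P_{b}\epsilon^{*}$ is the cofibrant replacement in $\text{Fun}(N(I),\calm)$ from Section~\ref{sec hocollkex}, built from $\epsilon\colon N(N(I))\to N(I)$ and the bounded cofibrant replacement $P_{b}$ in $\text{Fun}^{b}(N(N(I)),\calm)$.

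To finish I would use the commutative square
\[
\xymatrix@R=11pt@C=15pt{
P'F(\sigma)\rto\dto & \text{colim}_{N(I)}P'F\dto\\
F(\sigma)\rto & \text{colim}_{N(I)}F
}
\]
whose horizontals are the canonical maps into the colimit and whose verticals are induced by the pointwise weak equivalence $P'F\to F$. The left vertical map is a weak equivalence by construction of $P'$, and the top horizontal is the weak equivalence from Proposition~\ref{prop hconst}. For the right vertical, I would note that $P'F=\epsilon^{k}P_{b}\epsilon^{*}F$ lies in $\text{Fun}^{b}(N(I),\calm)$ (because $\epsilon^{k}$ preserves bounded functors) and is cofibrant there (because $\epsilon^{k}$ preserves cofibrations by Proposition~\ref{prop basicprophocolim}(2)), while $F$ is cofibrant by hypothesis. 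Applying Proposition~\ref{prop basicprophocolim}(1) to the map of spaces $p\colon N(I)\to \Delta[0]$, whose left Kan extension is the colimit functor over $N(I)$, then yields that the right vertical is a weak equivalence. It follows from the two-out-of-three property that $F(\sigma)\to \text{colim}_{N(I)}F$ is a weak equivalence.

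The main obstacle will be purely bookkeeping: keeping straight in which model structure each cofibrancy claim is made, and verifying the naturality of the square above. Once these identifications are in place the argument is an immediate application of Proposition~\ref{prop hconst} together with the standard preservation properties in Proposition~\ref{prop basicprophocolim}.
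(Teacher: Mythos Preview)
Your overall strategy—reducing to Proposition~\ref{prop hconst} applied with the simplex category $N(I)$ in the role of the contractible indexing category, then comparing via the displayed square—is exactly the route the paper has in mind. The contractibility of $N(I)$, the fact that $F$ lands in some $X_{\text{we}}$, and the identification of the top and left edges of the square as weak equivalences are all correct.

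The gap is in your justification of the right vertical. You assert that $P'F=\epsilon^{k}P_{b}\epsilon^{*}F$ is bounded and cofibrant in $\text{Fun}^{b}(N(I),\calm)$, citing Proposition~\ref{prop basicprophocolim}(2). But that proposition (and the boundedness-preservation statement in Section~\ref{sec boundedfunctors}) is about left Kan extension along a map of \emph{spaces} $f\colon A\to B$; the functor $\epsilon\colon N(N(I))\to N(I)$ of Section~\ref{sec boundedfunctors} is not the simplex-category functor induced by any map of spaces. Neither boundedness nor cofibrancy of $\epsilon^{k}G$ follows from the cited results, and in general $P'F$ need not even be a bounded functor on $N(I)$. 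You therefore cannot feed $P'F\to F$ into Proposition~\ref{prop basicprophocolim}(1).

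A clean way to close the gap is to bypass the cofibrancy of $P'F$ entirely. The constant-diagram functor $\calm\to\text{Fun}^{b}(N(I),\calm)$ preserves fibrations and acyclic fibrations (these are defined objectwise), so its left adjoint $\text{colim}_{N(I)}$ is left Quillen for the model structure of Theorem~\ref{thm modelmain}. Hence for your cofibrant $F$ the object $\text{colim}_{N(I)}F$ already represents the value of the total left derived functor of $\text{colim}_{N(I)}$ at $F$. Since Section~\ref{sec hocollkex} identifies $\text{colim}_{N(I)}P'(-)\to\text{colim}_{N(I)}(-)$ with precisely this total left derived functor, the comparison map $\text{colim}_{N(I)}P'F\to\text{colim}_{N(I)}F$ is a weak equivalence whenever $F$ is cofibrant, and your square then finishes the proof.
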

%

\section{Mapping spaces in model categories}\label{sec mappinginmc}
The homotopy colimits of  constant functors  
are homotopy invariant with respect to the indexing categories and thus 
the functor $\text{Spaces}\ni A\mapsto \text{hocolim}_A X\in\text{Ho}(\mathcal{M})$ is   a composition of the localization $\text{Spaces}\to \text{Ho}(\text{Spaces})$ and a functor denoted by  $X\otimes_l-\colon\text{Ho}(\text{Spaces})\to \text{Ho}(\mathcal{M})$  (\cite[Proposition 7.1]{MR2443229}). A key result in~\cite{MR2443229} states that $X\otimes_l-$ has a right adjoint $\text{map}(X,-)\colon\text{Ho}(\mathcal{M})\to \text{Ho}(\text{Spaces})$\@. Its value  $\text{map}(X,Y)$  is what we take to be the homotopy type of the mapping space between $X$ and $Y$ in $\calm$\@. If $\calm$ is a simplicial model category, then  the mapping space between a cofibrant and a fibrant objects given by the simplicial structure on $\calm$ is weakly equivalent to the value of this  right adjoint.
\begin{notation}\label{pt hcons}
The full subcategory in  $\text{Fun}^{b}(N(\Delta[0]),\calm)$ of   
 cofibrant and fibrant objects which are  weakly equivalent in  $\text{Fun}^{b}(N(\Delta[0]),\calm)$ to constant functors is denoted by 
 $\text{Cons}(N(\Delta[0]),\calm)$.
 
To make formulas more transparent the effect of  $N(p)^{\ast}\colon \text{Fun}^b(N(\Delta[0]),\calm)\to \text{Fun}^b(N(\Delta[n]),\calm)$ is denoted by adding the symbol $[n]$\@.  Thus  if $f\colon F\to G$ is a natural transformation in 
$\text{Fun}^b(N(\Delta[0]),\calm)$, then $f[n]\colon F[n]\to G[n]$ denotes the natural transformation
$N(p)^{\ast}f\colon N(p)^{\ast}F\to N(p)^{\ast}G$ in  $\text{Fun}^b(N(\Delta[n]),\calm)$\@. 
In particular $F=F[0]$ for any $F$ in $\text{Fun}^b(N(\Delta[0]),\calm)$\@.
Note that  $F[n]=N(\alpha)^{\ast} F[m]$  for any map $\alpha\colon\Delta[n]\to\Delta[m]$ and any functor $F$ in $\text{Fun}^b(N(\Delta[0]),\calm)$.
\end{notation}

We  now recall the construction of  mapping spaces in \smallskip a model category from~\cite{MR2443229}.

\noindent
{\bf Step 1.}
Let  $\alpha\colon\Delta[n]\to\Delta[m]$ be  a map. For   objects  
$F$ and $G$   in    $\text{Cons}(N(\Delta[0]),\calm)$,
define 
 a function of sets
 $\text{map}(F,G)_{\alpha}\colon\text{map}(F,G)_m\to \text{map}(F,G)_n$  by the formula:
\[\xymatrix@R=6pt@C=15pt{
  \text{map}(F,G)_m\ar@{*{\cdot\cdot}=}[d]\rrto^{\text{map}(F,G)_{\alpha}} & &  \text{map}(F,G)_n\ar@{*{\cdot\cdot}=}[d]\\
 \text{Nat}(F[m],G[m])\rrto^{N(\alpha)^{\ast}} & &  \text{Nat}(F[n],G[n])
}\]
In this way we obtain a simplicial set  $\text{map}(F,G)$.   
\smallskip

\noindent
{\bf Step 2.}
Let $f\colon E\to F$ and $g\colon G\to H$ be morphisms in  $\text{Cons}(N(\Delta[0]),\calm)$\@. Define a map of simplicial sets $\text{map}(f,g)\colon \text{map}(F,G)\to \text{map}(E,H)$  
by the formula:
\[\xymatrix@R=6pt@C=15pt{
\text{map}(F,G)_n\ar@{=}[d]\ar@{->}[rrr]^{\text{map}(f,g)_n} & &&  \text{map}(E,H)_n\ar@{=}[d]\\
 \text{Nat}(F[n],G[n])\ar@{->}[rrr]^(.47){\text{Nat}(f[n],g[n])} & &&   \text{Nat}_{\calm}(E[n],H[n])
}\]
In this way we have
constructed a functor:
\[\text{map}(-,-)\colon \text{Cons}(N(\Delta[0]),\calm)^{\text{op}}\times \text{Cons}(N(\Delta[0]),\calm\to \text{Spaces}
\smallskip\]

\noindent
{\bf Step 3.}
For any  $F$, $G$, and $H$  in $\text{Cons}(N(\Delta[0]),\calm)$, define the composition
$\circ\colon\text{map}(F,G)\times \text{map}(G,H)\to \text{map}(F,H)$
to be  in each degree the composition of natural transformations:
\[ \xymatrix@R=11pt@C=10pt{
\text{map}(F,G)_n\times \text{map}(G,H)_n\ar@{=}[r]\dto^{\circ_n} &\text{Nat}(F[n],G[n])\times 
 \text{Nat}(G[n],H[n])\dto &(\phi,\psi)\ar@{|->}[d]  \ar@{}[l]|(.23){\ni}\\ 
\text{map}(F,H)_n\ar@{=}[r] & \text{Nat}(F[n],H[n]) & \ar@{}[l]|(.23){\ni}\psi\phi,
}\smallskip\]

\noindent
{\bf Step 4.} For any   $F$  in  $\text{Cons}(N(\Delta[0]),\calm)$,  define $e_F\colon\Delta[0]\to \text{map}(F,F)$ to be given by the $0$-dimensional simplex  $\text{id}_{F[0]}$ in  $\text{map}(F,F)_0=\text{Nat}(F[0],F[0])$.
\smallskip

  \begin{prop}
  \label{prop propertiesofmapping}
  Let $E$, $F$, $G$, and $H$ be objects in\/   $\text{\rm Cons}(N(\Delta[0]),\calm)$.
\begin{enumerate}
\item
The following diagrams commute
($\circ$ is associative and has  a unit):
\[\xymatrix@R=11pt@C=18pt{
\text{\rm map}(E,F)\times\text{\rm map}(F,G)\times \text{\rm map}(G,H)\rto^(.58){\circ\times\text{\rm id}}\dto_{\text{\rm id}\times\circ}& 
\text{\rm map}(E,G)\times \text{\rm map}(G,H)\dto^{\circ}\\
\text{\rm map}(E,F)\times\text{\rm map}(F,H)\rto^(.58){\circ} & \text{\rm map}(E,H)
}\]
\[\xymatrix{
\text{\rm map}(E, F)\times \Delta[0]\rto_(.44){\text{\rm id}\times e_F} \ar@/^16pt/[rr]|{\text{\rm pr}}  &  \text{\rm map}(E, F)\times \text{\rm map}(F, F)
\rto_(.63){\circ} &\text{\rm map}(E, F)
}\]
\[\xymatrix{
\Delta[0]\times \text{\rm map}(E, F)\rto^(.43){e_E\times\text{\rm id}} \ar@/ _ 16pt/[rr]|{\text{\rm pr}}  &  \text{\rm map}(E, E)\times \text{\rm map}(E, F)
\rto^(.63){\circ} &\text{\rm map}(E, F)
}\]
\item $\text{\rm map}(E,F)$ is Kan.
\item If $f\colon G\to E$ and $g\colon F\to H$ are weak
equivalences, 
then  the map of spaces $\text{\rm map}(f,g)\colon\text{\rm
map}(E,F)\to \text{\rm map}(G,H)$ is a weak
equivalence.
\item Let $\sigma$ be a simplex in $N(\Delta[0])$\@. The assignment which maps a\/ $0$-dimensional simplex  $f\colon F=F[0]\to G[0]=G$ in\/ $\text{\rm map}(F,G)$ to the  morphism   in $\text{\rm Ho}(\calm)$ 
represented by $f_{\sigma}\colon F(\sigma)\to G(\sigma)$ is   a bijection between  
$\pi_{0}\text{\rm map}(F,G)$ and\/
 $[F(\sigma),G(\sigma)]$\@.
This bijection is natural in $F$ and $G$ and  takes the composition $\circ$ to  the composition of morphisms in $\text{\rm Ho}(\calm)$.
\end{enumerate}
\end{prop}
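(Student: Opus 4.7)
The plan is to unpack the definitions of Section~\ref{sec mappinginmc} and to exploit the model structure on $\text{\rm Fun}^{b}(N(\Delta[n]),\calm)$ from Theorem~\ref{thm modelmain} together with Proposition~\ref{prop basicprophocolim}, which guarantees that base-change along $N(\alpha)^{\ast}$ preserves (acyclic) cofibrations since the nerve of any map is reduced. The crucial technical input for parts (2)--(4) is that for $E,F\in\text{\rm Cons}(N(\Delta[0]),\calm)$ the shifted functor $E[n]=N(p_{n})^{\ast}E$ is cofibrant and $F[n]$ is fibrant in $\text{\rm Fun}^{b}(N(\Delta[n]),\calm)$: cofibrancy follows from Proposition~\ref{prop basicprophocolim}.(4), and fibrancy is detected pointwise.

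Part (1) is formal. At each simplicial degree $n$, $\circ_{n}$ is ordinary composition of natural transformations in $\text{\rm Fun}^{b}(N(\Delta[n]),\calm)$, so the associativity square commutes dimensionwise. The unit $e_{F}$ is represented by $\text{id}_{F[0]}$, which pulls back under $N(p_{n})^{\ast}$ to $\text{id}_{F[n]}$, making the unit triangles the identity laws for composition of natural transformations. Compatibility with the face and degeneracy operators on $\text{\rm map}(-,-)$ is automatic from the functoriality of $N(\alpha)^{\ast}$.

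For (2), I would recast the horn-filling problem $\Lambda^{n}_{k}\to\text{\rm map}(E,F)$ as a lifting problem in $\text{\rm Fun}^{b}(N(\Delta[n]),\calm)$. By the Kan-extension adjunction such a horn corresponds to a natural transformation $N(\iota)_{!}N(\iota)^{\ast}E[n]\to F[n]$, where $\iota\colon\Lambda^{n}_{k}\hookrightarrow\Delta[n]$ is the horn inclusion, and a filler corresponds to an extension to $E[n]\to F[n]$. The counit $N(\iota)_{!}N(\iota)^{\ast}E[n]\to E[n]$ turns out to be an acyclic cofibration — using cofibrancy of $E[n]$ and that $N(\iota)$ is an acyclic cofibration since $\iota$ is — so fibrancy of $F[n]$ supplies the required lift. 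For (4), a $1$-simplex in $\text{\rm map}(F,G)$ is a natural transformation $h\colon F[1]\to G[1]$ with $d_{0}h=f$ and $d_{1}h=g$; since $N(\Delta[1])$ is contractible, Proposition~\ref{prop hocolimovercontract} shows $F[1]$ and $G[1]$ are homotopy constant, so $h$ yields a left homotopy between $f_{\sigma}$ and $g_{\sigma}$ realising equality in $\text{\rm Ho}(\calm)$; conversely any such equality is witnessed by a left homotopy between cofibrant-fibrant objects which can be promoted to such a $1$-simplex. Compatibility with composition is then a direct unwinding.

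Part (3) reduces via (4) to showing that pre- and post-composition with weak equivalences between cofibrant-fibrant objects induces bijections on $[-,-]$ in $\text{\rm Ho}(\calm)$, a standard fact; the higher homotopy groups follow by applying the same argument at each shift $E[n]$, $F[n]$. The main obstacle I anticipate is making the argument for (2) fully precise: identifying and verifying the acyclic cofibration $N(\iota)_{!}N(\iota)^{\ast}E[n]\to E[n]$ requires careful tracking of the clutching construction of Section~\ref{sec clutching} and of the interplay between the model structures on different simplex categories.
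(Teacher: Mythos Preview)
Your treatment of (1) matches the paper's: both observe that associativity and unitality are inherited from composition of natural transformations. For (2)--(4), the paper does not argue directly but simply cites results from~\cite{MR2443229} (Proposition~8.5, Corollary~8.4, and Corollary~8.6 together with Proposition~9.2(2)). Your attempt to supply self-contained proofs is therefore a genuinely different route, and worth pursuing, but there are gaps.

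For (3), your reduction to (4) handles $\pi_0$ correctly, but ``applying the same argument at each shift $E[n], F[n]$'' does not give the higher homotopy groups of $\text{map}(E,F)$. Knowing that $[E[n],F[n]]\to[G[n],H[n]]$ is a bijection for every $n$ tells you only that $\text{map}(f,g)$ is a bijection on $\pi_0$ in each simplicial degree, not that it induces bijections on $\pi_n$ of the simplicial set $\text{map}(E,F)$; these are different statements. A cleaner route is to observe that $f$ and $g$, being weak equivalences between cofibrant--fibrant objects in $\text{Fun}^b(N(\Delta[0]),\calm)$, are homotopy equivalences there, and then to show that a left or right homotopy in that model category induces a simplicial homotopy between the induced maps on $\text{map}(-,-)$. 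But this last step itself requires justification and is essentially what~\cite{MR2443229} establishes.

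For (2), you are right to flag the adjunction identification as the crux and to be uneasy about it. What you need is that a map $K\to\text{map}(E,F)$ corresponds to a natural transformation between the pullbacks of $E$ and $F$ to $\text{Fun}^b(N(K),\calm)$; granting that, the Kan condition does reduce to showing the counit $N(\iota)^{k}N(\iota)^{\ast}E[n]\to E[n]$ is an acyclic cofibration, and the ingredients you cite (Proposition~\ref{prop basicprophocolim} and cofibrancy of $E[n]$) are the right ones. But that first identification is not immediate from the definitions in Section~\ref{sec mappinginmc} and would need to be established separately. Your sketch for (4) is essentially sound.
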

\begin{proof}
 (1)  follows from analogous properties of  the  composition of natural transformations.\/
 (2)  is a particular case of~\cite[Proposition 8.5]{MR2443229} and (3) 
of~\cite[Corollary 8.4]{MR2443229}. Finally,  (4) is a consequence of~\cite[Corollary 8,6 and 
Proposition 9.2(2)]{MR2443229}.
\end{proof}

Let us  choose a functorial factorization  $P$ in $\text{Fun}^{b}(N(\Delta[0]),\calm)$
as defined in~\ref{pt modelcat} and a functorial fibrant replacement $R$ in $\calm$\@. 
By applying $R$ object-wise we obtain a functorial fibrant replacement in $\text{\rm Fun}^{b}(N(\Delta[0]),\calm)$\@. 
For any bounded functor $F\colon N(\Delta[0])\to\calm$ consider the factorization of $\emptyset \to RF$
given by $P$:
\[\xymatrix@R=11pt@C=15pt{\emptyset \dto \rmono & PRF\ar@{->>}[d]^(.4){\simeq} \\
F\ar@{->}[r]^(.45){\simeq} & RF}\]
Define $QF$ to  be   $PRF$\@.
It is a functorial cofibrant-fibrant replacement in  the category $ \text{\rm Fun}^{b}(N(\Delta[0]),\calm)$.
The natural weak equivalences
$F\xrightarrow{\simeq} RF \xleftarrow{\simeq} PRF=QF$
are called the {\bf standard comparison morphisms} between $QF$ and $F$.

For an object  $X$  in $\calm$, the same symbol $X\colon N(\Delta[0])\to \calm$  is used to denote the constant 
functor with value  $X$. Note that $QX$ is 
an object in  $\text{\rm Cons}(N(\Delta[0]),\calm)$. In this way we obtain a functor
$Q\colon\calm\to \text{\rm Cons}(N(\Delta[0]),\calm)$\@.
Its  composition with the mapping space defined above  is denoted by $\text{map}(-,-)\colon \calm^{\text{op}}\times \calm\to \text{Spaces}$. 

\section{The spaces of weak equivalences}\label{sec we}
In Section~\ref{sec mappinginmc} we described a construction of  mapping spaces. 
    Here  we discuss the spaces of weak equivalences.
Let $F$ and $G$ be objects in  $\text{\rm Cons}(N(\Delta[0]),\calm)$ and $\sigma$ be a simplex 
in $N(\Delta[0])$. The set of components $\pi_0\text{map}(F,G)$ is in bijection with
the set of morphisms $[F(\sigma),G(\sigma)]$ in $\text{Ho}(\calm)$\@.  The component of $\text{map}(F,G)$
corresponding to $\alpha$ in $ [F(\sigma),G(\sigma)]$ is  denoted by $\text{map}(F,G)_{\alpha}$.  
Define:
\[\text{we}(F,G):=\coprod_{\alpha\in [F(\sigma),G(\sigma)] \text{ is an isomorphism}}\text{map}(F,G)_{\alpha}\]
Since $N(\Delta[0])$ is connected and  $F$ and $G$ are weakly equivalent to constant functors, 
$\text{we}(F,G)$ does not depend on the choice of the simplex $\sigma$.

The spaces of weak equivalences $\text{we}(F,G)$
do  not  form a subfactor in  
$\text{map}(-,-)$, since 
in general $\text{map}(f,\text{id})\colon\text{map}(F,G)\to\text{map}(E,G)$ does not take
the component corresponding to  an isomorphism in $[F(\sigma),G(\sigma)]$ to a component  corresponding to
an isomorphism in $[E(\sigma),G(\sigma)]$\@. However, if both $f\colon E\to F$ and $g\colon G\to H$ are weak equivalences, then $\text{map}(f,g)$ takes $\text{we}(F,G)$ into $\text{we}(E,H)$ and in this case we use the symbol 
$\text{we}(f,g)\colon\text{we}(F,G)\to \text{we}(E,H)$ to denote the induced map. Thus $\text{we}(f,g)$ is defined only if $f$ and $g$
are weak equivalences.

Let $F$ and $G$ be objects in $\text{\rm Cons}(N(\Delta[0]),\calm)$\@. Define $\text{Natwe}(F[n],G[n])$  
to be the subset of $\text{map}(F,G)_n=\text{Nat}(F[n],G[n])$ which consists of the natural transformations
$f\colon F[n]\to G[n]$ in $\text{Fun}^{b}(N(\Delta[n]),\calm)$ which are weak equivalences.

\begin{prop} 
$\text{\rm we}(F,G)_n=\text{\rm Natwe}(F[n],G[n])$.
\end{prop}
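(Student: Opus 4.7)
The strategy is to rewrite both sides as conditions on which $f_{\sigma}\colon F[n](\sigma)\to G[n](\sigma)$ are weak equivalences in $\calm$, and then bridge them using $F,G\in\text{Cons}(N(\Delta[0]),\calm)$.

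First I would verify that every structure morphism of $F$ and of $G$ (i.e.\ $F(\alpha)$ and $G(\alpha)$ for $\alpha$ a morphism in the simplex category of $N(\Delta[0])$) is a weak equivalence in $\calm$. By definition of $\text{Cons}(N(\Delta[0]),\calm)$ there is a zigzag of weak equivalences between $F$ (respectively $G$) and a constant functor in $\text{Fun}^{b}(N(\Delta[0]),\calm)$; such weak equivalences are pointwise. Constant functors trivially have weak-equivalence structure maps (they are identities), and applying 2-out-of-3 to the naturality square of each step of the zigzag propagates this property to $F$ and $G$. Pulling back along $N(p)\colon N(\Delta[n])\to N(\Delta[0])$, the same holds for $F[n]=N(p)^{\ast}F$ and $G[n]=N(p)^{\ast}G$.

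Next I unpack $\text{we}(F,G)_n$. Given $f\in\text{map}(F,G)_n=\text{Nat}(F[n],G[n])$ and a vertex $v\colon\Delta[0]\to\Delta[n]$, the $0$-simplex $N(v)^{\ast}f$ is a vertex of the $n$-simplex $f$, hence lies in the same connected component of $\text{map}(F,G)$. By~\ref{prop propertiesofmapping}.(4) that component corresponds, under the bijection $\pi_{0}\text{map}(F,G)\cong[F(\tau),G(\tau)]$, to the class of $(N(v)^{\ast}f)_{\tau}=f_{N(v)(\tau)}$, for any simplex $\tau$ of $N(\Delta[0])$. Thus $f\in\text{we}(F,G)_n$ iff $f_{N(v)(\tau)}$ is a weak equivalence in $\calm$, for some (equivalently, any) choice of $v$ and $\tau$. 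The inclusion $\text{Natwe}(F[n],G[n])\subset\text{we}(F,G)_n$ is then immediate.

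For the reverse inclusion, given $f\in\text{we}(F,G)_n$ I would apply naturality of $f$ along an arbitrary morphism $\alpha\colon\sigma\to\sigma'$ in the simplex category of $N(\Delta[n])$: the horizontal arrows of the resulting square are $F[n](\alpha)$ and $G[n](\alpha)$, both weak equivalences by the first step, so 2-out-of-3 makes the property ``$f_{\sigma}$ is a weak equivalence'' invariant under moving along morphisms of that category. Since $N(\Delta[n])$ is connected, any simplex is connected to some $N(v)(\tau)$ by a zigzag of such morphisms, so $f_{N(v)(\tau)}$ being a weak equivalence forces every $f_{\sigma}$ to be a weak equivalence, giving $f\in\text{Natwe}(F[n],G[n])$. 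The only technically subtle point is the first step --- showing that ``structure maps are weak equivalences'' propagates along the defining zigzag for $\text{Cons}$ --- but this is a short naturality plus 2-out-of-3 argument.
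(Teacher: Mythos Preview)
Your proposal is correct and follows essentially the same approach as the paper: both identify the component of $f$ via~\ref{prop propertiesofmapping}.(4) evaluated at a simplex of the form $N(v)(\tau)$, and both reduce the equivalence to the statement that $f$ is a pointwise weak equivalence iff a single component $f_{\tau}$ is. The paper simply asserts the latter in one line (``as $F[n]$ and $G[n]$ are weakly equivalent to constant functors''), whereas you spell out the underlying 2-out-of-3 plus connectedness argument explicitly.
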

\begin{proof}
Let $f$ be an element in $\text{map}(F,G)_n=\text{Nat}(F[n],G[n])$\@.
As $F[n]$ and $G[n]$ are  weakly equivalent to  constant functors,  $f$ is a weak equivalence if and only if  for some  $\tau$ in $N(\Delta[n])$,
$f_{\tau}\colon F[n](\tau)\to G[n](\tau)$ is a weak equivalence.
Choose  a map $v\colon\Delta[0]\to \Delta[n]$ and a simplex $\sigma$ in $N(\Delta[0])$\@.
Let $\tau=N(v)(\sigma)$\@. Note that  $F[n](\tau)=F(\sigma)$ and $G[n](\tau)=G(\sigma)$\@.
According to~\ref{prop propertiesofmapping}.(4)  $f$ is  an $n$-dimensional simplex in 
$\text{map}(F,G)$ that belongs to the component  determined by the  morphism
in $[F(\sigma),G(\sigma)] $ represented by
$f_{\tau}\colon F[n](\tau)=F(\sigma)\to G(\sigma)=G[n](\tau)$\@.
This morphism is  an isomorphism in $\text{Ho}(\calm)$ if and only if $f_{\tau}$ is a weak equivalence.
\end{proof}

For example since $\text{id}\colon F[0]\to F[0]$ belongs to $\text{Natwe}(F[0],F[0])$, the map $e_F\colon\Delta[0]\to \text{map}(F,F)$ factors as a composition of a map which  we denote by the same symbol $ e_F\colon\Delta[0]\to \text{we}(F,F)$ and the inclusion $\text{we}(F,F)\subset  \text{map}(F,F)$.

Compositions of weak equivalences are weak equivalences and hence 
the restriction of the composition operation  $\circ$  fits into a commutative diagram:
 \[\xymatrix@R=11pt@C=15pt{
\text{we}(F,G)\times  \text{we}(G,H)\ar@{^(->}[d]\rto^(.6){\circ} & \text{we}(F,H)\ar@{^(->}[d]\\
\text{map}(F,G)\times  \text{map}(G,H)\rto^(.63){\circ} &  \text{map}(F,H)
}\]
This together with~\ref{prop propertiesofmapping} gives:
\begin{cor}\label{cor propertiesofwe}
Let $E$, $F$, $G$, and $H$ be objects in\/ $\text{\rm Cons}(N(\Delta[0]),\calm)$.
\begin{enumerate}
\item
The following diagrams commute ($\circ$ is associative and has a unit):
\[\xymatrix@R=11pt@C=18pt{
\text{\rm we}(E,F)\times\text{\rm we}(F,G)\times \text{\rm we}(G,H)\rto^(.58){\circ\times\text{\rm id}}\dto_{\text{\rm id}\times\circ}& 
\text{\rm we}(E,G)\times \text{\rm we}(G,H)\dto^{\circ}\\
\text{\rm we}(E,F)\times\text{\rm we}(F,H)\rto^(.58){\circ} & \text{\rm we}(E,H)
}\]
\[\xymatrix{
\text{\rm we}(E, F)\times \Delta[0]\rto_-{\text{\rm id}\times e_F} \ar@/^16pt/[rr]|{\text{\rm pr}}  &  \text{\rm we}(E, F)\times \text{\rm we}(F, F)
\rto_-{\circ} &\text{\rm we}(E, F)
}\]
\[\xymatrix{
\Delta[0]\times \text{\rm we}(E, F)\rto^(.43){e_E\times\text{\rm id}} \ar@/ _ 16pt/[rr]|{\text{\rm pr}}  &  \text{\rm we}(E, E)\times \text{\rm we}(E, F)
\rto^(.63){\circ} &\text{\rm we}(E, F)
}\]

\item  $\text{\rm we}(F,G)$ is Kan.
\item If $f\colon G\to E$ and $g\colon F\to H$ are weak
equivalences, then so is the map of spaces\/  $\text{\rm we}(f,g)\colon\text{\rm
we}(E,F)\to \text{\rm we}(G,H)$.
\end{enumerate}
\end{cor}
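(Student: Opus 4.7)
The strategy is to deduce all three statements from the already established properties of $\text{map}(-,-)$ (Proposition~\ref{prop propertiesofmapping}) by exploiting the fact that, for each pair of objects, $\text{we}(F,G)$ is a union of connected components of $\text{map}(F,G)$.

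For (1), I will observe that each of the diagrams to be checked is obtained by restricting the corresponding diagram for $\text{map}(-,-)$ along the componentwise inclusions $\text{we}(-,-)\subset\text{map}(-,-)$. The unit element $e_F$ already lifts along this inclusion by construction, and the composition $\circ$ restricts to $\text{we}$ (as recorded in the commutative square preceding the corollary). Hence commutativity of the three diagrams is inherited from Proposition~\ref{prop propertiesofmapping}.(1).

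For (2), the point is that any subspace of a Kan complex which is a union of connected components is itself Kan. Given a horn $\Lambda^n_k\to\text{we}(F,G)$ with $n\geq 1$, compose with the inclusion into $\text{map}(F,G)$ and use Proposition~\ref{prop propertiesofmapping}.(2) to extend to a simplex $\Delta[n]\to\text{map}(F,G)$. Since $\Delta[n]$ is connected, its image lies in a single component, which is determined by (any vertex of) the horn and therefore sits inside $\text{we}(F,G)$. So the extension factors through $\text{we}(F,G)$.

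For (3), the main point is to identify $\pi_0\text{we}(F,G)$ inside $\pi_0\text{map}(F,G)$ and show that the bijection $\pi_0\text{map}(f,g)$ respects this identification. By Proposition~\ref{prop propertiesofmapping}.(4), a component of $\text{map}(F,G)$ lies in $\text{we}(F,G)$ exactly when its class in $[F(\sigma),G(\sigma)]$ is an isomorphism in $\text{Ho}(\calm)$, and the induced map on $\pi_0$ is given by pre- and postcomposition with the classes of $f$ and $g$. Since $f$ and $g$ are weak equivalences, these classes are isomorphisms in $\text{Ho}(\calm)$, so pre- and postcomposition with them both preserves and reflects being an isomorphism. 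Thus $\pi_0\text{map}(f,g)$ restricts to a bijection $\pi_0\text{we}(F,G)\to\pi_0\text{we}(E,H)$. Combined with Proposition~\ref{prop propertiesofmapping}.(3), which says that $\text{map}(f,g)$ is a weak equivalence between Kan complexes and is therefore a weak equivalence on each component, this shows that the restriction $\text{we}(f,g)\colon\text{we}(F,G)\to\text{we}(E,H)$ is a weak equivalence. The main (modest) obstacle is simply bookkeeping: one must check that the bijective correspondence of components provided by $\text{map}(f,g)$ really matches the subsets indexed by isomorphism classes, and this is where Proposition~\ref{prop propertiesofmapping}.(4) (naturality and behavior under composition) is indispensable.
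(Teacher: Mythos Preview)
Your proposal is correct and follows essentially the same approach as the paper, which simply records that the corollary follows from Proposition~\ref{prop propertiesofmapping} together with the observations (made just before the corollary) that $e_F$ factors through $\text{we}(F,F)$ and that $\circ$ restricts to $\text{we}$. Your write-up is more detailed than the paper's one-line justification, but the underlying argument is the same.
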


We finish this section with:
\begin{prop}
\label{prop wehgoup}
Let $F$, $G$, and $H$ be objects in\/ $\text{\rm Cons}(N(\Delta[0]),\calm)$\@. Then the following square is a homotopy pull-back:
\[\xymatrix@R=11pt@C=15pt{
\text{\rm we}(F,G)\times\text{\rm we}(G,H)\rto^(.62){\circ} \dto_{\text{\rm pr}} & \text{\rm we}(F,H)\dto\\
\text{\rm we}(F,G)\rto & \Delta[0]
}\]
\end{prop}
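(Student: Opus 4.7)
The plan is to verify the homotopy pull-back property by a direct fibrewise comparison. The right vertical $\text{\rm we}(F,H)\to\Delta[0]$ is automatically a Kan fibration, since $\text{\rm we}(F,H)$ is Kan by Corollary~\ref{cor propertiesofwe}.(2). The left vertical projection $\text{\rm pr}\colon\text{\rm we}(F,G)\times\text{\rm we}(G,H)\to\text{\rm we}(F,G)$ is likewise a Kan fibration, because its fibre $\text{\rm we}(G,H)$ is Kan. By the standard fibrewise criterion for homotopy pull-backs of Kan fibrations of spaces, it therefore suffices to show that for each vertex $\phi\in\text{\rm we}(F,G)_0$ the induced map of strict fibres is a weak equivalence.

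The strict fibre of $\text{\rm pr}$ over $\phi$ is canonically identified with $\text{\rm we}(G,H)$, and the strict fibre of $\text{\rm we}(F,H)\to\Delta[0]$ is $\text{\rm we}(F,H)$ itself. Unwinding Steps 2 and 3 of Section~\ref{sec mappinginmc}, the induced map of fibres sends, in each simplicial dimension $n$, an element $\psi\in\text{\rm Natwe}(G[n],H[n])=\text{\rm we}(G,H)_n$ to $\psi\circ\phi[n]\in\text{\rm Natwe}(F[n],H[n])=\text{\rm we}(F,H)_n$, where $\phi[n]=N(p)^{\ast}\phi$ is the pullback along the degeneracy $p\colon\Delta[n]\to\Delta[0]$. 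Comparing with the definition of $\text{\rm we}(\phi,\text{id}_H)$ in Step~2 shows that this is precisely the map $\text{\rm we}(\phi,\text{id}_H)$ in dimension $n$.

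Since a vertex of $\text{\rm we}(F,G)$ is by definition a natural weak equivalence $\phi\colon F[0]\to G[0]$ in $\text{Fun}^{b}(N(\Delta[0]),\calm)$, Corollary~\ref{cor propertiesofwe}.(3) applies and yields that $\text{\rm we}(\phi,\text{id}_H)$ is a weak equivalence. This being so at every vertex of $\text{\rm we}(F,G)$ completes the verification.

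The only subtlety is the identification of the fibre map with $\text{\rm we}(\phi,\text{id}_H)$, and this is a direct unpacking of the definitions of $\circ$ and of the functorial structure on mapping spaces; no substantive obstacle arises, and the proposition is essentially a corollary of Corollary~\ref{cor propertiesofwe}.(3) combined with the standard fibrewise characterisation of homotopy pull-backs.
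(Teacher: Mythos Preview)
Your argument is correct and follows essentially the same route as the paper's own proof: identify the (homotopy) fibre of $\text{pr}$ over a vertex $\phi$ with $\text{we}(G,H)$, recognise the induced map to $\text{we}(F,H)$ as $\text{we}(\phi,\text{id}_H)$, and invoke Corollary~\ref{cor propertiesofwe}.(3). You supply a bit more detail on why the verticals are Kan fibrations and unwind the identification of the fibre map explicitly, but the substance is the same.
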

\begin{proof}
We leave to the reader the proof of  the proposition in the case when one of the spaces 
$\text{\rm we}(F,G)$, or $\text{\rm we}(G,H)$, or 
$\text{\rm we}(F,H)$ is empty.  Assume then that all these spaces are  not empty.
We need to  show that $\circ$ induces a weak equivalence between the homotopy fibers 
of  $\text{pr}$ and $\text{we}(F,H)$\@.
Choose a vertex in  $\text{we}(F,G)$ represented by
a weak equivalence $f\colon F[0]\to G[0]$\@.
The homotopy fiber of  the  map $\text{pr}$ over the component of $f$ is given by
$\text{we}(G,H)$\@. The restriction of the composition  $\circ$ 
to this homotopy fiber is given by $\text{we}(f,\text{id})$ which is a weak equivalence
by~\ref{cor propertiesofwe}.(3).
\end{proof}

Let $X$ and $Y$ be objects in $\calm$\@. Define $\text{we}(X,Y)$ as  $\text{we}(QX,QY)$,
where $Q$ is the functorial cofibrant-fibrant replacement  in $\text{Fun}^b(N(\Delta[0]),\calm)$
(see Section~\ref{sec mappinginmc}).

\section{Deloopings of the spaces of weak equivalences}
In this section we describe the standard  delooping of  the spaces of weak equivalences with the monoid structure given by the composition.  The proof of the  main result and needed techniques are places in the appendix (Section~\ref{appendix}).

\begin{notation}\label{not conswesystem}
Let $X$ be an object  in $\calm$\@. The symbol $\text{\rm Cons}(N(\Delta[0]),X_{\text{we}})$
denotes the full subcategory of $\text{Fun}^b(N(\Delta[0]), X_{\text{we}})$ whose objects
are functors  whose composition with
the inclusion $X_{\text{we}}\subset \calm$ 
is cofibrant, fibrant, and weakly equivalent  to the  constant functor $X$
in $\text{Fun}^b(N(\Delta[0]), \calm)$
(compare with~\ref{pt hcons}). The set of morphisms   between two such functors $F$ and $G$ is given by $\text{Natwe}(F,G)$.

Let   $\cals$ be a collection of objects in $\text{\rm Cons}(N(\Delta[0]),X_{\text{we}})$\@. Define
$\cals_n$ to be the full subcategory of $\text{Fun}^b(N(\Delta[n]), X_{\text{we}})$
 of objects of the form $F[n]$ where
$F$ is in $\cals$  (see~\ref{pt hcons}). 
The set of morphisms between  $F[n]$ and $G[n]$ in  $\cals_n$ is given by 
$\text{Natwe}(F[n],G[n])$\@.
For any  $\alpha\colon\Delta[n]\to\Delta[m]$, 
$F[n]=N(\alpha)^{\ast} F[m]$. The restriction of $ N(\alpha)^{\ast}\colon
\text{Fun}^b(N(\Delta[m]), X_{\text{we}})\to \text{Fun}^b(N(\Delta[n]), X_{\text{we}})$
induces therefore a functor $N(\alpha)^{\ast}\colon\cals_m\to \cals_n$  which we denote by $\cals_{\alpha}\colon\cals_m\to \cals_n$. In this way we obtain a system of categories $\cals_{-}$
indexed by $\Delta$.   
\end{notation}

Assume  $\cals$ is a {\bf set} of objects in $\text{\rm Cons}(N(\Delta[0]),X_{\text{we}})$\@. For  $n\geq 0$, 
the category $\cals_n$ is then   small. By taking the nerves we obtain a functor $N(\cals_{-})\colon\Delta^{\text{op}}\to \text{Spaces}$.

\begin{Def}\label{def Bwe}
$B\text{\rm we}(\mathcal{S})$  is defined to  be 
the diagonal of $N(\cals_{-})$.
\end{Def}

By definition, the set of $0$-dimensional simplices $B\text{we}(\mathcal{S})_0$ is given by the set of objects in
$\cals_0$ which is  the set $\cals$\@.
For $n>0$, the set of $n$-dimensional simplices 
$B\text{we}(\mathcal{S})_n$ is the set of $n$-composable morphisms in $\cals_n$:
\[B\text{we}(\mathcal{S})_n=\coprod_{(X_n,\ldots, X_0)\in\mathcal{S}^{n+1}}\prod_{k=n}^{k=1}\text{Natwe}(X_{k}[n],X_{k-1}[n])\]

If $\mathcal{S}'\subset \mathcal{S}$, then ${\cals'}_n$ is a full  subcategory of ${\cals}_n$\@.
We call the induced map $N({\cals'}_{-})\subset N({\cals}_{-})$ 
the 
standard inclusion. The following is a consequence of~\ref{prop appendix deloopingwe}:
\begin{prop}\label{prop deloopingwe}
Let  $\mathcal{S}'\subset \mathcal{S}$ be non-empty  sets of objects in\/ $\text{\rm Cons}(N(\Delta[0]),X_{\text{\rm we}})$.
\begin{enumerate} 
\item  $B\text{\rm we}(\mathcal{S})$ is a connected space.
\item  The  loop space\/  $\Omega B\text{\rm we}(\mathcal{S})$ is weakly equivalent to\/ $\text{\rm we}(X,X)$.
\item The map $B\text{\rm we}(\mathcal{S}')\to B\text{\rm we}(\mathcal{S})$, induced by the standard inclusion,
is a weak equivalence.
\end{enumerate}
\end{prop}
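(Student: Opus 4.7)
The plan is to derive all three statements from Proposition~\ref{prop appendix deloopingwe} in the appendix, which is formulated precisely to handle bisimplicial objects of the type $N(\mathcal{S}_-)$. First I would unpack the structure: each $\mathcal{S}_n$ has object set in bijection with $\mathcal{S}$ via the assignment $F\mapsto F[n]$, and for a map $\alpha\colon[n]\to[m]$ the functor $\mathcal{S}_\alpha=N(\alpha)^{\ast}$ acts as the identity on objects while its action on morphisms $\text{Natwe}(F[m],G[m])\to\text{Natwe}(F[n],G[n])$ is simply pre-composition with $N(\alpha)$. This puts $\mathcal{S}_-$ in precisely the form required by the appendix criterion.

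For (1), I would construct an explicit $1$-simplex joining any two vertices $F,G\in\mathcal{S}$. Since both $F$ and $G$ are in $\text{Cons}(N(\Delta[0]),X_{\text{we}})$, the pulled-back functors $F[1]$ and $G[1]$ are both cofibrant, fibrant, and weakly equivalent to the constant functor $X$ in $\text{Fun}^b(N(\Delta[1]),\calm)$. Using the standard comparison morphisms of Section~\ref{sec mappinginmc} together with model-categorical lifting, there exists a weak equivalence $F[1]\to G[1]$, giving a $1$-simplex in the diagonal $B\text{we}(\mathcal{S})$ whose boundary vertices are $F$ and $G$. This establishes path-connectedness at the level of components.

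For (2), I would identify $\Omega B\text{we}(\mathcal{S})$ at the basepoint $F\in\mathcal{S}$ with the morphism space $\text{we}(F,F)$ via the appendix result. The required input is the grouplike/Segal condition for the simplicial space of morphisms, and this is exactly the content of Proposition~\ref{prop wehgoup}, which guarantees that composition $\circ$ makes the relevant squares into homotopy pull-backs. Once the loop space is identified as $\text{we}(F,F)$, Corollary~\ref{cor propertiesofwe}.(3) applied to any zigzag of weak equivalences from $F$ to $QX$ (which exists because both lie in $\text{Cons}(N(\Delta[0]),X_{\text{we}})$) yields a weak equivalence $\text{we}(F,F)\simeq\text{we}(QX,QX)=\text{we}(X,X)$.

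For (3), the inclusion $B\text{we}(\mathcal{S}')\subset B\text{we}(\mathcal{S})$ is a map between connected spaces by (1), and on loop spaces it is compatible with the identifications from (2)\,---\,indeed, the basepoint can be chosen in $\mathcal{S}'$, and the loop space at that basepoint is computed from the morphism space $\text{we}(F,F)$ which does not depend on the ambient set $\mathcal{S}$ or $\mathcal{S}'$. Hence $\Omega$ of the inclusion is an equivalence and the conclusion follows from the long exact sequence of homotopy groups (or equivalently, from the Whitehead theorem applied to the induced map on universal covers). The principal obstacle I anticipate is not in any of these steps individually but rather in verifying the hypotheses of Proposition~\ref{prop appendix deloopingwe} in the concrete form relevant here; this is essentially a repackaging of the grouplike structure from Proposition~\ref{prop wehgoup} into a Segal-type statement about the bisimplicial set $N(\mathcal{S}_-)$.
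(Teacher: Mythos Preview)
Your proposal is correct and follows essentially the same approach as the paper: the paper simply states that the proposition is a consequence of Proposition~\ref{prop appendix deloopingwe}, applied to the connected homotopy groupoid $\text{we}(\mathcal{S})$ (whose groupoid axioms are verified via Corollary~\ref{cor propertiesofwe} and Proposition~\ref{prop wehgoup}), after identifying the diagonal $B\text{we}(\mathcal{S})$ of $N(\mathcal{S}_-)$ with the diagonal of the bar construction $\mathcal{B}G$. Your treatment of (3) via the long exact sequence of homotopy groups is a minor variation---the paper instead uses the contractible total space $\mathcal{E}G$ directly in the proof of~\ref{prop appendix deloopingwe}(3)---but both arguments are standard and equivalent here.
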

\bigskip

\noindent
{\bf Part III.}
In this part we show that $\text{Fun}(I,X_{\text{we}})$ is essentially small and the nerve of its core is weakly equivalent to  $\text{map}(N(I), B\text{we}(X,X))$. This will be applied to  prove Theorem A. 

\section{The category of weak equivalences}
\begin{thm}\label{thm bweXX}
Let $X$ be an object in $\calm$\@.
The category $X_{\text{\rm we}}$ is essentially small and the nerve of its core is weakly equivalent to
$B\text{\rm we}(X, X)$.
\end{thm}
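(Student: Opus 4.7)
The plan is to exhibit a small subcategory $I_\cals \subset X_{\text{we}}$ that simultaneously serves as a core and has nerve weakly equivalent to $B\text{we}(X,X)$. The construction is parameterized by a non-empty small set $\cals \subset \text{Cons}(N(\Delta[0]), X_{\text{we}})$; at the end, taking $\cals = \{QX\}$ produces $N(I_\cals) \simeq B\text{we}(X,X)$ via Proposition~\ref{prop deloopingwe}.(3).

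Given such a $\cals$, I would form the Grothendieck construction $\mathcal{G}_\cals := \text{Gr}_\Delta\,\cals_{-}$ of the simplicial system of Notation~\ref{not conswesystem}; this is a small category. By Thomason's theorem (Proposition~\ref{prop ThomasonPuppe}.(1)) combined with the standard weak equivalence between the homotopy colimit of a simplicial space and its diagonal, one gets $N(\mathcal{G}_\cals) \simeq B\text{we}(\cals)$. Next, define an evaluation functor $E_\cals : \mathcal{G}_\cals \to X_{\text{we}}$ on objects by $E_\cals([n], F[n]) := F([0])$ and on morphisms $(\alpha : [n_1] \to [n_0],\ \beta : F_1[n_1] \to F_0[n_1])$ by evaluating $\beta$ at a canonical $0$-simplex of $N(\Delta[n_1])$; boundedness makes this choice unambiguous. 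Let $I_\cals$ be a small subcategory of $X_{\text{we}}$ containing the image of $E_\cals$, slightly enlarged to include the zigzags $Y \xrightarrow{\simeq} RY \xleftarrow{\simeq} QY$ of Section~\ref{sec mappinginmc} so that non-cofibrant-fibrant data can be absorbed later.

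To verify the core property, let $J \subset X_{\text{we}}$ be small with $I_\cals \subset J$. I would enlarge $\cals$ to $\cals' \supset \cals$ by adjoining $QY$ for every object $Y$ of $J$ and, for each morphism $Y \to Z$ of $J$, a natural transformation of cofibrant-fibrant replacements realising it through the above zigzags; the construction is iterative, in the style of the proof of Proposition~\ref{prop esssmallretract}, closing up under all the needed data. Then $J \subset I_{\cals'}$. By Proposition~\ref{prop deloopingwe}.(3) the inclusion $B\text{we}(\cals) \hookrightarrow B\text{we}(\cals')$ is a weak equivalence, and combined with the key claim $N(I_{(-)}) \simeq B\text{we}(-)$ below, this forces $I_\cals \subset I_{\cals'}$ to be a weak equivalence. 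This shows $I_\cals$ is a core, and specialising to $\cals = \{QX\}$ yields $N(I_\cals) \simeq B\text{we}(X,X)$.

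The main obstacle is the key claim that $N(E_\cals) : N(\mathcal{G}_\cals) \to N(I_\cals)$ is a weak equivalence. Since $E_\cals$ is far from a subcategory inclusion --- it collapses every vertical slice $\{([n], F[n])\}_{n \geq 0}$ onto the single object $F([0])$ --- this is a genuine homotopy comparison rather than a formality. The attack is via Quillen's Theorem A (Proposition~\ref{prop ThomasonPuppe}.(4)), verifying that the under-categories $Y\!\uparrow\! E_\cals$ are contractible for each $Y$ in $I_\cals$. Contractibility should ultimately rest on the contractibility of each $N(\Delta[n])$ together with Proposition~\ref{prop hocolimovercontract}, which governs the homotopy of cofibrant bounded functors over contractible bases.
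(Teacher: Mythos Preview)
Your evaluation functor $E_\cals$ is not well-defined. A morphism in $\mathcal{G}_\cals$ from $([n_1],F_1[n_1])$ to $([n_0],F_0[n_0])$ involves an arbitrary natural weak equivalence $\beta\colon F_1[n_1]\to F_0[n_1]$ in $\text{Fun}^b(N(\Delta[n_1]),\calm)$; although source and target are pulled back along $N(p)$, the transformation $\beta$ need not be. Hence the components $\beta_v$ at the distinct vertices $v$ of $N(\Delta[n_1])$ are genuinely different morphisms $F_1([0])\to F_0([0])$. Boundedness only forces degeneracy morphisms to isomorphisms; the $0$-simplices are all nondegenerate and are not compared by it. Worse, no fixed choice of vertex is functorial: for a composable pair with $\alpha'\colon[n_2]\to[n_1]$, functoriality would require $\beta_{N(\alpha')(v_{n_2})}=\beta_{v_{n_1}}$, which fails whenever $\alpha'$ does not hit your chosen vertex. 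So the whole chain --- the definition of $I_\cals$, the key claim $N(\mathcal{G}_\cals)\simeq N(I_\cals)$, and the core argument built on it --- collapses.

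The paper circumvents this by replacing ``evaluate at a vertex'' with $\Psi(F[n]):=\text{colim}_{N(\Delta[n])}F[n]$, which \emph{is} functorial and lands in $X_{\text{we}}$ by Proposition~\ref{prop hocolimovercontract}. It then works at the level of the large category $\text{Gr}_\Delta\calt_{-}$ (with $\calt$ all of $\text{Cons}(N(\Delta[0]),X_{\text{we}})$), exhibits $\Psi$ as a homotopy equivalence with explicit inverse $\Phi$ built from the cofibrant-fibrant replacement $Q$, and transfers essential smallness via Corollary~\ref{cor esssmallhomeqinv}. The core of $\text{Gr}_\Delta\calt_{-}$ is then $\text{Gr}_\Delta\{QX\}_{-}$, whose nerve is identified with $B\text{we}(X,X)$ by Thomason plus the diagonal description; no Quillen Theorem~A argument on under-categories is needed. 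If you want to rescue your outline, replace your $E_\cals$ by the restriction of $\Psi$ and use its homotopy inverse rather than a contractibility-of-fibers argument.
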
 

\begin{proof}
Let $\calt$ be the collection of all the objects in $\text{Cons}(N(\Delta[0]),X_{\text{we}})$\@.
Consider the system $\calt_{-}$ indexed by $\Delta$ (see~\ref{not conswesystem}).
The objects in  its  Grothendieck construction $\text{Gr}_{\Delta} \calt_{-}$
are given by   functors $F[n]$ where $n\geq 0$ and $F$  is an object in  $\text{Cons}(N(\Delta[0]),X_{\text{we}})$\@. A morphism in $\text{Gr}_{\Delta} \calt_{-}$ between two such functors $F[n]$ and $G[m]$ is 
 a pair 
$(\alpha\colon\Delta[n]\to \Delta[m], \phi\colon F[n]\to G[n])$ where  $\alpha$ is a map and  $\phi$
is a natural weak 
equivalence. Define a functor $\Psi\colon\text{Gr}_{\Delta}\calt_{-}\to\calm$ as
follows:
\[\xymatrix@R=6pt@C=15pt{
\Psi(F[n])\ar@{*{\cdot\cdot}=}[d]\ar[rrrr]|{\Psi(\alpha,\phi)} & & & &  \Psi(G[m])\ar@{*{\cdot\cdot}=}[d]\\
\text{colim}_{N(\Delta[n])}F[n]\rrto^{\text{colim}(\phi)}    & & \text{colim}_{N(\Delta[n])}G[n]
 \rrto^{\text{colim}_{N(\alpha)}} & &  \text{colim}_{N(\Delta[m])}G[m]
}\]
Since $\Delta[n]$ is contractible, by~\ref{prop hocolimovercontract},
$\Psi$ has values in $X_{\text{we}}$\@. We claim  the induced functor
 $\Psi\colon\text{Gr}_{\Delta}\calt_{-}
\to X_{\text{we}}$
is a homotopy equivalence. 
To prove the claim  choose a functorial   factorization $P$  in $\text{Fun}^b(N(\Delta[0]),\calm)$
as defined  in~\ref{pt modelcat} and
a functorial fibrant replacement $R$ in $\calm$\@. 
By applying  $P$ to  morphisms of the form $\emptyset \to F$ we obtain a functorial cofibrant replacement in 
$\text{Fun}^b(N(\Delta[0]),\calm)$ (see~\ref{pt modelcat}).
By applying $R$ object-wise we obtain a functorial fibrant replacement in $\text{Fun}^b(N(\Delta[0]),\calm)$\@. Set  $QF=PRF$
(see the end of Section~\ref{sec mappinginmc}).
Let us denote by the same symbol $Q\colon X_{\text{we}}\to \text{Cons}(N(\Delta[0]),X_{\text{we}})$ the 
restriction of $Q$ to the constant functors.
Define $\Phi\colon X_{\text{we}}\to
\text{Gr}_{\Delta} \calt_{-}$ to be  the following composition:
\[\xymatrix@R=11pt@C=17pt{
X_{\text{we}}\rto^-{Q} \ar@/ _ 14pt/[rrrr]|{\Phi}  &  \text{Cons}(N(\Delta[0]),X_{\text{we}})=\calt_0\ar@{->}[rrr]^-{\text{standard inclusion}}  && &  \text{Gr}_{\Delta}\calt_{-}
}\]

We will show  that   $\Psi\Phi$ and $\Phi \Psi$ are homotopic to the identity
functors. 
 The  composition  $\Psi\Phi\colon X_{\text{we}}\to X_{\text{we}}$ assigns to  $Y$
the object $\text{colim}_{N(\Delta[0])} QY$\@. 
Think about the simplex ${\sigma}\colon\Delta[0]\to \Delta[0]$ as a vertex in $N(\Delta[0])$\@. 
According to~\ref{prop hocolimovercontract} the morphism
$QY(\sigma)\to \text{colim}_{N(\Delta[0])} QY$, induced by the  inclusion of   $\sigma$ in $N(\Delta[0])$,
is a weak equivalence. Consequently the following morphisms form
a ``zig-zag'' of natural weak equivalences between  $\text{id}_{X_{\text{we}}}$ and
$\Psi\Phi$:
\[
\underbrace{Y\to RY\gets QY(\sigma) }_{\text{standard comparison}}\to  \text{colim}_{N(\Delta[0])} QY=\Psi\Phi(Y)
\]

The composition $\Phi\Psi\colon\text{Gr}_{\Delta} \calt_{-}\to \text{Gr}_{\Delta}\calt_{-}$
assigns to $F[n]\colon N(\Delta[n])\to X_{\text{we}}$ the functor $Q(\text{colim}_{N(\Delta[n])} F[n])\colon N(\Delta[0])\to X_{\text{we}}$\@.
Consider the left Kan extension  $N(p)^k(F[n])\colon N(\Delta[0])\to \calm$\@.
Denote by $N(p)^k(F[n]) \to \text{colim}_{N(\Delta[n])}F[n]$ the canonical natural transformation
 into the constant functor $ \text{colim}_{N(\Delta[n])}F[n]$\@.
 We denote the composition of this natural transformation with the 
fibrant replacement $\text{colim}_{N(\Delta[n])}F[n]\to R(\text{colim}_{N(\Delta[n])}F[n])$ by $\pi$.
Finally, take the following factorizations in $\text{Fun}^b(N(\Delta[0]),\calm)$ given in~\ref{pt modelcat}:
\[\xymatrix@R=9pt@C=15pt{
\emptyset\dto  \rmono & PR(\text{colim}_{N(\Delta[n])}F[n])\ar@{->>}[r]^{\simeq}\dto^{\simeq} &R(\text{colim}_{N(\Delta[n])}F[n]) \ar@{=}[d]\\
N(p)^{k}(F[n]) \ar@/_14pt/[rr]|{\pi}  \rmono& P(\pi)\ar@{->>}[r]^(.37){\simeq}& R(\text{colim}_{N(\Delta[n])}F[n])
}\]
Observe that $ N(p)^{k}(F[n])$ is cofibrant  in $\text{Fun}^b(N(\Delta[0]),\calm)$
(see~\ref{prop basicprophocolim}.(2)). Thus  the functors
$P(\pi)$ and $PR(\text{colim}_{N(\Delta[n])}F[n])$ are objects in  $\text{Cons}(N(\Delta[0]),X_{\text{we}})$.
Consider next the following sequence of morphisms in $\text{Gr}_{\Delta}\calt_{-}$:
\[
\xymatrix@1@R=10pt@C=13pt{
F[n]\rto^{(p,\phi)} & P(\pi) & PR(\text{colim}_{N(\Delta[n])}F[n])=Q(\text{colim}_{N(\Delta[n])}F[n])=\Phi\Psi(F[n])\lto
}\]
 where $p\colon\Delta[n]\to \Delta[0]$ is the unique map,  $\phi\colon F[n]\to P(\pi)[n]$ is adjoint to 
$N(p)^k(F[n])\hookrightarrow P(\pi)$, and $P(\pi)\gets PR(\text{colim}_{N(\Delta[n])}F[n])$ is the vertical morphism in the above
commutative diagram.
These morphisms  give a  ``zig-zag'' of natural weak equivalences between
 $\text{id}_{\text{Gr}_{\Delta} \calt_{-}}$ and $\Phi\Psi$.

 Since $X_{\text{we}}$ and $\text{Gr}_{\Delta}\calt_{-}$ are homotopy equivalent, 
 one of them is essentially small if and only if the other one is (see~\ref{cor esssmallhomeqinv}).  Consider 
 the one element set $\{QX\}$. 
We claim that $\text{Gr}_{\Delta}\{QX\}_{-}\subset \text{Gr}_{\Delta}\calt_{-}$ is a core.
Let  $J\subset \text{Gr}_{\Delta}\calt_{-}$
be a small subcategory  containing
$\text{Gr}_{\Delta}\{QX\}_{-}$ and  $\cals$ be the  set of all  $F$ in $\text{Cons}(N(\Delta[0]),X_{\text{we}})$  such
that, for some $n\geq 0$,  $F[n]$ is  in $J$\@. We  have a sequence of inclusions
$\text{Gr}_{\Delta}\{QX\}_{-}\subset  J\subset  \text{Gr}_{\Delta}\cals$. Its composition
 is a weak equivalence by~\ref{prop deloopingwe}.(3), which shows the claim.

By  Thomason's theorem~\ref{prop ThomasonPuppe}.(1)) and the fact that homotopy colimit of a simplicial space is weakly equivalent to its diagonal we get that the following spaces are weakly equivalent  to each other:
the   nerve $N(\text{Gr}_{\Delta}\{QX\}_{-})$, the homotopy colimit $\text{hocolim}_{\Delta^{\text{op}}} N(\{QX\}_{-})$
and the diagonal  $B\text{we}(X,X)$ of  $N(\{QX\}_{-})$ (see~\ref{def Bwe}).
%
 \end{proof}

\section{Cofinality}
\begin{prop}\label{prop cofinalwecat}
Let $f\colon I\to J$ be a functor between small categories. Assume that, for any object $j$ in $J$,
the over category $f\!\downarrow\! j$ is contractible. Then 
the functor $f^{\ast}\colon\text{\rm Fun}(J,X_{\text{\rm we}})\to
\text{\rm Fun}(I,X_{\text{\rm we}})$ is a homotopy equivalence.
\end{prop}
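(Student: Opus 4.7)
The plan is to take as homotopy inverse to $f^{*}$ the derived left Kan extension $f^{k}P\colon \text{Fun}(I,X_{\text{we}})\to \text{Fun}(J,X_{\text{we}})$, where $P$ is a functorial cofibrant replacement in $\text{Fun}(I,\calm)$. That $f^{k}P$ restricts to a functor with values in $\text{Fun}(J,X_{\text{we}})$ is the content of the discussion following Proposition~\ref{prop hconst} in Section~\ref{sec hdkepxwe}, which invokes precisely the hypothesis that each over category $f\!\downarrow\! j$ be contractible. It will then suffice to exhibit, in the sense of Section~\ref{sec basicdictionary}, natural weak equivalences $f^{*}\circ f^{k}P\sim \text{id}$ and $f^{k}P\circ f^{*}\sim \text{id}$, using the unit and counit of the adjunction $f^{k}\dashv f^{*}$.

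For $F\in \text{Fun}(I,X_{\text{we}})$ consider the zigzag $f^{*}f^{k}PF\longleftarrow PF\longrightarrow F$ in $\text{Fun}(I,\calm)$, natural in $F$, with left arrow the adjunction unit and right arrow the cofibrant replacement. The right arrow is a pointwise weak equivalence by construction of $P$. At $i\in I$ the left arrow is the canonical structure map $PF(i)\to \text{colim}_{f\downarrow f(i)}PF\circ g_{f(i)}$ corresponding to the vertex $(i,\text{id}_{f(i)})$, where $g_{f(i)}\colon f\!\downarrow\! f(i)\to I$ is the forgetful functor. Using the weak equivalence from the end of Section~\ref{sec hocollkex} that identifies this colimit with $\text{hocolim}_{f\downarrow f(i)}F\circ g_{f(i)}$, and then applying Proposition~\ref{prop hconst} to the contractible category $f\!\downarrow\! f(i)$ and the $X_{\text{we}}$-valued diagram $F\circ g_{f(i)}$, this structure map is a weak equivalence. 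Both arrows of the zigzag are therefore pointwise weak equivalences, yielding the desired homotopy.

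For $G\in \text{Fun}(J,X_{\text{we}})$ the derived counit $f^{k}Pf^{*}G\to f^{k}f^{*}G\to G$ is natural in $G$; at $j\in J$ it coincides, up to the same weak equivalence from the end of Section~\ref{sec hocollkex}, with the canonical map $\text{hocolim}_{f\downarrow j}(G\circ f\circ g_{j})\to G(j)$. The assignment $(i,\alpha\colon f(i)\to j)\mapsto G(\alpha)$ defines a natural transformation from $G\circ f\circ g_{j}$ to the constant diagram with value $G(j)$ that is pointwise a weak equivalence, because morphisms in $X_{\text{we}}$ are weak equivalences. Contractibility of $f\!\downarrow\! j$ combined with Proposition~\ref{prop hconst} applied to the constant diagram then shows the homotopy colimit map is a weak equivalence, so the counit is pointwise a weak equivalence and yields the other desired homotopy.

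The main sticking point will be matching the abstract adjunction-unit/counit maps, evaluated on cofibrantly replaced diagrams, with the concrete weak equivalences produced by Proposition~\ref{prop hconst}; this is done via the identification at the end of Section~\ref{sec hocollkex} of the derived left Kan extension at $j$ with the homotopy colimit over the comma category $f\!\downarrow\! j$. Once this identification is in hand, the rest is formal bookkeeping.
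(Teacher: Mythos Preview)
Your proof is correct and follows essentially the same approach as the paper: both take $f^{k}P$ as the homotopy inverse, produce the zig-zag $F\leftarrow PF\to f^{*}f^{k}PF$ via the unit for one direction, and use the derived counit $\phi_G\colon f^{k}Pf^{*}G\to G$ for the other, with Proposition~\ref{prop hconst} over the contractible comma categories supplying the needed weak equivalences. If anything, you are slightly more careful than the paper in explicitly invoking the identification from the end of Section~\ref{sec hocollkex} before applying Proposition~\ref{prop hconst}, whereas the paper cites~\ref{prop hconst} directly for the map $(Pf^{*}F)(i)\to\text{colim}_{f\downarrow j}(Pf^{*}F)g_j$.
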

\begin{proof}
Let $P$ be a cofibrant replacement in $ \text{\rm Fun}(I,\calm)$ (see Section~\ref{sec hocollkex}).
Recall  that  the derived left Kan extension $f^{k}P\colon\text{\rm Fun}(I,\calm)\to
\text{\rm Fun}(J,\calm)$ maps the subcategory $\text{\rm Fun}(I,X_{\text{\rm we}})\subset \text{\rm Fun}(I,\calm)$ 
into $\text{\rm Fun}(J,X_{\text{\rm we}})$ (see Section~\ref{sec hdkepxwe}).
We claimed that the  functor 
$f^{k}P\colon\text{\rm Fun}(I,X_{\text{\rm we}})\to \text{\rm Fun}(J,X_{\text{\rm we}})$ is a homotopy inverse to $f^{\ast}$.

Let $F\colon J\to X_{\text{we}}$ be a functor, $Pf^{\ast} F\to f^{\ast} F$ 
 the cofibrant replacement, and 
$\phi_F\colon f^kPf^{\ast} F\to F$  its adjoint.
Choose an object $a=(i, \alpha\colon f(i)\to j)$ in   $f\!\downarrow\! j$ and consider the following commutative diagram: 
\[\xymatrix@R=10pt@C=15pt{
(Pf^{\ast}F)(i)\dto \ar@{->>}[r]^{\simeq} & F(f(i))\rto^{F(\alpha)} & F(j)\\
\text{colim}_{f\downarrow j}Pf^{\ast}F\ar@{=}[r] & (f^{k}Pf^{\ast}F)(j)\ar@/_15pt/[ur]_(.75){(\phi_F)_j}
}\]
where the left vertical morphism is induced by the inclusion of the object $a$ in  $f\!\downarrow\! j$.
According to~\ref{prop hconst} this morphism is a weak equivalence and hence so is
$(\phi_F)_j$\@. The morphisms $\{\phi_F\colon f^kPf^{\ast} F\to F\}_{F}$ 
form therefore a natural transformation between  $f^kPf^{\ast}\colon\text{\rm Fun}(J,X_{\text{\rm we}})\to 
\text{\rm Fun}(J,X_{\text{\rm we}})$ and the identity functor. 

By the same argument the morphism  $\psi_F\colon PF\to f^{\ast}f^{k}PF$, which is adjoint to
 $\text{id}_{f^{k}PF}$, is also a natural  weak equivalence.
Thus the natural transformations $F\gets PF$ and  $\psi_F$
form a ``zig-zag''  connecting $f^{\ast}f^kP$ with 
$\text{id}_{\text{\rm Fun}(I,X_{\text{\rm we}})}$.
\end{proof}

Using the above  proposition we can extend Theorem~\ref{thm bweXX}  to:
\begin{cor}\label{cor funoutofcotract}
If $I$ is a small contractible category, then\/ $\text{\rm Fun}(I,X_{\text{\rm we}})$ is essentially
small and the nerve of its core is weakly equivalent to $B\text{\rm we}(X,X)$. 
\end{cor}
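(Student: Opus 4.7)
The plan is to reduce directly to Theorem~\ref{thm bweXX} by applying the cofinality result of Proposition~\ref{prop cofinalwecat} to the unique functor $p\colon I\to [0]$. Since $I$ is assumed contractible, the over category $p\!\downarrow\! 0$ is just $I$ itself, hence contractible. Therefore Proposition~\ref{prop cofinalwecat} applies and yields that
\[
p^{\ast}\colon \text{Fun}([0],X_{\text{we}})\longrightarrow \text{Fun}(I,X_{\text{we}})
\]
is a homotopy equivalence of categories.

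Next, I would identify $\text{Fun}([0],X_{\text{we}})$ with $X_{\text{we}}$ via the canonical isomorphism (a functor out of the terminal category is the same as an object). Theorem~\ref{thm bweXX} tells us that $X_{\text{we}}$ is essentially small and the nerve of any of its cores is weakly equivalent to $B\text{we}(X,X)$. Applying Corollary~\ref{cor esssmallhomeqinv} to the homotopy equivalence $p^{\ast}$, I conclude that $\text{Fun}(I,X_{\text{we}})$ is essentially small as well.

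Finally, to compare cores, I invoke Proposition~\ref{prop esssmallweakequiv}: any homotopy equivalence between essentially small categories is a weak equivalence in the sense of Definition~\ref{def weqfhpull}. Thus $p^{\ast}$ is a weak equivalence, which by unpacking means that a suitable pair of cores $F\subset X_{\text{we}}$ and $F'\subset \text{Fun}(I,X_{\text{we}})$ can be chosen so that the restriction of $p^{\ast}$ between them is a weak equivalence of small categories; equivalently (via Proposition~\ref{prop propesssmall}.(1), which shows the homotopy type of a core is a well-defined invariant) the nerves of cores of $\text{Fun}(I,X_{\text{we}})$ and of $X_{\text{we}}$ are weakly equivalent. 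Combining with Theorem~\ref{thm bweXX} gives the weak equivalence with $B\text{we}(X,X)$.

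I do not expect a serious obstacle here, since all the hard work has been done upstream: Theorem~\ref{thm bweXX} is the main input and Proposition~\ref{prop cofinalwecat} is the ready-made cofinality statement. The only mild subtlety is keeping track of the distinction between homotopy equivalence (by functors and natural transformations, in the sense of Section~\ref{sec basicdictionary}) and weak equivalence (via cores and nerves, Definition~\ref{def weqfhpull}); this is precisely what Proposition~\ref{prop esssmallweakequiv} and Corollary~\ref{cor esssmallhomeqinv} are designed to bridge, so the argument consists essentially of chaining these two facts with Theorem~\ref{thm bweXX}.
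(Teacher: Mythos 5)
Your argument is correct and is exactly the route the paper intends: the corollary is stated immediately after Proposition~\ref{prop cofinalwecat} with the remark that it extends Theorem~\ref{thm bweXX}, i.e.\ one applies the cofinality proposition to the unique functor $I\to[0]$ (whose over category is $I$ itself), identifies $\text{Fun}([0],X_{\text{we}})$ with $X_{\text{we}}$, and transfers essential smallness and the homotopy type of the core via Corollary~\ref{cor esssmallhomeqinv} and Proposition~\ref{prop esssmallweakequiv}. Nothing is missing.
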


The cofinality result~\ref{prop cofinalwecat} 
can also be used to  translate between categories of functors indexed by  arbitrary small categories and  simplex categories:
\begin{cor}\label{cor redtonerve}
Let $I$ be a small category and  $\epsilon\colon N(I)\to  I$ be the functor defined in Section~\ref{sec boundedfunctors}.
Then both functors $\epsilon^{\ast}\colon\text{\rm Fun}(I,X_{\text{\rm we}})\to \text{\rm Fun}(N(I),X_{\text{\rm we}})$ and
 $\epsilon^{\ast}\colon\text{\rm Fun}(I,X_{\text{\rm we}})\to \text{\rm Fun}^{b}(N(I),X_{\text{\rm we}})$ 
 are homotopy equivalences.
\end{cor}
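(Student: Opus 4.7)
The plan is to verify the hypotheses of Proposition~\ref{prop cofinalwecat} for $\epsilon\colon N(I)\to I$ and then to propagate the conclusion to the bounded setting. The first task is to show that the over category $\epsilon\!\downarrow\! i$ is contractible for each object $i$ of $I$. Unpacking the definitions, an object of $\epsilon\!\downarrow\! i$ is a pair $(\sigma,\alpha)$ with $\sigma=(i_n\to\cdots\to i_0)$ a simplex of $N(I)$ and $\alpha\colon i_0\to i$ a morphism in $I$. Such a pair corresponds bijectively to a simplex $(i_n\to\cdots\to i_0\to i)$ of $N(I\!\downarrow\! i)$, and checking morphisms extends this to a canonical isomorphism between $\epsilon\!\downarrow\! i$ and the simplex category of $N(I\!\downarrow\! i)$. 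Since $I\!\downarrow\! i$ has the terminal object $(i,\text{id}_i)$, the nerve $N(I\!\downarrow\! i)$ is contractible, and by property (1) of the subdivision in Section~\ref{sec simplex} the nerve of its simplex category is contractible as well. With $\epsilon\!\downarrow\! i$ contractible, Proposition~\ref{prop cofinalwecat} immediately delivers the first statement, that $\epsilon^{\ast}\colon\text{\rm Fun}(I,X_{\text{\rm we}})\to\text{\rm Fun}(N(I),X_{\text{\rm we}})$ is a homotopy equivalence.

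Because $\epsilon$ sends degeneracies to identities, the image of $\epsilon^{\ast}$ lies in the subcategory $\text{\rm Fun}^{b}(N(I),X_{\text{\rm we}})$. To deduce that the corestricted functor $\epsilon^{\ast}\colon\text{\rm Fun}(I,X_{\text{\rm we}})\to\text{\rm Fun}^{b}(N(I),X_{\text{\rm we}})$ remains a homotopy equivalence, I plan to mimic the argument of Proposition~\ref{prop cofinalwecat} in the bounded setting. Let $P$ denote a functorial cofibrant replacement in $\text{\rm Fun}^{b}(N(I),\calm)$ from Theorem~\ref{thm modelmain}, and define $L\colon\text{\rm Fun}^{b}(N(I),X_{\text{\rm we}})\to\text{\rm Fun}(I,X_{\text{\rm we}})$ by $L(F):=\epsilon^{k}PF$. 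The two required homotopies arise as follows. For $L\circ\epsilon^{\ast}\simeq\text{id}$, Proposition~\ref{prop modelapprox}.(2) applied to the cofibrant replacement $P\epsilon^{\ast}F\to\epsilon^{\ast}F$ furnishes a natural weak equivalence $L\epsilon^{\ast}F=\epsilon^{k}P\epsilon^{\ast}F\to F$; this supplies the homotopy and simultaneously shows that $L\epsilon^{\ast}F$ takes values in $X_{\text{\rm we}}$. For $\epsilon^{\ast}\circ L\simeq\text{id}$, examine the adjunction unit $\eta_F\colon PF\to\epsilon^{\ast}\epsilon^{k}PF=\epsilon^{\ast}LF$: at a simplex $\sigma$ of $N(I)$ with $\epsilon(\sigma)=i$, the component $(\eta_F)_\sigma$ is the canonical map $(PF)(\sigma)\to\text{colim}_{\epsilon\downarrow i}((PF)\circ g)=(\epsilon^{k}PF)(i)$ induced by the object $(\sigma,\text{id}_i)\in\epsilon\!\downarrow\! i$. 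The contractibility of $\epsilon\!\downarrow\! i$ together with Proposition~\ref{prop hconst} or~\ref{prop hocolimovercontract} identifies this map as a weak equivalence, since $PF$ is homotopy constant with values in $X_{\text{\rm we}}$; hence $\eta_F$ is a natural weak equivalence. The zig-zag $F\xleftarrow{\simeq}PF\xrightarrow{\eta_F}\epsilon^{\ast}LF$ then produces the required homotopy and also confirms that $LF$ takes values in $\text{\rm Fun}(I,X_{\text{\rm we}})$.

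The main obstacle I anticipate is the verification that the unit $\eta_F\colon PF\to\epsilon^{\ast}\epsilon^{k}PF$ is pointwise a weak equivalence. This amounts to recognizing the ordinary colimit $\text{colim}_{\epsilon\downarrow i}((PF)\circ g)$ as a homotopy colimit of a homotopy-constant diagram on the contractible category $\epsilon\!\downarrow\! i$. The restriction of a bounded cofibrant functor along the forgetful functor $g\colon\epsilon\!\downarrow\! i\to N(I)$ need not itself be cofibrant in $\text{\rm Fun}(\epsilon\!\downarrow\! i,\calm)$, so one must either invoke a Thomason-style identification of this colimit with a homotopy colimit (Proposition~\ref{prop ThomasonPuppe}) or exploit the specific structure of the bounded cofibrant replacement $P$ to guarantee enough cofibrancy on restriction for Proposition~\ref{prop hconst} to apply. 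Bridging this gap between the bounded model structure on $\text{\rm Fun}^{b}(N(I),\calm)$ and the absence of one on $\text{\rm Fun}(\epsilon\!\downarrow\! i,\calm)$ is the central technical difficulty of the second half of the proof.
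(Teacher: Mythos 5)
Your handling of the first functor is correct and is exactly the paper's route: the paper's proof simply quotes the proof of Proposition~\ref{prop cofinalwecat}, and your identification of $\epsilon\!\downarrow\! i$ with the simplex category of $N(I\!\downarrow\! i)$ (contractible since $I\!\downarrow\! i$ has a terminal object and subdivision preserves contractibility) makes explicit the hypothesis check the paper leaves implicit.

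The second half, however, is not yet a proof. Everything rests on the claim that the unit $\eta_F\colon PF\to\epsilon^{\ast}\epsilon^{k}PF$, for $P$ the cofibrant replacement in the bounded model structure, is an objectwise weak equivalence; you first assert that this follows from~\ref{prop hconst} or~\ref{prop hocolimovercontract}, and then concede in your closing paragraph that neither applies as stated: \ref{prop hconst} is about a cofibrant replacement formed in $\text{Fun}(\epsilon\!\downarrow\! i,\calm)$, not about the restriction of a bounded cofibrant functor, while \ref{prop hocolimovercontract} is about the colimit over all of a nerve of a functor weakly equivalent to a constant one in the bounded category, which is not the situation at hand. So the central step of the second statement is acknowledged but not established --- a genuine gap. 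The paper avoids this extra work entirely: its proof of the corollary is that the homotopy inverse $\epsilon^{k}P$ already produced in the proof of~\ref{prop cofinalwecat} (with the cofibrant replacement of Section~\ref{sec hocollkex}) restricts to $\text{Fun}^{b}(N(I),X_{\text{we}})$, the natural weak equivalences $\phi_F$ and $\psi_F$ constructed there being reused verbatim, with the delicate colimit comparison over the contractible categories $\epsilon\!\downarrow\! i$ delegated to~\ref{prop hconst} inside that proof. If you prefer to keep your bounded-replacement variant, note that the isomorphism you proved in the first paragraph is the missing bridge: under $\epsilon\!\downarrow\! i\cong N(I\!\downarrow\! i)$ (simplex categories), the composite $(PF)\circ g$ becomes the restriction $N(u)^{\ast}(PF)$ along the reduced map $N(u)\colon N(I\!\downarrow\! i)\to N(I)$ induced by the forgetful functor $u$, hence it is again bounded and cofibrant by~\ref{prop basicprophocolim}.(3), with contractible index category $I\!\downarrow\! i$; but you must still supply the weak-constancy (or \ref{prop hconst}-type) input that identifies $\text{colim}_{N(I\downarrow i)}N(u)^{\ast}(PF)$ with the value $PF(\sigma)$, and until that is carried out the argument for the second functor is incomplete.
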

\begin{proof}According to the proof of~\ref{prop cofinalwecat}, 
$\epsilon^{k}P\colon
\text{\rm Fun}(N(I),X_{\text{\rm we}})\to \text{\rm Fun}(I,X_{\text{\rm we}})$ is a homotopy inverse to the first functor
in the statement of the corollary. Its restriction to $\text{\rm Fun}^{b}(N(I),X_{\text{\rm we}})$ is  a homotopy inverse to the second functor.
\end{proof}

The following special cases of~\ref{prop cofinalwecat} are of particular interest to us:
\begin{cor}\label{cor cofinalitycons}
\begin{enumerate}
\item   If 
$df(\sigma)=
\text{\rm lim}(\Delta[n]\xrightarrow{\sigma} B\xleftarrow{f} A)$
is contractible  for any   $\sigma$ Êin $B$, then 
 $f^{\ast}\colon\text{\rm Fun}(B,X_{\text{\rm we}})\to \text{\rm Fun}(A,X_{\text{\rm we}})$
is a homotopy equivalence.
\item Let $K$ be a contractible simplicial set and\/ $\text{\rm pr}:B\times K\ra B$ be the projection.
Then $\text{\rm pr}^{\ast}\colon\text{\rm Fun}(B,X_{\text{\rm we}})\to \text{\rm Fun}(B\times K,X_{\text{\rm we}})$
is a homotopy equivalence.
\item  Let 
$A_0\subset A_1\subset A_2\subset \cdots  A$ be an increasing sequence of  subspaces  of $A$ such  that  $A=\cup_{i\geq 0} A_i$\@. Define $f\colon\text{\rm Tel}\to A$ to be the following map:
\[\xymatrix@R=12pt@C=22pt{
\text{\rm Tel}\dto_{f}\ar @{}[r]|(0.46){:=} &
*{\text{\rm colim}\hspace{1mm}\big(\hspace{-18pt}} & \coprod_{i\geq 0} A_i\times \Delta[1] \dto_{ \coprod_{i\geq 0}\text{\rm pr}}
& (\coprod_{i\geq 0} A_i) \coprod(\coprod_{i\geq 0}  A_i) \lto_(.55){g_0\coprod g_1}
\rto^(.65){\text{\rm id}\coprod \text{\rm id}}\ar@{=}[d]  &   \coprod_{i\geq 0} A_i\ar@{=}[d] & *{\hspace{-18pt}\big)}\\
A \ar @{}[r]|(0.46)=& *{\text{\rm colim}\hspace{1mm}\big(\hspace{-18pt}} &
\coprod_{i\geq 0} A_i &  (\coprod_{i\geq 0} A_i) \coprod(\coprod_{i\geq 0}  A_i)\lto\rto^(.65){\text{\rm id}\coprod \text{\rm id}} &   \coprod_{i\geq 0} A_i & *{\hspace{-18pt}\big)} }
\]
where on each component:
\begin{itemize}
\item  $g_0$ is the inclusion  $A_i= A_i\times\Delta[0]
\xrightarrow{\text{\rm id}\times d_0}A_i\times\Delta[1]$;
\item $g_1$ is the inclusion $A_i\subset A_{i+1}= A_{i+1}\times\Delta[0]
\xrightarrow{\text{\rm id}\times d_1}A_{i+1}\times\Delta[1]$.
\end{itemize}
Then $f^{\ast}\colon\text{\rm Fun}(A,X_{\text{\rm we}})\to \text{\rm Fun}(\text{\rm Tel},X_{\text{\rm we}})$
is a homotopy equivalence.
\end{enumerate}
\end{cor}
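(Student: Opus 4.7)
All three parts reduce, via Proposition~\ref{prop cofinalwecat}, to showing that appropriate over categories in simplex categories are contractible. The unifying observation is that for any map $f\colon A\to B$ of simplicial sets and any simplex $\sigma\colon\Delta[n]\to B$, the over category $f\!\downarrow\!\sigma$ of the induced functor of simplex categories is canonically isomorphic to the simplex category of the pullback $df(\sigma)=A\times_B\Delta[n]$: an object of $f\!\downarrow\!\sigma$ is a pair $(\tau\colon\Delta[m]\to A,\,\alpha\colon\Delta[m]\to\Delta[n])$ with $f\tau=\sigma\alpha$, which is precisely an $m$-simplex of $A\times_B\Delta[n]$, and the morphism conditions match. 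By the subdivision property recorded in Section~\ref{sec simplex}, the nerve of the simplex category of any space $X$ is weakly equivalent to $X$; hence $f\!\downarrow\!\sigma$ is contractible as a category iff $df(\sigma)$ is contractible as a space. This immediately yields part~(1).

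For part~(2) I would apply part~(1) to $\text{pr}\colon B\times K\to B$. For each $\sigma\colon\Delta[n]\to B$ the pullback $d\text{pr}(\sigma)=(B\times K)\times_B\Delta[n]$ is canonically isomorphic to $\Delta[n]\times K$, which is contractible because $K$ is. For part~(3) I would apply part~(1) to $f\colon\text{Tel}\to A$; the task is to show $df(\sigma)=\text{Tel}\times_A\Delta[n]$ is contractible for every $\sigma\colon\Delta[n]\to A$. Since $\Delta[n]$ has only finitely many non-degenerate simplices, $\sigma$ factors through some $A_k$. Because $\text{Spaces}/A$ is a topos, the pullback functor $\Delta[n]\times_A-$ has a right adjoint and hence preserves colimits; in particular it commutes with both the coproduct and the coequalizer used to form $\text{Tel}$. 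Thus $df(\sigma)$ is the analogous telescope built on the filtration $\{A_i\times_A\Delta[n]\}_i$, which is a sequence of inclusions of subsimplicial sets of $\Delta[n]$ stabilizing at $\Delta[n]$ for $i\ge k$. A mapping telescope of inclusions is weakly equivalent to its colimit, so $df(\sigma)$ is weakly equivalent to $\Delta[n]$ and therefore contractible.

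The main obstacle will be the bookkeeping in part~(3): one must justify commuting the pullback along $\sigma$ with the coequalizer defining $\text{Tel}$ and then identify the resulting simplicial set explicitly enough to recognize it as a mapping telescope on a filtration that eventually stabilizes at $\Delta[n]$. Parts~(1) and~(2) are formal consequences of the simplex-category/pullback identification of the first paragraph, while (3) is formal once this telescope-pullback compatibility has been set up carefully.
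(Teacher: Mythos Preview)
Your proposal is correct and follows exactly the approach the paper intends: the paper presents this corollary without proof, simply as ``special cases of~\ref{prop cofinalwecat}'', and it has already recorded (in Section~\ref{sec hdkepxwe}) the key identification $f\!\downarrow\!\sigma\cong df(\sigma)$ that you spell out. Your treatment of part~(3), including the topos argument for commuting pullback with the colimit defining $\text{Tel}$ and the stabilization of the filtration at $\Delta[n]$, is a clean way to fill in the details the paper omits.
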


\section{Clutching}
Recall that  $\text{\rm Cof}(A,X_{\text{\rm we}})$ denotes the full subcategory
of $\text{\rm Fun}^{b}(A,X_{\text{\rm we}})$ whose objects are  bounded functors 
$F\colon A\to X_{\text{\rm we}}$ whose  composition with $X_{\text{we}}\subset\calm$ is  cofibrant  in
$\text{\rm Fun}^{b}(A,\calm)$ (see Section~\ref{sec boundedfunctors}).

\begin{prop}\label{prop cofinallbund}
$\text{\rm Cof}(A,X_{\text{\rm we}})\subset  \text{\rm Fun}^{b}(A,X_{\text{\rm we}})$ is a homotopy equivalence.
\end{prop}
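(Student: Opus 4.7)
The plan is to exhibit a functor $P \colon \text{\rm Fun}^{b}(A,X_{\text{\rm we}}) \to \text{\rm Cof}(A,X_{\text{\rm we}})$ that is a homotopy inverse to the inclusion $i$, where the homotopy in both directions is witnessed by a single natural transformation coming from a cofibrant replacement.

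By Theorem~\ref{thm modelmain}, $\text{\rm Fun}^{b}(A,\calm)$ carries a model structure. Choose a functorial factorization $P$ in $\text{\rm Fun}^{b}(A,\calm)$ of the type described in~\ref{pt modelcat}; restricting to arrows $\emptyset \to F$ produces a functor $P \colon \text{\rm Fun}^{b}(A,\calm) \to \text{\rm Fun}^{b}(A,\calm)$ with a natural transformation $\pi \colon P \to \text{id}$ such that each $\pi_F \colon P(F) \to F$ is an acyclic fibration and $\emptyset \to P(F)$ is a cofibration. In particular, each $P(F)$ is cofibrant and $\pi_F$ is object-wise a weak equivalence.

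First I would check that $P$ restricts to a functor $\text{\rm Fun}^{b}(A,X_{\text{\rm we}}) \to \text{\rm Cof}(A,X_{\text{\rm we}})$. Given $F$ with values in $X_{\text{\rm we}}$, the weak equivalence $\pi_F$ forces each $P(F)(\sigma)$ to lie in $X_{\text{\rm we}}$, and $P(F)$ is cofibrant by construction, so $P(F) \in \text{\rm Cof}(A,X_{\text{\rm we}})$. For a morphism $f \colon F \to G$ in $\text{\rm Fun}^{b}(A,X_{\text{\rm we}})$ (a natural weak equivalence), naturality of $\pi$ makes the square $\pi_G \circ P(f) = f \circ \pi_F$ commute, and the 2-out-of-3 property (applied object-wise) shows $P(f)$ is again a natural weak equivalence, hence a morphism in $\text{\rm Cof}(A,X_{\text{\rm we}})$.

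Finally, because every component of $\pi_F$ is a weak equivalence in $\calm$, the family $\{\pi_F\}_F$ is a natural transformation inside $\text{\rm Fun}^{b}(A,X_{\text{\rm we}})$ from $iP$ to $\text{id}_{\text{\rm Fun}^{b}(A,X_{\text{\rm we}})}$, and its restriction to $\text{\rm Cof}(A,X_{\text{\rm we}})$ is a natural transformation from $Pi$ to $\text{id}_{\text{\rm Cof}(A,X_{\text{\rm we}})}$. By the dictionary of Section~\ref{sec basicdictionary}, a single natural transformation already witnesses a homotopy, so $iP$ and $Pi$ are homotopic to the respective identities and $i$ is a homotopy equivalence. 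The only subtle step is verifying that $P$ takes natural weak equivalences to natural weak equivalences, which is the obstacle one has to handle carefully; but since weak equivalences in $\text{\rm Fun}^{b}(A,\calm)$ are tested object-wise, this is immediate from 2-out-of-3 once the square above is written down.
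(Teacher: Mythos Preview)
Your proposal is correct and is exactly the argument the paper has in mind: its proof is the single sentence ``A homotopy inverse is given by a cofibrant replacement,'' and you have simply unpacked this by exhibiting the functorial cofibrant replacement $P$ and using the natural acyclic fibration $\pi\colon P\Rightarrow\text{id}$ as the homotopy in both directions.
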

\begin{proof}
A homotopy inverse  is given by
a cofibrant replacement.
\end{proof}

Recall  that  if $f\colon A\to B$ is reduced (see Section~\ref{sec simplex}), then
$f^{\ast}\colon\text{Fun}^{b}(B,X_{\text{\rm we}})\to \text{Fun}^{b}(A,X_{\text{\rm we}})$ maps
the subcategory $\text{\rm Cof}(B,X_{\text{\rm we}})$
into $\text{\rm Cof}(A,X_{\text{\rm we}})$.

\begin{prop}\label{prop cofstronfibration}
If  $f:A\ra B$ is reduced, then
$f^{\ast}\colon\text{\rm Cof}(B,X_{\text{\rm we}})\to \text{\rm Cof}(A,X_{\text{\rm we}})$
is a strong fibration (see Section~\ref{sec basicdictionary}). 
\end{prop}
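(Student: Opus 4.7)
The plan is to show that for every morphism $\gamma\colon G_1\to G_0$ in $\text{\rm Cof}(A,X_{\text{\rm we}})$, the functor $\gamma\!\uparrow\! f^*\colon G_0\!\uparrow\! f^*\to G_1\!\uparrow\! f^*$ is a homotopy equivalence of categories. I will construct an explicit homotopy inverse using pushouts in $\text{\rm Fun}^b(B,\calm)$ transported through the adjunction $f^k\dashv f^*$, exploiting that $f$ is reduced so that $f^k$ preserves (acyclic) cofibrations by Proposition~\ref{prop basicprophocolim}(2).

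First, I apply the functorial factorization $P$ of~\ref{pt modelcat} to $\gamma$ to write it as $G_1\xrightarrow{\iota}P\gamma\xrightarrow{p}G_0$, with $\iota$ a cofibration and $p$ an acyclic fibration; since $\gamma=p\iota$ is a weak equivalence, two-out-of-three makes $\iota$ an acyclic cofibration, and both $P\gamma$ and $G_0$ are cofibrant. Because $\gamma\!\uparrow\! f^*=(\iota\!\uparrow\! f^*)\circ(p\!\uparrow\! f^*)$, it suffices to prove that each factor is a homotopy equivalence.

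For $\iota$, I define $s\colon G_1\!\uparrow\! f^*\to P\gamma\!\uparrow\! f^*$ as follows. Given $(F,\beta\colon G_1\to f^*F)$ with adjoint $\tilde\beta\colon f^kG_1\to F$, form the pushout of $\tilde\beta$ along $f^k\iota\colon f^kG_1\to f^kP\gamma$ in $\text{\rm Fun}^b(B,\calm)$. By Proposition~\ref{prop basicprophocolim}(2), $f^k\iota$ is an acyclic cofibration, so the pushout map $i\colon F\to F'$ is an acyclic cofibration with $F'\in\text{\rm Cof}(B,X_{\text{\rm we}})$. Taking $\alpha\colon P\gamma\to f^*F'$ adjoint to the pushout map $f^kP\gamma\to F'$ yields $s(F,\beta)=(F',\alpha)$, functorial in $(F,\beta)$ via the universal property. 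The morphism $i$ provides a natural weak equivalence $\text{\rm id}\to(\iota\!\uparrow\! f^*)\circ s$ because adjunction-chasing in the pushout square gives $\alpha\iota=f^*(i)\circ\beta$; the reverse natural weak equivalence $s\circ(\iota\!\uparrow\! f^*)\to\text{\rm id}$ comes from the canonical retraction to $F$ produced by the pushout universal property when the relevant adjoint already factors through $f^kP\gamma$. Hence $s$ is a homotopy inverse to $\iota\!\uparrow\! f^*$.

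For $p$, lift $\emptyset\to G_0$ against the acyclic fibration $p$ to choose a section $s''\colon G_0\to P\gamma$; then $s''\!\uparrow\! f^*$ is a strict right inverse to $p\!\uparrow\! f^*$. The remaining composition $(p\!\uparrow\! f^*)\circ(s''\!\uparrow\! f^*)=(s''p)\!\uparrow\! f^*$ is treated by applying the pushout argument above to a suitable factorization of the weak equivalence $s''p\colon P\gamma\to P\gamma$ between cofibrant objects. The main obstacle I anticipate is precisely here: the section $s''$ is non-canonical, and exhibiting $(s''p)\!\uparrow\! f^*$ as genuinely homotopic to the identity via a zig-zag of natural transformations (rather than merely as a strict right inverse) requires carefully combining the pushout construction with the observation that $s''p$ and $\text{\rm id}_{P\gamma}$ are both lifts of $p$ through $p$, and are thus related by model-categorical homotopies compatible with the pushout formation.
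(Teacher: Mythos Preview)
Your treatment of the acyclic cofibration factor $\iota$ is correct: pushing out the adjoint $\tilde\beta$ along the acyclic cofibration $f^k\iota$ gives a clean homotopy inverse, and both natural transformations you describe check out.

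The gap is exactly where you flag it. For the acyclic fibration $p$, the section argument stalls: you need $(s''p)\!\uparrow\! f^*$ to be homotopic to the identity on $P\gamma\!\uparrow\! f^*$, but a left homotopy $s''p\simeq\text{id}_{P\gamma}$ in $\text{Fun}^b(A,\calm)$ does not produce a zig-zag of natural transformations between the under-category functors. Your proposed fix --- factor $s''p$ and apply the pushout argument --- is circular, since that factorization again contains an acyclic fibration and you are back to the same obstruction. There is no evident way to close this loop.

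The paper sidesteps the whole issue by \emph{not} factoring $\gamma$. Instead, given $(G,\alpha_1)$ in $G_1\!\uparrow\! f^*$, it factors the \emph{adjoint} $\tilde\alpha_1\colon f^kG_1\to G$ as a cofibration followed by an acyclic fibration, $f^kG_1\hookrightarrow P(\tilde\alpha_1)\stackrel{\simeq}{\twoheadrightarrow} G$, and then pushes out the cofibration $f^kG_1\hookrightarrow P(\tilde\alpha_1)$ along $f^k\gamma$. Since $\gamma$ is a weak equivalence between cofibrant objects, so is $f^k\gamma$ (Proposition~\ref{prop basicprophocolim}(1)), and the gluing lemma for cofibrations makes the pushout map $P(\tilde\alpha_1)\to\Phi(G,\alpha_1)$ a weak equivalence. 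This yields the homotopy inverse $\Phi\colon G_1\!\uparrow\! f^*\to G_0\!\uparrow\! f^*$ in one step, with both required zig-zags falling out of the factorization and the pushout universal property. The key point you are missing is that factoring the adjoint (rather than $\gamma$) lets the pushout argument work uniformly for \emph{any} weak equivalence between cofibrant objects, so no separate treatment of cofibrations and fibrations is needed.
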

\begin{proof}
 We need to show that $\psi\!\uparrow\! f^{\ast}\colon
F_0\!\uparrow\! f^{\ast}\to F_1\!\uparrow\! f^{\ast}$ is a homotopy equivalence for any 
 $\psi\colon F_1\to F_0$   in
$\text{\rm Cof}(A,X_{\text{\rm we}})$. 
Let  $(G,\alpha_1)$ be an object in  $F_1\!\uparrow\! f^{\ast}$\@. It consists of a functor $G\colon B\to X_{\text{we}}$ in 
$\text{\rm Cof}(B,X_{\text{\rm we}})$ and
a natural weak equivalence $\alpha_1\colon F_1\to f^{\ast}G$\@.  Consider the   following commutative diagram in $\text{Fun}^b(B,\calm)$:
\[\xymatrix@R=11pt@C=20pt{
f^{k}F_1 \ar@/^15pt/[rr]|{\widetilde{\alpha_1}}\rmono\dto_(.4){f^k\psi}^(.4){\simeq}  & P(\widetilde{\alpha_1})\ar@{->>}[r]^{\simeq}\dto^(.4){\simeq} & G\\
f^{k}F_0\rmono^(.45){\widetilde{\alpha_0}} & \Phi(G,\alpha_1)
}\]
where  $\widetilde{\alpha_1}\colon f^kF_1\to G$ is the adjoint to $\alpha_1$,
$f^{k}F_1\hookrightarrow P(\widetilde{\alpha_1})\stackrel{\simeq}{\twoheadrightarrow}G$
is the functorial factorization given in~\ref{pt modelcat}, and 
 the left hand square is a {\bf push-out}.
Since $\psi$ is a  week equivalence between cofibrant 
objects, its left Kan extension $f^{k}\psi$ is also  a weak equivalence between cofibrant objects (see~\ref{prop basicprophocolim}). Thus 
the natural transformation $P(\widetilde{\alpha_1})\to  \Phi(G,\alpha_1)$ is a weak equivalence
and $\widetilde{\alpha_0}$ is a cofibration.
The functor $\Phi(G,\alpha_1)$ is therefore cofibrant and has values in $X_{\text{we}}$.
Define $\alpha_0\colon F_0\to f^{\ast}\Phi(G,\alpha_1)$ to be the adjoint to $\widetilde{\alpha_0}$.
In this way out of an object $(G,\alpha_1)$ in  $F_1\!\uparrow\! f^{\ast}$ we have constructed an 
object $(\Phi(G,\alpha_1), \alpha_0)$ in $F_0\!\uparrow\! f^{\ast}$\@. This whole procedure is  functorial. The induced functor is denoted by $\Phi\colon F_1\!\uparrow\! f^{\ast}\to F_0\!\uparrow\! f^{\ast}$\@. We claim that $\Phi$ is a homotopy
inverse to $\psi\!\uparrow\! f^{\ast}\colon F_0\!\uparrow\! f^{\ast}\to F_1\!\uparrow\! f^{\ast}$\@.
The weak equivalences
$\Phi(G,\alpha_1) \xleftarrow{\simeq} P(\widetilde{\alpha_1})\xrightarrow{\simeq} G$
give a ``zig-zag'' of natural transformations between the composition $(\psi\!\uparrow\! f^{\ast})\Phi$ and 
$\text{id}_{F_1\uparrow f^{\ast}}$\@.
Let $(G\colon B\to X_{\text{we}},\lambda\colon F_0\to f^{\ast}G)$ be an object in $F_0\!\uparrow\! f^{\ast}$ and consider the following commutative diagram in $\text{Fun}^b(C,\calm)$:
\[\xymatrix@R=11pt@C=15pt{
f^{k}F_1 \ar@/^15pt/[rr]|{\widetilde{\lambda}f^k\psi}\rmono\dto_-{f^k\psi}^{\simeq}  & P(\widetilde{\lambda}f^k\psi)\ar@{->>}[r]^{\simeq}\dto^-{\simeq} & G\ar@{=}[d]\\
f^{k}F_0\rmono \ar@/_13pt/[rr]|{\widetilde{\lambda}} & \Phi(G,\widetilde{\lambda}f^k\psi)\rto^(.65){\simeq} & G
}\]
where $ \Phi(G,\widetilde{\lambda}f^k\psi)\xrightarrow{\simeq} G$ is given by the universal property
of a push-out.
These  week equivalences
form a natural transformation  between  $\Phi(\psi\!\uparrow\! f^{\ast})$ to $\text{id}_{F_0\uparrow f^{\ast}}$.
\end{proof}

\begin{prop}\label{prop cluthcingcof}
Assume that the  square on the left below is a push-out with $i$ an inclusion and $f$ reduced (see Section~\ref{sec simplex}). Then
$g$ is also reduced and the square on the right is   a strong homotopy pull-back
(see Section~\ref{sec basicdictionary}):
\[\xymatrix@R=11pt@C=15pt{
A\rmono^{i} \dto_-{f} & C\dto^-{g}\\
B\rmono^{j} & D
}\ \ \ \ \ \ \ \  \ \ \ \ 
\xymatrix@R=11pt@C=15pt{
\text{\rm Cof}(D,X_{\text{\rm we}})\rto^{j^{\ast}} \dto_-{g^{\ast}} & \text{\rm Cof}(B,X_{\text{\rm we}})\dto^-{f^{\ast}}\\
\text{\rm Cof}(C,X_{\text{\rm we}})\rto^{i^{\ast}} & \text{\rm Cof}(A,X_{\text{\rm we}})
}\]
\end{prop}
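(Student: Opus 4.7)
First I verify that $g$ is reduced. In simplicial sets the push-out of an inclusion is an inclusion, so $j$ is also an inclusion, and every simplex of $D$ is uniquely of the form $j(\sigma')$ for $\sigma'\in B$ or $g(\bar\sigma)$ for $\bar\sigma \in C \setminus i(A)$. For a non-degenerate $\sigma\in C$: if $\sigma=i(\sigma')$ then $g(\sigma)=jf(\sigma')$ is non-degenerate because $f$ is reduced and $j$ is an inclusion; if $\sigma\not\in i(A)$ then $g(\sigma)\in D\setminus j(B)$, and any identity $g(\sigma)=s_k\tau$ with $\tau\in D\setminus j(B)$ would pull back under the bijection $C\setminus i(A)\to D\setminus j(B)$ to a degeneration of $\sigma$. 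Since $i$ is an inclusion it is itself reduced, and Proposition~\ref{prop cofstronfibration} yields that $i^{\ast}\colon\text{Cof}(C,X_{\text{we}})\to\text{Cof}(A,X_{\text{we}})$ is a strong fibration. By Definition~\ref{def weqfhpull} and the definition of strong homotopy pull-back, it suffices now to show that for every $F\in\text{Cof}(B,X_{\text{we}})$ the comparison functor $(g^{\ast},f^{\ast})\colon F\!\uparrow\! j^{\ast}\to f^{\ast}F\!\uparrow\! i^{\ast}$ is a homotopy equivalence.

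I construct a homotopy inverse $\Phi$ using the clutching construction of Section~\ref{sec clutching} preceded by a rectification of the adjoint morphism. Given $(G,\beta\colon f^{\ast}F\to i^{\ast}G)$, let $\widetilde{\beta}\colon i^{k}f^{\ast}F\to G$ be adjoint to $\beta$, and apply the functorial factorization $P$ of~\ref{pt modelcat}:
\[
i^{k}f^{\ast}F \lhr P(\widetilde{\beta}) \overset{\simeq}{\twoheadrightarrow} G
\]
in $\text{Fun}^{b}(C,\calm)$. Write $G':=P(\widetilde{\beta})$ and $\beta'\colon f^{\ast}F\to i^{\ast}G'$ for the adjoint of the cofibration; then $G'\in\text{Cof}(C,X_{\text{we}})$ and $\beta'$ is a weak equivalence by the 2-out-of-3 property applied to $\beta=i^{\ast}(G'\twoheadrightarrow G)\circ\beta'$. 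The acyclic fibration $G'\twoheadrightarrow G$ induces a natural weak equivalence $(G',\beta')\to(G,\beta)$ in $f^{\ast}F\!\uparrow\! i^{\ast}$. Applying the clutching of Section~\ref{sec clutching} to the initial data $(F,G',\beta')$ produces a functor $H:=H(\beta',F,G')\colon D\to X_{\text{we}}$ with $j^{\ast}H=F$ strictly, bounded by Proposition~\ref{prop boundclutch}. I set $\Phi(G,\beta):=(H,\mathrm{id}_{F})$; functoriality follows from the functoriality of $P$ and of the clutching.

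The key technical step is verifying $H\in\text{Cof}(D,X_{\text{we}})$. For a non-degenerate $\sigma\in j(B)$ the latching map $\text{colim}_{\partial\Delta[n]}H\sigma\to H(\sigma)$ coincides with the latching map for $F$, hence is a cofibration. For a non-degenerate $\sigma=g(\bar\sigma)\not\in j(B)$, the sub-simplicial set $\bar\sigma^{-1}(i(A))\cap\partial\Delta[n]\subset\partial\Delta[n]$ splits the boundary into the part where $H\circ(\sigma\circ-)$ agrees with $F\circ f\circ(\bar\sigma\circ-)'$ and the complementary part where it agrees with $G'\circ(\bar\sigma\circ-)$; the latching map for $H$ at $\sigma$ is then the push-out of the latching map for $G'$ at $\bar\sigma$ along the cofibration $\widetilde{\beta'}\colon i^{k}f^{\ast}F\hookrightarrow G'$ restricted to this sub-complex, and a push-out of cofibrations is a cofibration. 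Once cofibrancy is in hand, I check the two composites: $(g^{\ast},f^{\ast})\Phi(G,\beta)=(G',\beta')$, naturally weakly equivalent to $(G,\beta)$ via $G'\twoheadrightarrow G$; conversely $\Phi(g^{\ast},f^{\ast})(H',\alpha)$ is compared to $(H',\alpha)$ through a zig-zag built from the factorization of $\widetilde{\alpha'}$ together with the uniqueness statement of Proposition~\ref{prop univclutching}, which identifies the clutching of a restricted transformation with the original functor up to a canonical natural weak equivalence. These zig-zags of natural weak equivalences establish the homotopy equivalence. The main obstacle is precisely the cofibrancy verification for $H$: clutching alone does not preserve cofibrancy, and the passage to $G'=P(\widetilde{\beta})$ is essential to convert $\beta$ into data whose adjoint is a genuine cofibration in $\text{Fun}^{b}(C,\calm)$, so that the gluing underlying the clutching yields cofibrations latching-map by latching-map.
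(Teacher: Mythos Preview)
Your overall strategy is sound and the ingredients are the right ones, but the approach diverges from the paper's at the key technical point, and that is precisely where your argument has a genuine gap.

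\textbf{Where you differ from the paper.} You rectify \emph{before} clutching: you factor $\widetilde{\beta}\colon i^{k}f^{\ast}F\to G$ in $\text{Fun}^{b}(C,\calm)$ to obtain $G'$, then clutch $F$ and $G'$ and hope the result $H$ is already cofibrant. The paper does the opposite: it clutches $F$ and $G$ directly to get $H(\psi,F,G)$ (which is \emph{not} claimed to be cofibrant), and only then factors the adjoint map $j^{k}F\to H(\psi,F,G)$ in $\text{Fun}^{b}(D,\calm)$ to produce a cofibrant $\overline{H}(\psi,F,G)$. Since $j^{k}F$ is cofibrant (Proposition~\ref{prop basicprophocolim}(2)), cofibrancy of $\overline{H}$ is immediate from the factorization axiom, with no latching-map analysis required at all.

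\textbf{The gap.} Your cofibrancy argument for $H$ at a non-degenerate $\sigma\notin j(B)$ asserts that the latching map is ``the push-out of the latching map for $G'$ at $\bar\sigma$ along the cofibration $\widetilde{\beta'}$ restricted to this sub-complex.'' This is not substantiated. What one can say is that the latching map for $H$ factors as
\[
\text{colim}_{\partial\Delta[n]} H\sigma \xrightarrow{\ \text{colim}\,\overline{\beta'}\ } \text{colim}_{\partial\Delta[n]} G'\bar\sigma \longrightarrow G'(\bar\sigma),
\]
and the second arrow is a cofibration because $G'$ is cofibrant. But the first arrow is a colimit over $\partial\Delta[n]$ of a natural transformation that is the identity on $\partial\Delta[n]\setminus K$ and equals (a pullback of) $\beta'$ on $K=\bar\sigma^{-1}(i(A))$. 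Showing this is a cofibration requires identifying $\text{colim}_{\partial\Delta[n]} G'\bar\sigma$ as a pushout of $\text{colim}_{\partial\Delta[n]} H\sigma$ along $\text{colim}_{K}(\beta'$-data$)$, which in turn needs a compatibility between the \emph{bounded} left Kan extension along $K\hookrightarrow\partial\Delta[n]$ and the clutching description of $H\sigma|_{\partial\Delta[n]}$. None of this is established, and it is not a one-line verification. Even granting that $\beta'$ itself is a cofibration in $\text{Fun}^{b}(A,\calm)$ (which already uses that $i^{\ast}i^{k}\cong\mathrm{id}$ for the bounded adjunction, something you do not justify), this does not by itself control the colimit over $\partial\Delta[n]$.

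\textbf{Minor point.} Your description of $(g^{\ast},f^{\ast})\Phi(G,\beta)$ as $(G',\beta')$ is not literally correct: it is $(g^{\ast}H,\mathrm{id}_{f^{\ast}F})$, and one then uses the clutching transformation $\overline{\beta'}\colon g^{\ast}H\to G'$ followed by $G'\twoheadrightarrow G$ to build the zig-zag. This is easily repaired and is essentially what the paper does with its $\overline{\psi}$.

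In short: the paper's post-clutching factorization in $\text{Fun}^{b}(D,\calm)$ is not just a stylistic choice; it replaces the delicate latching-map analysis you attempt by a one-line appeal to the model structure. If you want to keep your pre-clutching rectification, you must supply a rigorous proof that clutching a cofibrant $F$ with a cofibrant $G'$ along a transformation whose adjoint is a cofibration yields a cofibrant functor, and that is substantially more work than what you have written.
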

\begin{proof}
The proof of the fact that $g$ is reduced is left for the reader.

By definition, the claimed square is a strong homotopy pull-back if
two requirements are satisfied: 
$i^{\ast}$ is a strong fibration and,
for any object $F$ in  $\text{\rm Cof}(B,X_{\text{\rm we}})$, the functor
$(g^{\ast},f^{\ast})\colon F\!\uparrow\!  j^{\ast}\rightarrow  f^{\ast}F\!\uparrow\! i^{\ast}$ is a homotopy
equivalence (see Section~\ref{sec basicdictionary}).
The first requirement is the content of~\ref{prop cofstronfibration}. It remains  to
prove the second one.
A short argument for $(g^{\ast},f^{\ast})$ being a homotopy equivalence is that
the clutching construction is its homotopy inverse.  Here is a more detailed
explanation  of why this is so.
An object  $(G,\psi)$ in $f^{\ast}F\!\uparrow\! i^{\ast}$  consists of a functor $G$ in
$\text{\rm Cof}(C,X_{\text{\rm we}})$ and a natural weak equivalence $\psi\colon f^{\ast}F\to i^{\ast}G$.
 This is an example of a clutching data (see Section~\ref{sec clutching}).
 Its  clutching   is  a functor  $H(\psi, F, G)\colon D\to \calm$ and 
a natural transformation $\overline{\psi}\colon g^{\ast}H(\psi, F, G)\to G$\@.  
The functor $H(\psi, F, G)$ is bounded (see~\ref{prop boundclutch}) and
 $\overline{\psi}$ is a weak equivalence. 
However   $H(\psi, F, G)$ may not be  cofibrant  in $\text{Fun}^b(D,\calm)$\@.
Let $\alpha\colon j^{k}F\to H(\psi, F, G)$ be the adjoint to the identity functor
$F=j^{\ast}H(\psi, F, G)$\@. Define $\overline{H}(\psi, F, G)$ to be the functor that fits into the following functorial factorization in
$\text{Fun}^b(D,\calm)$ (see~\ref{pt modelcat}):
\[\xymatrix@R=12pt@C=15pt{
j^{k}F \ar@/_12pt/[rr]|{\alpha}\rmono & \overline{H}(\psi, F, G)\ar@{->>}[r]^(.45){\simeq}& H(\psi, F, G)
}\]
By applying $g^{\ast}$ to the acyclic fibration on the right  and composing it with $\overline{\psi}$
we get a natural transformation denoted by the same symbol 
$\overline{\psi}\colon g^{\ast}\overline{H}(\psi, F, G)\to G$:
\[\xymatrix@R=12pt@C=15pt{
g^{\ast}\overline{H}(\psi, F, G)\ar@{->>}[r]^(.45){\simeq}  \ar@/_12pt/[rr]|{\overline{\psi}}& 
g^{\ast}H(\psi, F, G)\rto^-{\overline{\psi}} & G}\]
Let $\overline{\alpha}\colon F\to j^{\ast}\overline{H}(\psi, F, G)$  be the adjoint to
 $j^{k}F\hookrightarrow \overline{H}(\psi, F, G)$\@.
Since $F$ is cofibrant, then so are $j^{k}F$ and  $\overline{H}(\psi, F, G)$\@.
Thus $\overline{H}(\psi, F, G)$ is an object in $\text{\rm Cof}(D,X_{\text{\rm we}})$.
This data can be arranged into a commutative diagram:
\[
\xymatrix@R=13pt@C=13pt{
 & C\drto^-{g}\rrto_(.6){G}="G" & & X_{\text{we}}\\
A\urmono^{i}\drto_{f} & & D\rrto|(.6){\overline{H}}="H" &  &X_{\text{we}}\\
 & B\urmono^-{j}\rrto^(.6){F}="F" & & X_{\text{we}}
 \ar@/_.2pc/ @{->}|-\hole_(.3){\psi} "F"; "G" 
 \ar @/_.5pc/@{->}|(.6){\overline{\alpha}} "F" ; "H"
 \ar @/_.5pc/@{->}|(.4){\overline{\psi}} "H" ; "G"
}
\]
Out of an object $(G,\psi)$ in $f^{\ast}F\!\uparrow\! i^{\ast}$,
we have constructed an object $(\overline{H}(\psi, F, G), \overline{\alpha})$
in $F\!\uparrow\!  j^{\ast}$. All the steps in this construction are functorial. The obtained functor is denoted by
$\Phi\colon f^{\ast}F\!\uparrow\! i^{\ast}\to F\!\uparrow\!  j^{\ast}$.
We claim that $(g^{\ast},f^{\ast})\Phi$ and $\Phi (g^{\ast},f^{\ast})$ are homotopic to
the identity functors. The morphisms $\overline{\psi}\colon g^{\ast}\overline{H}(\Psi,F,G)\to G$  in $f^{\ast}F\!\uparrow\! i^{\ast}$ form a natural transformation between 
$(g^{\ast},f^{\ast})\Phi$ and $\text{id}_{f^{\ast}F\uparrow i^{\ast}}$.

Let $(G, \psi)$ be an object in $F\!\uparrow\!  j^{\ast}$ consisting of a functor $G$ in
$\text{Cof}(D, X_{\text{we}})$ and a  weak equivalence $\psi\colon F\to j^{\ast}G$.
The composition of $ \widehat{\psi}\colon H(f^{\ast}\psi, F, g^{\ast} G)\to  G$ given in~\ref{prop univclutching}
with the cofibrant replacement $\overline{H}(f^{\ast}\psi, F, g^{\ast} G)\to  H(f^{\ast}\psi, F, g^{\ast} G)$ is   a natural transformation between  $\Phi (g^{\ast},f^{\ast})$ and 
$\text{id}_{F\uparrow j^{\ast}}$.
\end{proof}

\section{The category of functors with values in $X_{\text{we}}$}
\begin{thm}\label{thm funintoxwe}
Let 
 $I$ be  a small category. 
Then\/  $\text{\rm Fun}(I,X_{\text{\rm we}})$ is essentially small and
the nerve of its core is weakly equivalent to\/ $\text{\rm map}(N(I),B\text{\rm we}(X,X))$.
\end{thm}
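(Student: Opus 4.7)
The plan is to imitate the proof of Theorem~\ref{thm bweXX}, with bounded cofibrant functors on $N(I)$ playing the role played there by functors on $N(\Delta[0])$. First, by Corollary~\ref{cor redtonerve} the functor $\epsilon^{\ast}\colon \text{Fun}(I, X_{\text{we}}) \to \text{Fun}^{b}(N(I), X_{\text{we}})$ is a homotopy equivalence, and by Proposition~\ref{prop cofinallbund} so is the inclusion $\text{Cof}(N(I), X_{\text{we}}) \subset \text{Fun}^{b}(N(I), X_{\text{we}})$. Via Corollary~\ref{cor esssmallhomeqinv} and Proposition~\ref{prop esssmallweakequiv}, it therefore suffices to show that $\text{Cof}(N(I), X_{\text{we}})$ is essentially small with core whose nerve is weakly equivalent to $\text{map}(N(I), B\text{we}(X,X))$.

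In parallel with Theorem~\ref{thm bweXX}, let $\calt^{I}$ denote the collection of all $F$ in $\text{Cof}(N(I), X_{\text{we}})$ which are also fibrant in $\text{Fun}^{b}(N(I), \calm)$. For each $F \in \calt^{I}$ and each $n \geq 0$, set $F[n] := \text{pr}^{\ast}F \colon N(I) \times N(\Delta[n]) \to X_{\text{we}}$ (the pullback along the projection), and let $\calt^{I}_{n}$ be the full subcategory of $\text{Fun}^{b}(N(I) \times N(\Delta[n]), X_{\text{we}})$ on the objects $\{F[n] : F \in \calt^{I}\}$, with morphisms the natural weak equivalences. The assignment $[n] \mapsto \calt^{I}_{n}$ is a system indexed by $\Delta$. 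Define $\Psi^{I}\colon \text{Gr}_{\Delta}\calt^{I}_{-} \to \text{Cof}(N(I), X_{\text{we}})$ as the direct analog of the functor $\Psi$ in the proof of Theorem~\ref{thm bweXX}, sending $F[n]$ to the colimit of $F[n]$ along the $N(\Delta[n])$-direction; by Proposition~\ref{prop hconst} this lands in $X_{\text{we}}$-valued functors, since $\Delta[n]$ is contractible. A homotopy inverse $\Phi^{I}$ is given by a fibrewise cofibrant-fibrant replacement in $\text{Fun}^{b}(N(I), \calm)$ followed by the standard inclusion $\calt^{I}_{0} \hookrightarrow \text{Gr}_{\Delta}\calt^{I}_{-}$. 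The verifications that $\Psi^{I}\Phi^{I}$ and $\Phi^{I}\Psi^{I}$ are homotopic to the identity transcribe directly from the corresponding zig-zags in the proof of Theorem~\ref{thm bweXX}.

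To locate a core, note that the components of $\text{Cof}(N(I), X_{\text{we}})$ biject with $[N(I), B\text{we}(X,X)]$ by Stasheff's classical theorem recalled in the introduction, and hence form a set. Choose a set $\cals \subset \calt^{I}$ containing exactly one representative in each component. Mimicking the argument in the proof of Theorem~\ref{thm bweXX}, for any small subcategory $J$ with $\text{Gr}_{\Delta}\cals_{-} \subset J \subset \text{Gr}_{\Delta}\calt^{I}_{-}$, set $\cals' := \{F \in \calt^{I} : F[n] \in J \text{ for some } n\}$ and invoke an $N(I)$-relative version of Proposition~\ref{prop deloopingwe}.(3) to see that $\text{Gr}_{\Delta}\cals_{-} \subset \text{Gr}_{\Delta}\cals'_{-}$ is a weak equivalence; this shows that $\text{Gr}_{\Delta}\cals_{-}$ is a core. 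By Thomason's theorem~\ref{prop ThomasonPuppe}.(1), the nerve of this core is weakly equivalent to the diagonal of the simplicial space $N(\cals_{-})$.

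The final step, and the principal obstacle, is to identify this diagonal with $\text{map}(N(I), B\text{we}(X,X))$. I plan to do this by unwinding the definition of the mapping space from Section~\ref{sec mappinginmc} together with the description of $B\text{we}(X,X)$ as the diagonal of $N(\{QX\}_{-})$ from Section~\ref{sec we}: an $n$-simplex on either side corresponds to an $n$-chain of natural weak equivalences of functors $N(I) \times N(\Delta[n]) \to X_{\text{we}}$, and the bijection respects the simplicial structure and is compatible with the partition into components. The $N(I)$-relative version of Proposition~\ref{prop deloopingwe} needed here is an $N(I)$-parametrized adaptation of the appendix material, with Proposition~\ref{prop wehgoup} supplying the multiplicative input. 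An alternative route that I would keep in reserve is to induct on the skeletal filtration of $N(I)$, using Proposition~\ref{prop cluthcingcof} and Corollary~\ref{cor stronhpullhompull} to match strong homotopy pull-backs on the $\text{Cof}$-side with homotopy pull-backs of mapping spaces on the $\text{map}$-side, with Corollary~\ref{cor funoutofcotract} as the base case; the subtlety there is that attaching maps in a general simplicial set such as $N(I)$ need not be reduced, which is incompatible with the hypothesis of Proposition~\ref{prop cluthcingcof} and would require either reorienting the push-outs or refining the cellular decomposition.
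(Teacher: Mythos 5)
Your outline has a genuine gap at exactly the point you call ``the principal obstacle'': the identification of the diagonal of $N(\cals_-)$ with $\text{map}(N(I),B\text{we}(X,X))$ is asserted as a plan, not proved, and the claimed dictionary is wrong as stated. An $n$-simplex of $\text{map}(N(I),B\text{we}(X,X))$ (a homotopy-invariant mapping space into the diagonal of a bar construction) does \emph{not} unwind to a strictly commuting $n$-chain of natural weak equivalences of functors $N(I)\times N(\Delta[n])\to X_{\text{we}}$; passing from such coherent-up-to-homotopy data to strict chains is a rectification problem, and it is essentially the entire content of the theorem, not a bookkeeping step. The auxiliary inputs you invoke are also not available: the appeal to Stasheff's theorem to see that the components of $\text{Cof}(N(I),X_{\text{we}})$ form a set is a statement about topological fibrations, not about functor categories in an arbitrary model category $\calm$, and the set-ness of components is part of what essential smallness is supposed to deliver (\ref{prop propesssmall}.(3)), so using it here is circular in spirit; the ``$N(I)$-relative version of~\ref{prop deloopingwe}.(3)'' would require redeveloping Sections 9--11 and the appendix over the base $N(I)$ (Kan-ness and the homotopy pull-back property~\ref{prop wehgoup} for relative $\text{we}$-spaces, plus a non-connected variant of~\ref{prop appendix deloopingwe}, since $\text{Cof}(N(I),X_{\text{we}})$ has many components); and the $\Psi^{I}/\Phi^{I}$ zig-zags do not ``transcribe directly'' because the objects of $\calt^{I}$ are not weakly constant, so~\ref{prop hocolimovercontract} does not apply fibrewise without further argument.

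Ironically, the route you ``keep in reserve'' is the paper's actual proof, and the obstacle you cite there has a clean resolution. The paper proves the stronger Proposition~\ref{prop keypropessentiallysmall} by showing the class of spaces $A$ for which the conclusion holds is closed under push-outs along inclusions, cofinal maps in the sense of~\ref{cor cofinalitycons}.(1), coproducts and telescopes, with contractible spaces as the base case (\ref{cor funoutofcotract}); the theorem follows by applying this to $A=N(I)$ and using~\ref{cor redtonerve}. The reduced-maps issue disappears because one does not clutch over $A$ itself but over its subdivision: the subdivision is a left adjoint, so it carries the push-out square to a push-out square of the $N(-)$'s, and every map of the form $N(f)$ is reduced, so~\ref{prop cluthcingcof} applies to $N(A)\to N(C)$, $N(A)\to N(B)$ directly; combined with~\ref{prop cofinallbund}, \ref{cor stronhpullhompull} and the homotopy-limit description of the mapping space, this gives the push-out step with no refinement of the cellular decomposition needed. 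If you want to salvage your first route, you must actually supply the rectification comparing the relative bar construction with $\text{map}(N(I),B\text{we}(X,X))$; as written, the proof is incomplete.
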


Our strategy to prove~\ref{thm funintoxwe} is  to show that the nerve of a 
core of  $\text{\rm Fun}(I,X_{\text{\rm we}})$  is weakly equivalent to the 
homotopy limit of the constant functor indexed by $I$ with value $B\text{\rm we}(X,X)$\@. 
For this strategy to work we need to choose these cores
in a certain functorial way 
with respect to $I$\@. We set this functoriality first.

Let   $\phi\colon I\to \text{Spaces}$ be a functor.
For any morphism $\lambda\colon i\to j$ in $I$ and any commutative diagram on the left  below we have the following  commutative diagram of  functors on the right:
\[\xymatrix@R=11pt@C=17pt{\Delta[n]\rto^{\alpha}\dto_{\sigma} & \Delta[m]\dto^{\tau}\\
\phi(i)\rto^{\phi(\lambda)}\dto_{p} & \phi(j)\dto^{p}\\
\Delta[0]\ar@{=}[r] & \Delta[0]}\ \ \ \ \ \ \ \ \ \ \  \ \ \ \ \ \ \  \xymatrix@R=11pt@C=17pt{
\text{Fun}(\Delta[n], X_{\text{we}}) &  \text{Fun}(\Delta[m], X_{\text{we}})\lto_{\alpha^{\ast}} \\
\text{Fun}(\phi(i), X_{\text{we}})\uto^{\sigma^{\ast}}    &\text{Fun}(\phi(j), X_{\text{we}})\lto_{\phi(\lambda)^{\ast}}\uto_{\tau^{\ast}} \\
\text{Fun}(\Delta[0], X_{\text{we}})\uto^{p^{\ast}}\ar@{=}[r] & \text{Fun}(\Delta[0], X_{\text{we}})\uto_{p^{\ast}}
}\]
The symbol $\calf(\phi,X)$  denotes the system of categories given by all the functors in the right 
diagram above for all the morphisms $\lambda$ in $I$\@. To describe this system we use the following 
notation. Let  $i$ be an object in $I$ and  $\sigma\colon\Delta[n]\to \phi(i)$ a simplex:
\[\xymatrix@R=6pt@C=15pt{
\calf(\phi,X)_{e_{\phi(i)}}\ar@{*{\cdot\cdot}=}[d]\rrto^{\calf(\phi,X)_p}
 \ar@/^22pt/[rrrr]|{\calf(\phi,X)_p} & &\calf(\phi,X)_{t_{\phi(i)}} \ar@{*{\cdot\cdot}=}[d] 
\rrto^{\calf(\phi,X)_{\pi}}& &\calf(\phi,X)_{\sigma}\ar@{*{\cdot\cdot}=}[d]
\\ 
\text{Fun}(\Delta[0],X_{\text{we}}) \rrto^{p^{\ast}}
 \ar@/_15pt/[rrrr]|{p^{\ast}} & & \text{Fun}(\phi(i),X_{\text{we}})\rrto^{\sigma^{\ast}} &&
\text{Fun}(\Delta[n],X_{\text{we}})
}\]
For example let $A$ be a space and $A\colon [0]\to \text{Spaces}$ be the constant functor with value 
$A$\@.
The corresponding system
of categories  $\calf(A,X)$ is given by the following functors indexed by
morphism $\alpha\colon\sigma\to \tau$ in $A$:
\[\xymatrix@R=40pt@C=18pt{
\calf(A,X)_{e_A}\rto^(.55){\calf_{p}}\ar@{=}[d]\ar@/^17pt/[rr]|{\calf_p} \ar@/^30pt/[rrr]|(.52){\calf_p}&  \calf(A,X)_{t_A}\rto^(.45){\calf_{\pi}}  \ar@/_15pt/[rr]|(.6){\calf_{\pi}} \ar@{=}[d]&   \calf(A,X)_{\tau}\rto^{\calf_{\alpha}}\ar@{=}[d]|(.265)\hole
&  \calf(A,X)_{\sigma}\ar@{=}[d]\\
\text{Fun}(\Delta[0],X_{\text{we}})\rto^(.58){p^{\ast}} 
\ar@/^17pt/[rr] |(.49)\hole ^(.68){p^{\ast}} 
\ar@/^30pt/[rrr]|(.32)\hole |(.52){p^{\ast}} |(.65)\hole&  \text{Fun}(A,X_{\text{we}})\rto^{\tau^{\ast}}  \ar@/_15pt/[rr]|{\sigma^{\ast}}& \text{Fun}(\Delta[m],X_{\text{we}})\rto^{\alpha^{\ast}}
&  \text{Fun}(\Delta[n],X_{\text{we}})  }\]

\begin{prop}\label{prop keypropessentiallysmall}
The system $\calf(A,X)$ is essentially small 
and,  for any of its cores
$F\subset \calf(A,X)$, 
 the maps
$N(F_{t_A})\to \text{\rm holim}_{\sigma\in A}N(F_{\sigma})\gets \text{\rm holim}_{\sigma\in A} N(F_{e_A})$,
induced by $F_{\pi}\colon F_{t_A}\to F_{\sigma}$  and  $F_{\sigma}\gets F_{e_A} : F_p $, are  weak equivalences.
\end{prop}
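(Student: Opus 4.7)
The argument splits into two parts: (i) establishing essential smallness of the system $\calf(A,X)$, and (ii) proving the two weak equivalences for a chosen core. For (i), by Proposition~\ref{prop funcorialcore}.(1) it suffices to check essential smallness of each constituent category. The case $\calf(A,X)_{e_A}=X_{\text{we}}$ is Theorem~\ref{thm bweXX}, and the case $\calf(A,X)_\sigma=\text{Fun}(\Delta[n],X_{\text{we}})$ follows from Corollary~\ref{cor funoutofcotract} since the simplex category of $\Delta[n]$ has terminal object $\id_{\Delta[n]}$ and is therefore contractible. The substantive case is $\text{Fun}(A,X_{\text{we}})$. I would pass to $\text{Cof}(A,X_{\text{we}})$ via Proposition~\ref{prop cofinallbund} and Corollary~\ref{cor redtonerve}, then induct on the skeletal dimension of $A$: the pushout $A^{(n)}\sqcup_{\coprod\partial\Delta[n+1]}\coprod\Delta[n+1]=A^{(n+1)}$ turns into a strong homotopy pullback of $\text{Cof}(-,X_{\text{we}})$'s by Proposition~\ref{prop cluthcingcof}, whence essential smallness propagates via Corollary~\ref{cor stronhpullhompull}. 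The passage $A=\bigcup_n A^{(n)}$ is handled by Corollary~\ref{cor cofinalitycons}.(3) together with Lemma~\ref{lemma-telescope}.

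For (ii), fix any core $F\subset\calf(A,X)$. The right-hand map is the easier one: each component $\calf_p$ of the system is, up to identification, the restriction functor $p^*\colon\text{Fun}(\Delta[0],X_{\text{we}})\to\text{Fun}(\Delta[n],X_{\text{we}})$ along $p\colon\Delta[n]\to\Delta[0]$. By Corollary~\ref{cor cofinalitycons}.(2) (applied with $B=\Delta[0]$ and $K=\Delta[n]$), this is a homotopy equivalence between essentially small categories, hence a weak equivalence by Proposition~\ref{prop esssmallweakequiv}. Restriction to $F$ gives a levelwise weak equivalence of $A^{\text{op}}$-shaped diagrams $F_p\colon F_{e_A}\to F_\sigma$, and a levelwise weak equivalence of diagrams induces a weak equivalence on homotopy limits.

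For the left-hand map $N(F_{t_A})\to\text{holim}_\sigma N(F_\sigma)$, I would again induct on skeletal dimension. In the base case $A=\Delta[n]$, the simplex category has terminal object $\id_{\Delta[n]}$, so its opposite has an initial object and the homotopy limit collapses to $N(F_{\id_{\Delta[n]}})$; the map in question reduces essentially to $N(F_\pi)$ at $\id_{\Delta[n]}$, which is the identity (because $\calf_\pi=\id^*=\id$ on $\text{Fun}(\Delta[n],X_{\text{we}})$). For the inductive step, both $N(F_{t_{A^{(n+1)}}})$ and $\text{holim}_\sigma N(F_\sigma)$ admit pullback decompositions along the skeletal pushout: the former by Proposition~\ref{prop cluthcingcof} and Corollary~\ref{cor stronhpullhompull}, the latter by the standard property that a hocolim in the indexing produces a holim on values. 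The inductive hypothesis then transports the weak equivalence up to $A^{(n+1)}$, and the telescope step closes out via Lemma~\ref{lemma-telescope} and Corollary~\ref{cor cofinalitycons}.(3).

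The main technical obstacle I anticipate is maintaining a coherent choice of core across the skeletal induction. Cores are not functorial, so at each stage the existing core must be enlarged to a full-subcategory core that fits compatibly into the strong homotopy pullback of Proposition~\ref{prop cluthcingcof}. Lemma~\ref{lem keysystemess} and Proposition~\ref{prop funcorialcore}.(2) are the tools that make this coherent enlargement possible, after which Corollary~\ref{cor stronhpullhompull} produces the genuine homotopy pullback of nerves needed to run the induction.
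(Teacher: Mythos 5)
Your outline is, in substance, the paper's own argument: the paper proves the statement for the class of all spaces for which it holds by establishing closure under push-outs along inclusions (via~\ref{cor redtonerve}, \ref{prop cofinallbund}, \ref{prop cluthcingcof}, \ref{cor stronhpullhompull}), invariance under maps $f$ with all $df(\sigma)$ contractible, contractible spaces, coproducts, and filtered unions via the telescope of~\ref{cor cofinalitycons}.(3); your skeletal induction is the same decomposition, run twice instead of once. But two steps in your sketch need repair before they work. First, Proposition~\ref{prop cluthcingcof} requires the map $f$ in the push-out to be reduced, and a skeletal attaching map $\coprod\partial\Delta[n+1]\to A^{(n)}$ is not reduced in general; worse, restriction along a non-reduced map does not even preserve the subcategories $\text{Cof}(-,X_{\text{we}})$, so your square of $\text{Cof}$'s is not defined as written. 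The fix is the paper's: first apply the subdivision $N(-)$, which preserves push-outs and inclusions and makes every map reduced, work with $\text{Cof}(N(-),X_{\text{we}})$, and compare back to $\text{Fun}(-,X_{\text{we}})$ through $\epsilon^{\ast}$ and the cofibrant-replacement equivalence. Your citation of~\ref{cor redtonerve} points in this direction, but the subdivision has to be carried out at the level of the push-out square itself.

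Second, the closing move ``Lemma~\ref{lemma-telescope} together with~\ref{cor cofinalitycons}.(3)'' does not yield the left-hand weak equivalence for $A=\bigcup_n A^{(n)}$. Corollary~\ref{cor cofinalitycons}.(3) transfers essential smallness and the homotopy type of the $\text{Fun}$-category between $A$ and $\text{Tel}$, but the statement you must transfer compares $N(F_{t_A})$ with a homotopy limit indexed by the simplex category of $A$, and $\text{Tel}$ has a different simplex category; Lemma~\ref{lemma-telescope} says nothing about holims over an increasing union of indexing spaces. What is needed is the analogue of the paper's Step~(2): for any $f\colon A\to B$ with every $df(\sigma)$ contractible, the whole statement holds for $A$ if and only if it holds for $B$, proved by the three-column comparison diagram (the constant column is unaffected, holims preserve levelwise weak equivalences, and the left vertical map is a weak equivalence by~\ref{cor cofinalitycons}.(1)). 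The same invariance is what handles the telescope pieces $A_i\times\Delta[1]$, and you also need closure under coproducts (the paper's Step~(4)) for $\coprod\Delta[n+1]$, $\coprod\partial\Delta[n+1]$ and the components of $\text{Tel}$ — neither is addressed in your sketch. With these additions (and noting that in your base case the map is the inclusion of one core of $\text{Fun}(\Delta[n],X_{\text{we}})$ into another, a weak equivalence by~\ref{lem propesssmall}.(1) rather than an identity), your induction becomes exactly the paper's proof.
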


\begin{proof}
If $K$ is a contractible space, then $\text{Fun}(K,X_{\text{we}})$ is essentially small (see~\ref{cor funoutofcotract}). Thus to prove that $\calf(A;X)$ is an essentially small system, we need to show that $\text{Fun}(A,X_{\text{we}})$ is an essentially small category (see~\ref{prop funcorialcore}.(1)).

Let $\calt$ to be the collection of all  spaces  for which the proposition is true. 
To show that any space belongs  to $\calt$, we  prove that $\calt$ satisfies the following properties:
\begin{enumerate}
\item If in the following push square $A$, $B$, and $C$ belong to $\calt$, then so does $D$. 
\[\xymatrix@R=11pt@C=15pt{
A\rmono^{i} \dto_{f} & C\dto^{g}\\
B\rmono^{j} & D
}\]
\item Let $f\colon A\to B$ be a map  such that
$df(\sigma)=
\text{\rm lim}(\Delta[n]\xrightarrow{\sigma} B\xleftarrow{f} A)$
is contractible for any $\sigma$ Êin $B$. Then Ê$A$ belongs to $\calt$ if and only if $B$ does.
\item  If $A$ is contractible, then $A$ belongs to $\calt$.
\item Let $S$ be a set. If $A_s\in \calt$  for any $s\in S$,  then    $\coprod_{s\in S}A_s\in\calt$.
\item Let $A_0\subset A_1\subset \cdots \subset A$ be a filtration of $A$ such that $A=\cup_{i\geq 0} A_i$.
If $A_{i}\in\calt$  for any $i$, then to $A\in \calt$. 
\end{enumerate}
In each of the following steps we prove the corresponding  property. 
\smallskip

\noindent 
{\bf Step (1).} 
By~\ref{cor redtonerve} and~\ref{prop cofinallbund} the  functors 
$ \text{Cof}(N(I),X_{\text{we}})\subset \text{Fun}^b(N(I),X_{\text{we}})$ and
$\epsilon^{\ast}\colon\text{Fun}(I,X_{\text{we}})\to  \text{Fun}^b(N(I),X_{\text{we}})$
are homotopy equivalences.
Thus if one of those categories is essentially small, then so is 
any other (see~\ref{cor esssmallhomeqinv})  in which case these functors are weak equivalences (see~\ref{prop esssmallweakequiv}).

Since the following square on the left is a push-out with all the maps being reduced, according to~\ref{prop cluthcingcof} we have a strong homotopy pull-back on the right:
\[\xymatrix@R=11pt@C=25pt{
N(A)\rmono^{N(i)} \dto_{N(f)} & N(C)\dto^{N(g)}\\
N(B)\rmono^{N(j)} & N(D)
}\ \ \ \ \ \ \ \ 
\xymatrix@R=11pt@C=25pt{
\text{\rm Cof}(N(D),X_{\text{\rm we}})\rto^{N(j)^{\ast}} \dto_{N(g)^{\ast}} & \text{\rm Cof}(N(B),X_{\text{\rm we}})\dto^{N(f)^{\ast}}\\
\text{\rm Cof}(N(C),X_{\text{\rm we}})\rto^{N(i)^{\ast}} & \text{\rm Cof}(N(A),X_{\text{\rm we}})
}
\]
By the assumption 
$\text{Fun}(A,X_{\text{\rm we}})$, $\text{Fun}(B,X_{\text{\rm we}})$, and $\text{Fun}(C,X_{\text{\rm we}})$
are essentially small  and hence so are 
$ \text{\rm Cof}(N(A),X_{\text{\rm we}})$, $\text{\rm Cof}(N(B),X_{\text{\rm we}})$, and $\text{\rm Cof}(N(C),X_{\text{\rm we}})$. We can  use~\ref{cor stronhpullhompull} to conclude that
$\text{\rm Cof}(N(D),X_{\text{\rm we}})$ is also essentially small. The   right square above is thus  a homotopy 
pull-back and consequently  so is: 
\[\xymatrix@R=11pt@C=15pt{
\text{Fun}(D,X_{\text{\rm we}})\rto^{j^{\ast}} \dto_{g^{\ast}} &\text{Fun}(B,X_{\text{\rm we}})\dto^{f^{\ast}}\\
\text{Fun}(C,X_{\text{\rm we}})\rto^{i^{\ast}} & \text{Fun}(A,X_{\text{\rm we}})
}\]

Let $\phi$ be the  functor indexed by  the poset category of all subsets of $\{0,1\}$
given by the  commutative square  in  the statement  (1).
The system  $\calf(\phi,X_{\text{we}})$ is essentially small as its values are so
(see~\ref{prop funcorialcore}.(1)).
Let $F$ be its core.
If  $K$ is among $\{A,B,C\}$, then the maps
$N(F_{t_K})\to \text{holim}_{\sigma\in K} N(F_{\sigma})\gets \text{holim}_{\sigma\in K} N(F_{e_K})$
are weak equivalences. These maps fit into a commutative diagram:
\[\xymatrix@C=-14pt@R=10pt{
N(F_{t_D})\ddto\drto \rrto& &  \text{holim}_{\sigma\in D} N(F_{\sigma})\ddto|\hole\drto
& &  \text{holim}_{\sigma\in D} N(F_{e_D})\llto\ddto|\hole\drto\\
& N(F_{t_B})\ddto\rrto & &  \text{holim}_{\sigma\in B} N(F_{\sigma})\ddto 
& &  \text{holim}_{\sigma\in B} N(F_{e_B})\llto\ddto\\
N(F_{t_C})\drto\rrto|(.38)\hole & &  \text{holim}_{\sigma\in C} N(F_{\sigma})\drto 
& &  \text{holim}_{\sigma\in C} N(F_{e_C})\llto|(.51)\hole\drto\\
&  N(F_{t_A})\rrto & &  \text{holim}_{\sigma\in A} N(F_{\sigma}) & &  \text{holim}_{\sigma\in A} N(F_{e_A})\llto
}\]
The right side, the left, and the middle  squares are  homotopy pull-backs. The first one is because   the functors involved are constant, the second because of the discussion before, and the middle because 
the right horizontal maps are weak equivalences.  By the  inductive assumption
the horizontal left bottom and  front  maps are  weak equivalences. It follows that so is the fourth one proving step 1.
\smallskip

\noindent 
{\bf Step (2).}  Let  $\phi\colon [1]\to \text{Spaces}$ be a functor given by the map $f\colon A\to B$. Consider the system $\calf(\phi,X)$. By~\ref{cor cofinalitycons}.(1),
 $f^{\ast}\colon\text{\rm Fun}(B,X_{\text{\rm we}})\to \text{\rm Fun}(A,X_{\text{\rm we}})$ is a homotopy
equivalence. Thus if one of these categories is essentially small, then   $\calf(\phi,X)$ is essentially  small and   it has a core $F$.
Consider the following commutative diagram induced by the functors in such a core:
\[\xymatrix@R=11pt@C=15pt{
N(F_{t_B})\rto\dto & \text{holim}_{\sigma\in B} N(F_{\sigma}) \dto& \text{holim}_{\sigma\in B} N(F_{e_B})\lto\dto\\
N(F_{t_A})\rto & \text{holim}_{\sigma\in A} N(F_{\sigma}) & \text{holim}_{\sigma\in A} N(F_{e_A})\lto
}\]
The right vertical map is a weak equivalence since the functors involved are constant. 
The right horizontal maps are weak equivalences as the  homotopy limit preserves  weak equivalences.  
It follows that so is the middle vertical map. The left vertical map is a weak equivalence by~\ref{cor cofinalitycons}.(1).
 We can then conclude that
if one of the left horizontal  maps is a weak equivalence then so is the other. This means that
$A$ belongs to $\calt$ if and only if $B$ does which proves step 2.
\smallskip

\noindent 
{\bf Step (3).} 
According to~\ref{cor funoutofcotract} the category $\text{Fun}(A,X_{\text{we}})$ is essentially small
and consequently $\calf(A,X)$ is an essentially small system (see~\ref{prop funcorialcore}.(1)). 
Let $F\subset \calf(A,X)$ be its core and consider  the following commutative diagram:
\[\xymatrix@R=11pt@C=15pt{
N(F_{e_A})\rto^(.32){a} \dto_{N(F_p)} & \text{holim}_{\sigma\in A} N(F_{e_A})\dto\\
N(F_{t_A})\rto &  \text{holim}_{\sigma\in A} N(F_\sigma)
}\]
where $a$ is the diagonal  map from $N(F_{e_A})$ to the homotopy limit of the constant functor
$N(F_{e_A})$ and  the rest of the maps are induced by functors in the system $F$\@. Since $A$ is contractible, 
the vertical maps and $a$ are weak equivalences (see~\ref{cor funoutofcotract}). 
Thus so is the bottom map and  $A$ belongs to $\calt$.
\smallskip

\noindent 
{\bf Step (4).}  Consequence of the fact that  products preserve weak equivalences.
\smallskip

\noindent
{\bf Step (5).}  
The map $f\colon\text{Tel}\to A$ (see~\ref{cor cofinalitycons}.(3)) satisfies the requirements in step (2).
Thus  we need to prove that $\text{Tel}$ belongs to $\calt$\@. Again by step (2) the products
$A_i\times\Delta[1]$ belong to $\calt$ and so by step (4)  the components of the push-out describing $\text{Tel}$ also belong to $\calt$\@. We can now use step (1).
\end{proof}

\begin{proof}[Proof of Theorem~\ref{thm funintoxwe}]
Apply~\ref{prop keypropessentiallysmall} to  $\calf(N(I), X)$
to see that the nerve of a core of  $\calf(N(I), X)_{t_{N(I)}}=\text{\rm Fun}(N(I),X_{\text{\rm we}})$
is weakly equivalent to the homotopy limit of the constant functor $\text{holim}_{\sigma\in N(I)}N(F_{e_{N(I)}})
\simeq \text{map}(N(I),N(F_{e_{N(I)}}))$.
  By~\ref{cor funoutofcotract} the nerve of $F_{e_{N(I)}}$, which
is a core of $\calf(N(I),X)_{e_{N(I)}}=\text{Fun}(\Delta[0],X_{\text{we}})$, is weakly equivalent to 
 $B\text{we}(X,X)$.  To finish the proof recall that 
according to~\ref{cor redtonerve}, the functor $\epsilon^{\ast}\colon\text{\rm Fun}(I,X_{\text{\rm we}})\to \text{\rm Fun}(N(I),X_{\text{\rm we}})$
is a homotopy equivalence.
%
%
\end{proof}

As a direct consequence of~\ref{prop keypropessentiallysmall} we also get:
\begin{cor}
If $f\colon I\to J$ be a weak equivalence of small categories, then 
$f^{\ast}\colon\text{\rm Fun}(J,X_{\text{\rm we}})\to \text{\rm Fun}(I,X_{\text{\rm we}})$ is a weak
equivalence.
\end{cor}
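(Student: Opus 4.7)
The plan is to reduce the claim to Proposition~\ref{prop keypropessentiallysmall} by exploiting the naturality of the augmentation $\epsilon\colon N(-)\to(-)$ and working at the level of systems of categories.

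First I would consider the commutative square
\[\xymatrix@R=11pt@C=20pt{
\text{Fun}(J, X_{\text{\rm we}}) \rto^{f^{\ast}} \dto_{\epsilon^{\ast}} & \text{Fun}(I, X_{\text{\rm we}}) \dto^{\epsilon^{\ast}} \\
\text{Fun}(N(J), X_{\text{\rm we}}) \rto^{N(f)^{\ast}} & \text{Fun}(N(I), X_{\text{\rm we}})
}\]
induced by naturality of $\epsilon$. By Corollary~\ref{cor redtonerve} the two vertical arrows are homotopy equivalences, hence weak equivalences by Proposition~\ref{prop esssmallweakequiv}. Since weak equivalences are closed under 2-out-of-3 (immediate from Definition~\ref{def weqfhpull} and the corresponding property for small categories), it suffices to show that $N(f)^{\ast}$ is a weak equivalence.

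Next I would form the functor $\phi\colon [1]\to\text{Spaces}$ determined by $N(f)\colon N(I)\to N(J)$ and pass to the system $\calf(\phi, X)$. All of its values are essentially small: the $t$-values by Proposition~\ref{prop keypropessentiallysmall} and the $e$-value $\text{Fun}(\Delta[0], X_{\text{\rm we}})$ by Corollary~\ref{cor funoutofcotract}. Thus $\calf(\phi, X)$ itself is essentially small (Proposition~\ref{prop funcorialcore}.(1)) and admits a core $F$. By the subsystem condition, $F_{t_{N(I)}}\subset\text{Fun}(N(I), X_{\text{\rm we}})$ and $F_{t_{N(J)}}\subset\text{Fun}(N(J), X_{\text{\rm we}})$ are cores, $N(f)^{\ast}$ sends $F_{t_{N(J)}}$ into $F_{t_{N(I)}}$, and the inclusion $F_{e_{N(J)}}\subset F_{e_{N(I)}}$ of cores of the common category $\text{Fun}(\Delta[0], X_{\text{\rm we}})$ is a weak equivalence by Lemma~\ref{lem propesssmall}.(1). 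It therefore suffices to verify that the restriction $F_{t_{N(J)}}\to F_{t_{N(I)}}$ is a weak equivalence of small categories.

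Finally, mimicking Step~(2) of the proof of Proposition~\ref{prop keypropessentiallysmall}, I would assemble the commutative diagram
\[\xymatrix@R=11pt@C=15pt{
N(F_{t_{N(J)}})\rto\dto & \text{holim}_{\sigma\in N(J)} N(F_{\sigma}) \dto & \text{holim}_{\sigma\in N(J)} N(F_{e_{N(J)}})\lto\dto\\
N(F_{t_{N(I)}})\rto & \text{holim}_{\sigma\in N(I)} N(F_{\sigma}) & \text{holim}_{\sigma\in N(I)} N(F_{e_{N(I)}})\lto
}\]
The two rows consist of weak equivalences by Proposition~\ref{prop keypropessentiallysmall} applied to $N(I)$ and $N(J)$ separately. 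The right vertical map is a weak equivalence because $N(f)$ is a weak equivalence of spaces by hypothesis and homotopy limits of constant functors preserve weak equivalences in both the index space and the value (using the inclusion $F_{e_{N(J)}}\subset F_{e_{N(I)}}$ noted above). Two applications of 2-out-of-3 then force the middle and left vertical maps to be weak equivalences, completing the proof. The main technical point is the existence of a single core $F$ of the system $\calf(\phi, X)$ that simultaneously restricts to cores at both endpoints compatible with $N(f)^{\ast}$; this is exactly what the system-level formulation is designed to provide, and once that is in place the argument is essentially a direct transcription of Step~(2), with the hypothesis on $N(f)$ replacing the contractibility condition used there via Corollary~\ref{cor cofinalitycons}.(1).
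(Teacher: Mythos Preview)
Your proposal is correct and is precisely the argument the paper has in mind: the paper offers no explicit proof beyond declaring the corollary ``a direct consequence of~\ref{prop keypropessentiallysmall}'', and what you have written is exactly that consequence spelled out---reduce to nerves via~\ref{cor redtonerve}, pass to the system $\calf(\phi,X)$ for $\phi=N(f)$ to obtain compatible cores, and then run the diagram from Step~(2) of the proof of~\ref{prop keypropessentiallysmall} with the direction of inference reversed (from the right column inward rather than from the left column outward). There is nothing to add.
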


\section{Theorem A}
The aim of this final section is to prove Theorem A from the introduction. 
\begin{Def} Let $X$ and $F$ be spaces.
 $\text{Ext}(X,F)$ is a subcategory of $\text{\rm Fib}(X,F)$ whose objects  are maps 
$f\colon E\to X$ with a fixed codomain $X$\@.
The set of  morphisms  in $\text{Ext}(X,F)$ between $f\colon E\to X$ and $f'\colon E'\to X$ consists of  weak equivalences $g\colon E\to E'$ for which $f'g=f$.
\end{Def}

To prove Theorem A we are going to use the following constructions.
%
\begin{point}{\bf Fibrant replacement.}\label{pt fibreplac}
In addition to the factorization given in~\ref{pt modelcat}, the category $\text{\rm Spaces}$ has  the following  factorization: any commutative square on the left below can be extended {\bf functorially } to a commutative diagram on the right with the indicated morphisms being acyclic cofibrations and fibrations:
\[\xymatrix@R=13pt@C=15pt{
X\rto^{f}\dto_{\alpha_1} &Y\dto^{\alpha_2}\\
X'\rto^{f'} & Y'
} \ \ \ \ \  \ \ \ \ \ \ \ \
\xymatrix@R=13pt@C=22pt{
X\dto_{\alpha_1}\ar@/^18pt/[rr]|{f}\rmono_(.45){\simeq}^{\gamma_f}  & R(f)\ar@{->>}[r]^{\mu_f} \dto|(.45){R(\alpha_1,\alpha_2)}& Y\dto^{\alpha_2}\\
X'\ar@/_15pt/[rr]|{f'}\rmono_(.45){\simeq}^{\gamma_{f'}}  &R(f') \ar@{->>}[r]^{\mu_{f'}} &Y'
} \]
The commutativity of the right diagram  means that  the morphisms
$\gamma_f\colon X\to R(f)$ and $\mu_f\colon R(f)\to Y$  form natural transformations.
Define $\mu\colon \text{\rm Ext}(X,F)\to \text{\rm Ext}(X,F)$ to be the functor   assigning to an object
 $f\colon E\to X$ the fibration $\mu_f\colon R(f)\to X$ and to a morphism $g\colon E\to E'$, between $f\colon E\to X$ and
 $f'\colon E'\to X$ the map $R(f',\text{id}_X)\colon R(f)\to R(f')$.
 \end{point}


\begin{point}{\bf Decomposition.}\label{pt decompositon} Let  $f\colon E\to X$ be a map.  
Define $df\colon X\to \text{Spaces}$ to be the functor that assigns to 
a morphism  $\alpha\colon \Delta[n]\to \Delta[m]$ in $X$ between $\sigma\colon\Delta[n]\to X$ and
 $\tau\colon\Delta[m]\to X$
the map: 
\[\xymatrix@C=15pt@R=11pt{
df(\sigma)\dto_{df(\alpha)}\ar @{}[r]|(0.60){:=} & *{\text{\rm lim}\hspace{1mm}\big(\hspace{-18pt}} &
\Delta[n]\dto_{\alpha}\rto^{\sigma} & X\ar@{=}[d] & E\ar@{=}[d]\lto_{f} & *{\hspace{-18pt}\big)}\\
df(\tau)\ar @{}[r]|(0.60){:=} & *{\text{\rm lim}\hspace{1mm}\big(\hspace{-18pt}} &
\Delta[m]\rto^{\tau} & X & E\lto_{f} & *{\hspace{-18pt}\big)}
}\]
 $d(\text{id}_X)\colon X\to \text{\rm Spaces}$ is  also denoted by $\Delta_X$\@. Its value on
 $\sigma\colon\Delta[n]\to X$ is  the space $\Delta[n]$.  For any such $\sigma$, define
 $df_{\sigma}\colon df(\sigma)\to \Delta_X(\sigma)$  to be the map that fits into   the pull-back square on the left below. These maps form a natural transformation that we denote by  $df\colon df\to \Delta_X$.
The horizontal maps in  this square satisfy the universal property of the colimit inducing 
{\bf canonical} isomorphisms on the right:
\[\xymatrix@C=15pt@R=11pt{
df(\sigma)\rrto\dto_{df_{\sigma}} & & E\dto^{f}\\
\Delta_X(\sigma)\ar@{=}[r] & \Delta[n]\rto^{\sigma} & X
} \ \ \ \ \ \ \ \ 
\xymatrix@C=15pt@R=11pt{\text{colim}_X df \rto^-{\cong}\dto_{\text{colim}_{\sigma\in X}df_{\sigma} }&  E\dto^{f}\\
\text{colim}_X \Delta_X\rto^-{\cong} & X
}
\]
For any morphism $\psi\colon f\to f'$ in $\text{Spaces}\!\downarrow\! X$,  define  $d\psi\colon df\to df'$ as:
\[\xymatrix@C=15pt@R=11pt{
df(\sigma)\dto_{d\psi_\sigma}\ar @{}[r]|(0.60){:=} & *{\text{\rm lim}\hspace{1mm}\big(\hspace{-18pt}} &
\Delta[n]\ar@{=}[d]\rto^{\sigma} & X\ar@{=}[d] & E\dto^{\psi}\lto_{f} & *{\hspace{-18pt}\big)}\\
df'(\sigma)\ar @{}[r]|(0.60){:=} & *{\text{\rm lim}\hspace{1mm}\big(\hspace{-18pt}} &
\Delta[n]\rto^{\tau} & X & E'\lto_{f'} & *{\hspace{-18pt}\big)}
}\]
In  this way we obtain a functor $d\colon\text{Spaces}\!\downarrow\! X\ra \text{Fun}(X,\text{Spaces})\!\downarrow\! \Delta_X$
which we call the  {\bf decomposition}.    Its composition with the fibrant replacement is called 
 the {\bf derived decomposition} and is denoted by $\partial$:
\[\xymatrix{ \text{\rm Ext}(X,F)\rto^{\mu}\ar@/_13pt/[rr]|{\partial}& \text{\rm Ext}(X,F)\rto^(.4)d &\text{\rm Fun}(X,F_{\text{\rm we}})\!\downarrow\! \Delta_X} \]
\end{point}

\begin{point}{\bf Assembly.} Let   
$\text{colim}\colon\text{Fun}(X,\text{Spaces})\!\downarrow\! \Delta_X\to \text{Spaces}\!\downarrow\! X$ be the   functor whose value 
on  an object $g\colon G\to \Delta_X$ is the composition of the colimit
$\text{colim}_X g$ with  the canonical isomorphism
$ \text{colim}_X \Delta_X\cong X$. It maps a morphism $\psi\colon g\to g'$  to $\text{colim}_X \psi$\@.
Let us choose a cofibrant replacement $P$  in $\text{Fun}(X,\text{Spaces})$\@.
For an object $g\colon G\to \Delta_X$  in $\text{\rm Fun}(X,F)\!\downarrow\! \Delta_X$, take
the cofibrant replacement $PG\to G$ and define $\int g$ to be the  composition:
\[
\xymatrix@C=15pt@R=11pt{\text{hocolim}_XG\ar@{=}[r]\ar@/_16pt/[rrrrr]|{\int g} &\text{colim}_X PG\rto & \text{colim}_X G\rrto^-{\text{colim}_Xg} & &
\text{colim}_X \Delta_X\rto^-{\cong}& X
}\]
Note that if $g$ belongs to $\text{\rm Fun}(X,F_{\text{\rm we}})\!\downarrow\! \Delta_X$, then
by~\ref{prop ThomasonPuppe},
the homotopy fiber of $\int g$ is weakly equivalent to $F$ and hence  $\int g$ is an object in 
 $ \text{\rm Ext}(X,F)$\@. We call this composition of $P$ and  the colimit the {\bf assembly} functor  and  denote it by
$\int\colon\text{\rm Fun}(X,F_{\text{\rm we}})\!\downarrow\! \Delta_X\to \text{\rm Ext}(X,F)$.
\end{point}

\begin{prop}\label{prop Decompint}
$\partial\colon\text{\rm Ext}(X,F)\to \text{\rm Fun}(X,F_{\text{\rm we}})\!\downarrow\! \Delta_X $ is a homotopy equivalence.
\end{prop}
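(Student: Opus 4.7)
The assembly functor $\int$ is the natural candidate for a homotopy inverse to $\partial$, so the strategy is to exhibit $\int\partial$ and $\partial\int$ as homotopic to the identity functors via explicit zig-zags of natural transformations.

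For $\int\partial \simeq \text{id}_{\text{Ext}(X,F)}$: given $f\colon E\to X$, the fibrant replacement supplies a natural weak equivalence $\gamma_f\colon f\to \mu(f)$ in $\text{Ext}(X,F)$. On the other hand, the canonical isomorphism $\text{colim}_X d\mu(f)\cong R(f)$ from \ref{pt decompositon} shows that $\int\partial(f)$ factors as $\text{colim}_X P(d\mu(f))\to \text{colim}_X d\mu(f) \cong R(f)\xrightarrow{\mu(f)} X$, producing a natural morphism $\int\partial(f)\to \mu(f)$. The zig-zag $\int\partial(f)\to \mu(f)\xleftarrow{\gamma_f} f$ is then the required comparison, provided the first map is a weak equivalence, i.e.\ that $\hocolim_{\sigma\in X}\bigl(\Delta[|\sigma|]\times_X R(f)\bigr)\to R(f)$ is one. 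This holds because $\mu(f)$ is a fibration, so each value $d\mu(f)(\sigma)$ is simultaneously a strict pullback and a homotopy pullback, and the diagram $d\mu(f)$ is free enough along the simplex category (via Reedy considerations) for its colimit to compute its homotopy colimit.

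For $\partial\int\simeq \text{id}$: given $g\colon G\to \Delta_X$, the cofibrant replacement provides $Pg\colon PG\to\Delta_X$ and a weak equivalence $PG\to G$ over $\Delta_X$. For a simplex $\sigma\colon\Delta[n]\to X$, the canonical inclusion $PG(\sigma)\to \text{colim}_X PG=\hocolim_X G$, together with the structure map $PG(\sigma)\to \Delta[n]$, induces a map $PG(\sigma)\to \Delta[n]\times_X \hocolim_X G$; post-composing with the pullback along $\sigma$ of the acyclic cofibration $\gamma_{\int g}\colon \hocolim_X G\hookrightarrow R(\int g)$ yields $PG(\sigma)\to \partial\int g(\sigma)=\Delta[n]\times_X R(\int g)$. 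The resulting zig-zag $g\xleftarrow{} Pg\to \partial\int g$, natural in $g$, is the required comparison.

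The only nontrivial step, and the main obstacle, is verifying that $PG(\sigma)\to \partial\int g(\sigma)$ is a weak equivalence. The plan is: since $\mu\int g$ is a fibration and $\Delta[n]$ is contractible, right properness of $\text{Spaces}$ makes $\partial\int g(\sigma)$ weakly equivalent to the homotopy fiber of $\int g$ over the vertex $\sigma(0)$; by the Thomason/Puppe theorem \ref{prop ThomasonPuppe}(3), applied to the functor $G\colon X\to \text{Spaces}$ whose morphisms are all weak equivalences (because $G$ takes values in $F_{\text{we}}$), this homotopy fiber is weakly equivalent to $G(\sigma(0))$; and the structure map $G(\sigma(0))\to G(\sigma)$ is a weak equivalence for the same reason. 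Chasing compatibility with the cofibrant replacement $PG(\sigma)\to G(\sigma)$ through these identifications closes the argument; all remaining verifications are formal consequences of the universal properties of colimits, fibrant and cofibrant replacements, and the clutching/assembly definitions in Section~\ref{sec clutching} and \ref{pt decompositon}.
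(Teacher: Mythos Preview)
Your approach is essentially the paper's: exhibit the assembly functor $\int$ as a homotopy inverse to $\partial$ via the same two zig-zags $\int\partial(f)\to\mu_f\leftarrow f$ and $g\leftarrow Pg\to\partial\int g$. The paper simply writes down these natural transformations and stops, whereas you go further and argue that their components are weak equivalences---which is indeed required for them to be morphisms in $\text{Ext}(X,F)$ and in $\text{Fun}(X,F_{\text{we}})\!\downarrow\!\Delta_X$, since morphisms in both categories are by definition weak equivalences.

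Your extra verifications are correct in outline. For the second zig-zag the Puppe/Thomason argument you sketch is the right one. For the first, your ``Reedy considerations'' can be made precise and in fact do not need $\mu_f$ to be a fibration: for \emph{any} $p\colon E\to X$ the diagram $dp\colon\Delta/X\to\text{Spaces}$ is Reedy cofibrant (the latching map at a nondegenerate $\sigma\colon\Delta[n]\to X$ is the monomorphism $\partial\Delta[n]\times_X E\hookrightarrow\Delta[n]\times_X E$), and $\text{colim}_{\Delta/X}$ is left Quillen for the Reedy structure because $\Delta/X$ has fibrant constants (each matching category has a terminal object). Hence $\text{colim}_X dp\to E$ already computes the homotopy colimit, and the comparison $\int\partial f\to\mu_f$ is a weak equivalence.
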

\begin{proof}
We are going to show that  the  assembly functor  is a homotopy inverse to $\partial$.
For  an object $f\colon E\to X$ in   $\text{\rm Ext}(X,F)$ form a commutative diagram:
\[\xymatrix@C=15pt@R=11pt{
\text{colim}_XP\partial f\rto \dto \drrto|{\int\partial f}
&  \text{colim}_X\partial f = \text{colim}_Xd\mu_f\rto^(.7){ \cong} & R(f)\ar@{->>}[d]_{\mu_f} & 
E\lmono_(.45){\gamma_f}\ar@/^5pt/[dl]|{f}\\
\text{colim}_XP\Delta_X\rto & \text{colim}_X\Delta_X\rto_-{ \cong} &  X
}\]
The top horizontal maps form a ``zig-zag'' connecting  $\int \partial$ with the identity functor.

Similarly, for an  object $g\colon G\to \Delta_X$  in $\text{\rm Fun}(X,F_{\text{\rm we}})\!\downarrow\! \Delta_X$ and a simplex  $\sigma\colon \Delta[n]\to X$, consider the following commutative diagram
where the  maps $G(\sigma)\to  \text{colim}_X G$ and $PG(\sigma)\to  \text{colim}_X PG$ 
are induced by the inclusion of $\sigma$ into $X$,
and $PG(\sigma)\ra (d\mu_{\int g})(\sigma)$ is induced the universal property of a
pull-back. 

\[\xymatrix@C=15pt@R=10pt{
 & & G(\sigma)\rrto \ar@/^7pt/[dddl]|(.29)\hole_{g_\sigma}|(.62)\hole& & \text{colim}_X G\ddto|{\text{colim}_Xg}\\
 & PG(\sigma)\rrto\urto \dlto\ddto|\hole& & \text{colim}_XPG\ddto^{\int g}\urto\ar@{_(->}[dl]_{\simeq}\\
(d\mu_{\int g})(\sigma)\rrto\ar@/_5pt/[dr]|{(\partial\int)_{\sigma}} & & R(\int g)\ar@/_5pt/@{->>}[dr]|{\mu_{\int g}} &
& \text{colim}_X\Delta_X\dlto^{\cong}\\
& \Delta[n]\rrto^(.45){\sigma} & & X
}\]
The maps $(d\mu_{\int g})(\sigma)\gets PG(\sigma)\to G(\sigma)$ form natural transformations $d\mu_{\int g}\gets PG\to G$ connecting
$ \partial\int$ and the identity functor.
\end{proof}

We can now prove our  first classification theorem for fibrations of  spaces:
\begin{thm}\label{thmA Ext is Map}
The category\/ $ \text{\rm Ext}(X,F)$ is essentially small and the nerve of its core is weakly equivalent to\/
$\text{\rm map}(X,B\text{\rm we}(F,F))$.
\end{thm}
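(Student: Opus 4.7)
My plan is to deduce Theorem~\ref{thmA Ext is Map} from Proposition~\ref{prop Decompint}, Proposition~\ref{prop keypropessentiallysmall}, and Theorem~\ref{thm bweXX}, with a single short computation in between. By Proposition~\ref{prop Decompint} the functor $\partial\colon\text{Ext}(X,F)\to\text{Fun}(X,F_{\text{we}})\!\downarrow\!\Delta_X$ is a homotopy equivalence. Corollary~\ref{cor esssmallhomeqinv} transports essential smallness across a homotopy equivalence, and Proposition~\ref{prop esssmallweakequiv} combined with Proposition~\ref{prop propesssmall}.(1) implies that cores have weakly equivalent nerves; so it suffices to prove the stated conclusion for $\text{Fun}(X,F_{\text{we}})\!\downarrow\!\Delta_X$ in place of $\text{Ext}(X,F)$.

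The main intermediate step is to check that the forgetful functor
\[
V\colon \text{Fun}(X,F_{\text{we}})\!\downarrow\!\Delta_X \to \text{Fun}(X,F_{\text{we}})
\]
is a homotopy equivalence. I will do this by exhibiting an explicit inverse $W$, defined by $W(G)=(G\times\Delta_X,\mathrm{pr}_2)$, with the product formed objectwise. This lands in $\text{Fun}(X,F_{\text{we}})\!\downarrow\!\Delta_X$ because for each simplex $\sigma\colon\Delta[n]\to X$ the factor $\Delta[n]$ is contractible, so $G(\sigma)\times\Delta[n]$ remains in the component $F_{\text{we}}$. The projection $\mathrm{pr}_1\colon G\times\Delta_X\to G$ yields a natural transformation $VW\Rightarrow\mathrm{id}$ with pointwise weak equivalences as components; in the opposite direction the pairing $(\mathrm{id}_G,g)\colon (G,g)\to(G\times\Delta_X,\mathrm{pr}_2)=WV(G,g)$ supplies a natural transformation $\mathrm{id}\Rightarrow WV$, with components weak equivalences by two-out-of-three against $\mathrm{pr}_1$, and naturality in morphisms of the over-category follows from the defining constraint $g_0\alpha=g_1$.

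Finally, I will apply Proposition~\ref{prop keypropessentiallysmall} with $A=X$ to the system $\calf(X,F)$. Its top $\calf(X,F)_{t_X}$ is $\text{Fun}(X,F_{\text{we}})$, and its edge $\calf(X,F)_{e_X}=\text{Fun}(\Delta[0],F_{\text{we}})$ coincides with $F_{\text{we}}$; the proposition identifies a core of $\text{Fun}(X,F_{\text{we}})$ with nerve weakly equivalent to $\mathrm{holim}_{\sigma\in X}N(F_{e_X})\simeq \text{map}(X,N(F_{e_X}))$, and Theorem~\ref{thm bweXX} then gives $N(\text{core of }F_{\text{we}})\simeq B\text{we}(F,F)$. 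Concatenating the three reductions produces the claimed weak equivalence to $\text{map}(X,B\text{we}(F,F))$. The plan is a short chase and I do not expect a new technical obstacle; the one point demanding attention is the naturality verification for the zig-zag $\mathrm{id}\Rightarrow WV$ and $VW\Rightarrow\mathrm{id}$ in the over-category, which reduces to a direct calculation using the constraint $g_0\alpha=g_1$.
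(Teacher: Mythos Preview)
Your proposal is correct and follows essentially the same route as the paper. The paper streamlines your middle step by observing that the forgetful functor $\Phi$ and $-\times\Delta_X$ are an adjoint pair, so the unit and counit already supply the zig-zag you construct by hand; and for the final step the paper simply cites Theorem~\ref{thm funintoxwe} rather than reproving its simplex-category case via Proposition~\ref{prop keypropessentiallysmall} and Theorem~\ref{thm bweXX}. One small imprecision: $\calf(X,F)_{e_X}=\text{Fun}(\Delta[0],F_{\text{we}})$ is not literally $F_{\text{we}}$ (the simplex category of $\Delta[0]$ is infinite), so you should invoke Corollary~\ref{cor funoutofcotract} rather than Theorem~\ref{thm bweXX} directly at that point.
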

\begin{proof}
According to~\ref{prop Decompint} and ~\ref{thm funintoxwe},  we need to show  
$\text{\rm Fun}(X,F_{\text{\rm we}})\!\downarrow\! \Delta_X$   is homotopy equivalent to
$\text{\rm Fun}(X,F_{\text{\rm we}})$.
Let  $\Phi\colon \text{\rm Fun}(X,F_{\text{\rm we}})\!\downarrow\! \Delta_X
\to \text{\rm Fun}(X,F_{\text{\rm we}})$  be the functor that forgets the augmentation. Its right adjoint
 $-\times \Delta_X\colon\text{\rm Fun}(X,F_{\text{\rm we}})\to\text{\rm Fun}(X,F_{\text{\rm we}})\!\downarrow\! \Delta_X$  assigns to  $G\colon X\to F_{\text{\rm we}}$  the projection
 $\text{pr}\colon G\times \Delta_X\to \Delta_X$. This  adjointness implies that $-\times \Delta_X$ is a homotopy inverse of
 $\Phi$.
  \end{proof}

To understand  $\text{Fib}(X,F)$ we  will use the range functor
$\calr\colon\text{Fib}(X,F)\to X_{\text{we}}$ that  assigns to a morphism $(\phi,\psi)$  in $\text{Fib}(X,F)$
(see~\ref{def intro FibXF}) the map $\psi$. 

\begin{prop}\label{prop rangeqf}
$\calr\colon\text{\rm Fib}(X,F)\to X_{\text{we}}$ is a quasi-fibration (see~\ref{def weqfhpull}).
\end{prop}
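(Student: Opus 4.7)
The strategy is to show that the over category $B\!\uparrow\!\calr$ is homotopy equivalent to $\text{Ext}(B,F)$ for every $B$ in $X_{\text{we}}$, and then reduce the weak equivalence of $\alpha\!\uparrow\!\calr$ (for a weak equivalence $\alpha\colon B_1\to B_0$ in $X_{\text{we}}$) to a pullback statement for $\text{Ext}$'s that can be handled with the tools already developed.

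First I construct homotopy inverses $\Phi_B\colon \text{Ext}(B,F)\rightleftarrows B\!\uparrow\!\calr\colon \Psi_B$. The functor $\Phi_B$ sends $f\colon E\to B$ to $(f,\text{id}_B)$. The functor $\Psi_B$ sends an object $(f\colon A\to B',\beta\colon B\to B')$ to the pullback $\beta^*\mu_f\colon \beta^*R(f)\to B$ of the functorial fibrant replacement $\mu_f\colon R(f)\twoheadrightarrow B'$ from~\ref{pt fibreplac}. Right properness of simplicial sets guarantees that $\beta^*\mu_f$ is a fibration with homotopy fiber $F$ and that the projection $\beta^*R(f)\to R(f)$ is a weak equivalence. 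The composite $\Psi_B\Phi_B$ is $f\mapsto R(f)$, naturally connected to the identity via $\gamma_f\colon f\to R(f)$. For $\Phi_B\Psi_B$, the zig-zag in $B\!\uparrow\!\calr$
\[(\beta^*R(f),\text{id}_B)\longrightarrow (R(f),\beta)\longleftarrow (f,\beta),\]
built from the pullback projection paired with $\beta$, and from $(\gamma_f,\text{id}_{B'})$ respectively, gives the required natural homotopy to the identity.

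Given a weak equivalence $\alpha\colon B_1\to B_0$, the composite $\alpha^{\sharp}:=\Psi_{B_1}\circ(\alpha\!\uparrow\!\calr)\circ\Phi_{B_0}\colon \text{Ext}(B_0,F)\to \text{Ext}(B_1,F)$ sends $f$ to $\alpha^*R(f)\to B_1$. Tracing this through the derived decomposition $\partial$ of~\ref{pt decompositon}, for each simplex $\sigma\colon \Delta[n]\to B_1$ pasting of pullbacks yields a canonical isomorphism $d(\alpha^*R(f))(\sigma)\cong dR(f)(\alpha\sigma)$. Hence under the homotopy equivalence $\text{Ext}(B,F)\simeq \text{Fun}(B,F_{\text{we}})$ obtained in the proof of Theorem~\ref{thmA Ext is Map}, the functor $\alpha^{\sharp}$ corresponds up to natural weak equivalence to the restriction functor $\alpha^*\colon \text{Fun}(B_0,F_{\text{we}})\to \text{Fun}(B_1,F_{\text{we}})$. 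Since $\alpha$ is a weak equivalence of spaces, property~(2) of the subdivision in Section~\ref{sec simplex} makes the induced functor between simplex categories a weak equivalence of small categories, and the corollary at the end of the previous section then gives that $\alpha^*$ is a weak equivalence on $\text{Fun}(-,F_{\text{we}})$. Hence $\alpha^{\sharp}$ is a weak equivalence.

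Since $\Phi_{B_0}$ and $\Psi_{B_1}$ are homotopy equivalences between essentially small categories (essential smallness of the targets coming from Theorem~\ref{thmA Ext is Map}, and transferring to the sources by~\ref{cor esssmallhomeqinv}), Proposition~\ref{prop esssmallweakequiv} promotes them to weak equivalences, and two-out-of-three applied to $\alpha^{\sharp}=\Psi_{B_1}(\alpha\!\uparrow\!\calr)\Phi_{B_0}$ forces $\alpha\!\uparrow\!\calr$ to be a weak equivalence. The main technical hurdle will be functoriality: defining $\Psi_B$ compatibly with morphisms of $B\!\uparrow\!\calr$ (via coherent choices of pullbacks) and verifying that the identification of $\alpha^{\sharp}$ with the pullback functor on $\text{Fun}$ holds through a coherent system of natural weak equivalences rather than just pointwise. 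Both are manageable using functorial factorizations and pullbacks in simplicial sets.
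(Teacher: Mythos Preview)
Your proposal is correct and shares the same skeleton as the paper's proof: construct homotopy inverse functors $\Phi_B\colon\text{Ext}(B,F)\rightleftarrows B\!\uparrow\!\calr\colon\Psi_B$ exactly as you do (the paper calls them $\calf_B$ and $\calg_B$), establish essential smallness via Theorem~\ref{thmA Ext is Map} and~\ref{cor esssmallhomeqinv}, and then prove that the sandwich $\Psi_{B_1}\,(\alpha\!\uparrow\!\calr)\,\Phi_{B_0}$ is a weak equivalence, concluding by~\ref{prop esssmallweakequiv} and two-out-of-three.

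The genuine difference is in how that sandwich is handled. You pass through the derived decomposition $\partial$ and identify $\alpha^{\sharp}$, up to a zig-zag of natural weak equivalences, with the restriction functor $\alpha^{\ast}\colon\text{Fun}(B_0,F_{\text{we}})\to\text{Fun}(B_1,F_{\text{we}})$, then invoke the corollary that $\alpha^{\ast}$ is a weak equivalence when $\alpha$ is. The paper instead stays entirely inside the $\text{Ext}$ world: it observes that the postcomposition functor $\alpha_{\ast}\colon\text{Ext}(B_1,F)\to\text{Ext}(B_0,F)$, $p\mapsto \alpha p$, is a \emph{homotopy inverse} to $\alpha^{\sharp}$, exhibiting the required zig-zags directly from the universal property of pullbacks and the fibrant-replacement square. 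This is shorter and avoids tracing compatibility through the chain $\text{Ext}\simeq\text{Fun}(-,F_{\text{we}})\!\downarrow\!\Delta\simeq\text{Fun}(-,F_{\text{we}})$; in particular it does not need the corollary on $\alpha^{\ast}$ at all. Your route, on the other hand, makes the connection to the classifying-space picture more transparent and reuses the general machinery rather than producing ad hoc homotopies. Both are valid; the paper's is more economical, yours more conceptual.
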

\begin{proof}
Let $B$ be in  $X_{\text{we}}$.
An  object in $B\!\uparrow\!\calr$ is a pair of maps 
$E\stackrel{p}{\to} B'\stackrel{\phi}{\gets} B$
 where $\phi$ is a weak equivalence and $p$ is  in $\text{\rm Fib}(X,F)$\@. A morphism 
 between  such objects is a pair of weak equivalences $(g,h)$ making  the  following diagram commutative: 
\begin{equation}\label{morphisminover}
\xymatrix@C=15pt@R=11pt{
E_1 \ar[r]^{p_1}\ar[d]_g & B_1 \ar[d]^h & B \ar[l]_{\phi_1}\ar@{=}[d] \\
E_0 \ar[r]^{p_0} & B_0 & B \ar[l]_{\phi_1}
}\tag{*}
\end{equation}

Define $\calf_B\colon\text{\rm Ext}(B,F) \to B\!\uparrow\!\calr$ to  be the functor given by the assignment:
\[\vcenter{\xymatrix@C=15pt@R=11pt{
E_1\rto^{p_1} \dto_{g}& B\ar@{=}[d]\\
E_0\rto^{p_0} & B
}}\ \ \stackrel{\calf_B}{\longmapsto}\ \ 
\vcenter{\xymatrix@C=15pt@R=15pt{
E_1\rto^{p_1} \dto_{g}& B\ar@{=}[d] &B\ar@{=}[d]\ar@{=}[l]\\
E_0\rto^{p_0} & B&B\ar@{=}[l]
}}\]

Let $\calg_B\colon B\!\uparrow\!\calr\to \text{\rm Ext}(B,F)$  be the functor   that maps  a morphism
~\eqref{morphisminover} 
in $B\!\uparrow\!\calr$  to a morphism 
in $\text{\rm Ext}(B,F)$ described as follows.
Use the fibrant replacement $R$  to extend~\eqref{morphisminover}  to
a diagram on the left below (see~\ref{pt fibreplac}) and set  $\calg_B(g,h)\colon\calg_B(p_1,\phi_1)\to \calg_B(p_0,\phi_0)$  to be the 
unique morphism in $\text{\rm Ext}(B,F)$ that fits into the following commutative cube on the right where  the front and back
squares are pull-backs: 
\[
\vcenter{\vbox{\xymatrix@R=13pt@C=20pt{
E_1\dto_{g}\ar@/^14pt/[rr]|{p_1}\rmono^-{\simeq}  & R(p_1)\ar@{->>}[r]^{\mu_{p_1}} \dto|(.45){R(g,h)} &B_1 \dto^{h} & B \ar[l]_{\phi_1} \ar@{=}[d]\\
E_0\ar@/_13pt/[rr]|{p_0}\rmono^-{\simeq}  &R(p_0) \ar@{->>}[r]^{\mu_{p_0}} &B_0 & B \ar[l]_{\phi_0}
}}}\ \ \  \ \ \ 
\vcenter{\vbox{\xymatrix@!@C=10pt@R=-18pt{
 &\phi_1^{\ast}R(p_1)\dlto_{\calg_B(g,h)\ \ })\ar@{->>}[rr]^{\calg_B(p_1,\phi_1)}\ddto|\hole& & B 
 \ddto^{\phi_1}\ar@{=}[dl]\\
\phi_0^{\ast}R(p_0))\ar@{->>}[rr]^(.72){\calg_B(p_0,\phi_0)}\ddto  & & B\ddto^(.25){\phi_0}\\
& R(p_1)\ar@{->>}[rr]^(.32){\mu_{p_1}}|(.5)\hole\dlto_{R(g,h)} & & B_1 \dlto_{h}\\
R(p_0)\ar@{->>}[rr]^{\mu_{p_0}} & & B_0 
}}}\qedhere\]

For any object  $E\stackrel{p}{\to} B'\stackrel{\phi}{\gets} B$  in
 $B\!\uparrow\!\calr$ the following morphisms give a ``zig-zag'' of natural transformations between  $\calf_B \calg_B$ and the identity functor:
 \[\xymatrix@R=11pt@C=15pt{
 \calf_B\calg_B(p,\phi)\dto &  \phi^{\ast}R(p)\ar@{->>}[rr]^-{\calg_B(p,\phi)}\dto && B\dto^{\phi} &B\ar@{=}[l]\ar@{=}[d]\\
 (\mu_p,\phi) &R(p) \ar@{->>}[rr]^-{\mu_p} && B' &B\lto_-{\phi}  \\
 (p,\phi)\uto & E_0\rrto^p\ar@{^(->}[u]^{\simeq} & & B' \ar@{=}[u] & B\lto_-{\phi}\ar@{=}[u]
 }\]
 
 Instead of showing directly that the other composition $\calg_B\calf_B$ is homotopic to the identity, we  prove slightly more. Let $f\colon C\to B$ be a morphism in $X_{\text{we}}$ and $f_{\ast}\colon\text{\rm Ext}(C,F) \to \text{\rm Ext}(B,F)$ by the composition with $f$ functor:
\[\vcenter{\xymatrix@R=11pt@C=15pt{
E_1\rto^{p_1} \dto_{g} & C\ar@{=}[d]\\
E_0\rto^{p_0} & C}}\ \ \ \stackrel{f_{\ast}}{\longmapsto}\ \ \ 
\vcenter{\xymatrix@R=11pt@C=15pt{
E_1\rto^{fp_1} \dto_{g} & B\ar@{=}[d]\\
E_0\rto^{fp_0} & B
}}
\]
We claim that  $\xymatrix@1@R=15pt@C=18pt{ 
\text{\rm Ext}(B,F) \ar[r]^-{\calf_B} &  B\!\uparrow\!\calr \ar[r]^{f\uparrow\calr} & C\!\uparrow\!\calr\ar[r]^-{\calg_C} & \text{\rm Ext}(C,F)
}
$ is a homotopy inverse to $f_{\ast}$\@. The action of $f_{\ast} \calg_C(f\!\uparrow\!\calr) \calf_B$ on an object $p\colon E \to B$ in $\text{\rm Ext}(B,F)$  can be understood through the
following commutative diagram:
\[\xymatrix@R=18pt@C=40pt{
E \ar@/_5pt/[dr]_{p}\ar@{^(->}[r]^-{\simeq} &  R(p)\ar@{->>}[d]^{\mu_p} & & f^{\ast}R(p)
\dllto|{f_{\ast}\calg_C(f\uparrow\calr) \calf_B(p)}\llto\ar@{->>}[d]^{\calg_C(p,f)=\calg_C(f\uparrow\calr) \calf_B(p)}\\
 & B & &C\llto^{f}
}\]
The maps $E\hookrightarrow
 R(p)\leftarrow f^{\ast}R(p)$ give a ``zig-zag'' of natural transformations
between  $f_{\ast} \calg_C(f\uparrow\pi) \calf_B$ and  $\text{id}_{\text{\rm Ext}(B,F)}$.
 Conversely, by applying $\calg_C(f\uparrow\pi) \calf_B f_{\ast}$ to 
 an object  $q\colon E\to C$ in $\text{\rm Ext}(C, F)$,  we get a commutative diagram:
 \[\xymatrix@R=15pt{ 
 f^{\ast}R(fq)\ar@/_5pt/[dr]_(.3){\calg_C(f\uparrow\calr) \calf_Bf_{\ast}(q)=\calg_C(fq, f)\ }
 \ar@/^14pt/[rr] & E\dto^-{q}  \drto|{fq}\lto
 \ar@{^(->}[r]^-{\simeq}&
 R(fq)\ar@{->>}[d]^{\mu_{fq}}\\
  & C\rto^(.35){f} & B
 }\]
 where  $E\ra f^{\ast}R(fq)$ is the unique map into the pull-back 
 given by the commutativity of the inner square. These maps give a natural transformation
 between  $\calg_C(f\!\uparrow\!\calr) \calf_B f_{\ast}$ and  $\text{id}_{\text{\rm Ext}(C,F)}$.
For example if  $f=\text{\rm id}_B$,  then we see that $\calg_B\calf_B$ is homotopic to $ \text{\rm id}_{\text{\rm Ext}(B,F)}$.  Thus according to~\ref{thmA Ext is Map} and~\ref{prop esssmallweakequiv}, $f\!\uparrow\!\calr\colon B\!\uparrow\!\calr\to
 C\!\uparrow\!\calr$ is a weak equivalence of essentially small categories,  which shows the proposition.
\end{proof}

With~\ref{thmA Ext is Map} and~\ref{prop rangeqf}
we have the tools necessary to prove: 
%
\begin{proof}[Proof of Theorem A]
Consider the range functor $\calr\colon\text{\rm Fib}(X,F) \to X_{\text{\rm we}}$.  The categories
$\text{\rm Fib}(X,F)$ and $\text{\rm Gr}_{X_{\text{\rm we}}} (-\!\uparrow\!\calr)$ are homotopy equivalent.
(see~\ref{pt undercat}). According to~\ref{thm bweXX}, the category $X_{\text{\rm we}}$ is essentially small
and by~\ref{prop rangeqf} the system of categories $-\!\uparrow\!\calr$ satisfies the requirement of~\ref{lemma groveresssmallqf}.
The category $\text{\rm Gr}_{X_{\text{\rm we}}} (-\!\uparrow\!\calr)$ is therefore essentially small. Moreover,
there is a core $\Xi$ of    $X_{\text{\rm we}}$ and a core $\Psi$ 
of the restriction of the system  $(-\!\uparrow\!\calr)$ to $\Xi$ for which $\text{\rm Gr}_\Xi \Psi$ is a core of 
 $\text{\rm Gr}_{X_{\text{\rm we}}} (-\!\uparrow\!\calr)$. 
 We can now use Thomason's theorem~\ref{prop ThomasonPuppe}.(3)  to get that   the homotopy
 fiber of the the nerve of the projection $\text{\rm Gr}_\Xi \Psi \ra \Xi$ is weakly equivalent to
 the nerve of $\Psi_i$, which  
 is weakly equivalent to the mapping space $\text{map}(X,B\text{we}(F,F))$
 (see the proof of~\ref{prop rangeqf} and~\ref{thmA Ext is Map}). 
 The theorem follows as the projection $\text{\rm Gr}_\Xi \Psi \ra \Xi$ has a section
 and the nerve of $\Xi$ is weakly equivalent to $B\text{we}(X,X)$ (see~\ref{thm bweXX}).
\end{proof}


 \section{Appendix: delooping of homotopy groupoids}\label{appendix}
  In this appendix we recall the    standard delooping machinery and prove~\ref{prop deloopingwe}. 
  
 \begin{Def} \label{def hgroupoid}
 Let $\mathcal{S}$ be a set. 
 A homotopy groupoid  indexed by  $\mathcal{S}$ consists of:
 \begin{itemize}
  \item   fibrant-cofibrant spaces 
 $G(r,t)$ indexed by   pairs $(r,t)$ of elements in $\mathcal{S}$;
 \item  maps $\circ\colon G(r,s)\times G(s,t)\to G(r,t)$ 
  indexed by triples
 $(r,s,t)$  in $\mathcal{S}$;
 \item maps $e_r\colon\Delta[0]\to  G(r,r)$ indexed by elements $r$ in 
 $\mathcal{S}$.
 \end{itemize}
 These sequences are required to  satisfy the following properties:
 \begin{enumerate}
 \item (associativity) for any  $r,s,t,v$ in $\mathcal{S}$ 
 the following diagram commutes:
 \[\xymatrix@R=11pt@C=18pt{
 G(r,s)\times G(s,t)\times G(t,v)\rto^(.58){\circ\times\text{id}}\dto_{\text{id}\times\circ} & G(r,t)\times G(t,v)\dto^{\circ}\\
 G(r,s)\times G(s,v)\rto^(.6){\circ} & G(r,v)
 }\]
 \item (identity) for any  $r,s$ in $\mathcal{S}$ the following
 diagrams commute:
 \[\xymatrix@R=11pt@C=18pt{
\Delta[0]\times G(r,s)\rto^(.45){e_r\times\text{id}}\drto_{\text{pr}} &  G(r,r)\times G(r,s)\dto^{\circ}\\
& G(r,s)
 }\ \ \ 
 \xymatrix@R=12pt@C=18pt{
 G(r,s)\times G(s,s)\dto_{\circ} & G(r,s)\times\Delta[0]\lto_(.45){\text{id}\times e_s}\dlto^{\text{pr}}\\
 G(r,s)
  }
 \]
 \item  for any  $r,s, t$  in $\mathcal{S}$ the following square is a homotopy pull-back:
 \[\xymatrix@R=11pt@C=15pt{
 G(r,s)\times G(s,t)\rto^-{\circ}\dto_{\text{pr}} & G(r,t)\dto\\
 G(r,s)\rto & \Delta[0]
 }\]
 \end{enumerate}
 \end{Def}
 A homotopy groupoid is simply a small category 
 enriched
 over $\text{Spaces}$ 
 with an additional assumption given by requirement (3). 
A homotopy groupoid $G$ indexed by $\cals$ is also denoted by $G_\cals$.
  If all the spaces $G(s,t)$ are not empty, $G$ is called {\bf connected}.
In this case the requirement (3) of~\ref{def hgroupoid} 
implies that   all these spaces  are weakly equivalent to each other.
This unique homotopy type is  called the homotopy type of the connected 
groupoid $G$.
For example
let    $\mathcal{S}$  be a set of objects in $\text{\rm Cons}(N(\Delta[0]),\calm)$ (see~\ref{pt hcons}).  The  spaces $\text{\rm we}(F,G)$ with the composition operations $\circ$ as defined in Section~\ref{sec we} and  the identities
 given  by the maps   $e_F\colon\Delta[0]\to \text{\rm we}(F,F)$ 
 form a homotopy groupoid denoted by $\text{we}(\cals)$. The requirements (1) and (2) are the content of~\ref{cor propertiesofwe}.(1), and (3) of~\ref{prop wehgoup}.
If all objects in $\cals$ are weakly equivalent to each other,   $\text{we}(\cals)$ is connected.

The definition of a homotopy groupoid $G_{\cals}$ is designed so  we can form a  so called  {bar construction}. It is
a map of simplicial spaces $\pi\colon {\mathcal E}G\to  {\mathcal B}G$ and here is how to construct it.
 Let  $t_{-1}\in\mathcal{S}$ be an arbitrary but fixed element and 
  $n\geq 0$.  For any tuple $(t_n,\ldots, t_{0})\in\mathcal{S}^{n+1}$ set:
\[\xymatrix@R=8pt@C=40pt{
{\mathcal E}G_{t_n,\ldots, t_0}\ar@{*{\cdot\cdot}=}[d]\rto^{\pi_{t_n,\ldots, t_0}} & 
 {\mathcal B}G_{t_n,\ldots, t_0}\ar@{*{\cdot\cdot}=}[d]\\
\displaystyle{\prod_{k=n}^{k=0} G(t_{k},t_{k-1}}) \rto^(.5){\text{projection}} &\displaystyle{\prod_{k=n}^{k=1} G(t_{k},t_{k-1}}) 
}\]
Thus ${\mathcal E}G_{t_n,\ldots, t_0}= {\mathcal B}G_{t_n,\ldots, t_0}\times G(t_0,t_{-1})$
 and $\pi_{{t_n,\ldots, t_0}}$ is the projection.
For example, in the case $n=0$, since  the product of an empty set of spaces is  $\Delta[0]$, 
$\pi_t\colon {\mathcal E}G_{t}=G(t,t_{-1})\to\Delta[0]={\mathcal B}G_{t}$ is the unique map.
By assembling all these spaces we define:
\[
\xymatrix@R=8pt@C=95pt{
{\mathcal E}G_n\ar@{*{\cdot\cdot}=}[d]\ar@{->}[r]^{\pi_n} & {\mathcal B}G_n\ar@{*{\cdot\cdot}=}[d]\\
\displaystyle{\coprod_{(t_n,\ldots, t_0)\in\mathcal{S}^{n+1}}}{\mathcal E}G_{t_n,\ldots, t_0}
\ar@{->}[r]^{{\coprod_{(t_n,\ldots, t_0)\in\mathcal{S}^{n+1}}}\pi_{t_n,\ldots, t_0}} & 
\displaystyle{\coprod_{(t_n,\ldots, t_0)\in\mathcal{S}^{n+1}}}{\mathcal B}G_{t_n,\ldots, t_0}
}\]

Let $\phi\colon [m]\to [n]$ be a morphism in $\Delta$. Set $\phi(-1)=-1$. 
 Define: 
\[ {\prod_{k=n}^{k=1} G(t_{k},t_{k-1}})={\mathcal B}G_{t_n,\ldots, t_0}\xrightarrow{{\mathcal B}G_{t_n,\ldots, t_0,\phi}} {\mathcal B}G_{t_{\phi(m)},\ldots, t_{\phi(0)}}= {\prod_{k=m}^{k=1} G(t_{\phi(k)},t_{\phi(k-1)}}) \]
\[ {\prod_{k=n}^{k=0} G(t_{k},t_{k-1}})={\mathcal E}G_{t_n,\ldots, t_0}\xrightarrow{{\mathcal E}G_{t_n,\ldots, t_0,\phi}} {\mathcal E}G_{t_{\phi(m)},\ldots, t_{\phi(0)}}= {\prod_{k=m}^{k=0} G(t_{\phi(k)},t_{\phi(k-1)}}) \]
to  be  the maps whose projections onto  $G(t_{\phi(i)},t_{\phi(i-1)})$ are given by
 the following compositions in the case $\phi(i)=\phi(i-1)$:
\[{\mathcal B}G_{t_n,\ldots, t_0}\ra \Delta[0]\xrightarrow{e_{\phi(i)}}G(t_{\phi(i)},t_{\phi(i-1)})\ \ \ \ \ {\mathcal E}G_{t_n,\ldots, t_0}\ra \Delta[0]\xrightarrow{e_{\phi(i)}}G(t_{\phi(i)},t_{\phi(i-1)})\]
and  the following compositions in the case   $\phi(i)>\phi(i-1)$:
\[{\mathcal B}G_{t_n,\ldots, t_0}\xrightarrow{\text{projection}}{\prod_{k=\phi(i)}^{k=\phi(i-1)+1} G(t_{k},t_{k-1}})\xrightarrow{\text{composition}} G(t_{\phi(i)},t_{\phi(i-1)})\]
\[{\mathcal E}G_{t_n,\ldots, t_0}\xrightarrow{\text{projection}}{\prod_{k=\phi(i)}^{k=\phi(i-1)+1} G(t_{k},t_{k-1}})\xrightarrow{\text{composition}} G(t_{\phi(i)},t_{\phi(i-1)})\]

For any  $\phi\colon [m]\to [n]$ in $\Delta$, define
${\mathcal B}G_{\phi}\colon {\mathcal B}G_n\to {\mathcal B}G_m$ and
${\mathcal E}G_{\phi}\colon{\mathcal E}G_n\to {\mathcal E}G_m$ to be the maps
which on components are given by ${\mathcal B}G_{t_n,\ldots, t_0,\phi}$ and ${\mathcal E}G_{t_n,\ldots, t_0,\phi}$.
The requirements (1) and (2) of~\ref{def hgroupoid} are exactly what is needed for  ${\mathcal B}G$ and ${\mathcal E}G$  to be simplicial spaces (see for example~\cite{MR0370579, MR0232393}). From the above definition it is also clear that 
 $\pi\colon {\mathcal E}G\to {\mathcal B}G$ is a  map of simplicial spaces.
Note further that if $S'\subset S$, then ${\mathcal E}G_{S'}$ and ${\mathcal B}G_{S'}$ are  simplicial subspaces of
 ${\mathcal E}G_{S}$ and  ${\mathcal B}G_{S}$.
 \begin{prop}\label{prop appendix deloopingwe}
Let $\mathcal{S}'\subset \mathcal{S}$ be non-empty sets and $G_{\mathcal{S}}$ be a connected homotopy groupoid indexed by $ \mathcal{S}$.  Let $G_{\mathcal{S}'}$ be the restriction of $G_{\mathcal{S}}$ to  $\mathcal{S}'$.
Then:
\begin{enumerate} 
\item  $\text{\rm hocolim}_{\Delta^{\text{\rm op}}} \mathcal{B}G_{\mathcal{S}}$ is connected 
and\/  $\text{\rm hocolim}_{\Delta^{\text{\rm op}}} \mathcal{E}G_{\mathcal{S}}$ is contractible.
\item $G_{\mathcal{S}}$ has the homotopy tope of the  the loop space  $\Omega \text{\rm hocolim}_{\Delta^{\text{\rm op}}} \mathcal{B}G_{\mathcal{S}}$.
\item the map\/ $\text{\rm hocolim}_{\Delta^{\text{\rm op}}} \mathcal{B}G_{\mathcal{S}'}\to \text{\rm hocolim}_{\Delta^{\text{\rm op}}} \mathcal{B}G_{\mathcal{S}}$, induced by the inclusion of simplicial spaces $\mathcal{B}G_{\mathcal{S}'}\subset \mathcal{B}G_{\mathcal{S}}$,
is a weak equivalence.
\end{enumerate}
\end{prop}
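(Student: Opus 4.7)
The proof is the classical bar/delooping argument à la Segal and May, adapted to this framework. My plan is as follows.

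First, for part (1), I would establish the contractibility of $\hocolim_{\Delta^{\text{op}}} \mathcal{E}G_{\cals}$ by constructing an extra degeneracy. Since the element $t_{-1}\in\cals$ is fixed, the assignment $(t_n,\dots,t_0)\mapsto (t_n,\dots,t_0,t_{-1})$ and insertion of the identities $e_{t_{-1}}$ in the last coordinate give maps $s_{-1}\colon \mathcal{E}G_n\to \mathcal{E}G_{n+1}$ satisfying the usual simplicial identities of an extra degeneracy. Standard bar-construction arguments then produce a simplicial homotopy between the identity of $\mathcal{E}G$ and the constant simplicial map at the vertex $e_{t_{-1}}\in G(t_{-1},t_{-1})=\mathcal{E}G_0$, so the homotopy colimit (equivalently, the diagonal) is contractible. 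Connectedness of $\hocolim_{\Delta^{\text{op}}} \mathcal{B}G_{\cals}$ is immediate: $\mathcal{B}G_0=\cals$ is discrete, and by the connectedness assumption every $\mathcal{B}G_{s,t}=G(s,t)$ is non-empty, so any two vertices are joined by a $1$-simplex.

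Second, for part (2), I would argue that the bar construction produces a homotopy fibration sequence
\[ G(t_{-1},t_{-1})\lra \hocolim_{\Delta^{\text{op}}}\mathcal{E}G_{\cals}\xrightarrow{\ \pi\ } \hocolim_{\Delta^{\text{op}}}\mathcal{B}G_{\cals}. \]
The requirement (3) in Definition~\ref{def hgroupoid} is precisely what one needs: in each simplicial degree $n$, the square with vertices $\mathcal{E}G_n$, $\mathcal{B}G_n$, $G(t_{-1},t_{-1})$, $\Delta[0]$ obtained by projecting off every composition factor except the last is a homotopy pull-back. A Puppe-type theorem for simplicial spaces then promotes this degreewise homotopy pull-back to a homotopy pull-back of homotopy colimits, provided the base $\mathcal{B}G_{\cals}$ is connected after realization; this is exactly part (1). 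Combined with the contractibility of $\hocolim \mathcal{E}G_{\cals}$, we obtain $G(t_{-1},t_{-1})\simeq \Omega\hocolim_{\Delta^{\text{op}}}\mathcal{B}G_{\cals}$, and the connectedness assumption on $G_{\cals}$ ensures that all $G(s,t)$ share this homotopy type.

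Finally, for part (3), the inclusion $\cals'\subset\cals$ gives inclusions of simplicial spaces $\mathcal{B}G_{\cals'}\subset\mathcal{B}G_{\cals}$ and $\mathcal{E}G_{\cals'}\subset\mathcal{E}G_{\cals}$ that are compatible with $\pi$ and with the extra degeneracies used in step one. Hence the induced map on $\hocolim \mathcal{E}G$'s is a weak equivalence between contractible spaces, and step two identifies both loop spaces with $G(t_{-1},t_{-1})$ compatibly. Since both $\hocolim \mathcal{B}G_{\cals'}$ and $\hocolim \mathcal{B}G_{\cals}$ are connected (step one), a map between connected spaces inducing a weak equivalence on loop spaces is itself a weak equivalence.

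\paragraph{Main obstacle.} The delicate step is the second one: passing from a degreewise homotopy pull-back of simplicial spaces to a homotopy pull-back of homotopy colimits. This is a Bousfield--Friedlander/Puppe-type result whose hypotheses must be carefully checked; in our situation, property (3) of a homotopy groupoid together with connectedness of the base is exactly the right ``group-like'' condition to make this work, but spelling out the verification (e.g.\ via a Reedy-cofibrant replacement of $\mathcal{B}G_{\cals}$ and $\mathcal{E}G_{\cals}$, or via the $\pi_*$-Kan condition) will require some care.
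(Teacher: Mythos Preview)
Your proposal is correct and follows essentially the same route as the paper: extra degeneracy for the contractibility of $\hocolim\mathcal{E}G_{\cals}$, connectedness of $\hocolim\mathcal{B}G_{\cals}$ from nonemptiness of the $G(s,t)$, a Puppe-type result to promote the degreewise fibration to a homotopy fibration sequence, and then the comparison of the two fibration sequences for part~(3).

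One remark on your ``main obstacle'': the paper sidesteps the Bousfield--Friedlander/$\pi_*$-Kan machinery entirely. Its Puppe-type lemma (Lemma~\ref{prop puppe homotopy square}) has the hypothesis that for every morphism $\phi\colon[m]\to[n]$ in $\Delta$ the naturality square
\[
\xymatrix@R=11pt@C=18pt{
\mathcal{E}G_n\rto^{\mathcal{E}G_\phi}\dto_{\pi_n} & \mathcal{E}G_m\dto^{\pi_m}\\
\mathcal{B}G_n\rto^{\mathcal{B}G_\phi} & \mathcal{B}G_m
}
\]
is a homotopy pull-back, and this is a direct consequence of requirement~(3) in Definition~\ref{def hgroupoid} (checked componentwise). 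No Reedy-cofibrant replacement or group-completion argument is needed, and connectedness of the base is not an additional hypothesis for this lemma. So the step you flagged as delicate is in fact the easiest once the lemma is phrased this way. Also, for part~(3) remember to choose the fixed element $t_{-1}$ inside $\cals'$.
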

In the rest of this section we  prove~\ref{prop appendix deloopingwe}.  We start with  two classical lemmas
whose proves can be found for example in~\cite{MR2064057} and~\cite{MR2222504}.

 \begin{lemma}\label{lemma extra degeneracy}
Let $X$ be a simplicial space. Set $d_0\colon X_{0}\to \Delta[0]=:X_{-1}$ to be the
unique map. 
Assume that there are maps $s\colon X_n\to X_{n+1}$ for $n\geq -1$ such that  $d_0s = \text{\rm id}$
and  $d_{i}s= sd_{i-1}$ for $i>0$.
Then\/ $\text{\rm hocolim}_{\Delta^{\text{op}}} X$  is contractible. 
\end{lemma}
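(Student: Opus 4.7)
Plan: The hypothesis is (the face-operator part of) an \emph{extra degeneracy} on the augmented simplicial space $X_\bullet \to X_{-1} = \Delta[0]$, and the conclusion is the classical consequence of having such data. My strategy is to build an explicit null-homotopy of the identity on $\hocolim_{\Delta^{\text{op}}} X$ directly from the given maps $s$.

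First I would introduce the canonical collapse $\epsilon\colon X \to \text{const}(\Delta[0])$ of $X$ to the constant simplicial space at $X_{-1}$, together with a section $\sigma\colon \text{const}(\Delta[0]) \to X$ whose component in degree $n$ is the iterated composite $s^{n+1}\colon \Delta[0] = X_{-1}\to X_n$. The identities $d_0 s = \id$ and $d_i s = s d_{i-1}$ for $i>0$ are exactly what is needed to check that $\sigma$ commutes with all face operators (e.g.\ $d_i s^{n+1} = s^{n}$ for every $i$, by a short induction), and $\epsilon \sigma = \id$ is automatic. Next, one exhibits a face-compatible simplicial homotopy between $\id_X$ and $\sigma\epsilon$ built from the $s$ themselves, with homotopy operators $H_k\colon X_n \to X_{n+1}$ assembled from iterated applications of $s$ and the face maps of $X$; the required simplicial identities reduce by a bookkeeping calculation to the two hypothesized relations. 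This is the classical construction (as in May's \emph{Simplicial Objects in Algebraic Topology}).

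Applying $\hocolim_{\Delta^{\text{op}}}$ then gives a homotopy between $\id$ on $\hocolim_{\Delta^{\text{op}}} X$ and a map factoring through $\hocolim_{\Delta^{\text{op}}} \text{const}(\Delta[0])$. Because $\Delta$ has an initial object $[0]$, both $N(\Delta)$ and $N(\Delta^{\text{op}})$ are contractible, so $\hocolim_{\Delta^{\text{op}}} \text{const}(\Delta[0])$ is contractible (by Thomason's theorem, or directly from~\ref{prop hconst} applied to the constant functor), and therefore so is $\hocolim_{\Delta^{\text{op}}} X$. The main obstacle is the combinatorial verification of the simplicial-homotopy relations for the $H_k$; a secondary subtlety is that the hypothesis constrains only face operators while $\hocolim_{\Delta^{\text{op}}}$ is defined using the full simplicial structure, which I would circumvent by working with a Bousfield--Kan (fat) realization model that depends only on face data and is weakly equivalent to $\hocolim_{\Delta^{\text{op}}}$.
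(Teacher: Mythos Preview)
The paper does not actually prove this lemma: it introduces it as one of ``two classical lemmas whose proofs can be found for example in~\cite{MR2064057} and~\cite{MR2222504}'' and uses it as a black box. Your proposal therefore supplies more than the paper does.

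Your outline is the standard argument and is correct. The section $\sigma_n=s^{n+1}$ does satisfy $d_i\sigma_n=\sigma_{n-1}$ for all $i$ (telescope $d_i s=s d_{i-1}$ down to $d_0 s=\id$), and the usual homotopy operators built from $s$ give the required null-homotopy at the level of face data. Your diagnosis of the one genuine subtlety is also right: the hypothesis says nothing about how $s$ interacts with the degeneracies of $X$, so $\sigma$ and the homotopy are only \emph{semi}-simplicial, and your proposed fix via a fat realization model (which depends only on faces and is weakly equivalent to $\hocolim_{\Delta^{\text{op}}}$ for objectwise-cofibrant $X$) is appropriate. An equivalent and slightly more direct route, which you may find in the cited references, is to exhibit the contraction on the diagonal of $X$ directly using $s$; this avoids separating the semi-simplicial issue out as a second step.
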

\begin{lemma}\label{prop puppe homotopy square}
Let $\psi\colon F\to G$ be a natural transformation in\/ $\text{\rm Fun}(I,\text{\rm Spaces})$.
Assume the commutative diagram on the left below is a 
homotopy pull-back for any  $\alpha\colon i\to j$ in $I$. Then, for any  $k$ in $I$, the  diagram on the right  is  a 
homotopy pull-back, where the horizontal maps are induced by the inclusion of  $k$ in $I$.
\[
\xymatrix@R=11pt@C=18pt{
F(i) \ar[d]_{\psi_i}\ar[r]^{F(\alpha)} & F(j) \ar[d]^{\psi_j} \\
G(i) \ar[r]^{G(\alpha)} & G(j)
}\ \ \ \ \ \ \ \ \ \ \ 
\xymatrix@R=11pt@C=15pt{
F(k) \ar[d]_{\phi_k} \ar[r] & \text{\rm hocolim}_I F \ar[d]^{ \text{\rm hocolim}_I \psi} \\
G(k) \ar[r] & \text{\rm hocolim}_I G
}
\]
\end{lemma}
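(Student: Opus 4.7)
The plan is to reduce $\psi$ to a pointwise fibration whose naturality squares are \emph{strict} pull-backs, then analyze $\text{hocolim}_I$ via the Bousfield--Kan simplicial replacement and appeal to a realization principle for cartesian simplicial maps. The latter ultimately rests on Proposition~\ref{prop ThomasonPuppe}.(3).

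First I would replace $\psi\colon F\to G$ by a pointwise-fibration $\tilde\psi\colon\tilde F\to G$ obtained by applying, pointwise in $i\in I$, the functorial factorization in $\text{Spaces}$ to each $\psi_i$. The induced pointwise weak equivalence $F\to\tilde F$ gives weak equivalences $F(k)\xrightarrow{\simeq}\tilde F(k)$ and $\text{hocolim}_I F\xrightarrow{\simeq}\text{hocolim}_I\tilde F$ by homotopy invariance of the cofibrant replacement of Section~\ref{sec hocollkex}, so it suffices to treat $\tilde\psi$. Because $\text{Spaces}$ is right proper and $\tilde\psi_j$ is a fibration, the hypothesis that each naturality square of $\psi$ is a homotopy pull-back upgrades to: every naturality square of $\tilde\psi$ is a \emph{strict} pull-back.

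Second, I would use the Bousfield--Kan model $\text{srep}(F)_n=\coprod_{i_n\to\cdots\to i_0}F(i_n)$, whose diagonal is weakly equivalent to $\text{hocolim}_I F$, and similarly for $G$. The induced map $\text{srep}(\tilde\psi)\colon\text{srep}(\tilde F)\to\text{srep}(G)$ is a pointwise fibration. Inspecting faces and degeneracies shows that they all act as the identity on the ``fiber coordinate'' $\tilde F(i_n)$ except the last face, which acts by some $\tilde F(\alpha)\colon \tilde F(i_n)\to \tilde F(i_{n-1})$; by the previous step every such square is a strict pull-back on each component, so $\text{srep}(\tilde\psi)$ is a cartesian, pointwise-fibration map of simplicial spaces. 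From such a $p\colon X_\bullet\to Y_\bullet$ one concludes that for any vertex $y\in Y_0$ the homotopy pull-back of $|X_\bullet|\to|Y_\bullet|$ along $\{y\}\hookrightarrow|Y_\bullet|$ is the strict fiber $p_0^{-1}(y)$; applied to $y$ in the component $G(k)\subset\text{srep}(G)_0$ this strict fiber equals $\tilde F(k)\simeq F(k)$, giving the desired homotopy pull-back.

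The hard part is the realization principle invoked at the end of step two. In the framework of this paper I would establish it by repackaging the simplicial replacement as (the nerve of) a Grothendieck-type construction over $I^{\text{op}}$ and applying Proposition~\ref{prop ThomasonPuppe}.(3) to the system $i\mapsto p_0^{-1}(y)\cap\tilde F(i)$: the strict pull-back property says that the maps $\tilde F(\alpha)$ restrict to \emph{isomorphisms} of these strict fibers, so the system sends every morphism of $I$ to a weak equivalence, and Proposition~\ref{prop ThomasonPuppe}.(3) then identifies the homotopy fiber of the total map over $y$ with the fiber at any chosen index, in particular at $k$.
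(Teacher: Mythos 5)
The paper does not actually prove this lemma: it is stated as one of ``two classical lemmas'' with references, so there is no in-paper argument to match; your proposal has to stand on its own, and as written it has a genuine gap. The gap is in your first step. Right properness of $\text{Spaces}$ together with $\tilde\psi_j$ being a fibration tells you that the \emph{strict} pullback square over $G(\alpha)$ is a homotopy pull-back; combined with the hypothesis it gives that the comparison map $\tilde F(i)\to G(i)\times_{G(j)}\tilde F(j)$ is a weak equivalence. It does \emph{not} make the naturality squares of $\tilde\psi$ strictly cartesian, and no pointwise factorization will: rigidifying a diagram whose squares are only homotopy cartesian into a strictly cartesian one over an arbitrary $I$ is a substantial result (essentially of the same nature as the classification results this paper is after). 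Consequently the later claims collapse: $\text{srep}(\tilde\psi)$ is only levelwise a fibration whose structure squares are homotopy cartesian, the induced maps on strict fibers are weak equivalences rather than isomorphisms, and the strict fiber $p_0^{-1}(y)$ is not identified with anything by fiat.

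Even granting a cartesian levelwise fibration, the ``realization principle'' you invoke is exactly the hard content of the lemma in the special case $I=\Delta^{\text{op}}$ (the diagonal of a levelwise fibration need not be a fibration, so the strict fiber of the diagonal is not automatically the homotopy fiber; this is where Bousfield--Friedlander-type hypotheses or a genuine descent argument enter). Your proposed derivation from Proposition~\ref{prop ThomasonPuppe}.(3) does not supply it: that statement concerns systems of small \emph{categories} and, applied to (simplex categories of) the fibers, it identifies the homotopy fiber of the projection $N(\text{Gr}_I F)\to N(I)$, i.e.\ it controls the map to the nerve of the indexing category, not the homotopy fiber of $\text{hocolim}_I F\to\text{hocolim}_I G$ over a vertex of $G(k)$. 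So the reduction is circular in spirit: after the (fine and standard) passage to simplicial replacements, the $\Delta^{\text{op}}$ case still needs an independent proof, e.g.\ Puppe-style descent as in the references the paper cites, or a Bousfield--Friedlander argument with its $\pi_*$-Kan hypothesis verified. To repair the proof you would need either to drop the strictification and prove the cartesian-realization statement directly, or to cite it, as the paper does.
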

\begin{proof}[Proof of Proposition~\ref{prop appendix deloopingwe}]
(1):\quad  $\text{\rm hocolim}_{\Delta^{\text{op}}} \mathcal{B}G_{\mathcal{S}}$ is weakly equivalent to
the diagonal of  $\mathcal{B}G_{\mathcal{S}}$. 
For two vertices  $s,t\in \coprod_{t_0\in\cals} \Delta[0]_0=\text{\rm diag}(BG_\cals)_0$,
pick en element  $f$ in $ G(t,s)_0$ and set  $v=s_0f\in \text{\rm diag}(BG_\cals)_1$. Note that
$d_0v=s$ and $d_1v=t$. The space $\text{\rm diag}(BG_\cals)$ is therefore connected
 and hence so is  $\text{\rm hocolim}_{\Delta^{\text{op}}} \mathcal{B}G_{\mathcal{S}}$.

Let  $t_n$, \ldots, $t_0$ be in $\cals$. 
Define   $s_{t_n,\ldots,t_0}\colon\cale G_{t_n,\ldots,t_0}\to \cale G_{t_n,\ldots,t_0, t_{-1}}$  as:
\[\xymatrix@R=11pt@C=15pt{\cale G_{t_n,\ldots,t_0}\ar@{=}[r]\drto_{s_{t_n,\ldots,t_0}} &\displaystyle{\prod_{k=n}^{k=0}}G(t_k,t_{k-1})\ar@{=}[r] &\displaystyle{\prod_{k=n}^{k=0}}G(t_k,t_{k-1})\times\Delta[0]\dto^{\text{id}\times e_{t_{-1}}}\\
&\cale G_{t_n,\ldots,t_0,t_{-1}}\ar@{=}[r] &\displaystyle{\prod_{k=n}^{k=0}}G(t_k,t_{k-1})\times G(t_{-1},t_{-1}) 
}\]
Let  $s\colon\cale G_{n}\to \cale G_{n+1}$  be  given on components  by  $s_{t_n,\ldots,t_0}$\@.
Set  $\cale G_{-1}=\Delta[0]$ and $s:\cale G_{-1}=\Delta[0]\ra \cale G_{0} $ to be  the composition of $e_{t_{-1}}\colon\Delta[0]\to  G(t_{-1},t_{-1})$ and the inclusion $G(t_{-1},t_{-1})\subset \coprod_{t\in S} G(t,t_{-1})=
\cale G_{0}$. Since these maps satisfy the assumptions of~\ref{lemma extra degeneracy},
we can conclude  $\text{\rm hocolim}_{\Delta^{\text{\rm op}}} \mathcal{E}G_{\mathcal{S}}$ is contractible.
\smallskip

\noindent
(2):\quad 
The requirement (3) in~\ref{def hgroupoid} implies 
the square on the left below is  a homotopy pull-back. Since being a homotopy pull-back can be checked component-wise, the square  on the right is  also a homotopy pull-back:
\[\xymatrix@R=11pt@C=60pt{
{\mathcal E}G_{t_n,\ldots, t_0}\rto^(.45){{\mathcal E}G_{t_n,\ldots, t_0,\phi}}\dto_{\pi_{t_n,\ldots, t_0}} &
{\mathcal E}G_{t_{\phi(m)},\ldots, t_{\phi(0)}}\dto^{\pi_{t_{\phi(m)},\ldots, t_{\phi(0)}}}
\\
{\mathcal B}G_{t_n,\ldots, t_0}\rto^(.45){{\mathcal B}G_{t_n,\ldots, t_0,\phi}} &
{\mathcal B}G_{t_{\phi(m)},\ldots, t_{\phi(0)}}
}\ \ \ \  \ \ \ \ 
\xymatrix@R=11pt@C=18pt{
{\mathcal E}G_m\rto^{{\mathcal E}G_{\phi}}\dto_{\pi_{m}} & 
{\mathcal E}G_n\dto^{\pi_{n}}\\
{\mathcal B}G_m\rto^{{\mathcal B}G_{\phi}} & {\mathcal B}G_n
}
\]
We can then use~\ref{prop puppe homotopy square} to conclude that, for any $t$ in $S$, the  square:
\[\xymatrix@R=11pt@C=15pt{
G(t,t_{-1})\ar@{=}[r] & {\mathcal E}G_{t}\rto\dto_{\pi_t} & \text{\rm hocolim}_{\Delta^{\text{\rm op}}} {\mathcal E}G
\dto^{\text{\rm hocolim}_{\Delta^{\text{\rm op}}}\pi}\\
\Delta[0]\ar@{=}[r] &  {\mathcal B}G_{t}\rto &\text{\rm hocolim}_{\Delta^{\text{\rm op}}} {\mathcal B}G
}
\]
  is a homotopy pull-back which  proves statement (2).
\smallskip

  \noindent
(3):\quad  Let $t$ be a object in $\cals'$. Consider the following commutative diagram:
\[\xymatrix@R=13pt@C=15pt{
G(t,t_{-1})\ar@{=}[r] & {\mathcal E}G_{t}\rto\dto_{\pi_t} & \text{\rm hocolim}_{\Delta^{\text{op}}} {\mathcal E}G_{\cals '}
\dto|{\text{\rm hocolim}_{\Delta^{\text{op}}}\pi}\rto &\text{\rm hocolim}_{\Delta^{\text{op}}} {\mathcal E}G_{\cals}
\dto|{\text{\rm hocolim}_{\Delta^{\text{op}}}\pi} \\
\Delta[0]\ar@{=}[r] &  {\mathcal B}G_{t}\rto &\text{\rm hocolim}_{\Delta^{\text{op}}} {\mathcal B}G_{\cals '}
\rto 
&\text{\rm hocolim}_{\Delta^{\text{op}}} {\mathcal B}G_{\cals }
}
\]
The squares in this diagram are homotopy pull-backs,
the spaces $\text{\rm hocolim}_{\Delta^{\text{op}}} {\mathcal B}G_{\cals '}$ and 
$\text{\rm hocolim}_{\Delta^{\text{op}}} {\mathcal B}G_{\cals }$ are connected, and 
$\text{\rm hocolim}_{\Delta^{\text{op}}} {\mathcal E}G_{\cals '}$ and  
$\text{\rm hocolim}_{\Delta^{\text{op}}} {\mathcal E}G_{\cals }$  are contractible.
Thus  $\text{\rm hocolim}_{\Delta^{\text{op}}} {\mathcal B}G_{\cals '}\to \text{\rm hocolim}_{\Delta^{\text{op}}} {\mathcal B}G_{\cals }$  is   a weak equivalence.
\end{proof}


\bibliographystyle{plain}\label{biblography}
\bibliography{bibho}

\end{document}